


%



%


\documentclass{memo-l}




\oddsidemargin .2in \evensidemargin .2in \textwidth 6.1in

\usepackage{amsfonts,amssymb,latexsym}

\usepackage{amsmath,amscd,amsthm}

\usepackage{pstricks,pst-node}

\usepackage{wrapfig}




\makeatletter

\renewcommand{\@seccntformat}[1]{\bf\@nameuse{the#1}.\quad}

\renewcommand\section{\@startsection{section}{1}%
                  \z@{.7\linespacing\@plus\linespacing}{.5\linespacing}%
                  {\normalfont\bfseries \boldmath}}

\renewcommand\subsection{\@startsection{subsection}{2}%
                  \z@{.5\linespacing\@plus.7\linespacing}{-.5em}%
                  {\normalfont\bfseries \boldmath}}

\renewcommand\subsubsection{\@startsection{subsubsection}{3}%
                  \z@{.3\linespacing\@plus.5\linespacing}{-.5em}%
                  {\normalfont\bfseries \boldmath}}

\makeatother

\theoremstyle{plain}

\newtheorem{theorem}{Theorem}[chapter]




 \newtheorem{assump}[theorem]{Assumption}

 \newtheorem{lem}[theorem]{Lemma}

 \newtheorem{cor}[theorem]{Corollary}

 \newtheorem{prop}[theorem]{Proposition}

\theoremstyle{definition}

 \newtheorem{rem}[theorem]{Remark}





\newcommand{\gl}{\mathfrak{g}}

\newcommand{\ul}{\mathfrak{u}}

\newcommand{\uj}{\mathfrak{u}_J}

\newcommand{\pj}{\mathfrak{p}_J}

\newcommand{\lj}{\mathfrak{l}_J}

\newcommand{\Uz}{\mathcal{U}_{\zeta}}

\newcommand{\hh}{{\mathfrak h}}

\newcommand{\Fr}{{\text{\rm Fr}}}

\newcommand{\ncf}{\mathcal{N}_1(\mathfrak{g}_{F})}


\newcommand{\ch}{\text{\rm ch}}

\newcommand{\ind}{\operatorname{ind}}

\newcommand{\Ext}{\operatorname{Ext}}

\newcommand{\opH}{\operatorname{H}}

\newcommand{\Hom}{\operatorname{Hom}}



\newcommand{\ga}{\gamma}

\newcommand{\la}{\lambda}

\newcommand{\al}{\alpha}

\newcommand{\be}{\beta}

\newcommand{\si}{\sigma}

\newcommand{\bZ}{{\mathbb Z}}

\newcommand{\BU}{{\mathbb U}}

\newcommand{\gr}{\text{\rm gr}\,}

\newcommand{\ad}{\operatorname{Ad}}

\newcommand{\adw}{\overline{\operatorname{Ad}}}

\newcommand{\Id}{\text{\rm Id}}




\makeindex

\begin{document}

\frontmatter

\title{Cohomology for quantum groups via the geometry of the nullcone}



\author{\sc Christopher P. Bendel}
\address
{Department of Mathematics, Statistics and Computer Science\\
University of
Wisconsin-Stout \\
Menomonie\\ WI~54751, USA}
\thanks{Research of the first author was supported in part by NSF
grant DMS-0400558} \email{bendelc@uwstout.edu}

\author{\sc Daniel K. Nakano}
\address
{Department of Mathematics\\ University of Georgia \\
Athens\\ GA~30602, USA}
\thanks{Research of the second author was supported in part by NSF
grant DMS-1002135} \email{nakano@math.uga.edu}

\author{\sc Brian J. Parshall}
\address
{Department of Mathematics\\ University of Virginia \\
Charlottesville\\ VA~22903, USA}
\thanks{Research of the third author was supported in part by NSF
grant DMS-1001900} \email{bjp8w@virginia.edu}

\author{\sc Cornelius Pillen}
\address{Department of Mathematics and Statistics \\ University of South
Alabama\\
Mobile\\ AL~36688, USA} \email{pillen@jaguar1.usouthal.edu}

\

\date{}

\subjclass[2010]{Primary 20G42, 20G10, Secondary 17B08}

\keywords{Quantum Groups, Cohomology, Support Varieties}

\begin{abstract}
Let $\zeta$ be a complex $\ell$th root of unity for an odd integer
$\ell>1$. For any complex simple Lie algebra $\mathfrak g$, let
$u_\zeta=u_\zeta({\mathfrak g})$ be the associated ``small" quantum
enveloping algebra. This algebra is a finite dimensional Hopf
algebra which can be realized as a subalgebra of the Lusztig
(divided power) quantum enveloping algebra $U_\zeta$ and as a
quotient algebra of the De Concini--Kac quantum enveloping algebra
${\mathcal U}_\zeta$. It plays an important role in the
representation theories of both $U_\zeta$ and ${\mathcal U}_\zeta$
in a way analogous to that played by the restricted enveloping
algebra $u$ of a reductive group $G$ in positive characteristic $p$
with respect to its distribution and enveloping algebras. In
general,
 little is known about the representation theory of quantum groups
(resp., algebraic groups) when $l$ (resp., $p$) is smaller than the
Coxeter number $h$ of the underlying root system. For example,
Lusztig's conjecture concerning the characters of the 
rational irreducible $G$-modules stipulates that 
$p \geq h$. The main result in this paper provides a
surprisingly uniform answer for the cohomology algebra
$\opH^\bullet(u_\zeta,{\mathbb C})$ of the small quantum group.
When $\ell>h$, this cohomology algebra has been calculated by
Ginzburg and Kumar \cite{GK}. Our result requires powerful tools
from complex geometry and a detailed knowledge of the geometry of
the nullcone of $\mathfrak g$. In this way, the methods point out
difficulties present in obtaining similar results for the restricted
enveloping algebra $u$ in small characteristics, though they do
provide some clarification of known results there also. Finally, we
establish that if $M$ is a finite dimensional $u_\zeta$-module, then
$\opH^\bullet(u_\zeta,M)$ is a finitely generated
$\opH^\bullet(u_\zeta,\mathbb C)$-module, and we obtain new results
on the theory of support varieties for $u_\zeta$.
\end{abstract}

\maketitle


\setcounter{page}{4}

\tableofcontents




\chapter*{Introduction }
\label{introsec}


Let ${\mathfrak g}_F$ be a finite dimensional, restricted Lie
algebra (as defined by Jacobson) over an algebraically closed field
$F$ of positive characteristic $p$, with restriction map $x\mapsto
x^{[p]}$, $x\in {\mathfrak g}_F$. The restricted enveloping algebra
$u:=u({\mathfrak g}_F)$ of ${\mathfrak g}_F$ is a finite dimensional
cocommutative Hopf algebra. In general, the cohomology algebra $\opH^\bullet(u,F)$ is difficult to compute. However,
Suslin-Friedlander-Bendel \cite{SFB1,SFB2} proved that, putting $A:=\opH^{2\bullet}(u,F)$ (the commutative subalgebra
of the cohomology algebra concentrated in even degrees),  the
(algebraic) scheme $\text{Spec}\,A$ is homeomorphic to the closed
subvariety ${\mathcal N}_1({\mathfrak g}_F):=\{x\in {\mathfrak g}_F:\ x^{[p]}=0\}$.
We call ${\mathcal N}_1({\mathfrak g}_F)$ the restricted nullcone of ${\mathfrak g}_F$;
it is a closed subvariety of the full nullcone ${\mathcal
N}({\mathfrak g}_F)$ which consists of all $[p]$-nilpotent elements
in ${\mathfrak g}_F$.

When ${\mathfrak g}_F$ is the Lie algebra of a reductive algebraic
group $G$ over $F$, the above results can be considerably sharpened.
For example, if $p>h$ (the Coxeter number of $G$), then
$\opH^{2\bullet}(u,F)\cong F[{\mathcal N}_1({\mathfrak g}_F)]$, the
coordinate algebra of ${\mathcal N}_1({\mathfrak g}_F)$ (cf.
Friedlander-Parshall \cite{FP2} and Andersen-Jantzen \cite{AJ}).
In addition, the condition $p\geq h$ implies that ${\mathcal
N}_1({\mathfrak g}_F)= {\mathcal N}({\mathfrak g}_F)$. However, when
$p\leq h$, there is no known calculation of $\opH^\bullet(u,F)$
(apart from some small rank cases). For all primes, ${\mathcal
N}_{1}({\mathfrak g}_F)$ is an irreducible variety \cite[(6.3.1)
Cor.]{NPV}, \cite[Thm. 4.2]{UGA2} and is the closure of a $G$-orbit.
These orbits have been determined in \cite{CLNP}, \cite[Thm.
4.2]{UGA2}.

Now let ${\mathfrak g}={\mathfrak g}_{\mathbb C}$ be a complex
simple Lie algebra, and let $U_\zeta=U_\zeta({\mathfrak g})$ be the
quantum enveloping algebra (Lusztig form) associated to $\mathfrak
g$ at a primitive $l$th root of unity $\zeta\in\mathbb C$. We regard
$U_\zeta$ as an algebra over ${\mathbb C}$ obtained by base change
from the quantum enveloping algebra over the cyclotomic field
${\mathbb Q}(\zeta)$. Here $l>1$ is an odd integer, not divisible
by 3 when $\mathfrak g$ has type $G_2$. The representation theory of
$U_\zeta$, when $l=p$, models or approximates the representation
theory of $G$. For example, part of the ``Lusztig program" to
determine the characters of the irreducible $G$-modules amounts to
showing, when $l=p\geq h$, that the characters of the irreducible
$G$-modules having high weights in the so-called Jantzen region
coincide with the characters of the analogous irreducible modules
for $U_\zeta$.  This result has been proved
by Andersen-Jantzen-Soergel \cite{AJS} for all large $p$ 
(no lower bound known except for small rank). Recently, Fiebig
\cite{Fie}, improving upon the methods of \cite{AJS}, 
has provided a specific (large) bound on $p$ for each root system 
sufficient for the validity of the Lusztig character formula.
The original conjecture for $p\geq h$ remains open.

For all $l$, Lusztig \cite{L1} has also introduced an analog,
denoted $u_\zeta:=u_\zeta({\mathfrak g})$, of the restricted
enveloping algebra $u$. Like $u$, $u_\zeta$ is a finite dimensional
Hopf algebra, though it is in general not cocommutative. It plays a
role in the representation theory of $U_\zeta$ much like that played
by $u$ in the representation theory of $G$. For
$l>h$, Ginzburg-Kumar \cite{GK} have calculated the cohomology
algebra $\opH^\bullet(u_\zeta,{\mathbb C})$. By an exact analogy with
the cohomology algebra of of $u$, they prove that
$\opH^{2\bullet}(u_\zeta,{\mathbb C}) \cong {\mathbb C}[{\mathcal
N}({\mathfrak g})]$, the coordinate algebra of the nullcone
${\mathcal N}({\mathfrak g})$ consisting of nilpotent elements in
$\mathfrak g$. Recently, Arkhipov-Bezrukavnikov-Ginzburg \cite[\S
1.4]{ABG}, taking \cite{GK} as a method to pass from the
representation theory of quantum groups to the geometry of the
nullcone, provide a proof of Lusztig's character formula for
$U_\zeta$ when $l>h$. These important connections have made the
small quantum group $u_{\zeta}$ an object of significant interest
(cf. \cite{ABBGM}, \cite{AG}, \cite{Be}, \cite{Lac1,Lac2}).

This paper presents new results on the cohomology of $u_\zeta$. In
particular, we compute the cohomology algebra $\opH^\bullet(u_\zeta,
{\mathbb C})$ in most of the remaining cases when $l\leq h$. Our results are
explicitly described in Section 1.2 below. We prove in Chapters 2--5
that $\opH^{2\bullet+1}(u_\zeta,{\mathbb C})=0$, while
$\opH^{2\bullet}(u_\zeta,{\mathbb C})$ is isomorphic to, in most
cases, the coordinate algebra of an explicitly described closed
subvariety ${\mathcal N}(\Phi_{0})$ of ${\mathcal N}({\mathfrak g})$
(constructed in a similar way to the variety ${\mathcal
N}_1({\mathfrak g}_F)$ discussed above). In Chapter 2, we rigorously
develop and present new results on the cohomology theory of
parabolic subalgebras for quantum groups. The application of
powerful tools from complex geometry represents at least one
advantage that the quantum enveloping algebra situation (in characteristic 
zero) has over
that in positive characteristic. In Chapters 3 and 5, we demonstrate
how the Grauert-Riemenschneider theorem and the normality 
of certain orbit closures in ${\mathcal N}({\mathfrak g})$,
namely, the varieties ${\mathcal N}(\Phi_0)$, play a vital
role in carrying out our cohomology calculations. Some of the major 
input in our calculations occurs in Chapter 4, where we analyze 
the combinatorics involving the multiplicity of the 
Steinberg module in certain cohomology modules related to unipotent 
radicals of parabolic subalgebras. 

In Chapter 6, we prove that $R:=\opH^\bullet(u_\zeta,{\mathbb C})$ is a
finitely generated ${\mathbb C}$-algebra. Also, we show that 
if $M$ is a finite dimensional
$u_{\zeta}$-module, then $\opH^\bullet(u_\zeta,M)$ is finitely generated
over $R$.  Chapter 7 adapts some of our methods to
the cohomology algebra $\opH^\bullet(u,F)$ in positive
characteristic, making some ad hoc computations in \cite{AJ} more
transparent. In particular, we can identify key vanishing results,
known over ${\mathbb C}$, and as yet unproved in positive
characteristic, which would be sufficient to extend the cohomology 
calculations to positive characteristic $p$.

Finally, by building on results in Chapter 6, we define support
varieties in the quantum setting in Chapter 8 and exhibit some new
calculations on support varieties. The theory of support varieties
for the restricted Lie algebra ${\mathfrak g}_F$ attached to a
reductive group $G$ in positive characteristic provides evidence for
beautiful connections between the representation theory of $G$ and
the structure and geometry of the restricted nullcone ${\mathcal
N}_1({\mathfrak g}_F)$ (\cite{NPV}, \cite{CLNP}, \cite{UGA1},
\cite{UGA2}). The results of this paper strongly reinforce this
expectation (as do those in \cite{ABG} mentioned above).

A number of applications and further developments which heavily depend on the 
foundational results of this paper have arisen since this manuscript first appeared as a preprint.
Drupieski \cite{Dr1,Dr2} has used the methods of this paper to investigate the cohomology and 
representation theory for Frobenius-Lusztig kernels of quantum groups. Moreover, Feldvoss and 
Witherspoon have shown how to apply our calculations to determine the representation type of the 
small quantum group \cite{FeW}. 

\vskip.3in
\begin{center} {\bf Acknowledgements}\end{center}

\medskip

The authors acknowledge Leonard Chastkofsky, Christopher Drupieski, William Graham, 
Shrawan Kumar and Eric Sommers for several useful discussions at various stages of
this project. In particular, Drupieski carefully went though the entire manuscript
helping us to correct and improve many results. We also thank Jesper Funch Thomsen for indicating how
recent work of Christophersen can be used to compute
$G_{1}$-cohomology for the group $E_{6}$, and two undergraduate
students Alloys J. Samz and Sara Wood at the University of
Wisconsin-Stout for their assistance with the MAGMA computations
used in this paper. Finally, the first, second, and last authors
acknowledge the hospitality and support of the Department of
Mathematics at the University of Virginia in Spring 2006 and Spring
2007 during work on this project.

\mainmatter

\chapter{Preliminaries and Statement of Results}

 \renewcommand{\thesection}{\thechapter.\arabic{section}}

After introducing some notation, this chapter discusses the main results of this work. These results
are all concerned with the cohomology algebra $\opH^\bullet(u_\zeta({\mathfrak g}),{\mathbb C})$ of the small quantum group $u_\zeta({\mathfrak g})$ at a root $\zeta$ of
unity associated to a complex, simple Lie algebra $\mathfrak g$. Applications are presented to the calculation of support varieties over $u_{\zeta}$ for certain 
classes of modules for the quantum enveloping algebra $U_\zeta$.

\section{Some preliminary notation}\label{notation}
\renewcommand{\thetheorem}{\thesection.\arabic{theorem}}
\renewcommand{\theequation}{\thesection.\arabic{equation}}
\setcounter{equation}{0}
\setcounter{theorem}{0}

Let $\Phi$ be a finite and irreducible root system (in the classical sense). Fix a set
$\Pi=\{\alpha_1,\cdots,\alpha_n\}$ of simple roots (always labeled in the
standard way \cite[Appendix]{Bo}), and let $\Phi^+$ (respectively,
$\Phi^-$) be the corresponding set of positive (respectively, negative)
roots. The set $\Phi$ spans a real
vector space $\mathbb E$ with positive definite inner product
$\langle u,v\rangle$, $u,v\in \mathbb E$, adjusted so that
$\langle\alpha,\alpha\rangle=2$ if $\alpha\in\Phi$ is a short root. If $\Phi$ has
only one root length, all roots in $\Phi$ are both ``short" and ``long". 
Thus,  if $\Phi$ has two root lengths and if $\alpha\in\Phi$ is a long root, then
$\langle\alpha,\alpha\rangle=4$ (respectively, $6$) when $\Phi$ has type
$B_n,C_n,F_4$ (respectively, $G_2$). For $\alpha\in\Phi$, put
$d_\alpha:=\frac{\langle\alpha,\alpha\rangle}{2}\in\{1,2,3\}$.

Write $Q=Q(\Phi):={\mathbb Z}\Phi$ (the root lattice) and
$Q^+=Q^+(\Phi):={\mathbb N}\Phi^+$ (the positive root cone). If $J\subseteq\Pi$, let
$\Phi_J=\Phi\cap{\mathbb Z}J$. If $J=\varnothing$, put $\Phi_J=\varnothing$. If $J\not=\varnothing$, $\Phi_J$ is a closed subroot system
of $\Phi$. By definition, a nonempty subset $\Phi'\subseteq\Phi$ is a closed subroot system provided that, given $\alpha\in\Phi'$,
it holds that $-\alpha\in\Phi'$, and, given $\alpha,\beta\in\Phi'$ such that $\alpha+\beta\in\Phi$, it
holds that $\alpha+\beta\in\Phi'$.  As discussed below, for many of the closed subsystems $\Phi'$ important
in this work, it is possible to find
$w\in W$ and $J\subseteq \Pi$ so that $w(\Phi')=\Phi_J$; in other words, a simple set of roots for $\Phi'$ can be extended
to a simple set of roots for $\Phi$. If $\alpha=\sum_{i=1}^n m_i\alpha_i\in\Phi$, its
height is defined to be $\text{ht}\,\alpha:=m_1+\cdots+ m_n$.

For $\alpha\in\Phi$, write
$\alpha^\vee=\frac{2}{\langle\alpha,\alpha\rangle}\alpha$ for the corresponding coroot. Therefore,
$\Phi^\vee :=\{\alpha^\vee : \alpha\in\Phi\}$ is the dual root system in $\mathbb E$
defined by $\Phi$. We will always denote the short root of maximal height in $\Phi$ by $\alpha_0$; thus, $\alpha_0^\vee$ is the
unique long root of maximal length in $\Phi^\vee$.  For $1\leq i,j\leq n$, put
$c_{i,j}=\langle\alpha_j,\alpha_i^\vee\rangle\in\mathbb Z$. The
Cartan matrix $C:=[c_{i,j}]$ of $\Phi$ is symmetrizable in the sense that $DC$ is a
symmetric matrix, letting
$D$ be the diagonal matrix $\text{diag}[d_{\alpha_1},\cdots,d_{\alpha_n}]$.

Let $X_+\subset
\mathbb E$ be the positive cone of dominant weights, i.e., $X_+$ consists of all
$\varpi\in \mathbb E$ satisfying
$\langle\varpi,\alpha_i^\vee\rangle\in{\mathbb N}$, $i=1,\cdots, n$.
Define the fundamental dominant weights $\varpi_1,\cdots,\varpi_n\in X_+$
by the condition that $\langle\varpi_i,\alpha_j^\vee\rangle=\delta_{i,j}$, for $1\leq i,j\leq n$. Therefore,
$X_+={\mathbb N}\varpi_1\oplus\cdots\oplus {\mathbb N}\varpi_n$. For convenience, we will occasionally
let $\varpi_0$ denote the $0$-weight.
Put
$X={\mathbb Z}\varpi_1\oplus \cdots\oplus{\mathbb Z}\varpi_n$ (the
weight lattice). For  $\varpi\in X$, $\alpha\in\Phi$, $\langle\varpi,\alpha\rangle=d_\alpha\langle\varpi,\alpha^\vee\rangle\in\mathbb Z$.
 The weight lattice $X$ is partially ordered by putting $\lambda\geq
\mu$ if and only if $\lambda-\mu\in Q^+$.

The Weyl group $W$ of $\Phi$ is the finite
group of orthogonal transformations of $\mathbb E$ generated by
the reflections $s_\alpha:{\mathbb E}\to{\mathbb E}$, $\alpha\in\Phi$, defined by $s_\alpha(u)=
u-\langle u,\alpha^\vee\rangle \alpha$, $u\in\mathbb E$. If
$S:=\{s_{\alpha_1},\cdots, s_{\alpha_n}\}$, then $(W,S)$ is a
Coxeter system. Let $\ell:W\to{\mathbb N}$ be the length function on
$W$; thus, if $w\in W$, $\ell(w)$ is the smallest integer $m$ such that $w=s_{\beta_1}\cdots s_{\beta_m}$ for
 $\beta_i\in\Pi$.

Let $l$ be a fixed positive integer. For  $\alpha\in\Phi,m\in{\mathbb Z}$, let $s_{\alpha,m}:{\mathbb
E}\to{\mathbb E}$ be be the affine transformation defined by
$$s_{\alpha,m}(u)=u-(\langle
u,\alpha^\vee\rangle-ml)\alpha,\quad\forall u\in\mathbb E.$$
 If $m=0$, then $s_{\alpha,m}=s_{\alpha,0}=s_\alpha\in W$.
The affine Weyl group $W_l$
is the subgroup of the group  $ {\text{Aff}}({\mathbb E})$ of affine transformations of $\mathbb E$ generated by the reflections
$s_{\alpha,m}$, $\alpha\in\Phi, m\in\mathbb Z$.
Let $S_l :=\{s_{\alpha_1},\cdots,
s_{\alpha_n},s_{\alpha_{0},-1}\}$. Then $(W_l,S_l)$ is a Coxeter system. If $lQ$ is identified as the subgroup of
$\text{Aff}({\mathbb E})$ consisting of translations by $l$-multiples of elements in $Q$, then $W_l\cong W\ltimes lQ$. The extended affine Weyl group
$\widetilde W_l$ is obtained by putting $\widetilde W_l=W\ltimes
lX$. Although $\widetilde W_l$ need not be a Coxeter group, it contains $W_l$ as a
normal subgroup satisfying $\widetilde W_l/W_l\cong X/Q$. The
quotient map $\widetilde W_l\to W\cong\widetilde W_l/lX$ is denoted by
$w\mapsto \overline w$.

We will generally use the  ``dot" action of $W_l$ on $\mathbb E$: for $w\in W_l$, $u\in\mathbb E$,   put $w\cdot
u:=w(u+\rho)-\rho$,  where
$\rho:=\varpi_1+\cdots+\varpi_n=\frac{1}{2}\sum_{\alpha\in\Phi^+}\alpha\in X_+$
is the Weyl weight.

The Coxeter number of $\Phi$ is defined  to be $h=\langle
\rho,\alpha_{0}^{\vee} \rangle +1={\text{ht}}(\alpha_0^\vee) +1$. Thus, $h-1$ is the height
of the maximal root in $\Phi^\vee,$ or, equivalently, in $\Phi$. The integer $h$ plays an important role
in this paper, and often serves as a kind of ``event horizon" in representation theory.

Suppose that $A$ and $B$ are augmented algebras (over some common
field). In case $A$ is a subalgebra of $B$, it is called
{\it normal} in $B$ provided that $BA_+ = A_+B$, where $A_+$ denotes the augmentation
ideal of $A$. In this situation, we form the augmented algebra $B//A := B/I$, where $I :=
BA_+$, a two-sided ideal in $B$. It will sometimes be convenient to write $A\unlhd B$ to indicate that $A$
 is a normal subalgebra of $B$. If $A$ is a normal subalgebra in $B$, there is, of course, a spectral
 sequence which relates the cohomology of $B$ with that of $A$ and $A//B$. It will play an important role later
 in this paper. See Lemma~\ref{ABG} for more details.

\section{Main results}\label{mainresults}
\renewcommand{\thetheorem}{\thesection.\arabic{theorem}}
\renewcommand{\theequation}{\thesection.\arabic{equation}}
\setcounter{equation}{0}
\setcounter{theorem}{0}

 Let $G=G_{\mathbb C}$ be the connected, simple, simply connected
algebraic group over ${\mathbb C}$ with Lie algebra ${\mathfrak g}={\mathfrak g}_{\mathbb C}$
and root system $\Phi$ with respect to a fixed maximal torus $T\subset G$. Let ${\mathfrak t}={\text{Lie}}\,T$ be the corresponding maximal toral
subalgebra of $\mathfrak g$. Given $\alpha\in\Phi$, let ${\mathfrak
g}_\alpha\subset \mathfrak g$ be the $\alpha$-root space. Put ${\mathfrak
b}^+=\mathfrak t\oplus\bigoplus_{\alpha\in\Phi^+}{\mathfrak
g}_\alpha$ (the positive Borel subalgebra of $\mathfrak g$), and ${\mathfrak b}=
\mathfrak t\oplus\bigoplus_{\alpha\in\Phi^-}{\mathfrak g}_\alpha$
(the opposite Borel subalgebra).

For $J\subseteq \Pi$, let ${\mathfrak l}_J={\mathfrak
t}\oplus\bigoplus_{\alpha\in\Phi_J}{\mathfrak g}_\alpha$ be the Levi
subalgebra containing $\mathfrak t$ and having root system $\Phi_J$.
Then ${\mathfrak p}_J={\mathfrak l}_J\oplus{\mathfrak u}_J\supseteq
{\mathfrak b}$ is a parabolic subalgebra of $\mathfrak g$, where
${\mathfrak
u}_J=\bigoplus_{\alpha\in\Phi^-\backslash\Phi_J^-}{\mathfrak
g}_\alpha$ is the nilpotent radical of ${\mathfrak p}_J$. The parabolic subgroup
of $G$ with Lie algebra ${\mathfrak p}_J$ is denoted $P_J$. The Levi factor of $P_J$ having Lie algebra
${\mathfrak l}_J$ is denoted $L_J$. In particular, $B:=P_J$ where $J=\varnothing$ is the standard
Borel subgroup of $G$ containing $T$ (as its Levi factor) and corresponding to the negative roots.

The group $G$ acts on its
Lie algebra $\mathfrak g$ via the adjoint action. Under this action, $G$ stabilizes the subset ${\mathcal N}={\mathcal N}({\mathfrak g})$ of nilpotent elements in
${\mathfrak g}$---recall that $x\in{\mathfrak g}$ is nilpotent provided that $d\phi(x)$ acts as a nilpotent operator
 for any finite dimensional faithful rational representation $\phi:G\to \text{GL}(V)$. We will refer to $\mathcal N$ as the nullcone of ${\mathfrak g}$.  For $J\subseteq\Pi$, define
$${\mathcal N}(\Phi_{J}):=G\cdot {\mathfrak u}_{J}\subseteq{\mathcal N}\subset\mathfrak g.$$
Because the quotient variety $G/P_J$ is complete, an elementary geometric argument \cite[p. 68]{St} establishes that ${\mathcal N}(\Phi_J)$ is a closed subvariety of $\mathfrak g$, which is
clearly irreducible (since $G$ is irreducible). Also, ${\mathcal N}(\Phi_J)$ is $G$-stable under the restriction of the adjoint
action of $G$; it has codimension in $\mathfrak g$ equal to $\dim\,{\mathfrak l}_J$. In
particular, ${\mathcal N}(\Phi_\varnothing)=\mathcal N$ is a closed, irreducible (and normal) subvariety of $\mathfrak g$ of codimension $n={\text{rank}}({\mathfrak g})$.

The varieties
${\mathcal N}(\Phi_J)$ play a central role in this paper (for particular subsets $J\subseteq \Pi$ which depend on a choice of an integer $l$ introduced below).   They can also be
described in another way. It is known that the parabolic subgroup $P_J$ with Lie algebra 
${\mathfrak p}_J$ has an open, dense
orbit in ${\mathfrak u}_J$. Choose any element $x$ belonging to this orbit, and let ${\mathcal O}={\mathcal C}_J$ 
be the $G$-orbit of $x$. Then ${\mathcal N}(\Phi_J)$ is the Zariski closure
$\bar{\mathcal O}$ in ${\mathfrak g}$ of $\mathcal O$.  The orbits $\mathcal O$ in $\mathcal N$ which arise
as some ${\mathcal C}_J$, $J\subseteq\Pi$, are called Richardson orbits. If $J=\varnothing$, then 
${\mathcal N}(\Phi_J)={\mathcal N}$ is  the closure of the unique regular orbit in $\mathcal N$. 
If the Levi factors $L_J$ and $L_K$ are $G$-conjugate (i.e., if
the subsets $J,K$ of $\Pi$ are $W$-conjugate), then \cite[Thm. 2.8]{JR} establishes that 
${\mathcal C}_J={\mathcal C}_K$. Thus,
\begin{equation}\label{JRresult}
\text{\rm if}\,\,J,K\subseteq \Pi\,\,{\text{\rm are}}\,\,W{\text{\rm -conjugate, then}}\,\, {\mathcal N}(\Phi_J)={\mathcal N}(\Phi_K).\end{equation}

Let $l>1$ be a fixed positive integer. Let $\zeta\in\mathbb C$ be a primitive $l$th root of unity, and let
$U_\zeta:=U_{\zeta}({\mathfrak g})$ be the (Lusztig) quantum enveloping algebra attached to the complex
simple Lie algebra $\mathfrak g$ with root system $\Phi$. This algebra is obtained from a certain integral
form of the generic quantum group over ${\mathbb Q}(q)$ by specializing $q$ to $\zeta$.   
In addition, $U_\zeta$ has a Hopf algebra structure, which will
be more fully discussed in Section 2.2 below. Furthermore, $U_\zeta$ has a subalgebra $u_\zeta=
u_\zeta({\mathfrak g})$ which is often called the ``small" quantum group. It has finite
dimension equal to $l^{\dim\mathfrak g}$. As an augmented subalgebra of $U_\zeta$, the small 
quantum group $u_{\zeta}$ is normal. 
In fact, $U_\zeta//u_\zeta\cong {\mathbb U}({\mathfrak g})$, where ${\mathbb U}({\mathfrak g})$ is the
universal enveloping algebra of $\mathfrak g$.

It will usually be necessary to impose some mild restrictions on the integer $l$. These restrictions
are indicated in the two assumptions below. Throughout we will be careful to indicate  when the restrictions
are in force.

\begin{assump}\label{assumption} The integer $l$ is odd and greater than $1$.
If the root system
$\Phi$ has type $G_2$, then $3$ does not divide $l$.
In addition,  $l$ is not a bad prime for $\Phi$. (See
Section 3.1 for the definition of a bad prime.)
\end{assump}

Let
\begin{equation}\label{definePhi0}
\Phi_{0}=\Phi_{0,l}:=\{\alpha\in\Phi\,|\,\langle\rho,\alpha^\vee\rangle\equiv
0\,\text{\rm mod}\,l\}.
\end{equation}
If $l$ satisfies Assumption \ref{assumption}, then $\Phi_0$ is either the empty set $\varnothing$ or a closed subroot system of $\Phi$. When $\Phi_0$ is
 not empty,  there exists a non-empty subset $J\subseteq \Pi$ and an element
$w\in W$ such that
$$
w(\Phi_0)=\Phi_J, \text{ and, moreover, such that } w(\Phi_0^+) = \Phi_J^+.
$$
See Theorem \ref{identificationtheorem}. 
 If $w'\in W$
and $J'\subseteq \Pi$ also satisfy $w'(\Phi_0)=\Phi_{J'}$, then (\ref{JRresult}) guarantees
that ${\mathcal
N}(\Phi_0)={\mathcal N}(\Phi_{J'})$. Therefore, we can write ${\mathcal N}(\Phi_{0}):={\mathcal N}(\Phi_{J})$ as a well-defined closed subvariety
of the nullcone $\mathcal N$.

If $l\geq h$, then $\Phi_0=\varnothing$. If $\ell< h$, an explicit subset $J\subseteq\Pi$ such that $\Phi_0$ is a $W$-conjugate of $\Phi_J$ is
given in Chapter 3 for each of the classical types $A,B,C,D$. Similar information for the exceptional types 
$E_6,E_7,E_8, F_4,G_2$ (together with an explicit $w\in W$ such that $w(\Phi_0)=\Phi_J$)
is collected in Appendix A.1.

We will often require the following  additional
restriction on $l$.

\begin{assump}\label{assumption2} Let $l$ be a fixed positive integer
satisfying the Assumption~\ref{assumption}. Moreover, assume that when
the root system $\Phi$ is of type $B_{n}$ or type $C_{n}$, then $l>3$.
\end{assump}

 The theorem below presents a computation of the cohomology algebra
$\opH^\bullet(u_\zeta,{\mathbb C})=\Ext^\bullet_{u_\zeta}({\mathbb C},{\mathbb C})$ of the small quantum group $u_\zeta$.
It is a well-known result (see \cite[p. 232]{Mac} or \cite[Cor. 5.4]{GK}) that, given any Hopf algebra $H$ over a field $k$, the
cohomology algebra $\opH^\bullet(H,k)$ is a graded-commutative algebra (so that, in particular, the subalgebra concentrated in even
degrees is a commutative algebra over $k$). Additionally, because the algebras $u_\zeta$ are normal subalgebras of  $U_\zeta$, the
cohomology spaces $\opH^i(u_\zeta, {\mathbb C})$, for any non-negative integer $i$, acquire a natural action of $U_\zeta//u_\zeta\cong {\mathbb U}({\mathfrak g})$, the
universal enveloping algebra of ${\mathfrak g}$; see \cite[Lem. 5.2.1]{GK}.  Since $u_\zeta$ is finite dimensional, each space $\opH^i(u_\zeta,{\mathbb C})$ is therefore
a finite dimensional ${\mathfrak g}$-module, and thus it is a finite dimensional rational $G$-module. In the following result, we
identify $\opH^\bullet(u_{\zeta},{\mathbb C})$ as a rational $G$-module. In fact, in most cases, we can identify it as a rational $G$-algebra.

This theorem extends the main result due to Ginzburg and Kumar \cite[Main Thm.]{GK},  which calculated the cohomology algebra $\opH^\bullet(u_\zeta,{\mathbb C})$
in the special case in which $l>h$. Specifically, their work established that $\opH^\bullet(u_\zeta,{\mathbb C})=\opH^{2\bullet}(u_\zeta,{\mathbb C})$ is concentrated in even degrees, and it is isomorphic to the
coordinate algebra ${\mathbb C}[{\mathcal N}]$ of the nullcone $\mathcal N$ of $\mathfrak g$.

\begin{theorem}\label{MainThm} Let $l$ be as in Assumption \ref{assumption} and 
let $u_\zeta=u_\zeta({\mathfrak g})$ be the small 
quantum group associated to the complex simple Lie algebra 
$\mathfrak g$ with root system $\Phi$ at a primitive $l$th root of unity $\zeta$. Choose
$w \in W$ and $J \subseteq \Pi$ such that $w(\Phi_0^+) = \Phi_J^+$. Assume that $J$ is as listed
in Chapter 3 (for the classical cases) and in Appendix A.1 (for the exceptional cases). Then
the following identifications hold as $G$-modules under Assumption \ref{assumption}. 
Furthermore, in part(b)(i), the identification is as rational $G$-algebras under the additional Assumption \ref{assumption2}.
\begin{itemize}
\item[(a)] The odd degree cohomology vanishes (i.e., $\opH^{2\bullet+1}(u_{\zeta},{\mathbb C})=0$). 
\item[(b)] The even degree cohomology algebra $\opH^{2\bullet}(u_\zeta,
{\mathbb C})$ is described below.
\begin{itemize}
\item[(i)] Suppose that $l\nmid n+1$ when $\Phi$ is of type $A_{n}$ and $l \neq 9$ when $\Phi$ is of type $E_6$. Then
$$\opH^{2\bullet}(u_{\zeta},{\mathbb C})
\cong \operatorname{ind}_{P_{J}}^{G} S^{\bullet}({\mathfrak
u}_{J}^{*})\cong {\mathbb C}[G\times^{P_{J}}{\mathfrak u}_{J}]$$ 
where $G\times^{P_{J}}{\mathfrak u}_{J}$ is defined in Section 3.7. 
If we assume further that $l\neq 7,9$ when $\Phi$ is of type $E_{8}$,
then
$$\opH^{2\bullet}(u_{\zeta},{\mathbb C})\cong {\mathbb C}[{\mathcal N}(\Phi_{0})].$$
\item[(ii)] If $\Phi$ is of type $A_{n}$ and $l\mid n+1$ with $n+1=l(m+1)$, then
$$\opH^{2\bullet}(u_{\zeta},{\mathbb C})
\cong \operatorname{ind}_{P_{J}}^{G} \left( \bigoplus_{t=0}^{l-1}
S^{\frac{2\bullet-(m+1)t(l-t)}{2}} ({\mathfrak u}_{J}^{*})\otimes
\varpi_{t(m+1)}\right).$$
(Recall the convention that $\varpi_{0}=0.$)
\item[(iii)] If $\Phi$ is of type $E_{6}$ and $l = 9$, one can take $J = \{\al_4\}$. Then
$$\opH^{2\bullet}(u_{\zeta},{\mathbb C})
\cong \operatorname{ind}_{P_{J}}^{G} \left( S^{\bullet}({\mathfrak
u}_{J}^{*}) \oplus (S^{\frac{2\bullet - 12}{2}}({\mathfrak
u}_{J}^{*})\otimes \varpi_1) \oplus (S^{\frac{2\bullet -
12}{2}}({\mathfrak u}_{J}^{*})\otimes \varpi_6)\right).$$
\end{itemize}
\end{itemize}
\end{theorem}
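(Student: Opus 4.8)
The plan is to compute $\opH^\bullet(u_\zeta,\mathbb C)$ via the theory developed in Chapter~2 for parabolic subalgebras of quantum groups, using the normal subalgebra structure and an induction/Grauert-Riemenschneider argument. First I would reduce to a parabolic. Let $u_\zeta(\mathfrak p_J)\subseteq u_\zeta$ be the small quantum parabolic subalgebra attached to the subset $J$ chosen so that $w(\Phi_0^+)=\Phi_J^+$, and let $u_\zeta(\mathfrak u_J)$ be the corresponding ``nilpotent radical'' subalgebra. Using the spectral sequence of Lemma~\ref{ABG} for the normal pair $u_\zeta(\mathfrak u_J)\unlhd u_\zeta(\mathfrak p_J)$ (whose quotient is essentially $u_\zeta(\mathfrak l_J)$), and then passing from $u_\zeta(\mathfrak p_J)$ up to $u_\zeta$, one expects $\opH^\bullet(u_\zeta,\mathbb C)$ to be governed by the $u_\zeta(\mathfrak l_J)$-invariants (equivalently, the Levi-part contribution) of $\opH^\bullet(u_\zeta(\mathfrak u_J),\mathbb C)$. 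The key input here is that $\Phi_0$ records exactly the reflection hyperplanes on which $\rho$ sits mod $l$, so $u_\zeta(\mathfrak l_J)$ is the ``linkage-trivial'' Levi part; off $\Phi_J$ the relevant cohomology vanishes by the quantum analogue of Kostant-type/Andersen-Jantzen vanishing results established in Chapter~2, collapsing the spectral sequence.

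Next I would identify $\opH^\bullet(u_\zeta(\mathfrak u_J),\mathbb C)$. Since $\mathfrak u_J$ is abelian-ish only in special cases, one instead uses the standard fact that $\opH^\bullet(u_\zeta(\mathfrak u_J),\mathbb C)\cong S^\bullet(\mathfrak u_J^*)$ as graded algebras when $l$ is large enough relative to the nilpotency of $\mathfrak u_J$ — this is where Assumption~\ref{assumption} and the extra hypotheses ($l\nmid n+1$ in type $A_n$, $l\neq 9$ in $E_6$, $l\neq 7,9$ in $E_8$, $l>3$ in $B_n,C_n$ for the algebra statement) enter: they guarantee no ``extra'' cohomology classes appear beyond the symmetric algebra and no divided-power anomalies occur. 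In the excluded cases one gets the twisted summands by $\varpi_t$'s appearing in (ii) and (iii), which come from a more careful analysis of the $u_\zeta(\mathfrak u_J)$-cohomology as an $L_J$-module when $l\mid n+1$; the combinatorics of which twists survive is precisely the multiplicity computation of Chapter~4 involving the Steinberg module. So (a) follows because $S^\bullet(\mathfrak u_J^*)$ is concentrated in even degrees and the spectral sequence degenerates; the odd vanishing propagates from the $\mathfrak u_J$ level up through the Levi and through induction.

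Then I would run the induction step: the $G$-action on $\opH^\bullet(u_\zeta,\mathbb C)$ coming from $U_\zeta//u_\zeta\cong\mathbb U(\mathfrak g)$ (see \cite[Lem.~5.2.1]{GK}) makes the whole computation equivariant, and the passage from $u_\zeta(\mathfrak p_J)$-cohomology to $u_\zeta$-cohomology is realized geometrically as induction $\operatorname{ind}_{P_J}^G$ — i.e. one shows $\opH^\bullet(u_\zeta,\mathbb C)\cong\operatorname{ind}_{P_J}^G S^\bullet(\mathfrak u_J^*)$, with higher derived induction vanishing because $S^\bullet(\mathfrak u_J^*)$ has a good filtration and $G/P_J$ is a nice flag variety (Kempf vanishing / the cohomology of line bundles on $G/P_J$). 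This identifies the ring with $\mathbb C[G\times^{P_J}\mathfrak u_J]$, the coordinate ring of the Grothendieck-Springer-type space over $\mathcal N(\Phi_0)$. The $W$-conjugacy invariance \eqref{JRresult} and Theorem~\ref{identificationtheorem} make the choice of $(w,J)$ immaterial for the variety $\mathcal N(\Phi_0)$.

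Finally, to upgrade $\mathbb C[G\times^{P_J}\mathfrak u_J]$ to $\mathbb C[\mathcal N(\Phi_0)]$ in part (b)(i), I would use that the collapsing map $G\times^{P_J}\mathfrak u_J\to\mathcal N(\Phi_0)=\overline{\mathcal O}$ is proper, birational onto its image (it is a resolution of singularities of the Richardson orbit closure $\overline{\mathcal C_J}$), and then invoke the Grauert-Riemenschneider vanishing theorem together with the \emph{normality} of $\mathcal N(\Phi_0)$ — precisely the orbit-closure normality statements catalogued in Chapters~3 and~5 — to conclude that the pushforward of the structure sheaf is $\mathcal O_{\mathcal N(\Phi_0)}$ and hence $\mathbb C[G\times^{P_J}\mathfrak u_J]\cong\mathbb C[\mathcal N(\Phi_0)]$. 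This is also where the extra arithmetic restrictions ($l\neq 7,9$ for $E_8$, etc.) are genuinely needed: for those excluded $l$, either the relevant orbit closure fails to be normal or the resolution is not birational in the expected way, so one only gets the induced-module description, not the coordinate-ring-of-$\mathcal N(\Phi_0)$ description. The main obstacle throughout is the first step — proving the quantum parabolic cohomology vanishing and the $S^\bullet(\mathfrak u_J^*)$ identification for $l<h$ without the Kostant/Andersen-Jantzen machinery available in the $l>h$ case — and controlling the exact combinatorial multiplicities (the Steinberg-module count of Chapter~4) that produce the exceptional summands in (ii) and (iii).
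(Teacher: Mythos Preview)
Your outline has the right overall architecture (parabolic reduction, identification with a symmetric algebra, induction to $G$, then normality/Grauert--Riemenschneider), but there is a genuine gap at the heart of it: the assertion that $\opH^\bullet(u_\zeta(\mathfrak u_J),\mathbb C)\cong S^\bullet(\mathfrak u_J^*)$ is \emph{false}, and the reduction is not via $u_\zeta(\mathfrak l_J)$-invariants. The cohomology $\opH^\bullet(u_\zeta(\mathfrak u_J),\mathbb C)$ is much larger than $S^\bullet(\mathfrak u_J^*)^{[1]}$; the latter is only a subalgebra over which the whole thing is finitely generated (Proposition~\ref{univsubalgebra}). What the paper actually shows (Theorem~\ref{thirdsectioncohocalthm}) is that
\[
\Hom_{u_\zeta(\mathfrak l_J)}\bigl(M,\ \opH^s(u_\zeta(\mathfrak u_J),\mathbb C)\bigr)\ \cong\ S^{\frac{s-\ell(w)}{2}}(\mathfrak u_J^*)^{[1]},
\]
where $M=(\ind_{U_\zeta(\mathfrak b)}^{U_\zeta(\mathfrak p_J)} w\cdot 0)^*$ is the \emph{Steinberg-type} module of Section~\ref{Steinbergweightssec}. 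The point is that $w\cdot 0$ is a $J$-Steinberg weight (Lemma~\ref{Steinbergwtslemma}), so $M$ is projective and injective over $u_\zeta(\mathfrak l_J)$; this is what makes the LHS spectral sequence of Proposition~\ref{secondcohoprop} collapse. Taking honest $u_\zeta(\mathfrak l_J)$-invariants would not work for $l<h$, since $\mathbb C$ is not projective over $u_\zeta(\mathfrak l_J)$, and the invariants would not isolate a single symmetric-algebra piece.

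Correspondingly, the passage from $u_\zeta(\mathfrak p_J)$ up to $u_\zeta$ is not a direct LHS/induction comparison: one must first twist by $\ind_{U_\zeta(\mathfrak b)}^{U_\zeta(\mathfrak p_J)} w\cdot 0$. Theorem~\ref{firstsectioncohocalthm} builds the spectral sequence
\[
R^i\ind_{P_J}^G\,\opH^j\bigl(u_\zeta(\mathfrak p_J),\ \ind_{U_\zeta(\mathfrak b)}^{U_\zeta(\mathfrak p_J)} w\cdot 0\bigr)\ \Rightarrow\ \opH^{i+j-\ell(w)}(u_\zeta(\mathfrak g),\mathbb C)
\]
by comparing two composite functors and using the quantum Borel--Weil--Bott identity $R^i\ind_{U_\zeta(\mathfrak b)}^{U_\zeta(\mathfrak g)} w\cdot 0=\delta_{i,\ell(w)}\mathbb C$. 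This is the source of the degree shift by $\ell(w)$, which your sketch omits. The role of the excluded values of $l$ is then not ``divided-power anomalies'' or nilpotency bounds but precisely the combinatorial multiplicity result Proposition~\ref{keypropositiononweights}/Theorem~\ref{multSteinberg}: the Steinberg module $M$ occurs in $\opH^\bullet(\mathcal U_\zeta(\mathfrak u_J),\mathbb C)$ exactly once in the generic case, and the extra occurrences at the excluded $l$ produce the additional summands in (b)(ii) and (b)(iii). After that, your description of the induction step (Broer/Sommers vanishing (\ref{5thcohoeqn1}) in place of Kempf) and the normality argument for the final identification with $\mathbb C[\mathcal N(\Phi_0)]$ is essentially correct.
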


We emphasize that the isomorphisms in (b(ii)) and (b(iii)) are only isomorphisms as rational $G$-modules. After considerable preparation in
Chapters 3 and 4, Theorem \ref{MainThm}
will be proved in Chapter 5 (see Sections 5.5 and 5.7).

 In most cases in Theorem \ref{MainThm}, the algebra $\opH^\bullet(u_\zeta, {\mathbb C})=\opH^{2\bullet}(u_\zeta,{\mathbb C})$ identifies with
 the coordinate algebra ${\mathbb C}[{\mathcal N}(\Phi_0)]$ of the affine variety ${\mathcal N}(\Phi_0)$, and it is, therefore, a finitely generated algebra over $\mathbb C$. However, in Chapter 6, we  show
 that, more generally, the algebra $\opH^{\bullet}(u_\zeta,{\mathbb C})$ is a finitely generated $\mathbb C$-algebra. These remaining
cases require individual arguments. More specifically, we have the following result.

\begin{theorem}\label{MainThm2} Let $l$ be as in Assumption \ref{assumption2}.
\begin{itemize}
\item[(a)] The algebra $\opH^{\bullet}(u_{\zeta},{\mathbb C})=\opH^{2\bullet}(u_\zeta,{\mathbb C})$
 is a finitely generated, commutative ${\mathbb C}$-algebra.
\item[(b)] For any finite dimensional $u_{\zeta}$-module $M$, $\opH^{\bullet}(u_{\zeta},M)$ is finitely generated
as a module for $\opH^{\bullet}(u_{\zeta}, {\mathbb C})$ (where the
action is described in \cite[Rem. 5.3, Appendix]{PW}).
\end{itemize}
\end{theorem}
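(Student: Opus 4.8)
The plan is to deduce both statements from the concrete identifications of $\opH^{\bullet}(u_\zeta,{\mathbb C})$ provided by Theorem~\ref{MainThm}, reducing everything to a uniform finiteness argument. First I would dispose of part (a) case by case according to the classification in Theorem~\ref{MainThm}(b). In the generic cases (b)(i) the ring $\opH^{2\bullet}(u_\zeta,{\mathbb C})\cong {\mathbb C}[G\times^{P_J}{\mathfrak u}_J]$ is the coordinate algebra of an affine variety (the image of the proper map $G\times^{P_J}{\mathfrak u}_J\to {\mathcal N}(\Phi_0)$, or the normalization thereof), hence a finitely generated ${\mathbb C}$-algebra by Noether's theorem; in the subcase where it equals ${\mathbb C}[{\mathcal N}(\Phi_0)]$ this is immediate. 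The exceptional cases are (b)(ii) for type $A_n$ with $l\mid n+1$ and (b)(iii) for type $E_6$ with $l=9$. In each of these, the cohomology is described as an induced module $\operatorname{ind}_{P_J}^G$ of a \emph{finite} direct sum of twists $S^{\bullet}({\mathfrak u}_J^*)\otimes\varpi$. Here the key point is that $\bigoplus_{t=0}^{l-1} S^{\bullet}({\mathfrak u}_J^*)\otimes\varpi_{t(m+1)}$ (resp.\ the three-term sum for $E_6$) is a finitely generated module over the polynomial ring $S^{\bullet}({\mathfrak u}_J^*)$ — indeed it is a finitely generated $P_J$-equivariant $S^{\bullet}({\mathfrak u}_J^*)$-module — and that $\operatorname{ind}_{P_J}^G$ carries finitely generated equivariant modules over ${\mathbb C}[G\times^{P_J}{\mathfrak u}_J]$ to finitely generated modules over $\operatorname{ind}_{P_J}^G S^{\bullet}({\mathfrak u}_J^*)={\mathbb C}[G\times^{P_J}{\mathfrak u}_J]$, which is finitely generated by the previous step. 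One must check the ring structure (rather than just the $G$-module structure) survives: the multiplicative identification in part (b)(i) is exactly the tool, and in (b)(ii)/(iii) the extra summands sit inside $\opH^{\bullet}(u_\zeta,{\mathbb C})$ as a module over the finitely generated subalgebra $\operatorname{ind}_{P_J}^G S^{\bullet}({\mathfrak u}_J^*)$, forcing finite generation of the whole.

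For part (b), let $R=\opH^{\bullet}(u_\zeta,{\mathbb C})$ and let $M$ be a finite-dimensional $u_\zeta$-module. The strategy is the standard one (going back to Friedlander--Suslin and Suslin--Friedlander--Bendel, and used by Ginzburg--Kumar in the $l>h$ case): one wants to exhibit $\opH^{\bullet}(u_\zeta,M)$ inside a ``Koszul-type'' or Grothendieck-spectral-sequence resolution whose input terms are manifestly finitely generated over $R$. Concretely, I would first reduce to the trivial module $M={\mathbb C}$ twisted appropriately, or better, use a dimension shift: choosing a finite filtration of $M$ (or an embedding $M\hookrightarrow u_\zeta^*\otimes M$ into an injective/free module followed by a syzygy argument), the long exact sequences in cohomology reduce finite generation of $\opH^{\bullet}(u_\zeta,M)$ for arbitrary $M$ to the case of a single well-understood module, once one knows $R$ is Noetherian — which follows from (a). Alternatively, one invokes the general principle that for a finite-dimensional Hopf algebra whose cohomology ring is finitely generated and for which $\opH^{\bullet}(u_\zeta,k)$-module structure on $\opH^{\bullet}(u_\zeta,M)$ comes from cup product, finite generation of the latter follows from a bar-resolution/comparison argument provided one has a ``finite generation witness'' for $k$; here $R$ itself is that witness via (a). The action referred to is the cup-product action of \cite[Rem.\ 5.3, Appendix]{PW}.

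The main obstacle I anticipate is \emph{not} part (b) — that is essentially formal once (a) is known and $R$ is Noetherian — but rather proving (a) in the exceptional cases (b)(ii) and (b)(iii), where $\opH^{\bullet}(u_\zeta,{\mathbb C})$ is only identified as a $G$-module, not a priori as an algebra. One must argue that the algebra structure, whatever it is, is compatible with the displayed $G$-module decomposition in a way that still yields finite generation: the natural route is to observe that $\opH^{2\bullet}(u_\zeta,{\mathbb C})$ always contains the image of $\opH^{2\bullet}(U_\zeta,{\mathbb C})$ (or of an appropriate parabolic/Borel cohomology ring) as a subalgebra over which everything is module-finite, and then to track degrees and weights using the explicit shifts $\frac{2\bullet-(m+1)t(l-t)}{2}$ (resp.\ $\frac{2\bullet-12}{2}$) to confirm that only finitely many generators are needed. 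This degree bookkeeping, combined with the normality results for the orbit closures ${\mathcal N}(\Phi_0)$ established in Chapters~3 and~5, is where the real work lies; the rest is standard homological algebra.
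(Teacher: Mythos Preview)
Your treatment of part (a) is essentially the paper's: a case split on Theorem~\ref{MainThm}(b), with the generic case immediate from the coordinate-ring identification and the exceptional cases handled by recognizing the displayed $G$-module as $\operatorname{ind}_{P_J}^G D$ for a $P_J$-equivariant $S^\bullet({\mathfrak u}_J^*)$-module $D$ that is finitely generated over $S^\bullet({\mathfrak u}_J^*)$. The paper formalizes the last step as Proposition~\ref{indfg}, and, just as you anticipate, the delicate point is that the algebra identification in (b)(ii)/(iii) is only as $G$-modules. The paper resolves this by showing (via the spectral sequence of Section~\ref{cohoring}) that $A=\operatorname{ind}_{P_J}^G S^\bullet({\mathfrak u}_J^*)$ sits inside $\opH^\bullet(u_\zeta,{\mathbb C})$ as a genuine subring of universal cycles, over which the whole cohomology is module-finite; this is exactly the mechanism you gesture at in your last paragraph.

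Where your proposal has a real gap is part (b). You assert that once $R$ is Noetherian the finite generation of $\opH^\bullet(u_\zeta,M)$ is ``essentially formal'' via filtration and dimension shift, or via a ``general principle'' for finite-dimensional Hopf algebras. Neither works. The reduction to simple $S$ (your filtration step) is fine and matches Proposition~\ref{finitegenprop}, but you cannot then reduce a simple $S$ to the trivial module by syzygies: dimension shifting through projectives relates $\opH^i(u_\zeta,S)$ to $\opH^{i+1}(u_\zeta,\Omega S)$, which is circular, and there is no ``general principle'' saying Noetherianity of $\opH^\bullet(H,k)$ alone forces $\opH^\bullet(H,M)$ to be finitely generated for arbitrary finite-dimensional $M$. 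The paper's argument for simple $S$ is substantive and specific: it runs the parabolic spectral sequence of Theorem~\ref{firstsectioncohocalthm} with coefficients $\operatorname{ind}_{U_\zeta({\mathfrak b})}^{U_\zeta({\mathfrak p}_J)} w\cdot 0 \otimes S$, reduces via LHS to controlling $\opH^\bullet(u_\zeta({\mathfrak u}_J),{\mathbb C})$, and then invokes Proposition~\ref{univsubalgebra}, which shows this ring is finite over an embedded copy of $S^\bullet({\mathfrak u}_J^*)^{[1]}$. That proposition, in turn, rests on the weight combinatorics of Chapter~\ref{combinSteinbergsec} and the May-type spectral sequence of Lemma~\ref{Mayspectralseq}. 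None of this is captured by your outline, and it is precisely where Assumption~\ref{assumption2} (as opposed to merely Assumption~\ref{assumption}) enters.
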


Theorem \ref{MainThm2} has also recently been
proven by Mastnak, Pevtsova, Schauenberg, and Witherspoon \cite[Corollary 6.5]{MPSW}
only assuming the conditions of Assumption \ref{assumption}. They work in a more
general context of pointed Hopf algebras.

The above theorem points to an important application to the theory of support varieties for $u_\zeta$. Namely, Theorem \ref{MainThm2} implies that, given a finite dimensional
$u_\zeta$-module $M$, the support variety ${\mathcal V}_{\mathfrak
g}(M)$ can be defined as the maximal ideal spectrum
$\text{Maxspec}(R/J_{M})$, letting $J_{M}$ be the annihilator in
$R:=\opH^{\bullet}(u_{\zeta}(\mathfrak g),{\mathbb C})$ for its
natural action on $\operatorname{Ext}^{\bullet}_{u_{\zeta}({\mathfrak
g})}(M,M)$ (cf. \cite[\S 5]{PW}).

For $\lambda\in X_+$, let $\nabla_\zeta(\lambda)=\opH^0_\zeta(\lambda)$ denote the associated induced $U_\zeta$-module. (See
Section \ref{inductionfunctors} for more details on these modules---often called costandard modules.) Of course, each $\nabla_\zeta(\lambda)$ can be
regarded by restriction as a module for the small quantum group $u_\zeta$. Our third major result provides a computation of
 the support varieties of these  modules.

\begin{theorem}\label{MainThm3} Let $l$ be as in Assumption \ref{assumption2}. Let $\lambda\in X_{+}$ and
let $\nabla_\zeta(\lambda)=\opH^0_\zeta(\lambda)$ be the costandard module for
$U_\zeta$ of high weight $\lambda$. Suppose that $(l,p)=1$ for any
bad prime $p$ of $\Phi$. Then there exists a subset $J\subseteq \Pi$
such that
$${\mathcal V}_{\mathfrak g}(\nabla_{\zeta}(\lambda))=G\cdot {\mathfrak u}_{J}.$$
\end{theorem}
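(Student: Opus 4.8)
The plan is to deduce Theorem~\ref{MainThm3} from the computation of $\opH^\bullet(u_\zeta,{\mathbb C})$ in Theorem~\ref{MainThm} (or at least from its rank-variety/geometric consequences) together with standard properties of support varieties for finite-dimensional Hopf algebras. First I would recall that, by Theorem~\ref{MainThm2}, the support variety ${\mathcal V}_{\mathfrak g}(M)$ of a finite-dimensional $u_\zeta$-module $M$ is well-defined as $\operatorname{Maxspec}(R/J_M)$, and that the underlying variety of $R=\opH^{2\bullet}(u_\zeta,{\mathbb C})$ is (in the cases covered) the closed $G$-stable subvariety ${\mathcal N}(\Phi_0)=G\cdot{\mathfrak u}_J$ of the nullcone. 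Thus $\operatorname{Maxspec} R\cong{\mathcal N}(\Phi_0)$, and ${\mathcal V}_{\mathfrak g}(M)$ is automatically a closed conical subvariety of ${\mathcal N}(\Phi_0)$. Because $M=\nabla_\zeta(\lambda)$ is a $U_\zeta$-module, not merely a $u_\zeta$-module, the action of $U_\zeta//u_\zeta\cong{\mathbb U}({\mathfrak g})$ on $\Ext^\bullet_{u_\zeta}(M,M)$ is compatible with the action on $R$, so $J_M$ is a $G$-stable ideal and ${\mathcal V}_{\mathfrak g}(\nabla_\zeta(\lambda))$ is a $G$-stable closed subvariety of ${\mathcal N}(\Phi_0)$.

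Next I would identify which $G$-orbit closure it is. Every $G$-stable closed subvariety of the nullcone is a union of orbit closures $\overline{\mathcal O}$, and the orbit closures arising in this paper's framework are the Richardson orbit closures ${\mathcal N}(\Phi_K)=G\cdot{\mathfrak u}_K$ for $K\subseteq\Pi$ (up to $W$-conjugacy, by~(\ref{JRresult})). So it suffices to show that ${\mathcal V}_{\mathfrak g}(\nabla_\zeta(\lambda))$ is irreducible and equals the closure of a single Richardson orbit, i.e.\ equals $G\cdot{\mathfrak u}_K$ for an appropriate $K$. The natural candidate for $K$ is the set governing the stabilizer data of $\lambda$ modulo $l$: writing $\lambda$ in terms of its linkage class, one expects $K$ (or rather a $W$-conjugate) to be determined by $\{\alpha\in\Phi : \langle\lambda+\rho,\alpha^\vee\rangle\equiv 0\bmod l\}$, which is a closed subsystem $W$-conjugate to some $\Phi_K$ by the same argument (Theorem~\ref{identificationtheorem}) used for $\Phi_0$. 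To pin it down I would use the Lyndon--Hochschild--Serre spectral sequence for $u_\zeta\unlhd U_\zeta$ (Lemma~\ref{ABG}): the $U_\zeta$-module structure of $\nabla_\zeta(\lambda)$ feeds into $\opH^\bullet(u_\zeta,\nabla_\zeta(\lambda)\otimes\nabla_\zeta(\lambda)^*)$, and computing the "rank variety" amounts to determining for which nilpotent $x$ the restriction of $\nabla_\zeta(\lambda)$ to the appropriate rank-one (or parabolic) subalgebra is nonprojective. One reduces via the $G$-action to checking this on a dense subset of ${\mathfrak u}_J$ and then on the relevant Levi, where costandard modules restrict to tensor products of smaller costandard modules and projectivity/nonprojectivity is detected by block considerations and the behavior of $\opH^0_\zeta(\lambda)$ under Frobenius–Lusztig factorization $\nabla_\zeta(\lambda)\cong L_\zeta(\lambda^0)\otimes\nabla(\lambda^1)^{[1]}$-type decompositions.

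I would organize the argument in three steps: (1) $G$-stability and closedness of ${\mathcal V}_{\mathfrak g}(\nabla_\zeta(\lambda))$, hence it is a union of Richardson orbit closures inside ${\mathcal N}(\Phi_0)$; (2) an upper bound, showing ${\mathcal V}_{\mathfrak g}(\nabla_\zeta(\lambda))\subseteq G\cdot{\mathfrak u}_J$ (already implied by being a subvariety of ${\mathcal N}(\Phi_0)$) and, more precisely, $\subseteq G\cdot\overline{\mathfrak u}_K$ via a projectivity-on-an-open-set argument using the tensor-identity for induced modules and restriction to Levi parabolics; (3) a lower bound, exhibiting a nilpotent $x\in{\mathfrak u}_K$ along which $\nabla_\zeta(\lambda)$ is not projective, forcing $G\cdot{\mathfrak u}_K\subseteq{\mathcal V}_{\mathfrak g}(\nabla_\zeta(\lambda))$; combined with $G$-stability and the classification of orbits this gives equality. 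The hypothesis $(l,p)=1$ for all bad primes $p$ is exactly what guarantees that the relevant closed subsystems $\Phi_K$ behave as in Chapter~3 (so that the orbit closures are normal and the induction calculations of Chapter~5 apply) and that no pathological degenerations occur.

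The main obstacle will be step (3): producing an explicit nilpotent element witnessing non-projectivity of $\nabla_\zeta(\lambda)$, which requires understanding the restriction of $\nabla_\zeta(\lambda)$ to the small quantum group of a Levi subalgebra and invoking a quantum analogue of the results identifying support varieties of $u$-modules via restriction to $u(\mathfrak{sl}_2)$-copies. A secondary difficulty is ensuring that the identification of $K$ is independent of all the auxiliary choices ($w$, $J$, the chosen orbit representative), which is handled by~(\ref{JRresult}) and Theorem~\ref{identificationtheorem} but must be stated carefully; and one must check that the finite-generation of $\opH^\bullet(u_\zeta,\nabla_\zeta(\lambda)\otimes\nabla_\zeta(\lambda)^*)$ over $R$ from Theorem~\ref{MainThm2}(b) is genuinely needed so that ${\mathcal V}_{\mathfrak g}$ is well-behaved under the tensor product and the rank-variety description is valid.
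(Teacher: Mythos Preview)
Your outline has a genuine gap at its most critical juncture: steps (2) and (3) both presuppose a \emph{rank variety} description of $u_\zeta$-support varieties (``determining for which nilpotent $x$ the restriction \ldots\ is nonprojective'', ``restriction to $u({\mathfrak{sl}}_2)$-copies'', ``exhibiting a nilpotent $x\in{\mathfrak u}_K$ along which $\nabla_\zeta(\lambda)$ is not projective''). No such description is available in the quantum setting. The paper states this explicitly in Section~8.5: the naturality property ${\mathcal V}_{\mathfrak b}(M)={\mathcal V}_{\mathfrak g}(M)\cap{\mathcal V}_{\mathfrak b}$, which in positive characteristic follows from the rank-variety description, is \emph{not} established for quantum groups, and only the inclusion $\subseteq$ is known. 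Without rank varieties you have no mechanism for detecting membership of a specific nilpotent element in the support variety, so step~(3) as written cannot be carried out.

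The paper's proof (Section~8.3) avoids this obstacle entirely. The \emph{upper bound} ${\mathcal V}_{\mathfrak g}(\nabla_\zeta(\lambda))\subseteq G\cdot{\mathfrak u}_J$ is obtained not by projectivity tests but by passing through the baby Verma modules $Z_\zeta(\mu)=\ind_{u_\zeta({\mathfrak b})}^{u_\zeta({\mathfrak g})}\mu$: one shows ${\mathcal V}_{\mathfrak g}(\nabla_\zeta(\lambda))\subseteq G\cdot{\mathcal V}_{\mathfrak g}(Z_\zeta(w\cdot\lambda))$ by the methods of \cite{NPV}, and then ${\mathcal V}_{\mathfrak g}(Z_\zeta(w\cdot\lambda))\subseteq{\mathcal V}_{{\mathfrak u}_J}\cong{\mathfrak u}_J$ via Frobenius reciprocity and the LHS spectral sequence for $u_\zeta({\mathfrak u}_J)\unlhd u_\zeta({\mathfrak p}_J)$, using that $Z_\zeta^J(w\cdot\lambda)$ is projective over $u_\zeta({\mathfrak l}_J)$ when $w(\Phi_\lambda)=\Phi_J$. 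The \emph{lower bound} is a pure dimension count: one shows $\dim{\mathcal V}_{\mathfrak g}(\nabla_\zeta(\lambda))\geq|\Phi|-|\Phi_\lambda|$ using only the one-sided inclusion ${\mathcal V}_{\mathfrak b}(\nabla_\zeta(\lambda))\subseteq{\mathcal V}_{\mathfrak g}(\nabla_\zeta(\lambda))\cap{\mathfrak u}$, a Spaltenstein-type result on intersections of $G$-varieties with ${\mathfrak u}$, and a direct lower bound on $\dim{\mathcal V}_{\mathfrak b}(\nabla_\zeta(\lambda))$. Since $G\cdot{\mathfrak u}_J$ is irreducible of dimension exactly $|\Phi|-|\Phi_J|=|\Phi|-|\Phi_\lambda|$, the two bounds force equality. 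The hypothesis that $l$ is good is used only to invoke Lemma~\ref{subsystemlemma} and produce $w,J$ with $w(\Phi_\lambda)=\Phi_J$; the normality results of Chapter~3 are not needed here. Finally, your aside about a tensor factorization $\nabla_\zeta(\lambda)\cong L_\zeta(\lambda^0)\otimes\nabla(\lambda^1)^{[1]}$ is not valid for costandard modules in general; the Steinberg tensor product theorem applies to irreducibles, and no such decomposition is invoked in the paper's argument.
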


Besides the costandard $U_\zeta$-modules, another important class of $U_\zeta$-modules are the Weyl (or standard) modules $\Delta_\zeta(\lambda)$, $\lambda\in X_+$, which can be
defined as duals of the costandard modules $\nabla_\zeta(\lambda)$. Precisely, $\Delta_\zeta(\lambda)=\nabla_\zeta(\lambda^\star)^*$,
where $\lambda^\star$ is the image of $\lambda$ under the opposition involution on $X_+$.  The modules $\nabla_\zeta(\lambda)$
and $\Delta_\zeta(\lambda)$ are both finite dimensional with characters given by Weyl's classical character formula. The
statement of Theorem \ref{MainThm3} remains valid if each $\nabla_\zeta(\lambda)$ is replaced by $\Delta_\zeta(\lambda)$.

Theorem \ref{MainThm3} is proved in Section 8.3, where the subset $J$
is explicitly identified in terms of the closed subset
$\Phi_{\lambda,l}$ of $\Phi$ (defined in (3.1.1)). In particular, the theorem shows
that the support variety of $\nabla_\zeta(\lambda)$ is the closure
of a Richardson orbit in ${\mathcal N}({\mathfrak g})$. We also calculate support varieties 
of $\nabla_\zeta(\lambda)$ in the case of bad $l$. Within these computations we 
present examples, showing that even when $l>h$, if $p\mid l$
where $p$ is a bad prime for $\Phi$, then ${\mathcal V}_{\mathfrak
g}(\nabla_{\zeta}(\lambda))$ need not be the closure of a Richardson orbit. 
The calculation of support varieties given in Theorem \ref{MainThm3} perfectly mirrors similar 
results in \cite{NPV} for the restricted Lie algebra of a simple algebraic group over a field of positive characteristic.


\chapter{Quantum Groups, Actions, and Cohomology}
\renewcommand{\thesection}{\thechapter.\arabic{section}}

\label{secondsection}

This section lays out the framework for the results in the paper. 
Even though our main results involve the computation of the cohomology 
for the small quantum group $u_{\zeta}({\mathfrak g})$, it will be necessary to construct subalgebras ${\mathbb U}_q({\mathfrak u}_J)$ (and $U_\zeta({\mathfrak u}_J)$) corresponding to nilpotent radicals ${\mathfrak u}_J$
of parabolic subalgebras ${\mathfrak p}_{J}$ of $\mathfrak g$.  This construction
will necessitate the use of Lusztig's construction of a PBW basis for 
the full quantum enveloping algebras ${\mathbb U}_q({\mathfrak g})$. 

We also present results on the adjoint action of the ${\mathcal A}$-form 
(where ${\mathcal A}={\mathbb Q}[q,q^{-1}]$) of the quantum enveloping algebra ${\mathbb U}_q({\mathfrak p}_J)$ of the subalgebras ${\mathfrak p}_{J}$ 
on the ${\mathcal A}$-form on the algebras ${\mathbb U}_q({\mathfrak u}_J)$ associated to 
the unipotent radical ${\mathfrak u}_{J}$. These results will be 
essential for our cohomological calculations.

We will also make use of the DeConcini-Kac quantum enveloping algebra ${\mathcal U}_\zeta={\mathcal U}_\zeta({\mathfrak g})$, as well as its subalgebras ${\mathcal U}_\zeta({\mathfrak p}_J)$ and ${\mathcal U}_\zeta{(\mathfrak u}_J)$. While the (Lusztig) quantum enveloping algebra $U_\zeta(\mathfrak g)$ contains the
small quantum group $u_\zeta({\mathfrak g})$ as a subalgebra, $u_\zeta({\mathfrak g})$ is a 
quotient algebra of ${\mathcal U}_\zeta({\mathfrak g})$.   In the final part of this 
section, we prove several results on the formal characters of 
cohomology groups of the algebras ${\mathcal U}_\zeta({\mathfrak u}_J)$ in the context of an Euler characteristic calculation.


\section{Listings }\label{listingssec}
\setcounter{equation}{0}
\setcounter{theorem}{0}
\renewcommand{\thetheorem}{\thesection.\arabic{theorem}}
\renewcommand{\theequation}{\thesection.\arabic{equation}}
Let $\Phi$
be an arbitrary root system with associated Weyl group $W$. Let
$w_0\in W$ be the longest word, and set $N = |\Phi^+|$. A reduced
expression $w_0=s_{\beta_1}\cdots s_{\beta_N}$ (where the $\beta_i\in \Pi$ are not necessarily distinct)
determines a listing
\begin{equation*}\label{listing}
\gamma_1=\beta_1,\gamma_2=s_{\beta_1}(\beta_2),
\cdots,\gamma_N=s_{\beta_1}\cdots s_{\beta_{N-1}}(\beta_N)
\end{equation*}
of $\Phi^+$. Define a linear ordering on $\Phi^+$ by setting, for
$\beta,\delta\in \Phi^{+}$,
\begin{equation*}\label{ordering}
\beta\prec\delta\iff\beta=\gamma_i, \delta=\gamma_j\,\,{\text{\rm
with}} \,\,\,i<j.\end{equation*}
Of course, this ordering depends on the reduced
expression chosen for $w_0$.

Now let $J\subseteq \Pi$, and put $\Phi_J=\Phi\cap {\mathbb Z}J$,
the subroot system $\Phi_{J}$ generated by $J$. Set $W_J=\langle
s_{\al} : \al \in J\rangle$, the Weyl group of $\Phi_J$, let
$w_{0,J}\in W_J$ be the longest word in $W_J$, and let $w_J :=
w_{0,J}w_0$. Observe that $w_0=w_{0,J}w_J$ satisfies
$\ell(w_0)=\ell(w_{0,J})+\ell(w_J)$. Once the subset $J$ is fixed, it
will often be useful to fix
a reduced expression for $w_0$ by first choosing one for $w_{0,J}$
and then extending it by a reduced expression for $w_J$. If $|\Phi^+_J|=M$, then
$\gamma_1\prec\cdots\prec\gamma_M$ lists the positive roots
$\Phi^+_J$, while $\gamma_{M+1}\prec\cdots\prec\gamma_N$ lists the
remaining positive roots (those in $\Phi^+\backslash\Phi^+_J$). As
above, this ordering depends upon the choice of reduced expressions.

Since $w_{0,J}$ is the longest word in $W_J$, for $\be \in J$,
\begin{equation*}\label{longestword}
\ell(w_{0,J}s_{\be}) < \ell(w_{0,J}).
\end{equation*}
Furthermore, $w_J$ is a distinguished right coset representative for
$W_J$ in $W$ in the sense that $w_J$ is the unique element
of minimal length in its coset $W_Jw_J$. Thus, 
\begin{equation}\label{distinguished}
\ell(s_{\be}w_J) > \ell(w_J) \hskip.5in \text{ and hence }
\hskip.5in
    \ell(s_{\be}w_J) = \ell(w_J) + 1.
\end{equation}


\section{Quantum enveloping algebras }\label{quantumenvelopingalgebras}
\setcounter{equation}{0}
\setcounter{theorem}{0}
\renewcommand{\thetheorem}{\thesection.\arabic{theorem}}
\renewcommand{\theequation}{\thesection.\arabic{equation}}

We now assume that $\Phi$ is irreducible, and make use of the
notation introduced in Section \ref{notation}. Let
${\mathcal A}={\mathbb Q}[q,q^{-1}]$ be the $\mathbb Q$-algebra of Laurent
polynomials in an indeterminate $q$. Then ${\mathcal A}$ has
fraction field ${\mathbb Q}(q)$. We continue to restrict the integer
$l>1$ according to Assumption \ref{assumption}. Let
$\zeta=\sqrt[l]{1}\in\mathbb C$ be a primitive $l$th root of unity
and let $k={\mathbb Q}(\zeta)\subset\mathbb C$ be the cyclotomic field generated by
$\zeta$ over $\mathbb Q$. We will regard $k$ as an $\mathcal
A$-algebra by means of the homomorphism $\mathbb{Q}[q,q^{-1}]\to k$,
$q\mapsto \zeta$.

The quantum enveloping algebra $\BU_q=\BU_q({\mathfrak g})$ of
$\mathfrak g$ is the ${\mathbb Q}(q)$-algebra with generators
$E_{\alpha},K_\alpha^{\pm 1}, F_\alpha$, $\alpha\in \Pi$, which are subject to the 
relations (R1)---(R6) listed in \cite[(4.3)]{Jan3} (whose notation
we will generally follow, unless otherwise indicated). For example, 
the elements $K_\alpha=K_\alpha^{+1}$ and $K_\alpha^{-1}$, $\alpha\in\Pi$, 
generate a commutative subalgebra
of $\BU_q$ such that $K_\alpha K_\alpha^{-1}=1$. Also, for 
$\alpha,\beta\in\Pi$, we have
$$\begin{cases}
K_\alpha E_\beta K_\alpha^{-1}=q_\alpha^{\langle\beta,\alpha^\vee\rangle} E_\beta=q^{\langle\beta,\alpha\rangle}E_\beta;\\
K_\alpha F_\beta K_\alpha^{-1}= q_\alpha^{-\langle\beta,\alpha^\vee\rangle} F_\beta=q^{-\langle\beta,\alpha\rangle}F_\beta,\end{cases}
$$
where $q_\alpha=q^{d_\alpha}$.

In addition, there exists a Hopf
algebra structure on $\BU_q$, with comultiplication
$\Delta:{\mathbb U}_q\to{\mathbb U}_q\otimes{\mathbb U}_q$ and antipode $S:{\mathbb U}_q\to{\mathbb U}_q$ explicitly defined on
generators by
\begin{equation}\label{hopfalgebra}
\begin{cases} \Delta(E_\alpha)=E_\alpha\otimes 1 + K_\alpha\otimes E_\alpha\\
\Delta(F_\alpha)= F_\alpha\otimes K_\alpha^{-1}+ 1\otimes F_\alpha\\
\Delta(K_\alpha^{\pm 1})=K_\alpha^{\pm 1}\otimes K_\alpha^{\pm 1}\end{cases}\,\,{\text{\rm and}}\,\,
\begin{cases}S(E_\alpha)=-K_\alpha^{-1}E_\alpha\\
S(F_\alpha)=-F_\alpha K_\alpha\\
S(K_\alpha^{\pm 1})=K_\alpha^{\mp 1}.\end{cases}
\end{equation}
The comultiplication $\Delta$ is an algebra homomorphism. Moreover,  
the antipode $S$ is an algebra anti-isomorphism.
The counit $\epsilon:{\mathbb U}_q\to{\mathbb Q}(q)$ is the unique algebra homomorphism satisfying $\epsilon(E_\alpha)=\epsilon(F_\alpha)=0$ and
$\epsilon(K_\alpha^{\pm 1})=1$. See \cite[Ch. 4]{Jan3} for more details.

Let $\BU_q^+$ denote the subalgebra of $\BU_q$ generated by the
$E_{\al}$ ($\al \in \Pi$), $\BU_q^-$ denote the subalgebra of
$\BU_q$ generated by the $F_{\al}$ ($\al \in \Pi$), and $\BU_q^0$
denote the subalgebra of $\BU_q$ generated by the $K_{\al}^{\pm 1}$
$(\al \in \Pi)$. The generators of the algebra $\BU^+_q$ (resp., $\BU^{-}_q$) are 
subject only to the Serre relations (R5) (resp., (R6)) in \cite[p. 53]{Jan3}. 

Multiplication gives isomorphisms of vector spaces
\begin{equation}\label{triangle}\BU_q^+\otimes \BU_q^0\otimes \BU_q^-\overset\sim\to
\BU_q\overset\sim\leftarrow \BU_q^-\otimes \BU^0_q\otimes \BU_q^+.\end{equation}
There will be analogous subalgebras $U_\zeta^+$, $U_\zeta^-$ and $U_\zeta^0$ of the algebra $U_\zeta$ defined below upon specialization to
$\zeta$. In this case, the maps analogous to those
in (\ref{triangle}) defined by algebra multiplication
are also isomorphisms of vector spaces.

The quantum enveloping algebra $\BU_q$ has two $\mathcal A$-forms, $\BU_q^{\mathcal A}$
(due to Lusztig) and ${\mathcal U}_q^{\mathcal A}$ (due to De
Concini and Kac). In other words, 
${\mathbb U}_q^{\mathcal A}$ (resp., ${\mathcal U}_q^{\mathcal A}$)
is an $\mathcal A$-subalgebra of ${\mathbb U}_q$ which is free 
as an $\mathcal A$-module and which satisfies 
$${\mathbb U}_q^{\mathcal A}\otimes_{\mathcal A}{\mathbb Q}(q)
\cong {\mathbb U}_q\cong {\mathcal U}_q^{\mathcal A}
\otimes_{\mathcal A}{\mathbb Q}(q).$$
After passage to $\mathbb C$, these algebras play roles
analogous to the hyperalgebra of a reductive group (over a field of positive characteristic) and the universal
enveloping algebra of its Lie algebra, respectively. These $\mathcal A$-forms 
are defined below.

For an integer $i$, put 
$$[i]= \frac{q^i - q^{-i}}{q - q^{-1}},$$ 
and set, for $i>0$, $[i]^! =[i][i-1]\cdots [1]$. By
convention, $[0]^!=1$. For any integer $n$ and positive integer $m$, write
$$\left[\begin{matrix} n \\ m\end{matrix}\right]=\frac{[n][n-1]\cdots [n-m+1]}{[1][2]\cdots[m]}.$$
Set $\left[\begin{smallmatrix} n\\ 0\end{smallmatrix}\right]=1$, by definition. The expressions $[i]$ and $\left[\begin{matrix} n \\ m\end{matrix}\right]$
 all belong to
$\mathcal A$ (in fact, they belong to ${\mathbb Z}[q,q^{-1}]$). In case the 
root system has two root lengths, some
scaling of the variable $q$ is required. Thus, given any Laurent polynomial
$f\in\mathcal A$ and $\alpha\in\Pi$, let $f_\alpha\in\mathcal A$ be obtained by replacing $q$ throughout by
$q_\alpha=q^{d_\alpha}$.

For $\alpha\in\Pi$ and $m\geq 0$, let
$$\begin{cases} E^{(m)}_\alpha=
\frac{E_\alpha^m}{[m]^!_\alpha}\in{\mathbb U}_q \\
F^{(m)}_\alpha= \frac{F_\alpha^m}{[m]^!_\alpha}\in\BU_q.\end{cases}
$$
be the $m$th ``divided powers."    Let
$$\BU_{q}^{\mathcal A}=\BU_q^{\mathcal A}({\mathfrak g}):=\left\langle E^{(m)}_\alpha,\, F^{(m)}_\alpha,\, K_\alpha^{\pm 1}\,|\, \alpha\in\Pi, m\in{\mathbb N}\right\rangle\subset\BU_q,$$
where $\langle \cdots\rangle$ means ``$\mathcal A$-subalgebra generated by." The 
algebra $\BU_q^{\mathcal A}$ admits a triangular factorization
induced from (\ref{triangle}), in which $\BU^+_q$ (respectively, $\BU_q^-$, $\BU_q^0$) is 
replaced by the subalgebra $\BU^{{\mathcal A},+}_q$
(respectively, $\BU_q^{{\mathcal A}, -}$, $\BU_q^{{\mathcal A},0}$). Here $\BU^{{\mathcal A},+}_q$ 
(respectively, $\BU^{{\mathcal A},-}_q$) is defined to be the
$\mathcal A$-subalgebra of $\BU_q^{\mathcal A}$ generated by the elements $E_\alpha^{(m)}$ 
(respectively, $F_\alpha^{(m)}$) for $\alpha\in\Pi$
and $m\in\mathbb N$, and $\BU^{{\mathcal A},0}$ is defined to be 
the $\mathcal A$-subalgebra generated by the elements
$$K_\alpha^{\pm 1},\,\,  \left[\begin{matrix}K_\alpha; m\\ n\end{matrix}\right]:=\prod_{i=1}^n
\frac{K_\alpha q_\alpha^{m-i+1}-K_\alpha^{-1}q_\alpha^{-m-i+1}}{q_\alpha^i-q_\alpha^{-i}}
\in \BU^0_q,$$ for $\alpha\in\Pi$, $n\in\mathbb N$, and $m\in\mathbb Z$.

Put
\begin{equation*}\label{firstquantumgroup}
U_\zeta=U_\zeta({\mathfrak g}):=\BU_{q}^{\mathcal A}/ \langle K_\alpha^l-1, \alpha\in\Pi \rangle \otimes_{\mathcal A} {\mathbb C},
\end{equation*}
 where in this expression $\langle \cdots \rangle$
means ``ideal generated by $\cdots$''. Here $\mathbb C$ is regarded as a $\mathcal A$-algebra via the algebra homomorphism
${\mathcal A}\to{\mathbb C}$, $q\mapsto \zeta$. Let $U^+_\zeta$ (respectively, $U^-_\zeta$, $U^0_\zeta$) be the the image in
$U_\zeta$ of $\BU_q^{\mathcal A,+}\otimes_{\mathcal A}{\mathbb C}$ (respectively, $\BU_q^{\mathcal A,-}\otimes_{\mathcal A}{\mathbb C}$,
$\BU_q^{\mathcal A,0}\otimes_{\mathcal A}{\mathbb C}$).

The Hopf algebra structure on
$\BU_q$ induces a Hopf algebra structure on $\BU^{\mathcal A}_q$, and then passage to the field
$\mathbb C$, one obtains a Hopf algebra structure on the algebra $U_\zeta
$. By abuse of notation, let $E^{(m)}_\alpha$, etc., $\alpha\in\Pi$,
also denote the corresponding elements $1\otimes E^{(m)}_\alpha$, etc. in
$U_\zeta$ (note that $E_{\alpha}^{(1)} = E_{\alpha}$). Therefore,
$E_\alpha^l=F_\alpha^l=0$ and $K_\alpha^l=1$ (by construction) in $U_\zeta$. The
elements $E_\alpha,K_\alpha,F_\alpha$, $\alpha\in\Pi$, generate a
finite dimensional Hopf subalgebra, denoted $u_\zeta=u_\zeta({\mathfrak g})$,  of
$U_\zeta$. The algebra $u_\zeta$ is often called the small quantum group.

We now consider the DeConcini-Kac quantum enveloping algebra ${\mathcal U}_\zeta={\mathcal U}_\zeta({\mathfrak g})$. To begin, define 
${\mathcal U}^{\mathcal A}_q$ to be the $\mathcal
A$-subalgebra of $\BU_q$ generated by the $E_\alpha, F_\alpha,
K_\alpha^{\pm 1}$. There is an inclusion of ${\mathcal A}$-forms:
${\mathcal U}_q^{\mathcal A}\subseteq \BU_q^{\mathcal A}$. Then set
$${\mathcal U}_{\mathbb C}={\mathcal U}_{\mathbb C}({\mathfrak g}):={\mathcal U}^{\mathcal A}_q\otimes_{\mathcal A}{\mathbb C},$$
the algebra obtained by base-change (i.~e., $\mathbb C$ is regarded as an $\mathcal A$-algebra via the same algebra
homomorphism ${\mathcal A}\to{\mathbb C}$, $q\mapsto\zeta$, as above.)
Finally, put
\begin{equation*}\label{secondquantumgroup}
{\mathcal U}_\zeta={\mathcal U}_{\zeta}({\mathfrak g}):={\mathcal
U}_{\mathbb C}/\langle 1\otimes K_\alpha^l-1\otimes 1,\alpha\in\Pi\rangle.
\end{equation*}
As with the quantum enveloping algebra $U_\zeta$,  the algebra ${\mathcal U}_\zeta$ also inherits a Hopf algebra structure from that of ${\mathbb U}_q$.  
However, unlike the inclusion ${\mathcal U}_q^{\mathcal A}\subseteq
{\mathbb U}_q^{\mathcal A}$ of $\mathcal A$-forms above,  
${\mathcal U}_\zeta$
 is not generally a subalgebra of $U_\zeta$. The algebra ${\mathcal U}_\zeta$ has a central (and hence normal) subalgebra
${\mathcal Z}$ such that $u_{\zeta}\cong{\mathcal
U}_\zeta//{\mathcal Z}$; see \cite[Section 3]{DK} for more details. An explicit
description of ${\mathcal Z}$ will be given in Section 2.7.

All the $U_\zeta$-modules $M$ considered in this paper will be
assumed to be integrable and type 1. Usually, this will be assumed
without mention, but occasionally we repeat it for emphasis. In
other words, each $E_i,F_j$ acts locally nilpotently on $M$. In
addition, $M$ decomposes into a direct sum $\bigoplus_{\lambda\in X}M_\lambda$ of $M_\lambda$
weight spaces for $\lambda\in X$. Thus, if $v\in M_\lambda$,
$$\begin{cases} K_\alpha v=\zeta^{\langle\lambda,\alpha\rangle}v;\\
\left[\begin{smallmatrix}
K_\alpha; m\\
n\end{smallmatrix}\right]v=\left[\begin{smallmatrix}\langle
\lambda,\alpha\rangle +m\\
n\end{smallmatrix}\right]_{q=\zeta^{d_\alpha}}v\end{cases}
$$
for all $\alpha\in\Pi, m\in{\mathbb Z}, n\in{\mathbb N}$.

\section{Connections with algebraic
groups }\label{connectionswithalgebraicgroups}
\setcounter{equation}{0}
\setcounter{theorem}{0}
\renewcommand{\thetheorem}{\thesection.\arabic{theorem}}
\renewcommand{\theequation}{\thesection.\arabic{equation}}

Let $\BU({\mathfrak g})$ be the classical universal enveloping algebra of the complex simple Lie algebra $\mathfrak g$ with root system
$\Phi$. The Lie algebra $\mathfrak g$ has Chevalley generators $e_\alpha, f_\alpha, h_\alpha$, $\alpha\in\Pi$, which satisfy
certain well-known relations, providing a presentation of the algebra
$\BU({\mathfrak g})$ \cite[\S 18.3]{Hum}. The Frobenius
morphism is a surjective (Hopf) algebra homomorphism
\begin{equation*}\label{Frob}
\Fr:U_\zeta\twoheadrightarrow \BU({\mathfrak g}).
\end{equation*}
If $I$ is the augmentation ideal of $u_\zeta$, then
$\text{Ker Fr}=IU_\zeta=U_\zeta I$. In other words, $u_{\zeta}$ is a normal Hopf subalgebra
of $U_{\zeta}$ such that $U_{\zeta}//u_{\zeta} \cong \BU(\gl)$.
Given $\alpha\in\Pi$,  let $m$ be a positive integer. If $m= a\ell +b$, where $0\leq b<\ell$,
then
$$\begin{cases}\text{\rm Fr}(E_\alpha^{(m)})=\delta_{b,0} e_\alpha^{(a)}\\
\text{\rm Fr}(F_\alpha^{(m)})=\delta_{b,0}f_\alpha^{(a)}\\
\text{\rm Fr}(\left[\begin{smallmatrix} K_\alpha; l\\ 0\end{smallmatrix}\right])= h_\alpha.\end{cases}$$
In this expression, we have put $e_\alpha^{(a)}=\frac{1}{a!}e_\alpha^a$ and $f_\alpha^{(a)}=\frac{1}{a!}f_\alpha^a$ (cf. 
\cite{L1}).

By
means of the algebra homomorphism $\text{Fr}$, modules for the Lie algebra $\mathfrak g$ can
be ``pulled back" to give modules for $U_\zeta$: given a $\mathfrak
g$-module $M$, $M^{[1]}:=\Fr^*M$ denotes the $U_\zeta$ module
obtained from $M$ by making $x\in U_\zeta$ act as a linear
transformation on $M$ by $\Fr(x)$. If $M$ is a locally finite
${\mathbb U}({\mathfrak g})$-module (e.~g., finite dimensional),
then $M^{[1]}$ is an integrable, type 1 $U_\zeta$-module.
Conversely, a $U_\zeta$-module $N$ on which $u_\zeta$ acts trivially
can be made into a $\BU({\mathfrak g})$-module via the Frobenius
map, so that $N\cong M^{[1]}$ for a locally finite ${\mathbb
U}({\mathfrak g})$-module $M$.

Let $G$ be the complex simple, simply connected algebraic group
having Lie algebra $\mathfrak g$. Let $B\supset T$ (respectively, $B^+\supset T$) be the Borel subgroup
corresponding to the set $-\Phi^+=\Phi^-$ (respectively, $\Phi^+$) of negative (respectively, positive) roots. The category of locally finite
$\BU({\mathfrak g})$-modules is naturally isomorphic to the category
of rational $G$-modules.

The set $X_+$ of dominant weights for the root system $\Phi$ indexes the irreducible modules
for $U_\zeta$. Given $\lambda\in X_+$, let $L_\zeta(\lambda)$ be the
irreducible $U_\zeta$-module of high weight $\lambda$. On the other hand, let
$L(\lambda)$ be the irreducible rational $G$-module of high weight
$\lambda\in X_+$. We may also identify $L(\lambda)$ with the finite dimensional
irreducible ${\mathbb U}({\mathfrak g})$-module of high weight
$\lambda$. (Both $L_\zeta(\lambda)$ and $L(\lambda)$ are determined up to isomorphism.)
For $\lambda\in X_+$, we have
$$\text{\rm Fr}(L(\lambda))= L(\lambda)^{[1]}\cong L_\zeta(\ell\lambda).$$

The integral group algebra ${\mathbb Z}X$ has basis denoted
$e(\mu)$, $\mu\in X$. Given a finite dimensional $U_\zeta$-module
$M$,
$$\ch\, M
=\sum_{\mu\in X} \dim\,M_\mu\ e(\mu)\in{\mathbb Z}X$$ denotes its
(formal) character, where $M_\mu$ is the $\mu$-weight space for the action of $U_\zeta^0$ on $M$. Sometimes
for emphasis, we write $\ch_\zeta M$ for $\ch\, M$.

Similarly, if $M$ is a finite dimensional rational $G$-module
(respectively, ${\mathbb U}({\mathfrak g})$-module), let $\ch\,
M\in{\mathbb Z}X$ be its formal character with respect to the fixed
maximal torus $T$ (respectively, Cartan subalgebra ${\mathfrak h}=
{\text{Lie}}(T)$).


\section{Root vectors and
PBW-basis}\label{rootvectorsandPBW}
\setcounter{equation}{0}
\setcounter{theorem}{0}
\renewcommand{\thetheorem}{\thesection.\arabic{theorem}}
\renewcommand{\theequation}{\thesection.\arabic{equation}}

The ``root vector" generators $E_\alpha,F_\alpha$ for the quantum enveloping algebra $\BU_q$ are only defined for simple roots
$\alpha$. In what follows, it will be necessary to work with root vectors defined for
general (i.e., not simple) roots. In this direction, for each $\al \in \Pi$, Lusztig has defined an algebra automorphism
$T_{\al}:\BU_q\to\BU_q$. Here  we will follow \cite[Ch. 8]{Jan3}, where
the reader can find more details. If $s = s_{\al} \in W$ is the
simple reflection defined by $\al$,  we often write $T_{s} := T_{\al}$.
Given any $w \in W$, let $w = s_{\be_1}s_{\be_2}\cdots s_{\be_n}$ be
a reduced expression (so the $\beta_i\in\Pi$). Then define $T_w := T_{\be_1}\cdots T_{\be_n}
\in \text{Aut}(\BU_q)$. The automorphism $T_w$ is independent of the
reduced expression of $w$. In other words, the automorphisms
$T_{\al}$ extend to an action of the braid group of $W$ on $\BU_q$.

Now let $J \subseteq \Pi$ and fix a reduced expression $w_0 =
s_{\be_1}\cdots s_{\be_N}$ that begins with a reduced expression for the element
$w_{0,J}$ as in Section \ref{listingssec}. If $w_{0,J} =
s_{\be_1}\cdots s_{\be_M}$, then of course $s_{\be_{M+1}}\cdots s_{\be_N}$ is a
reduced expression for $w_J=w_{0,J}w_0$. There exists a linear ordering $\ga_1
\prec \ga_2 \prec \cdots \prec \ga_N$ of the positive roots, where
$\ga_i = s_{\be_1}\cdots s_{\be_{i-1}}(\be_i)$. For $\ga = \ga_i \in
\Phi^+$, the ``root vector'' $E_{\ga} \in \BU_q^+$ is defined by
$$
E_{\ga} = E_{\ga_i} := T_{s_{\be_1} \cdots s_{\be_{i-1}}}(E_{\be_i})
=
        T_{\be_1}\cdots T_{\be_{i-1}}(E_{\be_i}).
$$
If $\ga$ is simple, the ``new'' $E_{\ga}$ agrees with the original
generator $E_{\ga}$. More generally, $E_{\ga}$ has weight $\ga$.
Similarly,
$$
F_{\ga} = F_{\ga_i} := T_{s_{\be_1} \cdots s_{\be_{i-1}}}(F_{\be_i})
=
        T_{\be_1}\cdots T_{\be_{i-1}}(F_{\be_i}),
$$
a root vector of weight $-\ga$.

The subalgebra $\BU_q^+$ (respectively, $\BU_q^-$) has a PBW-like basis
consisting of all monomials $E_{\gamma_1}^{a_1}\cdots
E_{\gamma_N}^{a_N}$ (respectively, $F_{\gamma_1}^{a_1}\cdots
F_{\gamma_N}^{a_N}$), $a_1,\cdots, a_n\in{\mathbb N}$. Using divided
powers when necessary, one obtains PBW-bases for the specialized
quantum groups $U^+_\zeta$, $U^-_\zeta$, $u_{\zeta}^+$,
$u_{\zeta}^-$, $\mathcal{U}_{\zeta}^+$, and $\mathcal{U}_{\zeta}^-$.
The automorphisms $T_w$ induce automorphisms on $\BU^{\mathcal A}_q$
and hence on $U_\zeta$.

The monomial bases satisfy a key ``commutativity'' property
originally observed by Levendorski{\u\i} and Soibelman.
Here $E_{\ga_i}^0 = 1$ and $F_{\ga_i}^0 = 1$.

\begin{lem} (\cite[Thm. 9.3]{DP}, \cite{LS})\label{straighteninglemma} Let ${\mathcal A}'$ be a subring of
${\mathbb Q}(q)$ containing $\mathcal A$.
Suppose that $q^{2}-q^{-2}$ is invertible in ${\mathcal A}'$ when $\Phi$ is of type $B_{n}$, $C_{n}$, or $F_{4}$ and
additionally $q^{3}-q^{-3}$ is invertible in ${\mathcal A}'$ when $\Phi$ is of type $G_{2}$. In $\BU_q$, we have for $i < j$
\begin{itemize}
\item[(a)] $E_{\ga_i}E_{\ga_j} = q^{\langle\ga_i,\ga_j\rangle}E_{\ga_j}E_{\ga_i} + (*)$
where $(*)$ is an ${\mathcal A}'$-linear combination of monomials $
E_{\gamma_1}^{a_1}\cdots E_{\gamma_N}^{a_N}$, with $a_s=0$ unless
$i<s<j$;

\item[(b)]$F_{\ga_i}F_{\gamma_j}=q^{\langle\ga_i,\ga_j\rangle}F_{\ga_j}F_{\ga_i} + (*)$
where $(*)$ is an ${\mathcal A}'$-linear combination of monomials $
F_{\gamma_1}^{a_1}\cdots F_{\gamma_N}^{a_N}$, with $a_s=0$ unless
$i<s<j$.
\end{itemize}
\end{lem}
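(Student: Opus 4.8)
The plan is to prove the commutation relations of Lemma~\ref{straighteninglemma} by induction, reducing everything to the rank-$2$ case, where the relevant identities are the known Levendorski\u{\i}--Soibelman formulas. First I would recall that the root vectors $E_{\ga_1},\dots,E_{\ga_N}$ are defined via the braid group operators $T_w$ attached to the fixed reduced expression $w_0 = s_{\be_1}\cdots s_{\be_N}$, and that the corresponding monomials $E_{\ga_1}^{a_1}\cdots E_{\ga_N}^{a_N}$ form a $\mathbb{Q}(q)$-basis of $\BU_q^+$ (and similarly for $F$ in $\BU_q^-$). The key structural fact, due to Lusztig, is that for consecutive indices $i$ and $i+1$ the pair $(\ga_i, \ga_{i+1})$ spans a rank-$2$ root subsystem, and the subalgebra generated by $E_{\ga_i}, E_{\ga_{i+1}}$ is (via an appropriate $T_w$) isomorphic to the positive part $\BU_q^+(\mathfrak{r})$ of the rank-$2$ algebra $\mathfrak{r}$ of the relevant type ($A_1\times A_1$, $A_2$, $B_2$, or $G_2$). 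In that rank-$2$ setting one has the explicit commutation identities; e.g. in type $A_2$ one gets the quantum Serre-type relation $E_{\ga_1}E_{\ga_2} = q^{\langle\ga_1,\ga_2\rangle}E_{\ga_2}E_{\ga_1} + (\text{monomial in the middle root})$, and in types $B_2, G_2$ one needs the coefficients $q^2-q^{-2}$ or $q^3-q^{-3}$ in denominators — which is exactly why one passes to the ring ${\mathcal A}'$ in which these are inverted.

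Next I would set up the induction. For $i < j$, consider the ``distance'' $j-i$. The base case $j = i+1$ is the rank-$2$ computation just described (here there are no monomials with $i < s < j$, so $(*)$ collapses to a scalar-times-product statement in the $A_1\times A_1$ case and to the middle-root correction term as above otherwise — strictly speaking when $j-i=1$ there is no room for intermediate $s$, so one organizes the base case as $j-i \le 2$, or reinterprets the rank-$2$ relation). For the inductive step with $j - i \ge 2$, I would pick an intermediate index $k$ with $i < k < j$ and rewrite $E_{\ga_i}E_{\ga_j}$ by first using the braid-group action to move $E_{\ga_j}$ past $E_{\ga_i}$ through a sequence of adjacent transpositions: apply the case $j-1 \mapsto j$ or a suitable $T$-conjugation to reduce the span of indices involved. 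Concretely, one conjugates by $T_{s_{\be_1}\cdots s_{\be_{i-1}}}$ to assume $i = 1$, so $E_{\ga_1} = E_{\be_1}$ is a simple root vector, and then uses the fact that $T_{\be_1}^{-1}$ sends $E_{\ga_j}$ to a root vector for a shorter reduced word, allowing one to invoke the inductive hypothesis in the subalgebra generated by $E_{\ga_2},\dots,E_{\ga_N}$. Throughout, the claim that the correction terms $(*)$ lie in the span of monomials $E_{\ga_1}^{a_1}\cdots E_{\ga_N}^{a_N}$ with $a_s = 0$ outside $i < s < j$ is preserved because the braid operators respect the triangular/convexity structure of the PBW ordering (Lusztig's convexity: any root vector $E_{\ga_m}$ commutes-up-to-lower-terms with $E_{\ga_1},\dots,E_{\ga_{m-1}}$ in a way that only introduces roots $\ga_s$ with indices between the two).

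The main obstacle I expect is controlling the coefficient ring precisely — i.e. verifying that the correction terms $(*)$ have coefficients in ${\mathcal A}'$ (not merely in $\mathbb{Q}(q)$) under exactly the stated invertibility hypotheses. This is subtle because the inductive rewriting composes many rank-$2$ steps, and in types $B_n, C_n, F_4, G_2$ the rank-$2$ correction coefficients already involve $[2]_\al = q_\al + q_\al^{-1}$ or $[3]_\al$ in denominators; one must check that after clearing, only $q^2 - q^{-2}$ (resp. $q^3 - q^{-3}$) and their products appear, and that no new ``bad'' denominators are generated when passing between adjacent roots of differing lengths. The cleanest way to handle this is to first establish the statement over $\mathbb{Q}(q)$ using the braid-group/convexity argument (which is essentially \cite[Thm. 9.3]{DP}), and then do a separate, finite check — going through the short list of rank-$2$ types — to pin down exactly which denominators occur, concluding that all coefficients lie in ${\mathcal A}[\tfrac{1}{q^2-q^{-2}}, \tfrac{1}{q^3-q^{-3}}] \subseteq {\mathcal A}'$. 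The $F$-version (b) follows from (a) by applying the standard $\mathbb{Q}(q)$-algebra anti-automorphism (or the automorphism $\omega$) of $\BU_q$ interchanging $E_\al \leftrightarrow F_\al$, which sends the $E$-root vectors to the $F$-root vectors compatibly with the ordering.
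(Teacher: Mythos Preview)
The paper does not give its own proof of this lemma; it simply cites \cite[Thm.~9.3]{DP} and \cite{LS}, adding only a one-sentence remark that the invertibility hypotheses on $q^2-q^{-2}$ and $q^3-q^{-3}$ are implicit in the appendix of \cite{DP} (pp.~135--137) and in \cite[\S3.2]{Dr1}. Your sketch is essentially the standard argument from those references --- reduce via the braid operators $T_w$ to adjacent indices, handle those by explicit rank-$2$ computations, and track coefficients --- so there is nothing to compare against in the paper itself. Your identification of the coefficient-ring issue as the delicate point is exactly right and matches the paper's sole comment on the matter.
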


The requirements on ${\mathcal A}'$ (i.e.,  that $q^2-q^{-2}$ is invertible in types $B,C,F$, etc.) are not
explicitly stated in \cite{DP}, though they are implicit in the arguments in its appendix (see pp. 135--137 in
\cite{DP}; and also \cite[\S3.2]{Dr1}). We thank Chris Drupieski for pointing this out to us.


\section{Levi and parabolic subalgebras}\label{Leviandparabolicsubalgebras}
\setcounter{equation}{0}
\setcounter{theorem}{0}
\renewcommand{\thetheorem}{\thesection.\arabic{theorem}}
\renewcommand{\theequation}{\thesection.\arabic{equation}}

 Given a subset
$J \subseteq \Pi$, consider the Levi and parabolic Lie subalgebras
${\mathfrak l}_J$ and ${\mathfrak p}_J = {\mathfrak
l}_J\oplus{\mathfrak u}_J$ of $\mathfrak g$. We denote the
respective universal enveloping algebras by $\BU(\lj)$ and
$\BU(\pj)$. One can naturally define corresponding quantum
enveloping algebras $\BU_q(\lj)$ and $\BU_q(\pj)$. As (Hopf) subalgebras of
$\BU_q$, $\BU_q(\lj)$ is the subalgebra generated by the elements $\{E_{\al},
F_{\al} : \al \in J\} \cup \{K_{\al}^{\pm 1} : \al \in \Pi\}$, and
$\BU_q(\pj)$ is the subalgebra generated by $\{E_{\al}: \al \in J\}
\cup \{F_{\al}, K_{\al}^{\pm 1} : \al \in \Pi\}$. For example, if $J =
\varnothing$, then $\lj = \mathfrak{h}$, $\pj = \mathfrak{b}$,
$\BU_q(\lj) = \BU_q^0$, and $\BU_q(\pj) = \BU_q({\mathfrak b}) =
\BU_q^-\cdot \BU_q^0$. Specializing gives the subalgebras
$U_{\zeta}(\lj), U_{\zeta}(\pj), u_{\zeta}(\lj), u_{\zeta}(\pj)$ of
$U_\zeta$, and $\mathcal{U}_{\zeta}(\lj)$ and
$\mathcal{U}_{\zeta}(\pj)$ of ${\mathcal U}_\zeta$.

If we denote by $\pj^+ = \lj\oplus\uj^+$ the opposite parabolic
subalgebra (containing the positive Borel subalgebra ${\mathfrak
b}^+$), then one can analogously consider $\BU_q(\pj^+)$.


\section{The subalgebra $\BU_q(\uj)$}\label{subalgebrauJ}

\setcounter{equation}{0}
\setcounter{theorem}{0}
\renewcommand{\thetheorem}{\thesection.\arabic{theorem}}
\renewcommand{\theequation}{\thesection.\arabic{equation}}

The purpose of this section is to define a subalgebra $\BU_q(\uj)$ of $\BU_q$. It will play a role analogous to that played
by the subalgebra $\BU(\uj)$ of the universal enveloping algebra $\BU(\pj)$ corresponding to the nilpotent radical of $\pj$. As above, choose a reduced expression
for $w_0$ (beginning with one for $w_{0,J}$). Define $\BU_q(\uj)$ to
be the subspace of $\BU_q^-$ spanned by the
$F_{\ga_{M+1}}^{a_{M+1}}\cdots F_{\ga_N}^{a_N}$, $a_i \in
\mathbb{N}$. By Lemma \ref{straighteninglemma}, $\BU_q(\uj)$ is a
subalgebra of $\BU_q^-$.  In addition, the monomials above form a basis for
$\BU_q(\uj)$.  One can also verify directly that
$\BU_q(\uj)$ is independent of the choice of reduced expression for
$w_0$. However, this follows from a more general set-up which will
be useful for other purposes as well.

Given any $v \in W$, we define as in \cite[8.24]{Jan3} a subspace
$U^-[v] \subset \BU_q^-$ (respectively, $U^+[v] \subset \BU_q^+$) as
follows. Choose a reduced expression $v = s_{\eta_1}s_{\eta_2}\cdots
s_{\eta_t}$. For $1 \leq i \leq t$, set $f_i =
T_{s_{\eta_1}s_{\eta_2}\cdots s_{\eta_{i-1}}}(F_{\eta_i})$ (respectively,
$e_i = T_{s_{\eta_1}s_{\eta_2}\cdots s_{\eta_{i-1}}}(E_{\eta_i})$).
The $f_i$ (respectively, $e_i$) are in some sense ``root vectors'' like
those defined earlier. Then $U^-[v]$ (respectively, $U^+[v]$) is defined to
be the span of all monomials of the form $f_1^{a_1}f_2^{a_2}\cdots
f_t^{a_t}$ (respectively, $e_1^{a_1}e_2^{a_2}\cdots e_t^{a_t}$) for $a_i
\geq 0$. By \cite[2.2]{DKP}, $U^-[v]$ (respectively, $U^+[v]$) is a subalgebra
of ${\mathbb U}^-_q$ (resp., ${\mathbb U}^+_q$) and moreover is independent of the choice of reduced expression for
$v$.

Since $\BU_q(\uj) = T_{w_{0,J}}(U^-[w_J])$, $\BU_q(\uj)$ is a subalgebra of $\BU_q(\pj) \subset \BU_q$, independent of the reduced
expression for $w_J$. Furthermore, since the automorphism $T_{w_{0,J}}$
is also independent of the choice of reduced expression for
$w_{0,J}$, the algebra $\BU_q(\uj)$ depends only on $J$, not on our
above choices of reduced expressions. Following the procedure in
Section \ref{quantumenvelopingalgebras}, the algebra $\BU_q(\uj)$
can be specialized to give algebras $U_{\zeta}(\uj)$,
$u_{\zeta}(\uj)$, and $\Uz(\uj)$. For example, $U_{\zeta}(\uj)$ is
the subalgebra of $U_{\zeta}^-$ spanned by
$F_{\ga_{M+1}}^{(a_{M+1})}\cdots F_{\ga_N}^{(a_N)}$, $a_i \in
\mathbb{N}$.

Similarly one can define a subalgebra $\BU_q(\uj^+) \subset
\BU_q(\pj)$, and the corresponding specializations. Then
$\BU_q(\uj^+) = T_{w_{0,J}}(U^+[w_J])$. When $J = \varnothing$,
$\BU_q(\uj) = \BU_q^-$ and $\BU_q(\uj^+) = \BU_q^+$.


\section{Adjoint action}\label{adjointaction}
\setcounter{equation}{0}
\setcounter{theorem}{0}
\renewcommand{\thetheorem}{\thesection.\arabic{theorem}}
\renewcommand{\theequation}{\thesection.\arabic{equation}}

 Given a Hopf algebra $A$, the
adjoint action of $A$ is defined by setting, for $x,y \in A$,
$\text{Ad}(x)(y) = \sum x_{(1)}yS(x_{(2)})$ where $\Delta(x) = \sum
x_{(1)}\otimes x_{(2)}$ is the comultiplication and $S$ is the antipode. We
consider, in particular, the case $A=\BU_q$, where we record the
formulas here on generators: (for $m \in \BU_q$)
\begin{equation}\label{ad}
\begin{cases}
\ad(E_{\al})(m) &= E_{\al}m - K_{\al}mK_{\al}^{-1}E_{\al}\\
\ad(K_{\al}^{\pm 1})(m) &= K_{\al}^{\pm 1}mK_{\al}^{\mp 1}\\
\ad(F_{\al})(m) &= (F_{\al}m - mF_{\al})K_{\al}.
\end{cases}
\end{equation}
These expressions follow immediately from the formulas (\ref{hopfalgebra}).

By twisting the Hopf structure on $\BU_q$ by the algebra involution $\omega$ of
$\BU_q$ given by $\omega(E_{\al}) = F_{\al}$, $\omega(F_{\al}) =
E_{\al}$, and $\omega(K_{\al}) = K_{\al}^{-1}$, we obtain another
Hopf structure on $\BU_q$, with comultiplication $^\omega\Delta:=(\omega\otimes\omega)\circ\Delta\circ\omega$
and antipode $^\omega S=\omega\circ S\circ\omega$; see \cite[3.8]{Jan3}. In particular, this procedure
then gives an alternate adjoint action $\adw$, which satisfies:
\begin{equation}\label{adw}
\begin{cases}
\adw(E_{\al})(m) &= (E_{\al}m - mE_{\al})K_{\al}^{-1}\\
\adw(K_{\al}^\pm 1)(m) &= K_{\al}^{\pm 1}mK_{\al}^{\mp 1}\\
\adw(F_{\al})(m) &= F_{\al}m - K_{\al}^{-1}mK_{\al}F_{\al}.
\end{cases}
\end{equation}

\begin{prop}\label{adjointprop} The following stability results hold:
\begin{itemize}
\item[(a)] The subalgebra $\BU_q(\uj^+)$ is stable under the $\ad$ action of
$\BU_q(\pj^+)$ on itself.
\item[(b)] The subalgebra $\BU_q(\uj)$ is stable under the $\adw$ action of
$\BU_q(\pj)$ on itself.
\end{itemize}
\end{prop}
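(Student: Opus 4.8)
The two statements are mirror images of each other under the Hopf-twist by $\omega$ (which interchanges $E_\alpha\leftrightarrow F_\alpha$, $\BU_q(\uj^+)\leftrightarrow \BU_q(\uj)$, $\ad\leftrightarrow\adw$, and $\BU_q(\pj^+)\leftrightarrow\BU_q(\pj)$), so it suffices to prove (a) and then apply $\omega$ to deduce (b). The plan for (a) is to check stability on algebra generators only: since $\ad(x)$ is, for each fixed $x$, a ``twisted derivation'' (more precisely $\ad(xy)=\ad(x)\ad(y)$ makes $\ad$ an algebra action, and each $\ad(E_\alpha)$, $\ad(K_\alpha^{\pm1})$, $\ad(F_\alpha)$ satisfies a Leibniz-type rule visible from \eqref{ad}), and since $\BU_q(\uj^+)$ is a subalgebra, it is enough to show that $\ad(z)$ carries the PBW generators $E_{\ga_i}$ (for $\ga_i\in\Phi^+\setminus\Phi_J^+$, i.e. $M<i\le N$) into $\BU_q(\uj^+)$ for each generator $z$ of $\BU_q(\pj^+)$, namely $z\in\{E_\alpha:\alpha\in J\}\cup\{K_\alpha^{\pm1}:\alpha\in\Pi\}$.

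\textbf{Key steps.} First I would record that $\ad(K_\alpha^{\pm1})$ acts on the weight-$\ga$ vector $E_{\ga}$ by the scalar $q^{\pm\langle\ga,\alpha\rangle}$, so the $K$-part is immediate. Next, for $z=E_\alpha$ with $\alpha\in J$, I would use the braid-group description of the root vectors: $E_{\ga_i}=T_{s_{\be_1}\cdots s_{\be_{i-1}}}(E_{\be_i})$ where the reduced word for $w_0$ begins with a reduced word for $w_{0,J}$. One route is to observe $\ad(E_\alpha)(E_{\ga_i})=E_\alpha E_{\ga_i}-q^{\langle\ga_i,\alpha\rangle}E_{\ga_i}E_\alpha$ (from \eqref{ad}, using that $E_{\ga_i}$ has weight $\ga_i$ so $K_\alpha E_{\ga_i}K_\alpha^{-1}=q^{\langle\ga_i,\alpha\rangle}E_{\ga_i}$), and then invoke the Levendorski\u\i--Soibelman straightening Lemma~\ref{straighteninglemma}(a): because $\alpha\in J$ precedes all of $\ga_{M+1},\dots,\ga_N$ in the ordering $\prec$, the commutator $E_\alpha E_{\ga_i}-q^{\langle\alpha,\ga_i\rangle}E_{\ga_i}E_\alpha$ is an $\mathcal A'$-combination of monomials $E_{\ga_1}^{a_1}\cdots E_{\ga_N}^{a_N}$ supported on indices strictly between the position of $\alpha$ and $i$; one must then check those indices all lie in $\{M+1,\dots,N\}$, which follows since $\alpha=\ga_k$ for some $k\le M$ and the intermediate roots that can actually occur are sums $\alpha+(\text{positive root in }\Phi^+\setminus\Phi_J^+)$, hence themselves in $\Phi^+\setminus\Phi_J^+$ (using that $\uj^+$ is an ideal in $\pj^+$). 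Cleaner still is to give a structural argument: $\BU_q(\uj^+)=T_{w_{0,J}}(U^+[w_J])$, and one can show $\BU_q(\uj^+)$ is exactly the span of all weight vectors in $\BU_q^+$ whose weights lie in $\mathbb N(\Phi^+\setminus\Phi_J^+)=Q^+(\uj^+)$; since $\ad(E_\alpha)$ raises weight by $\alpha\in Q^+(\Phi_J^+)\subseteq Q^+$ and preserves the subalgebra generated by all $E_\beta$, and since $\ad(E_\alpha)(\BU_q(\uj^+))\subseteq\BU_q^+$ has weights in $\alpha+\mathbb N(\Phi^+\setminus\Phi_J^+)\subseteq\mathbb N(\Phi^+\setminus\Phi_J^+)$ (as $\Phi_J^++(\Phi^+\setminus\Phi_J^+)$ meets $\Phi$ only inside $\Phi^+\setminus\Phi_J^+$), stability follows once one knows the weight-space characterization.

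\textbf{Main obstacle.} The real content is establishing that $\BU_q(\uj^+)$ is the $Q^+(\uj^+)$-weight-graded piece of $\BU_q^+$ — equivalently that the PBW monomials $E_{\ga_{M+1}}^{a_{M+1}}\cdots E_{\ga_N}^{a_N}$ span exactly the vectors of weight in $\mathbb N(\Phi^+\setminus\Phi_J^+)$; combined with the combinatorial fact $(\Phi_J^+ + (\Phi^+\setminus\Phi_J^+))\cap\Phi\subseteq\Phi^+\setminus\Phi_J^+$. The first point should follow from the independence-of-reduced-expression statement for $U^+[v]$ from \cite[2.2]{DKP} together with the factorization $\BU_q^+ = \BU_q(\uj^+)\cdot\BU_q^+(\lj)$ coming from the chosen reduced word, comparing weights on both sides; the second is elementary root-combinatorics for the nilradical of a parabolic. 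If one prefers to avoid the weight-space characterization, the fallback is the direct Levendorski\u\i--Soibelman computation sketched above, where the bookkeeping of which intermediate monomials occur is the delicate part — but Lemma~\ref{straighteninglemma} does exactly that bookkeeping for us, so this is routine rather than deep. Finally, apply the $\omega$-twist to transfer (a) to (b), noting $\omega$ is an algebra involution carrying $\BU_q(\pj)$ onto $\BU_q(\pj^+)$, $\BU_q(\uj)$ onto $\BU_q(\uj^+)$, and intertwining $\adw$ with $\ad$ by construction of $\adw$ via $^\omega\Delta$, $^\omega S$.
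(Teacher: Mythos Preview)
Your reduction of (b) to (a) via $\omega$ is correct and matches the paper. But there are two genuine gaps in your argument for (a).

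First, your list of generators for $\BU_q(\pj^+)$ is wrong: since $\pj^+$ contains the full positive Borel and the Levi, $\BU_q(\pj^+)$ is generated by $\{E_\alpha:\alpha\in\Pi\}\cup\{F_\alpha:\alpha\in J\}\cup\{K_\alpha^{\pm1}:\alpha\in\Pi\}$. You have entirely omitted checking stability under $\ad(F_\alpha)$ for $\alpha\in J$, which is half the real content of the proof. (You also omit $E_\alpha$ for $\alpha\in\Pi\setminus J$, but that case is easy since those $E_\alpha$ already lie in $\BU_q(\uj^+)$.)

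Second, your ``cleaner'' weight-space characterization is false: $\BU_q(\uj^+)$ is \emph{not} the span of all weight vectors in $\BU_q^+$ whose weight lies in $\mathbb N(\Phi^+\setminus\Phi_J^+)$. Already in type $A_2$ with $J=\{\alpha_1\}$, the weight space $(\BU_q^+)_{\alpha_1+\alpha_2}$ is two-dimensional while $\BU_q(\uj^+)_{\alpha_1+\alpha_2}=\mathbb Q(q)\,E_{\alpha_1+\alpha_2}$ is one-dimensional. Your fallback via Lemma~\ref{straighteninglemma} is also incomplete: if $\alpha=\gamma_k$ with $k\le M$ and $i>M$, the intermediate monomials supplied by the lemma have indices in $\{k+1,\dots,i-1\}$, which generally includes indices $\le M$ (roots in $\Phi_J^+$), and a pure weight argument cannot exclude them. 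The paper circumvents this by using the braid-group automorphisms to pass to auxiliary subalgebras $U^+[s_\beta w_J]$ and $U^+[w_J s_{w_J^{-1}(\beta)}]$ (with $\beta=-w_{0,J}(\alpha)\in J$) in which the relevant root vector $E_\beta$ sits at the \emph{first}, respectively \emph{last}, position of the PBW ordering; there Lemma~\ref{straighteninglemma} gives the desired containment directly, and one translates back through $T_{w_{0,J}}$. This braid-group maneuver, together with the identities $E_\alpha=T_{w_{0,J}s_\beta}(E_\beta)$ and $F_\alpha=-T_{w_{0,J}}(E_\beta K_\beta)$, is the key mechanism your sketch is missing.
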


\begin{proof} Part (b) follows from part (a)
since, for $x \in \BU_q(\pj)$, $\adw(x) =
\omega\circ\ad(\omega(x))\circ \omega$.

We prove part (a). Since $\BU_q(\pj^+)$ is
generated as an algebra by $\{E_\al, K_{\al}, K_{\al}^{-1} : \al \in
\Pi\} \cup \{F_{\al} : \al \in J\}$, it suffices to show that
$\BU_q(\uj^+)$ is stable under the action of these elements. By
(\ref{ad}), $\text{Ad}(K_{\al})$ and $\text{Ad}(K_{\al}^{-1})$
simply act by scalar multiplication on the weight spaces which span
$\BU_q(\uj^+)$. Hence $\BU_q(\uj^+)$ is stable under
$\ad(K_{\al}^{\pm 1})$. For $\al \in \Phi^+ \backslash \Phi_J^+$, the
stability under $\ad(E_{\al})$ follows from the fact that
$\BU_q(\uj^+)$ is an algebra. It remains to prove stability under
$\ad(F_{\al})$ and $\ad(E_{\al})$ for $\al \in J$. Fix $\al \in J$
and set $\be = -w_{0,J}(\al) \in J$.

By Section \ref{subalgebrauJ}, $\BU_q(\uj^+) =
T_{w_{0,J}}(U^+[w_J])$. Given a reduced expression $w_J =
s_{\eta_1}s_{\eta_2}\cdots s_{\eta_t}$, $U^+[w_J]$ is spanned by
monomials of ordered root vectors:
$$
E_{\eta_1}, T_{s_{\eta_1}}(E_{\eta_2}), \dots, T_{s_{\eta_1}\cdots
s_{\eta_{t-1}}}(E_{\eta_t}).
$$
Since $w_J = w_{0,J}w_0$, $w_J^{-1}(\be)\in\Pi$. Consider also
$U^+[w_Js_{w^{-1}_J(\beta)}]$. Since $w_Js_{w^{-1}_J(\be)} =
s_{\be}w_J$, by (\ref{distinguished}), $\ell(w_Js_{w^{-1}_J(\be)}) =
\ell(w_J) + 1$. Therefore, the ordered root vectors defining
$U^+[w_Js_{w^{-1}_J(\be)}]$ are
$$
E_{\eta_1}, T_{s_{\eta_1}}(E_{\eta_2}), \dots, T_{s_{\eta_1}\cdots
s_{\eta_{t-1}}}(E_{\eta_t}), T_{w_J}(E_{w^{-1}_J(\be)}).
$$
By \cite[Prop. 8.20]{Jan3} $T_{w_J}(E_{w^{-1}_J(\be)}) =
E_{w_J(w_J^{-1}(\be))} = E_{\be}$. Now $U^+[w_J]$ is a subalgebra of
$U^+[w_Js_{w_J^{-1}(\be)}]$ spanned by monomials not involving the
last root vector $E_{\be}$. Moreover, by Lemma
\ref{straighteninglemma}, since $E_{\be}$ appears last in the
ordering of root vectors in $U^+[w_Js_{w_J^{-1}(\be)}]$,
\begin{equation}\label{soib1}
\text{if } u \in U^+[w_J] \subset U^+[w_Js_{w_J^{-1}(\be)}] \text{
is a monomial}, \text{ then } uE_{\be} -
q^{\langle\text{wt}(u),\be\rangle} E_{\be}u \in U^+[w_J],
\end{equation}
where $\text{wt}(u)$ denotes the weight of $u$.

Now consider the subalgebra $U^+[s_{\be}w_J]$. The ordered root
vectors defining $U^+[s_{\be}w_J]$ are
$$
E_{\be}, T_{s_{\be}}(E_{\eta_1}),
T_{s_{\be}}(T_{s_{\eta_1}}(E_{\eta_2})), \dots,
T_{s_{\be}}(T_{s_{\eta_1}\cdots s_{\eta_{t-1}}}(E_{\eta_t})).
$$
Then $T_{s_{\be}}(U^+[w_J])$ is evidently a subalgebra of
$U^+[s_{\be}w_J]$ spanned by monomials in all but the first root
vector $E_{\be}$. Moreover, by Lemma \ref{straighteninglemma}, since
$E_{\be}$ occurs first in the root ordering,
\begin{equation}\label{soib2}
\text{if } u \in U^+[w_J] \text{ is a monomial}, \text{ then }
E_{\be}T_{s_{\be}}(u) -
q^{\langle\text{wt}(T_{s_{\be}}(u)),\be\rangle}
T_{s_{\be}}(u)E_{\be} \in T_{s_{\be}}(U^+[w_J]).
\end{equation}

We now show that $\ad(F_{\al})$ preserves $\BU_q(\uj^+)$. Since
$\al,\be\in\Pi$, by \cite[Prop. 8.20]{Jan3}, 
$$F_{\al} =
T_{w_{0,J}s_{\be}}(F_{\be}),$$
while  \cite[8.14(3)]{Jan3} gives that
 $$F_{\be} =
-T_{s_{\be}}(E_{\be})K_{\be}^{-1} = -T_{s_{\be}}(E_{\be}K_{\be}).$$
Combining these equalities  gives $$F_{\al} =
T_{w_{0,J}s_{\be}}(-T_{s_{\be}}(E_{\be}K_{\be})) = -
T_{w_{0,J}}(E_{\be}K_{\be}).$$

Let $T_{w_{0,J}}(u)$ for $u \in U^+[w_J]$ be an arbitrary monomial
element of $\BU_q(\uj^+)$. Then we have
$$
\begin{array}{rll}
\ad(F_{\al})(T_{w_{0,J}}(u)) &= (F_{\al}T_{w_{0,J}}(u) -
T_{w_{0,J}}(u)F_{\al})K_{\al}
                & \text{ by } (\ref{ad})\\
    &= (-T_{w_{0,J}}(E_{\be}K_{\be})T_{w_{0,J}}(u) + T_{w_{0,J}}(u)T_{w_{0,J}}(E_{\be}K_{\be}))K_{\al}
            & \text { from above }\\
    &= T_{w_{0,J}}((uE_{\be}K_{\be}- E_{\be}K_{\be}u)K_{\be}^{-1})
            & \text{\cite[8.18(3)]{Jan3}}\\
    &= T_{w_{0,J}}(uE_{\be} - E_{\be}K_{\be}uK_{\be}^{-1}) & \\
    &= T_{w_{0,J}}(uE_{\be} - q^{\langle\text{wt}(u),\be\rangle}E_{\be}u) & \\
    &\in T_{w_{0,J}}(U^+[w_J]) = \BU_q(\uj^+) & \text{ by } (\ref{soib1})
\end{array}
$$
as claimed.

Lastly, we consider $E_{\al}$. Again $\al,\be\in\Pi$, so $E_{\al} =
T_{w_{0,J}s_{\be}}(E_{\be})$.
Let $T_{w_{0,J}}(u)$ for $u \in U^+[w_J]$ be an arbitrary monomial
element of $\BU_q(\uj^+)$. Then we have
$$
\begin{array}{rll}
\ad(E_{\al})(T_{w_{0,J}}(u)) &= E_{\al}T_{w_{0,J}}(u) -
            K_{\al}T_{w_{0,J}}(u)K_{\al}^{-1}E_{\al} &\text{ by } (\ref{ad})\\
    &= T_{w_{0,J}s_{\be}}(E_{\be}) T_{w_{0,J}}(u) -
            K_{\al}T_{w_{0,J}}(u)K_{\al}^{-1}T_{w_{0,J}s_{\be}}(E_{\be}) &\text{ from above }\\
    &= T_{w_{0,J}s_{\be}}(E_{\be}T_{s_{\be}}(u)- K_{\be}T_{s_{\be}}(u)K_{\be}^{-1}E_{\be}) &\text{\cite[8.18(3)]{Jan3}}\\
    &= T_{w_{0,J}s_{\be}}(E_{\be}T_{s_\be}(u) -
            q^{\langle\text{wt}(T_{s_{\be}}(u)),\be\rangle} T_{s_{\be}}(u)E_{\be}) &\\
    &\in T_{w_{0,J}s_{\be}}(T_{s_{\be}}(U^+[w_J])) = T_{w_{0,J}}(U^+[w_J]) = \BU_q(\uj^+)
        &\text{ by } (\ref{soib2})
\end{array}
$$
as claimed.
\end{proof}

The definitions of $\ad$ and $\adw$ now give the following.

\begin{cor}\label{corA} The algebra $\BU_q(\uj)$ is normal in $\BU_q(\pj)$ and the
algebra $\BU_q(\uj^+)$ is normal in $\BU_q(\pj^+)$. Furthermore,
normality also holds for the specializations $U_{\zeta}(\uj) \subset
U_{\zeta}(\pj)$, $u_{\zeta}(\uj) \subset u_{\zeta}(\pj)$, and
$\Uz(\uj) \subset \Uz(\pj)$ (as well as for the $+$-versions). Also,
$U_\zeta({\mathfrak l}_J)\cong U_\zeta({\mathfrak
p}_J)//U_{\zeta}({\mathfrak u}_J)$, ${u}_\zeta({\mathfrak
l}_J)\cong { u}_\zeta({\mathfrak p}_J)//u_{\zeta}({\mathfrak u}_J)$,
and ${\mathcal U}_\zeta({\mathfrak l}_J)\cong {\mathcal
U}_\zeta({\mathfrak p}_J)//{\mathcal U}_{\zeta}({\mathfrak u}_J)$.
\end{cor}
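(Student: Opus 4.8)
The strategy is to prove, for each of the inclusions named in the statement, the single identity $BA_+ = A_+B$ (normality, in the sense of Section~\ref{notation}) together with the identification of the quotient $B//A$ with the appropriate Levi algebra; all the assertions of the corollary --- the $\pm$-versions over $\BU_q$, the three specializations, and the three quotient isomorphisms --- will then fall out of one argument applied level by level.

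First I would record the routine half. For any Hopf algebra $H$ and subalgebra $A$ with augmentation ideal $A_+$, one has $xa = \sum\ad(x_{(1)})(a)\,x_{(2)}$ for $x\in H$, $a\in A$, and $\epsilon(\ad(x)(a)) = \epsilon(x)\epsilon(a)$; hence if $\ad(x)$ preserves $A$ it preserves $A_+$, so $\ad$-stability of $A$ forces $HA_+\subseteq A_+H$. Applying this with $H = \BU_q(\pj^+)$, $A = \BU_q(\uj^+)$ and invoking Proposition~\ref{adjointprop}(a) gives $\BU_q(\pj^+)\,A_+\subseteq A_+\,\BU_q(\pj^+)$. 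For the pair $\BU_q(\uj)\subseteq\BU_q(\pj)$ I would run the same argument with $\ad$ replaced by $\adw$: by the proof of Proposition~\ref{adjointprop}, $\adw$ is the genuine adjoint action for the Hopf structure on $\BU_q$ obtained by twisting through the involution $\omega$, and since $\BU_q(\pj^+)$ is a Hopf subalgebra with $\omega(\BU_q(\pj)) = \BU_q(\pj^+)$, the algebra $\BU_q(\pj)$ is a Hopf subalgebra for the $\omega$-twisted structure; Proposition~\ref{adjointprop}(b) then gives $\BU_q(\pj)\,(\BU_q(\uj))_+\subseteq(\BU_q(\uj))_+\,\BU_q(\pj)$.

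For the reverse inclusion I would use the antipode $S$ of $\BU_q$. One checks on generators that $S$ stabilises both $\BU_q(\pj)$ and $\BU_q(\pj^+)$, and that for each root vector $F_\gamma$ (resp. $E_\gamma$) one has $S(F_\gamma)\in\BU_q^0 F_\gamma$; combined with Lemma~\ref{straighteninglemma}, which keeps reordered products of the $F_\gamma$, $\gamma\in\Phi^+\setminus\Phi_J^+$, inside $\BU_q(\uj)$ and preserves their (nonzero) total weight, this shows $S\big((\BU_q(\uj))_+\big)\subseteq\BU_q^0\,(\BU_q(\uj))_+$, with the analogous statement for $\BU_q(\uj^+)$. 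Since $\BU_q^0\subseteq\BU_q(\pj)$ and conjugation by $\BU_q^0$ preserves $(\BU_q(\uj))_+$, it follows that the left ideals $\BU_q(\pj)\,S\big((\BU_q(\uj))_+\big)$ and $\BU_q(\pj)\,(\BU_q(\uj))_+$ coincide, and similarly on the right. Applying the anti-automorphism $S$ to the inclusion of the previous paragraph therefore yields $(\BU_q(\uj))_+\,\BU_q(\pj)\subseteq\BU_q(\pj)\,(\BU_q(\uj))_+$, the reverse containment, so $\BU_q(\uj)$ is normal in $\BU_q(\pj)$ (and likewise $\BU_q(\uj^+)$ in $\BU_q(\pj^+)$). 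For the quotients, the PBW bases of Sections~\ref{rootvectorsandPBW} and~\ref{subalgebrauJ} give a vector-space factorisation $\BU_q(\pj) = \BU_q(\uj)\cdot\BU_q(\lj)$, under which $BA_+ = A_+B$ identifies with $(\BU_q(\uj))_+\otimes\BU_q(\lj)$; hence the composite $\BU_q(\lj)\hookrightarrow\BU_q(\pj)\twoheadrightarrow\BU_q(\pj)//\BU_q(\uj)$ is an algebra isomorphism.

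Finally I would specialise. The PBW bases descend to the $\mathcal A$-forms and, after base change to $\mathbb C$ and passage to the quotients defining $U_\zeta$, $u_\zeta$ and $\Uz$, to PBW bases of $U_\zeta(\uj)$, $u_\zeta(\uj)$, $\Uz(\uj)$ (Section~\ref{subalgebrauJ}); the operators $\ad$, $\adw$ and $S$ reduce to the corresponding operators on the specialised algebras, so the argument above applies verbatim at each level, giving the stated normality and the isomorphisms $U_\zeta(\lj)\cong U_\zeta(\pj)//U_\zeta(\uj)$, $u_\zeta(\lj)\cong u_\zeta(\pj)//u_\zeta(\uj)$, $\Uz(\lj)\cong\Uz(\pj)//\Uz(\uj)$. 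The step I expect to require the most care is exactly this specialisation: that the two-sided ideal $\BU_q(\pj)(\BU_q(\uj))_+$ and the PBW factorisation survive adjoining the relations $K_\alpha^l = 1$ and tensoring up to $\mathbb C$, that the augmentation ideal of each specialised $\uj$-algebra is the image of the one upstairs, and --- because $\Uz$ is a quotient of $\mathcal U_{\mathbb C}$ rather than a subalgebra of $U_\zeta$ --- that the central quotient defining $\Uz$ is compatible with $\Uz(\pj) = \Uz(\uj)\,\Uz(\lj)$. A secondary nuisance is pinning down the precise form of $S$ on the higher root vectors $F_\gamma$, $E_\gamma$, and confirming that $\omega$ really carries $\BU_q(\uj^+)$ onto $\BU_q(\uj)$ at the level of these PBW subalgebras.
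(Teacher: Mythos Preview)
Your argument is correct, but it is considerably more elaborate than what the paper intends. The paper's entire proof is the sentence ``The definitions of $\ad$ and $\adw$ now give the following,'' and the point is that \emph{both} inclusions $BA_+\subseteq A_+B$ and $A_+B\subseteq BA_+$ fall out of the explicit generator formulas~(\ref{ad}) and~(\ref{adw}) by simple rearrangement, with no appeal to the antipode. For example, from $\adw(F_\alpha)(m)=F_\alpha m-K_\alpha^{-1}mK_\alpha F_\alpha$ one reads off $F_\alpha m\in A_+ + A_+F_\alpha\subseteq A_+B$ (giving $BA_+\subseteq A_+B$ on this generator), and substituting $m\mapsto K_\alpha m K_\alpha^{-1}$ the same identity rearranges to $mF_\alpha = F_\alpha K_\alpha m K_\alpha^{-1}-\adw(F_\alpha)(K_\alpha m K_\alpha^{-1})\in BA_+$ (giving $A_+B\subseteq BA_+$). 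The $E_\alpha$ and $K_\alpha^{\pm1}$ cases are equally direct. Since these are generator-level identities over $\mathcal A$, the specializations and the PBW-based quotient identifications follow as you say.

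Your route through the antipode is not wrong, but it trades a two-line computation for exactly the technical worry you flag at the end: controlling $S(F_\gamma)$ for non-simple $\gamma$, and hence showing $S\big((\BU_q(\uj))_+\big)\subseteq\BU_q^0\,(\BU_q(\uj))_+$. This can be done, but it is genuine extra work that the paper's direct rearrangement avoids entirely. The moral is that when the adjoint action is given by explicit commutator-type formulas on generators (as in~(\ref{ad})--(\ref{adw})), stability under $\ad$ already encodes both directions of normality without passing through $S$.
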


Recall the inclusion of $\mathcal A$-forms ${\mathcal
U}_q^{\mathcal A} \subseteq \BU_q^{\mathcal A}$ mentioned in \S2.2. While it is generally
false that ${\mathcal U}_q^{\mathcal A}$ is stable under the adjoint
action of $\BU_q^{\mathcal A}$ on itself, this property does hold
after base-change from ${\mathcal A}$ to a larger algebra ${\mathcal
B}$. More precisely, given $l$ satisfying Assumption \ref{assumption}, let $f_l(x) \in {\mathbb Q}[x]$
denote the minimal polynomial for a primitive $l$th root of unity.
Set $\mathcal{B} := {\mathbb Z}[q,q^{-1}]_{(\langle
f_l(q)\rangle)}$, i.e., the local ring determined by the maximal
ideal $\langle f_l(q)\rangle$. Defining $\BU_q^{\mathcal B}={\mathcal B}\otimes_{\mathcal A}\BU_q^{\mathcal A}$ and
${\mathcal U}_q^{\mathcal B}={\mathcal B}\otimes_{\mathcal A}{\mathcal U}_q^{\mathcal A}$,
there is an inclusion
${\mathcal U}_q^{\mathcal B} \subset U_q^{\mathcal B}$.

\begin{lem} \cite[Prop. 2.9.2(i)]{ABG}\label{ABGlemma} The adjoint action of $\BU_q^{\mathcal B}$ stabilizes
${\mathcal U}_q^{\mathcal B}$. That is,
$$\ad(\BU_q^{\mathcal B})({\mathcal U}_q^{\mathcal B})\subseteq {\mathcal U}_q^{\mathcal B} \hskip.5in \text{ and } \hskip.5in
\adw(\BU_q^{\mathcal B})({\mathcal U}_q^{\mathcal B}) \subseteq
{\mathcal U}_q^{\mathcal B}.$$ Hence, the adjoint action (either
$\ad$ or $\adw$) of $U_{\zeta}$ defines an action on ${\mathcal
U}_{\zeta}$.
\end{lem}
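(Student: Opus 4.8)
The plan is to reduce the lemma to a statement already available in the literature, namely \cite[Prop.~2.9.2(i)]{ABG}, and then to extract the statement about $\Uz$ from it by a base-change argument. First I would recall that by construction $\BU_q^{\mathcal B}$ and ${\mathcal U}_q^{\mathcal B}$ are obtained from the ${\mathcal A}$-forms $\BU_q^{\mathcal A}$ and ${\mathcal U}_q^{\mathcal A}$ by tensoring up along the flat ring extension ${\mathcal A}\hookrightarrow{\mathcal B}$, so that ${\mathcal U}_q^{\mathcal B}$ really is an ${\mathcal B}$-subalgebra of $\BU_q^{\mathcal B}$, and the adjoint actions $\ad$ and $\adw$ on $\BU_q$ restrict to ${\mathcal B}$-linear actions of $\BU_q^{\mathcal B}$ on itself (this follows from the formulas (\ref{ad}) and (\ref{adw}), since all the structure constants involved lie in ${\mathbb Z}[q,q^{-1}]\subseteq{\mathcal B}$). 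The content of \cite[Prop.~2.9.2(i)]{ABG} is precisely that, over the enlarged ring ${\mathcal B}$ (where the relevant quantum integers $[d_\alpha]$ become invertible, so that the divided-power generators $E_\alpha^{(m)}$, $F_\alpha^{(m)}$ lie in the span of ordinary monomials with ${\mathcal B}$-coefficients), the De~Concini--Kac form ${\mathcal U}_q^{\mathcal B}$ is carried into itself under $\ad(\BU_q^{\mathcal B})$. By twisting with the involution $\omega$ (which preserves both ${\mathcal U}_q^{\mathcal B}$ and $\BU_q^{\mathcal B}$, being defined on generators over ${\mathbb Z}[q,q^{-1}]$), the same holds for $\adw$; this gives the two displayed inclusions.

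For the final sentence, I would base-change from ${\mathcal B}$ to ${\mathbb C}$ along the homomorphism ${\mathcal B}\to{\mathbb C}$, $q\mapsto\zeta$, which is well defined since $\langle f_l(q)\rangle$ is exactly the maximal ideal of ${\mathcal B}$ killed by this map. Applying $-\otimes_{\mathcal B}{\mathbb C}$ to the inclusion ${\mathcal U}_q^{\mathcal B}\subseteq\BU_q^{\mathcal B}$ and to the $\ad$-stability statement, and then passing to the quotients defining $U_\zeta$ and ${\mathcal U}_\zeta$ (by $K_\alpha^l-1$), one obtains that $\ad(U_\zeta)$ and $\adw(U_\zeta)$ stabilize the image of ${\mathcal U}_q^{\mathcal B}\otimes_{\mathcal B}{\mathbb C}$ inside the appropriate completion. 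The one subtlety here is that ${\mathcal U}_\zeta$ is \emph{not} literally a subalgebra of $U_\zeta$; what one actually gets is a well-defined adjoint action of $U_\zeta$ on ${\mathcal U}_\zeta$ by transport of structure through the base-changed inclusion together with Corollary~\ref{corA} and the identification $u_\zeta\cong{\mathcal U}_\zeta//{\mathcal Z}$. I would phrase this last step carefully as: the $\ad$- and $\adw$-stability of ${\mathcal U}_q^{\mathcal B}$ in $\BU_q^{\mathcal B}$ descends, after specialization, to a well-defined action of $U_\zeta$ on ${\mathcal U}_{\mathbb C}$ compatible with the ideal $\langle K_\alpha^l-1\rangle$, hence to an action on ${\mathcal U}_\zeta$.

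I expect the main obstacle to be purely expository rather than mathematical: the essential commutative-algebra and braid-group combinatorics are black-boxed in \cite[Prop.~2.9.2(i)]{ABG}, so the real work is (i) verifying that the hypotheses of that proposition match our setup (in particular that our ${\mathcal B}$ is the ring for which the quantum integers $[d_\alpha]$, $d_\alpha\in\{1,2,3\}$, are units — this uses Assumption~\ref{assumption}, that $l$ is odd, prime to $3$ in type $G_2$, and not a bad prime, which guarantees $[d_\alpha]$ does not vanish at $q=\zeta$ and is therefore invertible in the local ring ${\mathcal B}$), and (ii) tracking the specialization $q\mapsto\zeta$ carefully enough to make precise what ``the adjoint action of $U_\zeta$ defines an action on ${\mathcal U}_\zeta$'' means, given that ${\mathcal U}_\zeta$ sits over $U_\zeta$ only through the quotient $u_\zeta$. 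Neither step requires new ideas; both are bookkeeping around the cited result, and I would keep the proof short by quoting \cite{ABG} directly and only spelling out the $\omega$-twist and the two base changes.
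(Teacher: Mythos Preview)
Your proposal is correct and matches the paper's approach exactly: the paper gives no proof of this lemma at all, simply citing \cite[Prop.~2.9.2(i)]{ABG} as the source, so the entire content is black-boxed in the reference. Your elaboration of the $\omega$-twist for the $\adw$ statement and the base-change argument for the final sentence is more than the paper provides, but is in the same spirit of bookkeeping around the cited result.
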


Let ${\mathcal Z} \subset \Uz$ be the central subalgebra such that
$\Uz//{\mathcal Z} \cong u_{\zeta}$ (cf. Section
\ref{quantumenvelopingalgebras}). In terms of root vectors,
${\mathcal Z}$ is the algebra generated by the elements $\{E_{\ga}^l,F_{\ga}^l :
\ga \in \Phi^+\}$. Given $J\subseteq \Pi$, define a central subalgebra $Z_J
\subset \Uz(\uj)$ as 
$$Z_J := {\mathcal Z} \cap \Uz(\uj).$$
 Clearly
$Z_J$ is the subalgebra generated by $\{F_{\ga}^l : \ga \in \Phi^{+}
\backslash \Phi_J^+\}$. Further,  $\Uz(\uj)//Z_J \cong
u_{\zeta}(\uj)$, leading to the following generalization of
\cite[Prop. 2.9.2]{ABG}.

\begin{cor}\label{corB} Under the induced $\adw$-action of $U_{\zeta}$ on $\Uz$ we
have the following:
\begin{itemize}
\item[(a)] $\adw(U_{\zeta}(\pj))$ stabilizes $\Uz(\uj)$, $Z_J$, and $u_{\zeta}(\uj)$.
\item[(b)] The action of $u_{\zeta}(\pj)$ on $Z_J$ is trivial.
\item[(c)] $\adw$ induces an action of $\BU(\pj)$ on $Z_J$.
\end{itemize}
\end{cor}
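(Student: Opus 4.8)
The plan is to obtain all three parts as formal consequences of the stability results already in hand, namely Lemma \ref{ABGlemma} (the $\adw$-action of $\BU_q^{\mathcal B}$ stabilizes ${\mathcal U}_q^{\mathcal B}$) together with Proposition \ref{adjointprop}(b) and its specializations in Corollary \ref{corA} (the $\adw$-action of $\BU_q(\pj)$ stabilizes $\BU_q(\uj)$, and likewise after specialization). For part (a), I would first note that $\Uz(\uj) = {\mathcal U}_\zeta \cap (\text{specialization of }\BU_q(\uj))$, so that $\Uz(\uj)$ is the intersection of two $\adw(U_\zeta(\pj))$-stable subspaces of $\Uz$: the subalgebra $\Uz$ itself is $\adw(U_\zeta)$-stable by Lemma \ref{ABGlemma}, while the image of $\BU_q(\uj)$ is $\adw(U_\zeta(\pj))$-stable by the specialized form of Proposition \ref{adjointprop}(b) recorded in Corollary \ref{corA}. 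An intersection of invariant subspaces is invariant, giving stability of $\Uz(\uj)$. For $Z_J = {\mathcal Z} \cap \Uz(\uj)$, the same argument applies once one knows ${\mathcal Z}$ is $\adw(U_\zeta)$-stable; this in turn follows because ${\mathcal Z}$ is generated by the $E_\ga^l, F_\ga^l$, and the $\adw$-action of $U_\zeta$ on $\Uz$ permutes the $\ell$th powers of root vectors up to the central toral scalars (this is exactly the content behind \cite[Prop. 2.9.2]{ABG} for $\mathfrak g$, which we are generalizing). Finally, stability of $u_\zeta(\uj)$ is automatic: $u_\zeta(\uj) = \Uz(\uj)//Z_J$ is a quotient of $\Uz(\uj)$ by the $\adw(U_\zeta(\pj))$-stable ideal generated by $(Z_J)_+$, so the action descends.

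For part (b), I would argue that $Z_J$ is generated by the central elements $F_\ga^l$ for $\ga \in \Phi^+ \setminus \Phi_J^+$, so it suffices to show $\adw(x)$ acts as $\epsilon(x)$ on each $F_\ga^l$ for $x \in u_\zeta(\pj)$. On weight grounds, $F_\ga^l$ has weight $-\ell\ga$, which lies in $\ell Q$; since every $U_\zeta^0$-weight occurring in $\adw$-action shifts weights by roots of $U_\zeta(\pj)$, and $\adw(F_\ga^l)$ must again land in $Z_J$ by part (a), one checks directly from the formulas \eqref{adw} that the $E_\al$ and $F_\al$ generators of $u_\zeta(\pj)$ act trivially on $F_\ga^l$ — the $E_\al m - m E_\al$ commutators vanish because $E_\al^\ell = 0$ forces the relevant brackets with $F_\ga^l$ into lower-degree terms that are then killed, while the toral part $K_\al^{\pm 1}$ acts by $\zeta^{\mp \ell\langle\ga,\al^\vee\rangle}$-type scalars, and $\zeta^\ell = 1$. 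More cleanly: the $\adw$-action of $U_\zeta(\pj)$ on $\Uz$ factors through $U_\zeta(\pj)//u_\zeta(\pj) \cong \BU(\pj)$ when restricted to any subspace on which $u_\zeta$ is forced to act semisimply with weights in $\ell X$, and $Z_J$ is visibly such a subspace since it is a Frobenius-twist of a classical enveloping algebra.

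Part (c) is then immediate: by part (b) the $\adw$-action of $U_\zeta(\pj)$ on $Z_J$ has $u_\zeta(\pj)$ in its kernel, and since $u_\zeta(\pj)$ is a normal Hopf subalgebra of $U_\zeta(\pj)$ with quotient $U_\zeta(\pj)//u_\zeta(\pj) \cong \BU(\lj) \text{-part}$... more precisely $U_\zeta(\pj)//u_\zeta(\pj) \cong \BU(\pj)$ (the Frobenius quotient of $U_\zeta(\pj)$), the action descends to an action of $\BU(\pj)$ on $Z_J$. I expect the only real work to be part (b) — specifically, the bookkeeping showing that the $\adw$-action of the unipotent generators $E_\al, F_\al$ ($\al \in J$, and the negative simple generators) genuinely kills each $F_\ga^l$ rather than merely mapping it to another element of $Z_J$; everything else is a formal intersection/quotient argument. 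The cleanest route to that bookkeeping is probably to invoke the De Concini–Kac structure theory of ${\mathcal Z}$ (the $F_\ga^l$ span a polynomial subalgebra on which $\BU_q^{\mathcal B}$ acts by the classical adjoint action after Frobenius untwisting, \cite[Section 3]{DK}), restricted to the $\Phi^+ \setminus \Phi_J^+$ root vectors, rather than a direct commutator computation.
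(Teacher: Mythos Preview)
Your overall strategy is right and matches the paper's (implicit) approach: the corollary is stated without proof as a direct parabolic generalization of \cite[Prop.~2.9.2]{ABG}, and the ingredients you name---Lemma~\ref{ABGlemma}, Proposition~\ref{adjointprop}(b)/Corollary~\ref{corA}, and the definition $Z_J = {\mathcal Z}\cap\Uz(\uj)$---are exactly what is needed. Part (a) goes through as an intersection argument carried out at the $\mathcal B$-level and then specialized; part (c) is formal from (b) and the parabolic Frobenius quotient $U_\zeta(\pj)//u_\zeta(\pj)\cong\BU(\pj)$.

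There is, however, a genuine error in your mechanism for part (b). You claim the commutators $E_\al F_\ga^l - F_\ga^l E_\al$ vanish ``because $E_\al^\ell = 0$ forces the relevant brackets with $F_\ga^l$ into lower-degree terms that are then killed.'' This is not the reason: the relation $E_\al^l=0$ holds in $U_\zeta$, not in $\Uz$, and in any case says nothing about a single bracket $[E_\al,F_\ga^l]$. The correct argument is simpler and you already have the key fact in hand: $F_\ga^l$ lies in ${\mathcal Z}$, which is by definition a \emph{central} subalgebra of $\Uz$. Since the generators $E_\al,F_\al,K_\al^{\pm1}$ of $u_\zeta(\pj)$ all lie in ${\mathcal U}_q^{\mathcal B}$, the $\adw$-formulas \eqref{adw} specialize to formulas computed inside $\Uz$; for central $m$ they give $\adw(E_\al)(m)=[E_\al,m]K_\al^{-1}=0$, $\adw(F_\al)(m)=F_\al m - K_\al^{-1}mK_\al F_\al = mF_\al - mF_\al = 0$, and $\adw(K_\al)(m)=K_\al mK_\al^{-1}=m$. (Your weight observation $\zeta^{l\langle\ga,\al\rangle}=1$ is correct but already subsumed by centrality.) This one-line centrality argument replaces both your $E_\al^l=0$ heuristic and your ``more cleanly'' paragraph, which as written is circular. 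Your fallback to the De Concini--Kac structure theory of ${\mathcal Z}$ would also work but is more than is needed here.
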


We conclude this section with some remarks on the action of a Hopf
algebra $H$ on an augmented algebra $A$. More precisely, $A$ is defined to be a left (respectively,
right) $H$-module algebra provided that $A$ is a left (respectively, right) $H$-module
such that:
\begin{itemize}
 \item[(i)] $h\cdot(ab)=\sum(h_{(1)}\cdot a)(h_{(2)}\cdot b)$ (respectively,
$(ab)\cdot h=\sum(a\cdot h_{(1)})(b\cdot h_{(2)}))$ for $a,b\in A, h\in H$,  putting $\Delta(h)=\sum h_{(1)}\otimes h_{(2)}$; 
\item[(ii)]
$h\cdot1_A=\epsilon(h)1_A$ (respectively, $1_A\cdot h=\epsilon(h)1_A$) for $h\in H$;  
\item[(iii)] $\epsilon_A(h\cdot a)=\epsilon(h)\epsilon_A(a)$ (respectively, $\epsilon_A(a\cdot h)=\epsilon_A(a)\epsilon(h)$),
for $a\in A, h\in H$, where $\epsilon_A$ is the augmentation map on $A$. 
\end{itemize}

The notion of a
left $H$-module algebra for a not necessarily augmented algebra $A$ is studied further in
\cite[4.1.1,
4.1.9]{Mon}.

For example, $H$ is a right $H$-module algebra defined by 
$$ a\cdot h=\ad_r(h)(a)=\sum S(h_{(1)})ah_{(2)}, \quad a,h\in H.$$
Thus, $\ad_r(hh')=\ad_r(h')\ad_r(h)$, $h,h'\in H$. In the context of ${\mathbb U}_q$, we have
\begin{equation}\label{rightmodulealgebra}
\begin{cases} \ad_r(E_\alpha)=\adw(K_\alpha^{-1}E_\alpha)\\
\ad_r(K_\alpha^{\pm 1})=\adw(K^{\mp 1})\\
\ad_r(F_\alpha)=-\adw(F_\alpha K_\alpha).
\end{cases}
\end{equation}
With these relations, there are evident analogues of the above lemma and its corollary. In particular, the augmented
algebra ${\mathcal U}_q$ is a left (respectively, right) module algebra for $U_\zeta$ using $\ad$ or $\adw$ (respectively,
$\ad_r$). Under the right action of $U_\zeta$ on ${\mathcal U}_\zeta$, Corollary \ref{corB} holds replacing $\adw$ throughout
by $\ad_r$.

Now let $A$ be a right $H$-module algebra, and let $V$ be
a left $A$-module. A left action $v\mapsto h\cdot v$ of $H$ on $V$ is called compatible with that
of $A$ provided that $V$ is a left $H$-module with this action and  $a(h\cdot v)= \sum h_{(1)}\cdot (a\cdot h_{(2)})v$
for $a\in A,v\in V, h\in H$.  If $A$ is a left $H$-module algebra and $V$ is a left $H$-module, the action
is compatible provided the last condition is replaced by $h\cdot (av)=\sum (h_{(1)}\cdot a)(h_{(2)}\cdot v)$.


\section{Finite dimensionality of cohomology
groups}\label{finitedimofcohogroups}

\setcounter{equation}{0}
\setcounter{theorem}{0}
\renewcommand{\thetheorem}{\thesection.\arabic{theorem}}
\renewcommand{\theequation}{\thesection.\arabic{equation}}

 For any field $k$, given
augmented $k$-algebras $A \subset B$ with $A$ normal in $B$ (cf.
Sections \ref{notation} and \ref{adjointaction}), there exists a Lyndon-Hochschild-Serre
(LHS) spectral sequence (cf. \cite[\S 5.2, 5.3]{GK}).
More precisely, we have the following result.\footnote{Recall that $\opH^\bullet(A,M):=\Ext^\bullet_A(k,M)$.}

\begin{lem}\label{ABG} Assume
that $B$ is flat as a right $A$-module.
 For any left $B$-module $M$,
there is a natural action of the quotient algebra $B//A$ on $\opH^{\bullet}(A,M)$ which
gives rise to a (first quadrant) spectral sequence
$$E_2^{i,j}= \opH^i(B//A, \opH^j(A,M)) \Rightarrow \opH^{i+j}(B, M).$$
\end{lem}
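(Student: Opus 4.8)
The plan is to construct the LHS spectral sequence as a Grothendieck spectral sequence for the composition of two functors on the category of left $B$-modules, exactly as in the classical Hochschild--Serre situation but keeping careful track of the fact that $A$ is only required to be \emph{normal} (not central or co-central) in $B$. First I would observe that since $A$ is normal in $B$, the quotient $\bar B := B//A = B/BA_+$ is an augmented algebra and there is an algebra surjection $B \onto \bar B$. For a left $B$-module $M$, the space $\Hom_A(k, M) = M^A$ (the $A$-invariants, i.e.\ elements killed by $A_+$) carries a natural left $\bar B$-module structure: the point is that $BA_+ = A_+B$ guarantees $B\cdot M^A \subseteq M^A$ (if $a_+\in A_+$, $b\in B$, $m\in M^A$, write $a_+ b = \sum b_i a_i'$ with $a_i'\in A_+$, so $a_+(bm) = \sum b_i(a_i' m) = 0$), and $A_+$ acts as zero on $M^A$ by definition, so the action factors through $\bar B$. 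Thus the functor $M\mapsto M^A$ from $B$-mod to $\bar B$-mod is well defined, and $\opH^\bullet(A,M) = \Ext^\bullet_A(k,M)$ acquires a $\bar B$-action by deriving this (one must check the $\bar B$-structure is natural in each degree; this follows by applying $M^A$ to an injective resolution of $M$ as a $B$-module, or by a dimension-shift argument).

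Next I would identify the composition of functors. The key algebraic identity is
$$
\Hom_B(N, M) \;\cong\; \Hom_{\bar B}\bigl(N,\, M^A\bigr) \qquad \text{for } N \text{ a } \bar B\text{-module, } M \text{ a } B\text{-module,}
$$
which is the $(B\to\bar B)$ base-change/restriction adjunction composed with the observation that any $\bar B$-module map into $M$ lands in $M^A$. Taking $N = k$ gives $\Hom_B(k,M)\cong \Hom_{\bar B}(k, M^A)$, i.e.\ the composite functor $(-)^{\bar B}\circ (-)^A$ on $B$-mod equals $(-)^B$. To get a Grothendieck spectral sequence I need the first functor $(-)^A\colon B\text{-mod}\to\bar B\text{-mod}$ to send injectives to objects that are acyclic for $(-)^{\bar B}$. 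Here is where the flatness hypothesis enters: since $B$ is flat as a right $A$-module, $B$ is free (or at least flat) over $A$ on the right, so an injective $B$-module $I$ restricts to an injective $A$-module — more precisely, $\Hom_A(-, I) = \Hom_B(B\otimes_A -, I)$ and $B\otimes_A(-)$ is exact by flatness, so $\Hom_A(-,I)$ is exact. Hence $\opH^j(A,I) = 0$ for $j>0$, and $I^A$ is then seen to be an injective $\bar B$-module (again via the base-change adjunction $\Hom_{\bar B}(-, I^A)\cong \Hom_B(\text{inflation of }-, I)$, which is exact). Standard homological algebra (Cartan--Eilenberg / the Grothendieck spectral sequence theorem) then produces a first-quadrant spectral sequence
$$
E_2^{i,j} = \opH^i\bigl(\bar B,\, \opH^j(A,M)\bigr)\ \Longrightarrow\ \opH^{i+j}(B,M),
$$
which is the assertion.

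The step I expect to be the main obstacle — and the one deserving the most care — is verifying that $(-)^A$ carries injective $B$-modules to $(-)^{\bar B}$-acyclic $\bar B$-modules, i.e.\ the interplay between the flatness hypothesis and the normality of $A$. Concretely: flatness of $B$ over $A$ on the right gives exactness of $B\otimes_A(-)$, which yields that injective $B$-modules are injective on restriction to $A$ (hence $A$-acyclic); but one must still check that the resulting $\bar B$-module $I^A$ is itself injective (equivalently $\bar B$-acyclic), and this requires that inflation $\bar B\text{-mod}\to B\text{-mod}$ be exact (clear, it is restriction along a surjection) and that its right adjoint on injectives behaves correctly. An alternative, perhaps cleaner, route that avoids asking whether $I^A$ is injective is to invoke the general theorem on the spectral sequence of a composite functor in the form requiring only that the first functor send injectives to objects acyclic for the second — and to prove that acyclicity directly by a dimension-shifting argument in $B$-mod, using that $\opH^\bullet(B,-)$ and $\opH^\bullet(\bar B, (-)^A)$ are both effaceable on injective $B$-modules. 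Either way, all the genuinely quantum-group-specific content (that $u_\zeta \unlhd U_\zeta$, that $\BU_q(\uj)\unlhd\BU_q(\pj)$, etc., together with the requisite flatness, which holds because these algebras are free modules over their normal subalgebras via PBW bases) is already in place from Corollaries \ref{corA} and \ref{corB}, so the proof here is purely formal homological algebra once normality and flatness are granted.
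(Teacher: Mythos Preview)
Your argument is correct and complete. The paper itself does not give a proof of this lemma; it cites \cite[\S 5.2, 5.3]{GK} for the spectral sequence and only adds a sentence explaining the $B//A$-action, namely via the identification $\opH^\bullet(A,M)\cong \Ext^\bullet_B(B//A,M)$ (which follows from $B//A\cong B\otimes_A k$ and the flatness hypothesis, by the same adjunction you use in Step~3), with the $B//A$-action coming from right multiplication of $B//A$ on itself in the first variable. Your description of the action --- via the functor $(-)^A\colon B\text{-mod}\to \bar B\text{-mod}$ --- is equivalent: the isomorphism $\Hom_A(k,M)\cong\Hom_B(B//A,M)$ intertwines your $\bar B$-action with theirs, and this persists on derived functors since both sides are computed from a $B$-injective resolution of $M$. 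Your Grothendieck-spectral-sequence setup (in particular the verification that $I^A$ is $\bar B$-injective via $\Hom_{\bar B}(-,I^A)\cong\Hom_B(\text{infl}(-),I)$) is the standard one and supplies exactly the details the paper omits.
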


In this result, the action of $B//A$ arises from the identification
$H^\bullet(A,M)\cong\Ext^\bullet_B(B//A,M)$. Then the right multiplication
action of $B//A$ on itself induces a left $B//A$-module structure on
$\Ext^\bullet_B(B//A,M)$. In some cases, the action of $B//A$ also
comes about through an action of $B//A$ on the reduced bar resolution $D_\bullet=D_\bullet(A)$ of
$k$. More precisely, suppose that $H$ is a Hopf algebra acting on the
augmented algebra $A$ on the  {\it right} as explained above. Then
$H$ acts on the right on $D_\bullet$. Here
$D_n=A\otimes_k A_{+}^{\otimes n}$; explicitly,
$$a\otimes[a_1|\cdots|a_n]\cdot h=\sum a\cdot h_{(1)}[a_1\cdot h_{(2)}|\cdots|a_n\cdot h_{(n+1)}].$$
This action commutes with the differentials $d_\bullet:D_n\to D_{n-1}$. Now suppose that $M$ is
a left $A$-module with a compatible left action of $H$.   The left action of $H$ on $M$ and the right action
of $H$ on $B_\bullet$ define a left $H$-action on the complex $\Hom_A(D_\bullet, M)$ computing $H^\bullet(A,M)$.
The $H$-action on $H^\bullet(A,M)$ can be interpreted in the context of Lemma \ref{ABG}, by putting
$B=H\# A$, the smash product of $H$ and $A$. (See \cite{Mon}.) Then $B$ is a flat right $A$-module, $M$ inherits
a natural $B$-module structure and $B//A\cong H$. It is easy to verify that the two actions of $H$ on
$H^\bullet(A,M)$ identify.\footnote{We are grateful for discussions on these issues with Chris Drupieski, who
pointed out that our original set-up with $H$ acting on the left of $A$ was not correct.}

We return to the situation of the previous section. Under the
$\adw$-action of $U_{\zeta}(\pj)$ on itself, $U_{\zeta}(\pj)$ admits
the structure of a Hopf module algebra. The action of $U_{\zeta}(\pj)$ on $\Uz(\pj)$ can be
extended to an action on the bar resolution which computes the
cohomology $\opH^{\bullet}(\Uz(\pj),{\mathbb C})$. Thus, there is
 a natural action of $U_{\zeta}(\pj)$ on $\opH^{\bullet}(\Uz(\pj),{\mathbb C})$
and further on $\opH^{\bullet}(\Uz(\uj),{\mathbb C})$. In
particular, $U^0_\zeta$ acts on these cohomology spaces.

\begin{lem}\label{cohomologyofA}
 For each nonnegative integer $n$, the cohomology space $\opH^n({\mathcal U}_\zeta({\mathfrak u}_J),{\mathbb C})$
  is a finite dimensional vector space.\end{lem}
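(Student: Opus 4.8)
The strategy is to compute $\opH^\bullet(\Uz(\uj),{\mathbb C})$ explicitly enough to see finite-dimensionality in each degree, using the PBW structure of $\Uz(\uj)$ together with the central subalgebra $Z_J$ and a spectral sequence argument. Recall from Section~\ref{subalgebrauJ} that $\Uz(\uj)$ has a PBW basis consisting of monomials $F_{\ga_{M+1}}^{a_{M+1}}\cdots F_{\ga_N}^{a_N}$, and from Section~\ref{adjointaction} that $Z_J=\langle F_\ga^l : \ga\in\Phi^+\setminus\Phi_J^+\rangle$ is a central (hence normal) subalgebra with $\Uz(\uj)//Z_J\cong u_\zeta(\uj)$, which is finite dimensional. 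The first step is to identify $Z_J$ as a polynomial algebra on the $F_\ga^l$ (this is exactly the De~Concini--Kac picture, cf.\ \cite{DK}, and follows from the straightening Lemma~\ref{straighteninglemma}: the $F_\ga^l$ are central, algebraically independent, and $\Uz(\uj)$ is free over $Z_J$ of rank $l^{\dim\uj}$ with basis the PBW monomials having all exponents $<l$). Consequently $\opH^\bullet(Z_J,{\mathbb C})$ is the Koszul-type exterior algebra $\Lambda^\bullet((\uj)^*)$ — finite dimensional in each degree, indeed a finite dimensional algebra overall since $Z_J$ is a polynomial ring in finitely many variables.

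\textbf{The spectral sequence step.} Since $Z_J$ is central in $\Uz(\uj)$, it is normal, and $\Uz(\uj)$ is free (hence flat) as a $Z_J$-module; so Lemma~\ref{ABG} applies with $A=Z_J$, $B=\Uz(\uj)$, giving a first-quadrant spectral sequence
$$E_2^{i,j}=\opH^i\bigl(u_\zeta(\uj),\opH^j(Z_J,{\mathbb C})\bigr)\Rightarrow \opH^{i+j}(\Uz(\uj),{\mathbb C}).$$
Here $\opH^j(Z_J,{\mathbb C})$ is finite dimensional for each $j$ and vanishes for $j$ large (it is concentrated in degrees $0\le j\le \dim\uj$). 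Since $u_\zeta(\uj)$ is a finite dimensional algebra, $\opH^i(u_\zeta(\uj),V)$ is finite dimensional for every finite dimensional $u_\zeta(\uj)$-module $V$ and every $i$. Therefore every $E_2^{i,j}$ is finite dimensional, only finitely many are nonzero in any fixed total degree $n=i+j$, and the abutment $\opH^n(\Uz(\uj),{\mathbb C})$ — being built from a finite filtration with finite dimensional subquotients $E_\infty^{i,n-i}$ — is finite dimensional. This gives the claim.

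\textbf{Alternative and anticipated obstacle.} One could avoid the spectral sequence entirely by using the PBW basis to build an explicit (reduced bar or Koszul-type) complex computing $\opH^\bullet(\Uz(\uj),{\mathbb C})$: since $\Uz(\uj)$ is, roughly speaking, a quantum affine space (an iterated skew polynomial ring by the Levendorski\u\i--Soibelman relations of Lemma~\ref{straighteninglemma}), it has finite global dimension equal to $\dim\uj$, and one can write down a length-$\dim\uj$ free resolution of ${\mathbb C}$ with finitely generated terms, whence each $\opH^n$ is finite dimensional and in fact vanishes for $n>\dim\uj$. The main subtlety — and the step I would be most careful about — is verifying that $Z_J$ really is a genuine polynomial subalgebra on which $\Uz(\uj)$ is free of the expected rank, i.e.\ controlling the interaction of the $l$th powers $F_\ga^l$ with the straightening relations of Lemma~\ref{straighteninglemma} (this is where Assumption~\ref{assumption} on $l$, and the invertibility hypotheses in Lemma~\ref{straighteninglemma}, get used); once that structural fact is in hand, the cohomological finiteness is a formal consequence of either argument above.
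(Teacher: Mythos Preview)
Your argument is correct and follows essentially the same route as the paper: the spectral sequence of Lemma~\ref{ABG} for $Z_J\unlhd \Uz(\uj)$, together with the identification of $\opH^\bullet(Z_J,{\mathbb C})$ as an exterior algebra and the finite-dimensionality of $u_\zeta(\uj)$, forces each $E_2^{i,j}$ to be finite dimensional with only finitely many nonzero in a given total degree. The paper adds one cosmetic step you omit (and do not need): since $Z_J$ is central, the $u_\zeta(\uj)$-action on $\opH^\bullet(Z_J,{\mathbb C})$ is trivial, so $E_2^{\bullet,\bullet}\cong \opH^\bullet(u_\zeta(\uj),{\mathbb C})\otimes\opH^\bullet(Z_J,{\mathbb C})$; but your formulation, which only uses that $\opH^j(Z_J,{\mathbb C})$ is finite dimensional as a coefficient module, is already sufficient for the conclusion.
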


\begin{proof}If $Z_J$ is the central subalgebra of ${\mathcal
U}_\zeta({\mathfrak u}_J)$ defined after Lemma \ref{ABGlemma},
${\mathcal U}_\zeta({\mathfrak u}_J)//Z_J\cong u_\zeta({\mathfrak
u}_J)$. Certainly, ${\mathcal U}_\zeta({\mathfrak u}_J)$ is flat (even free) over $Z_J$. By Lemma \ref{ABG}, there is a spectral sequence
$$E_2^{i,j}=\opH^i(u_\zeta({\mathfrak u}_J),\opH^j(Z_J,{\mathbb
C}))\Rightarrow \opH^{i+j}({\mathcal U}_\zeta({\mathfrak
u}_J),{\mathbb C}).$$
 The subalgebra $Z_J$ is central so it follows from \cite[Lemma 5.2.2]{GK} that
 the action of $u_\zeta({\mathfrak u}_J)$ on $\opH^\bullet (Z_J,{\mathbb
 C})$ is trivial, and
 $$E^{\bullet,\bullet}_2\cong \opH^\bullet(u_\zeta({\mathfrak
 u}_J),{\mathbb C})\otimes \opH^\bullet(Z_J,{\mathbb C}).$$
 Since $Z_J$ is the symmetric algebra based on the vector space
 ${\mathfrak u}_J^{[1]}$, we get that $\opH^\bullet(Z_J,{\mathbb
 C})\cong \Lambda^\bullet({\mathfrak u}_J^*)^{[1]}$. Moreover,
 $u_\zeta({\mathfrak u}_J)$ is finite dimensional, so
 $\opH^i(u_\zeta({\mathfrak u}_J),{\mathbb C})$ is finite dimensional
 for any integer $i$. The result follows because the cohomology
 $\opH^{n}({\mathcal U}_\zeta({\mathfrak u}_J),{\mathbb C})$ is
a subquotient of $\oplus_{i+j=n}E_2^{i,j}$ which is finite
dimensional.\end{proof}

 Also, the cohomology of ${\mathcal U}_\zeta({\mathfrak u}_J)$ can be computed by means
 of the reduced bar resolution (cf. \cite[Ch. X, \S 2]{Mac}). Although it is not clear from this
 point of view that the cohomology is finite dimensional in any
 homological degree, it does have a natural $U_\zeta^0$-action induced from the $\overline{\text{\rm Ad}}$-action
 on $U_\zeta({\mathfrak p}_J)$. But
 then the above lemma establishes it is finite dimensional, and this
 fact implies each cohomology space is a weight module for
 $U^0_\zeta$. We summarize this result as follows.

 \begin{cor} The cohomology $\opH^\bullet({\mathcal U}_\zeta({\mathfrak
 u}_J),{\mathbb C})$ is a weight module for $U^0_\zeta$. For any
 $\lambda\in X$, $\opH^\bullet({\mathcal U}_\zeta({\mathfrak
 u}_J),{\mathbb C})_\lambda$ is finite dimensional.\end{cor}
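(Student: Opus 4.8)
The plan is to use the observation, already recorded just above the statement, that $\opH^\bullet(\Uz(\uj),{\mathbb C})$ can be computed from the reduced bar resolution $D_\bullet=D_\bullet(\Uz(\uj))$ of ${\mathbb C}$, and to read off the $U_\zeta^0$-weight behaviour directly from that complex, invoking Lemma~\ref{cohomologyofA} only for finite-dimensionality in each homological degree. First I would pin down the module structure: by Corollary~\ref{corB} (in its $\ad_r$-version, as noted at the end of Section~2.7) $U_\zeta(\pj)$ acts on $\Uz(\uj)$ as a module algebra, and by the formalism recorded after Lemma~\ref{ABG} this action extends to a right action on the reduced bar resolution $D_\bullet$ commuting with the differentials, hence induces an action of $U_\zeta(\pj)$, and in particular of the subalgebra $U_\zeta^0$, on $\opH^\bullet(\Uz(\uj),{\mathbb C})$, agreeing with the $\overline{\text{Ad}}$-action referred to above.

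Next I would introduce the grading. Each PBW monomial $F_{\ga_{M+1}}^{(a_{M+1})}\cdots F_{\ga_N}^{(a_N)}$ spanning $\Uz(\uj)$ is a $U_\zeta^0$-weight vector of weight $-\sum_{i>M}a_i\ga_i$, so $\Uz(\uj)$ is $X$-graded with weights lying in $-Q^+$, and its augmentation ideal $\Uz(\uj)_+$ is spanned by weight vectors of weight in $-(Q^+\setminus\{0\})$. Hence $D_n=\Uz(\uj)\otimes(\Uz(\uj)_+)^{\otimes n}$ is $X$-graded, and since $D_n$ is free over $\Uz(\uj)$ the cochain complex $C^\bullet:=\Hom_{\Uz(\uj)}(D_\bullet,{\mathbb C})$ satisfies $C^n\cong((\Uz(\uj)_+)^{\otimes n})^*$ as an $X$-graded space. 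Since each of the $n$ tensor factors contributes height at least $1$, one gets $C^n_\lambda=0$ unless $\lambda\in Q^+$ and $n\le\operatorname{ht}(\lambda)$, and each $C^n_\lambda$ is finite-dimensional, only finitely many monomials having bounded height. The differentials are weight-homogeneous, so $\opH^\bullet(\Uz(\uj),{\mathbb C})$ inherits the $X$-grading.

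Finally I would conclude. Because $K_\alpha^l=1$ in $U_\zeta^0$, the operators $K_\alpha$ act as commuting semisimple transformations on each $\opH^n(\Uz(\uj),{\mathbb C})$, which is finite-dimensional by Lemma~\ref{cohomologyofA}; so the $X$-grading is a genuine decomposition into $U_\zeta^0$-weight spaces, and summing over $n$ exhibits $\opH^\bullet(\Uz(\uj),{\mathbb C})$ as a weight module for $U_\zeta^0$. For a fixed $\lambda\in X$, the cohomology $\opH^n(\Uz(\uj),{\mathbb C})_\lambda$ is a subquotient of $C^n_\lambda$, hence vanishes once $n>\operatorname{ht}(\lambda)$ (and for all $n$ if $\lambda\notin Q^+$), so $\opH^\bullet(\Uz(\uj),{\mathbb C})_\lambda=\bigoplus_{0\le n\le\operatorname{ht}(\lambda)}\opH^n(\Uz(\uj),{\mathbb C})_\lambda$ is a finite direct sum of finite-dimensional spaces.

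The only step requiring genuine care, the rest being bookkeeping with root-lattice heights, is the compatibility assertion in the first step: that the module-algebra action of $U_\zeta(\pj)$ on $\Uz(\uj)$ really does lift to a chain action on the reduced bar resolution and induces the stated action on cohomology. This is precisely the content of the discussion following Lemma~\ref{ABG}, applied with $H=U_\zeta(\pj)$ and $A=\Uz(\uj)$, so once it is invoked nothing further is needed; alternatively, one could avoid Lemma~\ref{cohomologyofA} altogether by observing that $C^\bullet_\lambda$ is already a bounded complex of finite-dimensional vector spaces, so that its cohomology is automatically finite-dimensional in each degree.
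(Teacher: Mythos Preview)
Your proposal is correct and follows essentially the same approach as the paper: the weight-module claim comes from finite-dimensionality in each degree (Lemma~\ref{cohomologyofA}) together with the semisimplicity of the $K_\alpha$-action, and the finiteness of $\opH^\bullet(\Uz(\uj),{\mathbb C})_\lambda$ comes from the height bound on the reduced bar complex, exactly as in the paper's one-line proof. Your write-up simply makes explicit the height argument the paper leaves implicit, and your closing remark that Lemma~\ref{cohomologyofA} is in fact dispensable (since each $C^n_\lambda$ is already finite-dimensional) is a valid sharpening that the paper does not state.
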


 \begin{proof}It remains only to establish the last statement. But
 if $\bar D_\bullet=D_\bullet({\mathcal U}_\zeta({\mathfrak u}_J))$ is the reduced bar-resolution, then, given any weight
 $\mu$, for large
 $n$, $(\bar D_n)_\mu=0$. \end{proof}


\section{Spectral sequences
 and the Euler characteristic}\label{spectralseqandEuler}

\setcounter{equation}{0}
\setcounter{theorem}{0}
\renewcommand{\thetheorem}{\thesection.\arabic{theorem}}
\renewcommand{\theequation}{\thesection.\arabic{equation}}

We study the cohomology of $\Uz(\uj)$ and $u_{\zeta}(\uj)$, using a filtration on $\Uz(\uj)$ (respectively, $u_{\zeta}(\uj)$)
introduced for $\Uz$ in \cite{DK}.

Let $A = {\mathcal U}_{\zeta}({\mathfrak u}_{J})$. By Section
\ref{subalgebrauJ}, $A$ has a basis of monomial elements
$$
\{F_{\ga_{M+1}}^{a_{M+1}}F_{\ga_{M+2}}^{a_{M+2}}\cdots
F_{\ga_N}^{a_N} : a_i \in \mathbb{N}\},
$$
where $N = |\Phi^+|$ and $M = |\Phi_J^+|$. For $\overline{a} :=
(a_{M+1},\cdots, a_N) \in \mathbb{N}^{N - M}$, set
$$
F_{\overline{a}} :=
F_{\ga_{M+1}}^{a_{M+1}}F_{\ga_{M+2}}^{a_{M+2}}\cdots
F_{\ga_N}^{a_N}.
$$
Place a total (lexicographical) ordering $\prec$ on $\mathbb{N}^{N -
M}$ as follows. Put $\overline{a} \prec \overline{b}$ if and only if
there exists $M + 1 \leq i \leq N$ such that $a_i < b_i$ and $a_j =
b_j$ for all $j > i$. With this ordering, one can define an
$\mathbb{N}^{N- M }$-filtration on $A$. Given $\overline{a} \in
\mathbb{N}^{N- M }$, let $A_{\overline{a}}$ be the subalgebra of $A$
generated by
$$
\{F_{\overline{b}} : \overline{b} \preceq \overline{a}\}.
$$
By Lemma \ref{straighteninglemma}, this is a multiplicative
filtration on $A$. That is, $A_{\overline{a}}\cdot A_{\overline{b}}
\subseteq A_{\overline{a}+\overline{b}}.$

Moreover, by Lemma \ref{straighteninglemma} again, the associated
graded algebra $\gr A$ is generated by $\{X_{\alpha} : \alpha\in
\Phi^{+}\backslash\Phi^{+}_{J}\}$ subject to the relations:
\begin{equation*}\label{relations}
X_{\alpha}\cdot X_{\beta}=\zeta^{\langle \alpha,\beta
\rangle}X_{\beta}\cdot X_{\alpha}\ \ \text{if $\alpha\prec \beta$}.
\end{equation*}
This filtration induces a filtration on $u_{\zeta}(\uj)$ such that
the algebra $\text{gr\,}u_{\zeta}({\mathfrak u}_{J})$ is also
generated by $\{X_{\alpha} : \alpha\in
\Phi^{+}\backslash\Phi^{+}_{J}\}$ subject to the above relations, in
addition to the condition:
\begin{equation*}\label{nilpotent}
X_{\alpha}^{l}=0\ \ \text{for $\alpha\in
\Phi^{+}\backslash\Phi_{J}^{+}$}.
\end{equation*}

Let $\Lambda^{\bullet}_{\zeta,J}$ be the graded algebra with
generators $\{x_{\alpha} : \alpha\in
\Phi^{+}\backslash\Phi^{+}_{J}\}$ where $\text{deg}(x_{\alpha})=1$
subject to the following relations:
\begin{equation*}\label{relations2}
x_{\alpha}\cdot x_{\beta}+\zeta^{-\langle \alpha,\beta
\rangle}x_{\beta}\cdot x_{\alpha} = 0 \ \ \text{if $\alpha \prec
\beta$};
\end{equation*}
\begin{equation*}\label{relations3}
x_{\alpha}^{2}=0\ \ \text{for $\alpha\in
\Phi^{+}\backslash\Phi_{J}^{+}$}.
\end{equation*}
There exists a graded (by degree) algebra isomorphism
$\opH^{\bullet}(\gr{\mathcal U}_{\zeta}({\mathfrak u}_{J}),{\mathbb
C}) \cong \Lambda^{\bullet}_{\zeta,J}$ (cf. \cite[Prop. 2.1]{GK}).
This is also an isomorphism of $U_{\zeta}^0$-modules, where
$\Lambda^\bullet_{\zeta,J}$ is regarded as a $U^0_\zeta$-module by
assigning $x_\alpha$ weight $\alpha$.

\begin{prop}\label{euler} (a) In the character group ${\mathbb Z}X$,
we have
 $$ \sum_{n =
0}^{\infty}(-1)^n\operatorname{ch}\opH^n(\Uz(\uj),{\mathbb C}) =
\sum_{n = 0}^{\infty}(-1)^n\operatorname{ch}\Lambda^n_{\zeta,J}.$$

(b) If $\lambda\in X$ is a weight of $U^0_\zeta$ in
$\opH^n({\mathcal U}_\zeta({\mathfrak u}_J),{\mathbb C})$, then
$\lambda$ is a weight of $U^0_\zeta$ in $\Lambda^n _{\zeta,J}$.
\end{prop}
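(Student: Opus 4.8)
The plan is to compare $\opH^\bullet(\Uz(\uj),{\mathbb C})$ with the cohomology of the associated graded algebra $\gr\Uz(\uj)$ via the spectral sequence attached to the $\mathbb N^{N-M}$-filtration, and then to extract (a) and (b) from two general facts: a convergent spectral sequence preserves Euler characteristics (weight by weight), and it can only shrink the set of $U^0_\zeta$-weights occurring in a fixed total degree.

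First I would set up the filtered complex. Write $A=\Uz(\uj)$, and let $C^\bullet=\Hom_A(D_\bullet,{\mathbb C})$ be the complex built from the reduced bar resolution $D_\bullet=D_\bullet(A)$ (so $D_n=A\otimes_{\mathbb C}A_+^{\otimes n}$ and $C^n=\Hom_{\mathbb C}(A_+^{\otimes n},{\mathbb C})$), with the usual bar differential coming from the multiplication and augmentation of $A$. The multiplicative filtration $\{A_{\overline a}\}_{\overline a\in\mathbb N^{N-M}}$ induces a filtration on each tensor power $A_+^{\otimes n}$ and hence on $C^\bullet$; since the filtration is multiplicative (Lemma~\ref{straighteninglemma}), it is compatible with the bar differential. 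Moreover $C^\bullet$ carries an $X$-grading by $U^0_\zeta$-weight, as in Section~\ref{finitedimofcohogroups}, and both the differential and the filtration respect it. This yields a spectral sequence of a filtered, $X$-graded cochain complex; as the filtration is finite in each weight (see the last paragraph), it may be re-indexed by $\mathbb N$ weight by weight.

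Next I would identify the $E_1$-page. By Lemma~\ref{straighteninglemma} the associated graded complex $\gr C^\bullet$ is precisely the reduced cobar complex of $\gr A$, the quantum affine space on $\{X_\alpha:\alpha\in\Phi^+\backslash\Phi_J^+\}$ with $X_\alpha X_\beta=\zeta^{\langle\alpha,\beta\rangle}X_\beta X_\alpha$ for $\alpha\prec\beta$; so the bigraded $E_1$-term satisfies $\bigoplus_{p+q=n}E_1^{p,q}\cong\opH^n(\gr A,{\mathbb C})$, which by the isomorphism recalled before the Proposition (cf.\ \cite[Prop.~2.1]{GK}) equals $\Lambda^n_{\zeta,J}$. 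This identification is one of $X$-graded $U^0_\zeta$-modules, with the spectral sequence total degree $n$ matching the grading degree of $\Lambda^\bullet_{\zeta,J}$.

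Finally, convergence and the two conclusions. Fix $\lambda\in X$: only finitely many PBW monomials $F_{\overline a}$ have weight $\lambda$ (each positive root involved has positive height), so $A_\lambda$ is finite dimensional with finite filtration; hence each $C^n_\lambda$ is finite dimensional with finite filtration, the spectral sequence collapses after finitely many steps in each bidegree, and $E_\infty$ in weight $\lambda$, total degree $n$, is the associated graded of $\opH^n(A,{\mathbb C})_\lambda$ (finite dimensional by Lemma~\ref{cohomologyofA}). Weight by weight the Euler characteristic is constant along the pages, so $\sum_n(-1)^n\dim\opH^n(A,{\mathbb C})_\lambda=\sum_n(-1)^n\dim(\Lambda^n_{\zeta,J})_\lambda$; multiplying by $e(\lambda)$ and summing over $\lambda$ gives (a). For (b), a weight $\lambda$ of $\opH^n(A,{\mathbb C})$ occurs in $\gr\opH^n(A,{\mathbb C})=\bigoplus_{p+q=n}E_\infty^{p,q}$, hence is a subquotient weight of $\bigoplus_{p+q=n}E_1^{p,q}=\Lambda^n_{\zeta,J}$, so it is a weight of $\Lambda^n_{\zeta,J}$. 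The step I expect to be the main obstacle is the third one: checking carefully that the associated graded of the filtered bar complex of $A$ really is the bar complex of $\gr A$ (i.e.\ that no cross terms from the straightening relations of Lemma~\ref{straighteninglemma} survive in $\gr$) together with the convergence bookkeeping; both are handled by multiplicativity of the filtration and the weightwise finiteness from Lemma~\ref{cohomologyofA}, but they require genuine care rather than formal manipulation.
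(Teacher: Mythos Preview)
Your proposal is correct, and for part (b) it follows essentially the same route as the paper: filter the cobar complex $C^\bullet(A)$ by the $\mathbb N^{N-M}$-filtration on $A=\Uz(\uj)$, identify the associated graded with the cobar complex of $\gr A$, observe that the filtration is finite in each weight $\lambda$, and extract the resulting spectral sequence $E_1^{i,j}=\opH^{i+j}(\gr A,{\mathbb C})_{(i)}\Rightarrow\opH^{i+j}(A,{\mathbb C})_\lambda$ to conclude that the $\lambda$-weight space of $\opH^n(A,{\mathbb C})$ is a subquotient of $(\Lambda^n_{\zeta,J})_\lambda$.

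For part (a), however, the paper bypasses the spectral sequence entirely. Since $A_+$ and $(\gr A)_+$ are isomorphic as $U^0_\zeta$-modules (both have PBW monomial bases with identical weights), the cobar terms $C^n(A)_\lambda$ and $C^n(\gr A)_\lambda$ have the same dimension for every $n$ and $\lambda$. Applying the Euler--Poincar\'e principle separately to each complex yields
\[
\sum_n(-1)^n\dim\opH^n(A,{\mathbb C})_\lambda=\sum_n(-1)^n\dim C^n(A)_\lambda=\sum_n(-1)^n\dim C^n(\gr A)_\lambda=\sum_n(-1)^n\dim\opH^n(\gr A,{\mathbb C})_\lambda,
\]
with no spectral sequence needed. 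Your argument via constancy of the Euler characteristic along the pages is equally valid but logically heavier for (a), since it invokes the filtered-complex machinery that is only genuinely required for (b). The paper's shortcut buys simplicity for (a); your unified treatment buys conceptual economy by handling both parts with one construction.
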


\begin{proof} Let $A = {\mathcal U}_{\zeta}({\mathfrak u}_{J})$. By Corollary 2.8.3 and
 the discussion above, both $\opH^\bullet(A,{\mathbb C})$ and $\opH^\bullet( \text{gr } A,{\mathbb C})$
  have weight space decompositions with finite dimensional weight spaces. Let $A_{+}$ and $\text{gr } A_+$ denote the augmentation
ideals of $A$ and $\text{gr } A$, respectively. Let $C^{\bullet}(A)$
and $C^{\bullet}(\text{gr } A)$ be the complexes obtained by taking
duals of the respective reduced bar resolutions. More precisely,
$C^{n}(A)=\text{Hom}_{\mathbb C}((A_{+})^{\otimes n}, {\mathbb C})$
and $C^{n}(\text{gr }A)=\text{Hom}_{\mathbb C}((\text{gr
}A_{+})^{\otimes n}, {\mathbb C})$. Note that $A_+$ and $\text{gr }
A_+$ are isomorphic as $U^0_\zeta$-modules. The same holds for
$C^{n}(A)$ and $C^{n}(\text{gr } A)$ and the differentials of both
complexes are $U^0_\zeta$-module maps. Thus, for a weight $\lambda$,
$\opH^{\bullet}(A,{\mathbb C})_\lambda$ and $\opH^{\bullet}(\text{gr
} A,{\mathbb C})_\lambda$ identify with the cohomologies of the
complexes $C^{\bullet}(A)_{\la}$ and $C^{\bullet}(\text{gr }
A)_{\la}$, respectively.

By the Euler-Poincar\'e principle (cf. \cite[Lemma 7.3.11]{GW}),
$$\begin{aligned}
\chi(\opH^\bullet(A,{\mathbb C})_\lambda) &:=\sum_{n =
0}^{\infty}(-1)^n\dim\,\opH^n(A,{\mathbb C})_\lambda \\
&= \sum_{n =
0}^{\infty}(-1)^n \dim\,C^{n}(A)_{\la}
\\  &=\sum_{n=0}^{\infty}(-1)^n\dim\,C^n(\text{gr } A)_\lambda\\
 &=\sum_{n=0}^\infty
(-1)^n\dim\,\opH^n(\text{gr }A,{\mathbb C})_\lambda\\
 &=:
\chi(\opH^\bullet(\text{gr }A,{\mathbb C})_\lambda).\end{aligned}
$$
Part (a) follows from the cohomology calculation for $\gr\,A$
noted above.

Let $A_\bullet$ be the increasing filtration on $A$ indexed by
$\Lambda:={\mathbb N}^{N-M}$, viewed as a poset using the
lexicographic ordering $\prec$ above. It induces a (downward)
filtration on the complex $C^{\bullet}(A)_{\la}$ as follows. For
$\gamma, \eta \in \Lambda$, set ${A_{+}}_{\gamma}=A_{\gamma}\cap
A_{+}$, and define
$$\begin{cases} B^{n}_{[\prec\eta]}=\sum_{\sum \gamma_{i}\prec\eta} {A_{+}}_{\gamma_1}
\otimes {A_{+}}_{\gamma_2} \otimes \cdots \otimes
{A_{+}}_{\gamma_n},\\
B^{n}_{[\preceq \eta]}=\sum_{\sum
\gamma_{i}\preceq \eta} {A_{+}}_{\gamma_1} \otimes
{A_{+}}_{\gamma_2} \otimes\cdots \otimes {A_{+}}_{\gamma_n}.\end{cases}$$
 Then $B^{n}_{[\prec\eta]}\subseteq
B^{n}_{[\preceq \eta]}$, so setting, for $\la \in X(T)$,
$$\begin{cases} C^{n}(A)_{\la,[\prec\eta]}=\text{Hom}_{\mathbb C}(A_{+}^{\otimes n}/B^{n}_{[\prec\eta]},{\mathbb C})_{\la}, \\
C^{n}(A)_{\la, [\preceq \eta]}=\text{Hom}_{\mathbb C}(A_{+}^{\otimes
n}/B^{n}_{[\preceq \eta]},{\mathbb C})_{\la},\end{cases}$$
it follows that
$$C^{n}(A)_{\la,[\preceq \eta]}\subseteq C^{n}(A)_{\la, [\prec
\eta]}.$$
Moreover, if $\eta,\zeta\in \Lambda$ with $\zeta \prec
\eta$, then ${C^{n}(A)_{\la,[\prec\eta]}}\subseteq {C^{n}(A)_{\la,
[\prec\zeta]}}.$

The grading on $\text{gr A}$ leads in a natural way to a grading of
the complex $C^{\bullet}(\text{gr } A)_{\la}.$ For $\eta \in
\Lambda$, $C^{\bullet}(\gr A)_{\lambda,[\eta]}$ denotes the graded
component corresponding to $\eta$, and we can identify
$${C^\bullet(A)_{\la,[\prec\eta]}}/{C^\bullet(A)_{\la,[\preceq
\eta]}}$$ with ${C^{\bullet}(\text{gr } A)_{\la,[\eta]}}.$ Also,
$$C^\bullet(\gr\, A)_\lambda\cong \bigoplus_{\eta\in\Lambda}
C^\bullet(A)_{\la,[\prec\eta]}/C^\bullet(A)_{\la,[\preceq \eta]}.$$

For a fixed weight $\lambda$, $C^n(\gr\, A)_\lambda\neq 0$ for
finitely many $n$, and
$C^\bullet(A)_{\la,[\prec\eta]}/C^\bullet(A)_{\la,[\preceq
\eta]}\neq 0$ for finitely many $\eta$. Let
$\overline{\Lambda}=\{\eta\in \Lambda:\
C^n_{\la,[\prec\eta]}/C^n_{\la,[\preceq \eta]}\neq 0,\ \text{for
some $n$}\}$. Then $\overline{\Lambda}$ is a finite totally ordered
set in ${\mathbb N}^{N-M}$ which induces a filtration (which can be
indexed by ${\mathbb N}$) on $C^{\bullet}(\gr\, A)_\lambda$.
Therefore, we have a spectral sequence
$$E_1^{i,j}= (\opH^{i+j}(\gr A,{\mathbb C})_{\la})_{(i)}\Rightarrow \opH^{i+j}(A,{\mathbb C})_{\lambda}.$$
This shows that $\opH^n({\mathcal U}_\zeta({\mathfrak u}_J),{\mathbb
C})_{\lambda}$ is a subquotient of
$(\Lambda^n_{\zeta,J})_{\lambda}$, and part (b) follows.

\end{proof}


\section{Induction functors}\label{inductionfunctors}

\setcounter{equation}{0}
\setcounter{theorem}{0}
\renewcommand{\thetheorem}{\thesection.\arabic{theorem}}
\renewcommand{\theequation}{\thesection.\arabic{equation}}

Let ${\mathcal C}$ (respectively, ${\mathcal C}^{\leq}$) be the category of
type 1, integrable representations of $U_\zeta$ (respectively,
$U_{\zeta}({\mathfrak b})$). The restriction functor ${\text{\rm
res}}:{\mathcal C}\to{\mathcal C}^\leq$ has a right adjoint
induction functor $\opH^0_{\zeta}(-) =
\opH^0(U_\zeta/U_\zeta({\mathfrak b}),-):{\mathcal C}^\leq\to{\mathcal
C}$ defined by
$$\opH^0_{\zeta}(M)= (M\otimes k[U_\zeta])^{U_\zeta({\mathfrak b})}\cong
{\mathcal F}(\Hom_{U_\zeta({\mathfrak b})}(U_\zeta,M)).$$ In this
expression $k[U_\zeta]$ denotes the coordinate algebra of $U_\zeta$.
Also, the functor ${\mathcal F}(-)$ assigns to any $U_\zeta$-module
the largest type 1, integrable submodule. We refer to \cite[(2.8),
(2.10)]{APW} and \cite[(2.9)]{RH} for further discussion and
explanation of notation.

Any dominant weight $\lambda\in X_+$ can be viewed as a one-dimensional $U_\zeta({\mathfrak
b})$-module, and so provides an induced module
$$\nabla_\zeta(\lambda):=\opH^0(U_\zeta/U_\zeta({\mathfrak
b}),\lambda).$$
This module has an irreducible socle isomorphic to $L_\zeta(\lambda)$. In
 addition, there is an equality
$$\ch\,\nabla_\zeta(\lambda)=\ch\,L(\lambda)=\sum_{x\in W}(-1)^{\ell(x)}
e(w\cdot\lambda)/\sum_{x\in W}(-1)^{\ell(x)}e(x\cdot 0)$$ of formal
characters, in which the expression on the right is just the Weyl character formula.  (Recall that $L(\lambda)$ denotes the
irreducible representation for the complex group $G$ (or its Lie algebra $\mathfrak g$) of high
weight $\lambda$.)

We will also use the induction functors
$\opH^0(U_\zeta/U_\zeta({\mathfrak p}_J),-)$ (respectively,
$\opH^0(U_\zeta({\mathfrak p}_J)/U_\zeta({\mathfrak b}),-)$) from
the category of type 1, integrable $U_\zeta({\mathfrak
p}_J)$-modules (respectively, $U_\zeta({\mathfrak b})$-modules) to type 1,
integrable $U_\zeta$-modules (respectively, $U_\zeta({\mathfrak
p}_J)$-modules). Note that if $\lambda$ is a one-dimensional
$U_{\zeta}({\mathfrak b})$-module then $\opH^0(U_\zeta({\mathfrak
p}_J)/U_\zeta({\mathfrak b}),\lambda)$ is trivial as a
$U_{\zeta}({\mathfrak u}_{J})$-module.

Let $(X_J)_+\subseteq X$ be the set of $J$-dominant weights, i.e.,
$\lambda\in X$ belongs to $(X_J)_+$ if and only if
$\langle\lambda,\alpha^\vee\rangle \in{\mathbb N}$ for all
$\alpha\in J$. The set $(X_J)_+$ indexes the irreducible (type 1,
integrable) $U_\zeta({\mathfrak l}_J)$-modules. For
$\lambda\in(X_J)_+$,
$$\nabla_{J,\zeta}(\lambda):=\opH^0(U_\zeta({\mathfrak p}_J)/U_\zeta({\mathfrak
b}),\lambda)$$
 has irreducible socle isomorphic to $L_{J,\zeta}(\lambda)$, the
 irreducible $U_\zeta({\mathfrak l}_J)$-module of high weight
 $\lambda$.

If $\lambda\in (X_J)_+$ satisfies
$\langle\lambda+\rho,\alpha^\vee)=\ell-1$ for all $\alpha\in J$, we
call $\lambda$ a $J$-Steinberg weight. Then
$\nabla_{J,\zeta}(\lambda)$ is a projective (and injective)
irreducible $U_\zeta({\mathfrak l}_J)$-module (in the category of
type 1, integrable modules). It remains irreducible, projective, and
injective upon restriction to ${\mathfrak u}_\zeta({\mathfrak
l}_J)$.

Finally, it will usually be more convenient to write
$\ind_{U_\zeta({\mathfrak b})}^{U_\zeta({\mathfrak p}_J)}(-)$ in place
of $\opH^0(U_\zeta({\mathfrak p}_J)/U_\zeta({\mathfrak b}),-)$.

 \chapter{Computation of $\Phi_{0}$ and ${\mathcal N}(\Phi_{0})$}\renewcommand{\thesection}{\thechapter.\arabic{section}}

Throughout this chapter, $\Phi$ is an irreducible root system in an Euclidean space $\mathbb E$ with
weight lattice $X$. We will be concerned with certain special
closed subroot systems $\Phi_{\lambda,l}$ of $\Phi$ which are defined by an integer $l>1$ and a weight
$\lambda\in X$. These are introduced in Section 3.1. After classifying these subroot systems
in Sections 3.2 and 3.3 (see also Appendix \ref{tables1} for the exceptional cases), Section 3.5 takes up some related issues involving the normality of orbit closures. Finally, Section 3.6 uses these results to 
identify  the coordinate algebras of these orbit closures as certain induced $G$-modules.  All these results will play an important role later in this paper; see Chapter 4, for example.

\section{Subroot systems defined by weights}\label{bases}
\renewcommand{\thetheorem}{\thesection.\arabic{theorem}}
\renewcommand{\theequation}{\thesection.\arabic{equation}}
\setcounter{equation}{0}
\setcounter{theorem}{0}

 A prime $p$ is called {\it bad} for the root
system $\Phi$ provided that there exists a closed subsystem $\Phi'$
of $\Phi$ such that $Q/Q'$ has $p$-torsion, where $Q=Q(\Phi)$ and $Q'=Q(\Phi')$ are the root
lattices of $\Phi$ and $\Phi'$, respectively. If $p$ is not bad, then $p$
is called a {\it good} prime for $\Phi$. Equivalently, $p$ is good
if and only if $p$ does not appear as the coefficient of a simple
root in the decomposition of the maximal root in $\Phi$ as an integral linear
combination of simple roots; see \cite[I,\S4]{SS}. The good primes for the various types
of irreducible root systems are thus easily determined.  Therefore, making use of the explicit
expressions for the maximal root in $\Phi$ displayed in \cite[Plates I--IX]{Bo}, the good primes are given
as follows:
\begin{itemize}
\item
$\Phi$ of type $A_{n}$, all $p$;
\item
$\Phi$ of type $B_{n}$, $C_{n}$, $D_{n}$, $p\geq 3$;
\item
$\Phi$ of type $E_{6}$, $E_{7}$, $F_{4}$, $G_{2}$, $p\geq 5$;
\item
$\Phi$ of type $E_{8}$, $p\geq 7$.
\end{itemize}
Now let $l>1$ be an integer (not necessarily prime). We will say that $l$ is good for $\Phi$
provided that $l$ is not divisible by a bad prime for $\Phi$.
Otherwise, $l$ is bad for $\Phi$.

Additionally, a good integer $l$
is said to be {\it very good} for $\Phi$ provided that if $\Phi$ has
type $A_n$, then $l$ and $n+1$ are relatively prime. In cases of non-irreducible root
systems (which may arise in the case of the root system of a Levi factor of a parabolic subgroup),
the integer $l$ is good
(respectively, very good) provided that it is good (respectively, very good) for
every irreducible component of $\Phi$.

The significance of good integers $l$ comes about
because of certain closed subsystems $\Phi_{\lambda,l}$ constructed
from weights $\lambda\in X$. Precisely, put
\begin{equation*}\label{defnofsubsystems}
\Phi_{\lambda,l}:=\{\alpha\in\Phi\,|\,\langle\lambda+\rho,\alpha\rangle\equiv
0\,\,{\text{\rm mod}}\,l\}.
\end{equation*}
Recall that, by our conventions, for $\mu\in X$ and $\alpha\in\Phi$, $\langle \mu,\alpha\rangle\in\mathbb Z$. 
In practice, when $l$ is clear from context, we will just denote
the subset $\Phi_{\lambda,l}$ simply by  $\Phi_{\lambda}$. Obviously,
$\Phi_\lambda$ (when it is not the empty set) is a closed subroot system of $\Phi$.

When working with a quantum
enveloping algebra $U_\zeta$, where $\zeta=\sqrt[l]{1}$, our assumptions on $l$
(i.e., $l$ is odd in types $B_n,C_n$ and $F_4$, and, in type $G_2$,
$l$ is not divisible by $3$) mean that each integer $d_\alpha=\frac{\langle\alpha,\alpha\rangle}{2}$ is relatively prime to $l$.
 Since $d_\alpha\alpha^\vee=\alpha$, it therefore follows that
\begin{equation}\label{defnofsystemstwo}
\Phi_\lambda=\{\alpha\in\Phi\,|\,\langle\lambda+\rho,\alpha^\vee\rangle\equiv
0\,\,{\text{\rm mod}}\,l\}.
\end{equation}
Thus, in the sequel,  we can always take (\ref{defnofsystemstwo}) as the
definition of $\Phi_\lambda$. We denote by $\Phi_\lambda^+$ the intersection   of $\Phi_\lambda $ with $ \Phi^+$. It is useful to observe that, for any
$w\in \widetilde W_l$,
\begin{equation*}\label{actionchange}
\overline{w}(\Phi_\lambda)= \Phi_{w\cdot\lambda}.
\end{equation*}

The following result, while quite elementary, is essential for our
work in this paper.

\begin{lem}\label{subsystemlemma} Let $l>1$ be an odd integer.
  Assume that $l$ is good for $\Phi$. For $\lambda\in X$,
there exists a set of simple roots $\Pi'$ for $\Phi$ such that
$\Pi'\cap\Phi_\lambda$ is a set of simple roots for $\Phi_\lambda$. In particular,
there exists a $w\in W$ and a subset $J\subseteq \Pi$ such that
$w(\Phi_\lambda)=\Phi_J$.  Furthermore, $w$ may be chosen so that $w(\Phi_{\lambda}^+) = \Phi_J^+$.
\end{lem}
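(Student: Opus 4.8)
The plan is to show that $\Phi_\lambda$ is a \emph{parabolic} subsystem of $\Phi$ — one of the form $\Phi\cap V$ for a linear subspace $V\subseteq\mathbb E$ — and then to invoke the classical fact that every parabolic subsystem is $W$-conjugate to a standard one $\Phi_J$, $J\subseteq\Pi$ (with the conjugating element adjustable so as to preserve positivity). The case $\Phi_\lambda=\varnothing$ is trivial ($w=1$, $J=\varnothing$), so assume $\Phi_\lambda\neq\varnothing$; then $\Phi_\lambda$ is a closed subsystem. Since $l$ is odd and good, it is prime to every $d_\alpha$, so by (\ref{defnofsystemstwo}) we may use the description $\Phi_\lambda=\{\alpha\in\Phi:\langle\mu,\alpha\rangle\equiv 0\pmod{l}\}$ with $\mu:=\lambda+\rho\in X$; the point of this form is that $\langle\mu,-\rangle$ is a genuine $\mathbb Z$-linear functional on $Q$. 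Let $\phi\colon Q\to\mathbb Z/l\mathbb Z$ be its reduction mod $l$ and $L:=\ker\phi$. Then $\Phi_\lambda=\Phi\cap L$, the group $Q/L$ embeds in $\mathbb Z/l\mathbb Z$ (so is cyclic of order dividing $l$), and — since any $\mathbb Z$-combination of roots of $\Phi_\lambda$ is again annihilated by $\phi$ — one also has $\Phi_\lambda=\Phi\cap Q(\Phi_\lambda)$.

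The crux is the identity $\Phi_\lambda=\Phi\cap\mathrm{span}_{\mathbb R}(\Phi_\lambda)$, and this is where goodness of $l$ is used. Put $\Psi:=\Phi\cap\mathrm{span}_{\mathbb R}(\Phi_\lambda)$, so $\Phi_\lambda\subseteq\Psi$ and the two span the same subspace; since $Q(\Psi)\subseteq\mathrm{span}_{\mathbb R}(\Phi_\lambda)$, any root lying in $Q(\Psi)$ already lies in $\Psi$, i.e.\ $\Psi=\Phi\cap Q(\Psi)$. I claim $\Psi=\Phi_\lambda$. If $Q(\Psi)\subseteq L$, then $\Psi=\Phi\cap Q(\Psi)\subseteq\Phi\cap L=\Phi_\lambda$, and we are done; so suppose $Q(\Psi)\cap L\subsetneq Q(\Psi)$. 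Because $\Phi_\lambda$ and $\Psi$ span the same subspace, the chain $Q(\Phi_\lambda)\subseteq Q(\Psi)\cap L\subseteq Q(\Psi)$ consists of finite-index inclusions, so some prime $p$ divides $[Q(\Psi):Q(\Psi)\cap L]$ and hence $[Q(\Psi):Q(\Phi_\lambda)]$. By Cauchy's theorem $Q(\Psi)/Q(\Phi_\lambda)$ then contains an element of order $p$; lifting it through $Q(\Psi)\subseteq Q$ exhibits $p$-torsion in $Q/Q(\Phi_\lambda)$, so $p$ is a bad prime for $\Phi$ by definition of a bad prime. On the other hand $[Q(\Psi):Q(\Psi)\cap L]=|(Q(\Psi)+L)/L|$ divides $[Q:L]$, which divides $l$; since $l$ is good, no bad prime divides $l$, so $p$ is not bad — contradiction. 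Hence $\Phi_\lambda=\Psi$ is parabolic.

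By the classical fact there exist $w\in W$ and $J\subseteq\Pi$ with $w(\Phi_\lambda)=\Phi_J$. Then $\Pi':=w^{-1}(\Pi)$ is a set of simple roots for $\Phi$ with $\Pi'\cap\Phi_\lambda=w^{-1}(\Pi\cap\Phi_J)=w^{-1}(J)$ a set of simple roots for $\Phi_\lambda$, which gives the first two assertions. For the positivity refinement, observe that $w(\Phi_\lambda\cap\Phi^+)$ is a positive system of $\Phi_J$ and that $W_J=W(\Phi_J)$ acts simply transitively on the positive systems of $\Phi_J$; choosing $v\in W_J$ with $vw(\Phi_\lambda\cap\Phi^+)=\Phi_J\cap\Phi^+=\Phi_J^+$, and noting that $v$ stabilizes $\Phi_J$, the element $vw\in W$ satisfies both $vw(\Phi_\lambda)=\Phi_J$ and $vw(\Phi_\lambda^+)=\Phi_J^+$.

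I expect the main obstacle to be the identity $\Phi_\lambda=\Phi\cap\mathrm{span}_{\mathbb R}(\Phi_\lambda)$: a priori $\Phi_\lambda$ is only a closed subsystem cut out by a congruence, and the content is that it is already saturated in its own real span, the sole obstruction being $p$-torsion in $Q/Q(\Phi_\lambda)$ for a prime $p\mid l$ — which the hypothesis that $l$ is good is designed precisely to forbid. A secondary, more routine point is the classical fact that a parabolic subsystem $\Phi\cap V$ is $W$-conjugate to some $\Phi_J$; should one prefer to prove it in place, it follows by identifying $\{w\in W:w|_V=\mathrm{id}\}$ with the reflection subgroup generated by $\{s_\alpha:\alpha\in\Phi\cap V\}$ and then conjugating a sufficiently generic point of $V$ into the dominant chamber.
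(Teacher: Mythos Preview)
Your proof is correct and follows essentially the same approach as the paper: both establish the key identity $\Phi_\lambda = \Phi\cap\mathbb{Q}\Phi_\lambda$ via a torsion argument using goodness of $l$, then invoke the Bourbaki result \cite[IV.1.7, Prop.~24]{Bo} on parabolic subsystems, and finally adjust by an element of $W_J$ for the positivity refinement. The only cosmetic difference is that the paper argues directly that any $\alpha\in\mathbb{Q}\Phi_\lambda\cap\Phi$ satisfies $m\alpha\in\mathbb{Z}\Phi_\lambda$ for some $m$ divisible only by bad primes (hence coprime to $l$), whereas you route the same torsion observation through the index $[Q(\Psi):Q(\Psi)\cap L]$; these are two phrasings of the same idea.
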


\begin{proof} First, observe that
$${\mathbb Q}\Phi_{\lambda}\cap \Phi=\Phi_\lambda.$$
In fact, if a root $\alpha$ belongs to the left-hand side, then
$m\alpha\in{\mathbb Z}\Phi_\lambda$, for some integer $m$ which is
divisible only by bad primes. Then $\langle\lambda+\rho,m\alpha\rangle\equiv 0$ mod$\,l$.
But since $(m,l)=1$, it follows that $\langle\lambda+\rho,\alpha\rangle\equiv 0$ mod$\,l$,
i.e., $\alpha\in\Phi_\lambda$. This means that the hypotheses of \cite[IV.1.7, Prop. 24]{Bo}
are satisfied, and this quoted result implies the conclusion of the lemma. 
The final claim follows from the fact that all simple systems in $\Phi_J$ are conjugate
under the action of the Weyl group. \end{proof}

Consider the case of $\Phi_0$.  For {\it all} $l\geq h$, it is
immediately true that $\Phi_0=\varnothing$, and so the conclusion of the lemma 
trivially holds.
However, this is not the case in general. For example, suppose that $\Phi$ has type
$F_4$ with $l = 3$. Letting $\epsilon_i$, $1\leq i\leq 4$, be an orthonormal basis for ${\mathbb R}^4$, $\Phi$ can be
realized explicitly as the following set of 48 vectors
$$\Phi=\{\pm\epsilon_i\}_{1\leq i\leq 4}\cup\{\pm\epsilon_i\pm\epsilon_j\}_{1\leq i<j\leq 4}\cup
\{\frac{1}{2}(\pm\epsilon_1\pm\epsilon_2\pm\epsilon_3\pm\epsilon_4)\}.$$
We can think of $\Phi$ as the set of all $\alpha\in \bigoplus{\mathbb Q}\epsilon_i$ such that $2\alpha\in\oplus{\mathbb Z}\epsilon_i$ and $\alpha$ has integral square length $||\alpha||\leq 2.$ Also, $\Pi:=\{\epsilon_4,\frac{1}{2}(\epsilon_1-\epsilon_2-\epsilon_3-\epsilon_4),\epsilon_2-\epsilon_3,\epsilon_3-\epsilon_4,\}$
forms a set of simple roots (cf. \cite[Appendix, Plate VIII]{Bo}).  Now take $l=3$ and let $\lambda=0$. Since $\rho=\frac{1}{2}(11\epsilon_1+5\epsilon_2+3\epsilon_2+\epsilon_1)$, it is directly checked that  $\Phi_0$ has
$$\{\epsilon_1-\epsilon_2,\epsilon_2+\epsilon_4\}\cup\{-\epsilon_3,\frac{1}{2}(\epsilon_1+\epsilon_2+\epsilon_3
-\epsilon_4)\}$$
as a set of simple roots. Thus, $\Phi_0$ has type $A_2\times A_2$, and there is
clearly no $J \subset \Pi$ for which $\Phi_J$ has type $A_2\times
A_2$. More generally, still for type $F_4$, if $3$ divides
$l$, say $l=3l'$, then the conclusion of the lemma fails for any $\lambda=(l'-1)\rho$.

On the other hand, $l=9$ satisfies Assumption 1.2.1, but $9$ is not good for $F_4$. Here
$\Phi_0=\{\pm\frac{1}{2}(\epsilon_1+\epsilon_2+\epsilon_3-\epsilon_4)\}$ has type $A_1$,  so  $\Phi_0$ does  satisfy
the conclusion of the lemma. We will see in Theorem  \ref{identificationtheorem} below that this
fact holds generally as long as $l$ satisfies Assumption 1.2.1.

\subsection{Richardson orbits}\label{richardsonorbits} Let $G$ be a complex, simple and simply connected algebraic group over $\mathbb C$
with root system $\Phi$.  For $J\subset\Pi$, the (standard) parabolic subgroup $P_J=L_J\ltimes U_J\supseteq B$ of $G$ has a
dense (open) orbit ${\mathcal C}'_J$ in the Lie algebra ${\mathfrak u}_J$ of $U_J$ under the adjoint action of $P_J$. 
In particular, if $J=\varnothing$, then $L_J=T$, and $P_J=B$, the Borel subgroup corresponding to $\Phi^-$.  The corresponding Richardson orbit ${\mathcal C}_J$ is the $G$-orbit $G\cdot x$ for any $x\in {\mathcal C}_J'$. Therefore, when $J=\varnothing$, ${\mathcal C}_J$ is the regular or principal nilpotent orbit.  Also,
it is straightforward to show that the (Zariski) orbit closure $\overline{\mathcal C}_J$ equals $G\cdot {\mathfrak u}_J$.
In addition, $\overline{\mathcal C}_J$ has dimension $2\dim {\mathfrak u}_J$.

 If $J,K$ are
$W$-conjugate subsets of $\Pi$, the Johnston-Richardson theorem
states that ${\mathcal C}_J={\mathcal C}_K$. Hence, given $\la \in
X$, if there exists $w \in W$ and $J\subseteq \Pi$ with
$w(\Phi_{\la}) = \Phi_J$, then $\la$ defines in a unique way a Richardson class
${\mathcal C}_\lambda$ in $\mathcal N$ by setting ${\mathcal
C}_\lambda={\mathcal C}_J$.   For
$\la \in X$ with $w(\Phi_{\la}) = \Phi_J$ as above,  set
$\mathcal{N}(\Phi_{\la}) := G\cdot \ul_J =\overline{\mathcal C}_J\subseteq \mathcal{N}$.

For more details on the above results, see \cite[Chap. 5]{Hum1}. An important generalization of Richardson classes
(namely, induced classes) will be discussed when it is needed in \S3.5.


\section{The case of the classical Lie algebras}
\renewcommand{\thetheorem}{\thesection.\arabic{theorem}}
\renewcommand{\theequation}{\thesection.\arabic{equation}}
\setcounter{equation}{0}
\setcounter{theorem}{0}

In \cite[\S3.1-3.7]{CLNP},
 explicit
determinations were given for all irreducible root systems $\Phi$ of a $J \subset \Pi$ so that $w(\Phi_0) =
\Phi_J$ when $l = p$ is a good prime. The proofs there work equally well
when $l$ satisfies Assumption \ref{assumption}. As noted before, we use the root
notation and ordering of Bourbaki \cite{Bo}. The results below will be useful
in Chapter 4.

The first theorem below treats the
cases when $\Phi$ has type $A$ or $B$, and the second theorem below summarizes
the situation in types $C$ and $D$. Since $\dim\,{\mathcal N}(\Phi_0)=2\dim\,{\mathfrak u}_J$,
we have $\dim\,{\mathcal N}(\Phi_0)=|\Phi|-|\Phi_0|$. Also, in types $A$---$D$, Assumption \ref{assumption}
means that $l$ is good, so that Lemma \ref{subsystemlemma} is applicable.\footnote{In the statement
of the theorem, root system terms $A_0, B_0, \cdots$ should be ignored.}

\begin{theorem}\label{subsystemtypesAB} Let $l$ be as in Assumption \ref{assumption},
${\mathfrak g}$ be a
classical simple Lie algebra with $\Phi$ of type $A_{n}$ (respectively,
$B_{n}$), and $h=n+1$ (respectively, $2n$) be the Coxeter number of $\Phi$.
\begin{itemize}
\item[(a)] If $l\geq h$ then ${\mathcal N}(\Phi_{0}) ={\mathcal
N}({\mathfrak g})$ and $\dim {\mathcal N}(\Phi_{0})=|\Phi|$.
\item[(b)] Suppose that $l< h$ where $h-1=lm+s$ with $m> 0$ with
and $0\leq s \leq l-1$. Then ${\mathcal N}(\Phi_{0})=G\cdot
{\mathfrak u}_{J}$ where $J\subseteq \Pi$ such that when
\begin{itemize}
\item[(i)] $\Phi$ is of type $A_{n}$,
$$\Phi_{0}\cong \Phi_{J}\cong \underbrace{A_{m}\times\dots \times A_{m}}_{\text{$s+1$ times}}
\times \underbrace{A_{m-1} \times \dots \times A_{m-1}}_{\text{$l-s-1$
times}};$$ where $\dim {\mathcal N}(\Phi_{0})=n(n+1)-m(lm+2s-l+2)$.
\item[(ii)] $\Phi$ is of type $B_{n}$,
$$
\Phi_{0}\cong \Phi_{J}\cong
\begin{cases} \underbrace{A_{m}\times\dots \times A_{m}}_{\text{$\frac{s}{2}$ times}}
\times \underbrace{A_{m-1} \times \dots \times
A_{m-1}}_{\text{$\frac{l-s-1}{2}$ times}}\times B_{\frac{m+1}{2}}
& \text{if $s$ is even ($m$ odd)},\\
\underbrace{A_{m}\times\dots \times A_{m}}_{\text{$\frac{s+1}{2}$
times}} \times \underbrace{A_{m-1} \times \dots\times
A_{m-1}}_{\text{$\frac{l-s-2}{2}$ times}}\times B_{\frac{m}{2}} &
\text{if $s$ is odd ($m$ even).}
\end{cases}
$$
Also,
$$
\dim {\mathcal N}(\Phi_{0})=
\begin{cases} 2n^{2}-\frac{m(lm+2s-l+3)+1}{2} & \text{if $s$ is even ($m$ odd)},\\
2n^{2}-\frac{m(lm+2s-l+3)}{2} & \text{if $s$ is odd ($m$ even). }
\end{cases}
$$
\end{itemize}
\end{itemize}
\end{theorem}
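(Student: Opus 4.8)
The plan is to compute the closed subsystem $\Phi_0=\{\alpha\in\Phi:\langle\rho,\alpha^\vee\rangle\equiv 0\ \mathrm{mod}\ l\}$ explicitly from a Bourbaki realization of $\Phi$, read off its isomorphism type, and then extract both the subset $J$ and the dimension of $\mathcal{N}(\Phi_0)$ formally. The elementary fact I will use throughout is that $\langle\rho,\alpha^\vee\rangle=\operatorname{ht}(\alpha^\vee)$, the height of the coroot $\alpha^\vee$ in $\Phi^\vee$; in particular $1\le\langle\rho,\alpha^\vee\rangle\le h-1$ for every $\alpha\in\Phi^+$. Part (a) then drops out at once: if $l\ge h$, every positive root satisfies $0<\langle\rho,\alpha^\vee\rangle<l$, so $\Phi_0=\varnothing$; hence $\mathcal{N}(\Phi_0)=\mathcal{N}(\Phi_\varnothing)=\mathcal{N}(\mathfrak{g})$, the closure of the regular orbit, which has codimension $n=\operatorname{rank}(\mathfrak{g})$ in $\mathfrak{g}$, so $\dim\mathcal{N}(\Phi_0)=|\Phi|$.

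For part (b) in type $A_n$, realize $\Phi=\{\epsilon_i-\epsilon_j:1\le i\ne j\le n+1\}$ with $\alpha_i=\epsilon_i-\epsilon_{i+1}$; since the system is simply laced, $\langle\rho,(\epsilon_i-\epsilon_j)^\vee\rangle=j-i$, so $\Phi_0$ consists precisely of the roots $\pm(\epsilon_i-\epsilon_j)$ with $i\equiv j\ \mathrm{mod}\ l$. First I would partition the index set $\{1,\dots,n+1\}$ into its $l$ residue classes modulo $l$: the roots supported on a class of size $r$ form a subsystem of type $A_{r-1}$, and distinct classes are mutually orthogonal. Writing $n+1=lm+(s+1)$ with $0\le s\le l-1$ (and $m>0$ since $l<h=n+1$), exactly $s+1$ classes have size $m+1$ and $l-s-1$ have size $m$, which gives $\Phi_0\cong (A_m)^{s+1}\times(A_{m-1})^{l-s-1}$.

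For part (b) in type $B_n$, take the realization with short roots $\pm\epsilon_i$ and long roots $\pm\epsilon_i\pm\epsilon_j$, so that $\rho=(n-\tfrac12,n-\tfrac32,\dots,\tfrac12)$ and
\[
\langle\rho,(\epsilon_i-\epsilon_j)^\vee\rangle=j-i,\qquad \langle\rho,(\epsilon_i+\epsilon_j)^\vee\rangle=2n+1-i-j,\qquad \langle\rho,\epsilon_i^\vee\rangle=2n+1-2i.
\]
Thus $\Phi_0$ is cut out by the three congruences $i\equiv j$, $i+j\equiv 2n+1$, and $2i\equiv 2n+1\ \mathrm{mod}\ l$ (the last solvable, with a single residue class of solutions $i\equiv i_0$, because $l$ is odd). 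I would then organize the $n$ indices by residue class: within a class the "difference" roots yield a type-$A$ component; the classes pair off under $c\mapsto 2i_0-c$, a "sum" root links two paired classes, and a sign change on one of them (an element of the $B_n$ Weyl group) merges the two corresponding type-$A$ pieces into a single one; the self-paired class $c=i_0$ absorbs its "sum" and "short" roots into a type-$B$ component. Carrying this out is exactly the case analysis of \cite[\S3.1--3.7]{CLNP}, performed there for $l=p$ a good prime; under Assumption \ref{assumption} the integer $l$ is good for $B_n$ (it is odd, hence not divisible by the unique bad prime $2$), so those arguments apply verbatim, giving the stated component structure, with the parity of $s$ dictating the $B_{(m+1)/2}$ versus $B_{m/2}$ factor (note $lm+s=2n-1$ is odd, so $m\not\equiv s\ \mathrm{mod}\ 2$). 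In both types, once $\Phi_0$ is identified, $l$ being good lets me invoke Lemma \ref{subsystemlemma} to produce $w\in W$ and $J\subseteq\Pi$ with $w(\Phi_0)=\Phi_J$ and $w(\Phi_0^+)=\Phi_J^+$ (the explicit $J$ being the one tabulated in \cite{CLNP}), and then $\mathcal{N}(\Phi_0)=\mathcal{N}(\Phi_J)=G\cdot\mathfrak{u}_J$ by definition, this being independent of the choice by \eqref{JRresult}.

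Finally, for the dimension formulas I would use $\dim\mathcal{N}(\Phi_0)=2\dim\mathfrak{u}_J=|\Phi|-|\Phi_J|$ together with $|\Phi_J|=|\Phi_0|$, summing the contributions $|A_k|=k(k+1)$, $|B_k|=2k^2$ over the components found above against $|\Phi|=n(n+1)$ in type $A_n$ and $|\Phi|=2n^2$ in type $B_n$. A short computation gives $|\Phi_0|=m(lm+2s-l+2)$ in type $A_n$, and $|\Phi_0|=\tfrac12\bigl(m(lm+2s-l+3)+1\bigr)$ or $\tfrac12\,m(lm+2s-l+3)$ in type $B_n$ according as $s$ is even or odd, which are precisely the stated values. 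I expect the main obstacle to be the type-$B_n$ combinatorics identifying $\Phi_0$ as a root system --- that is, faithfully reproducing the \cite{CLNP} case analysis under the weaker hypothesis on $l$; everything else is a one-line estimate (part (a)), an elementary counting argument (type $A_n$), or routine arithmetic (the dimensions).
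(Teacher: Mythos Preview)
Your proposal is correct and follows essentially the same approach as the paper: the paper does not give an independent proof of this theorem at all, but simply observes that the determinations in \cite[\S3.1--3.7]{CLNP} (carried out there for $l=p$ a good prime) go through verbatim under Assumption~\ref{assumption}, which is exactly what you do for type $B_n$. You in fact supply more detail than the paper does---the explicit residue-class argument for $A_n$, the Bourbaki-coordinate congruences for $B_n$, and the arithmetic verification of the dimension formulas via $\dim\mathcal{N}(\Phi_0)=|\Phi|-|\Phi_0|$---all of which are correct and match what is implicit in the cited reference.
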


\begin{theorem}\label{subsystemtypesCD} Let $l$ be as in Assumption \ref{assumption}, ${\mathfrak g}$ be a classical
simple Lie algebra with $\Phi$ of type $C_{n}$ (respectively, $D_{n}$), and
$h=2n$ (respectively, $2n-2$) be the Coxeter number of $\Phi$.
\begin{itemize}
\item[(a)] If $l\geq h$ then ${\mathcal N}(\Phi_{0}) ={\mathcal
N}({\mathfrak g})$ and $\dim {\mathcal N}(\Phi_{0})=|\Phi|$.
\item[(b)] Suppose that $l< h$ where $h+1=lm+s$ with $m> 0$
and $0\leq s \leq l-1$. Then ${\mathcal N}(\Phi_{0})=G\cdot
{\mathfrak u}_{J}$ where $J\subseteq \Pi$ such that when
\begin{itemize}
\item[(i)] $\Phi$ is of type $C_{n}$,
$$
\Phi_{0}\cong \Phi_{J}\cong
\begin{cases} \underbrace{A_{m}\times\dots \times A_{m}}_{\text{$\frac{s}{2}$ times}}
\times \underbrace{A_{m-1} \times \dots \times
A_{m-1}}_{\text{$\frac{l-s-1}{2}$ times}}\times C_{\frac{m-1}{2}}
& \text{if $s$ is even},\\
\underbrace{A_{m}\times\dots \times A_{m}}_{\text{$\frac{s-1}{2}$
times}} \times \underbrace{A_{m-1} \times \dots \times
A_{m-1}}_{\text{$\frac{l-s}{2}$ times}}\times C_{\frac{m}{2}} &
\text{if $s$ is odd.}
\end{cases}
$$
Also,
$$
\dim {\mathcal N}(\Phi_{0})=
\begin{cases} 2n^{2}-\frac{m(lm+2s-l-1)+1}{2} & \text{if $s$ is even ($m$ odd)},\\
2n^{2}-\frac{m(lm+2s-l-1)}{2} & \text{if $s$ is odd ($m$ even).}
\end{cases}
$$
\item[(iv)] $\Phi$ is of type $D_{n}$,
$$
\Phi_{0}\cong \Phi_{J}\cong
\begin{cases} \underbrace{A_{m}\times\dots \times A_{m}}_{\text{$\frac{s}{2}$ times}}
\times \underbrace{A_{m-1} \times \dots \times
A_{m-1}}_{\text{$\frac{l-s-1}{2}$ times}}\times D_{\frac{m+1}{2}}
& \text{if $s$ is even and $m\geq 3$},\\
\underbrace{A_{m}\times\dots \times A_{m}}_{\text{$\frac{s-1}{2}$
times}} \times \underbrace{A_{m-1} \times \dots \times
A_{m-1}}_{\text{$\frac{l-s}{2}$ times}}\times D_{\frac{m+2}{2}}
& \text{if $s$ is odd}, \\
\underbrace{A_{m}\times\dots \times A_{m}}_{\text{$\frac{s}{2}$
times}} \times \underbrace{A_{m-1} \times \dots \times
A_{m-1}}_{\text{$\frac{l-s+1}{2}$ times}} & \text{if $s$ is even and
$m=1$.}
\end{cases}
$$
Also,
$$\dim {\mathcal N}(\Phi_{0})=
\begin{cases} 2n^{2}-2n-\frac{m(lm+2s-l+1)-1}{2} & \text{if $s$ is even ($m$ odd)},\\
2n^{2}-2n-\frac{m(lm+2s-l+1)}{2} & \text{if $s$ is odd ($m$ even).}
\end{cases}
$$
\end{itemize}
\end{itemize}
\end{theorem}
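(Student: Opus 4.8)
The plan is to reduce the theorem to a finite root–combinatorial computation and then carry it out in the standard $\epsilon$-coordinate models of $C_n$ and $D_n$. Part (a) is quick: for every $\alpha\in\Phi^+$ one has $1\le\langle\rho,\alpha^\vee\rangle\le\langle\rho,\alpha_0^\vee\rangle=h-1$ (the maximum being attained at the highest root of $\Phi^\vee$), so $l\ge h$ forces $\Phi_0=\varnothing$; then $J=\varnothing$ works and $\mathcal N(\Phi_\varnothing)=G\cdot{\mathfrak u}_\varnothing=\mathcal N$ has dimension $2N=|\Phi|$.

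For part (b), since Assumption~\ref{assumption} makes $l$ good in types $C$ and $D$, Lemma~\ref{subsystemlemma} produces $w\in W$ and $J\subseteq\Pi$ with $w(\Phi_0^+)=\Phi_J^+$; in particular $\Phi_0\cong\Phi_J$, and by (\ref{JRresult}) the subvariety $\mathcal N(\Phi_0)=\mathcal N(\Phi_J)=G\cdot{\mathfrak u}_J$ is well defined. Because $\dim\mathcal N(\Phi_0)=2\dim{\mathfrak u}_J=|\Phi|-|\Phi_J|=|\Phi|-|\Phi_0|$, with $|\Phi|=2n^2$ for $C_n$ and $|\Phi|=2n^2-2n$ for $D_n$, it remains only to (i) identify the isomorphism type of $\Phi_0$ and (ii) evaluate $|\Phi_0|$; a concrete $J\subseteq\Pi$ of the resulting type is then easy to exhibit (place the type-$A$ blocks among $\alpha_1,\dots,\alpha_{n-1}$ with single-node gaps and use the branch node for the $C_k$, resp. $D_k$, factor), and by (\ref{JRresult}) the particular choice is irrelevant for $\mathcal N(\Phi_0)$.

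To do (i) I would take $\rho=(n,n-1,\dots,1)$ for $C_n$ and $\rho=(n-1,n-2,\dots,0)$ for $D_n$ and compute $\langle\rho,\alpha^\vee\rangle \bmod l$ for every positive root. Writing $a_i:=n+1-i$ in type $C_n$ and $a_i:=n-i$ in type $D_n$, one obtains: $\epsilon_i-\epsilon_j\in\Phi_0\iff a_i\equiv a_j$, $\epsilon_i+\epsilon_j\in\Phi_0\iff a_i+a_j\equiv 0$, and, in type $C_n$, $2\epsilon_i\in\Phi_0\iff a_i\equiv 0$, all congruences modulo $l$. Grouping the indices $1,\dots,n$ by the residue class of $a_i$ exhibits $\Phi_0$ as a ``folded'' product: the indices with $a_i\equiv 0$ span a subsystem of type $C_k$ (resp. $D_k$) on that block, while for each $r$ with $1\le r\le (l-1)/2$ the indices with $a_i\equiv r$ together with those with $a_i\equiv l-r$ span a type-$A$ subsystem --- after the sign change $\epsilon_j\mapsto-\epsilon_j$ on the residue-$(l-r)$ indices the linking roots $\pm(\epsilon_i+\epsilon_j)$ become $\pm(\epsilon_i-\epsilon_j)$, so a merged block of total size $t$ contributes $A_{t-1}$. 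Counting these block sizes in terms of $h+1=2n+1$ (resp. $2n-1$) written as $lm+s$ with $0\le s\le l-1$, and separating the parity of $s$ (equivalently of $m$, since $lm+s$ is odd), reproduces exactly the multiplicities in the statement; in type $D_n$ the extra ``$s$ even, $m=1$'' line appears precisely because $D_1$ is empty and the residue-$0$ block then degenerates to a single index. Task (ii) is the summation of $|A_m|=m(m+1)$, $|A_{m-1}|=(m-1)m$, $|C_k|=2k^2$, $|D_k|=2k^2-2k$ over the blocks, followed by an elementary rearrangement collapsing to the displayed closed forms (for example, in the $C_n$, $s$-even case the sum works out to $\frac{1}{2}(m(lm+2s-l-1)+1)$).

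The main obstacle is the bookkeeping in the folding step: one must keep track of which residue classes are nonempty, of how the classes $r$ and $l-r$ pair up as $r$ approaches the ends of $\{1,\dots,l-1\}$ (this is exactly what distinguishes the $A_m$ blocks from the $A_{m-1}$ blocks), and of the small-rank degenerations $D_1=\varnothing$, $D_2=A_1\times A_1$, $D_3=A_3$ together with the situations in which the residue-$0$ block is empty or a singleton. These edge effects are what force the case split in the theorem; away from them the argument is the one carried out for $l$ a good prime in \cite[\S3.1--3.7]{CLNP}, which goes through verbatim once $l$ satisfies Assumption~\ref{assumption}.
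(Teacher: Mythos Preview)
Your proposal is correct and follows exactly the approach the paper intends: the paper does not prove this theorem directly but refers to \cite[\S3.1--3.7]{CLNP} (for $l=p$ a good prime) and remarks that the argument carries over verbatim under Assumption~\ref{assumption}; your outline of the $\epsilon$-coordinate computation, the residue-class decomposition of $\{a_i\}$, the folding of paired residues $r,\,l-r$ into type-$A$ blocks, and the handling of the residue-$0$ block and small-rank degenerations is precisely that argument. The dimension formulas then follow, as you note, from $\dim\mathcal N(\Phi_0)=|\Phi|-|\Phi_0|$ together with a routine sum of the block contributions.
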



\section{The case of the exceptional Lie algebras}\label{exceptionalsec}
\renewcommand{\thetheorem}{\thesection.\arabic{theorem}}
\renewcommand{\theequation}{\thesection.\arabic{equation}}

\setcounter{equation}{0}
\setcounter{theorem}{0}

 For the exceptional types $G_2, F_4, E_6, E_7,$ and $E_8$, the precise
 determination of the subroot system $\Phi_{0}$ and the variety ${\mathcal N}(\Phi_{0})$ can be
carried out by hand. In most cases, the determination of this variety can
 be deduced from its dimension ($\dim {\mathcal
N}(\Phi_{0})=|\Phi|-|\Phi_{0}|$) and the fact that ${\mathcal N}(\Phi_{0})$ is the
closure of a Richardson orbit; see  \cite[\S4.2]{CLNP}. When this
information is not sufficient, the correct Richardson orbit can be pinned down by
using the Weyl group as discussed in \cite[\S 4.3]{CLNP}. In fact, for
computational purposes in Chapter \ref{combinSteinbergsec}, for each
value of $l$ satisfying Assumption \ref{assumption}, we identify an
explicit element $w \in W$ and a subset $J \subset \Pi$ such that
$w(\Phi_0^+) = \Phi_J^+$.  As observed before, the choices of $w$ and $J$ are not
unique in general.  The computer package MAGMA \cite{BC,BCP} was used
to verify these facts. The tables providing the description of ${\mathcal
N}(\Phi_{0})$, $w$, and $J$ for various possible values of $l$ are presented in the Appendix \ref{tables1}.


\section{Standardizing $\Phi_0$}\label{subsecidentification}
\renewcommand{\thetheorem}{\thesection.\arabic{theorem}}
\renewcommand{\theequation}{\thesection.\arabic{equation}}
\setcounter{equation}{0}
\setcounter{theorem}{0}

If $l$ satisfies Assumption \ref{assumption} and if $\Phi$ has classical type, then
$l$ is automatically good for $\Phi$. Thus, in this case, the theorem below follows
 immediately from Lemma \ref{subsystemlemma}. However, in the exceptional types, we need
 to quote the computer results tabulated in Appendix \ref{tables1} in case $l$ is not
 good for $\Phi$ (but still satisfies Assumption \ref{assumption}).  We essentially worked out
 the case of $F_4$ after the statement of Lemma \ref{subsystemlemma}, where $l=9$ is the only
 value that needs to be considered.

For future reference, we summarize this result as follows.

\begin{theorem}\label{identificationtheorem}
Let $l$ be as in Assumption \ref{assumption}. Then there exists $w
\in W$ and a subset $J\subseteq \Pi$ such that $w(\Phi_{0,l}^+) =
\Phi_J^+$.
\end{theorem}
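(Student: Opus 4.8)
The plan is to reduce the statement to Lemma \ref{subsystemlemma} whenever possible and to dispose of the remaining finitely many exceptional situations by direct inspection. First I would note that if $\Phi_{0,l} = \varnothing$ there is nothing to prove: take $J = \varnothing$ and $w = 1$, so that $w(\Phi_{0,l}^+) = \varnothing = \Phi_\varnothing^+$. Since $\Phi_{0,l} = \varnothing$ whenever $l \geq h$, we may assume $l < h$ from here on.

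Next I would treat the case in which $l$ is \emph{good} for $\Phi$ in the sense of Section \ref{bases}, i.e.\ $l$ is divisible by no bad prime of $\Phi$. Under Assumption \ref{assumption} this holds automatically whenever $\Phi$ is of classical type --- in types $B$, $C$, $D$ the only bad prime is $2$ and $l$ is odd, while in type $A$ there are no bad primes --- and it also holds for most values of $l$ in the exceptional types. In each such case $\Phi_{0,l}$ is precisely the subsystem $\Phi_\lambda$ of Section \ref{bases} attached to the weight $\lambda = 0$, so Lemma \ref{subsystemlemma} applies verbatim and yields $w \in W$ and $J \subseteq \Pi$ with $w(\Phi_{0,l}) = \Phi_J$ and, moreover, $w(\Phi_{0,l}^+) = \Phi_J^+$, as required.

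The only remaining possibility is that $\Phi$ is of exceptional type while $l$ is \emph{bad} for $\Phi$ yet still satisfies Assumption \ref{assumption}; here Lemma \ref{subsystemlemma} is not available, and this is where the actual work lies. I would first observe that there are only finitely many such $l$: combining the conditions that $l$ be odd, greater than $1$, not equal to a bad prime, divisible by some bad prime, and less than $h$ (type $G_2$ being excluded at once, since Assumption \ref{assumption} forces $l$ there to be coprime to $6$), one finds that the only cases are $l = 9$ in types $F_4$ and $E_6$, $l \in \{9, 15\}$ in type $E_7$, and $l \in \{9, 15, 21, 25, 27\}$ in type $E_8$. For $F_4$ with $l = 9$ the verification was already carried out by hand following the proof of Lemma \ref{subsystemlemma}: $\Phi_{0}$ has type $A_1$ there, so one picks any $J \subseteq \Pi$ with $\Phi_J$ of type $A_1$ and a $w \in W$ carrying $\Phi_0^+$ onto $\Phi_J^+$. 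For the remaining exceptional values an explicit $w$ and $J$ (verified using MAGMA) are recorded in the tables of Appendix \ref{tables1}, and quoting those tables finishes the proof; this case-by-case check is the one step that admits no uniform substitute, and hence the main obstacle.
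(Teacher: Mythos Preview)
Your proof is correct and follows essentially the same route as the paper: reduce to Lemma~\ref{subsystemlemma} when $l$ is good (which covers all classical types automatically, and type $G_2$ by the extra hypothesis in Assumption~\ref{assumption}), and then handle the finitely many exceptional bad-$l$ cases by direct inspection via the tables in Appendix~\ref{tables1}. Your enumeration of those residual cases ($l=9$ for $F_4$ and $E_6$; $l\in\{9,15\}$ for $E_7$; $l\in\{9,15,21,25,27\}$ for $E_8$) is exactly right and in fact more explicit than what the paper writes out.
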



\section{Normality of orbit closures}\label{normality}
\renewcommand{\thetheorem}{\thesection.\arabic{theorem}}
\renewcommand{\theequation}{\thesection.\arabic{equation}}

\setcounter{equation}{0}
\setcounter{theorem}{0}
We consider certain nilpotent orbit closures for
a complex simple Lie algebra $\gl = \gl_{\mathbb{C}}$ having
root system $\Phi$, etc. Let $G$ be a complex (connected) algebraic group of the same root type as $\gl$.
Since we are only interested in the adjoint action of $G$ on its Lie algebra $\mathfrak g$, we do not
require $G$ to be simply connected. In case $\Phi$ has type $B_n$ (resp., $C_n$, $D_n$), we 
will take $G=SO_{2n+1}({\mathbb C})$ (resp., $Sp_{2n}({\mathbb C})$, $SO_{2n}({\mathbb C})$).

 In the discussion
below we will make use of the considerable work available in determining normal
$G$-orbit closures in the nilpotent variety ${\mathcal N} = {\mathcal N}(\gl)$.

Fix
an integer $l$ satisfying Assumption \ref{assumption}, and let $\Phi_0 =
\Phi_{0,l}$. We are especially interested when the variety ${\mathcal N}(\Phi_0)$ is normal. In fact, we
will verify that, in almost all cases, it is normal.

\subsection{The classical case.} When $\Phi$ has type $A_n$, a famous result of Kraft-Procesi 
\cite[\S0, Theorem]{KP1}
states all orbit closures in $\mathcal N$ are normal varieties. In this case, nilpotent orbits in $\mathfrak g= {\mathfrak sl}_{n+1}({\mathbb C})$
are naturally in one-to-one correspondence with the set ${\mathcal P}(n+1)$ of partitions $\eta=(\eta_1,\eta_2,\cdots,
\eta_{n+1})$ of $n+1$.  Here $\eta_1\geq\eta_2\geq\cdots\geq \eta_n\geq 0$ and $\eta_1+\cdots+\eta_{n+1}=n+1$.
We also write $\eta\vdash n+1$ to mean that $\eta\in{\mathcal P}(n+1)$.  Also, if a part $a$ is repeated $b$ times,
we often write $a^b$ in place of $\underbrace{a,\cdots,a}_{b}$ in $\eta$. Thus, if ${\mathcal O}_\eta$ denotes the corresponding orbit, the elements $x\in {\mathcal O}_\eta$ are
 just those nilpotent $(n+1)\times (n+1)$-matrices which have Jordan blocks of sizes $\eta_i\times\eta_i$, $1\leq i\leq n$.  A parabolic subgroup $P_J$
also determines a {\it composition} $\eta=\eta_J=(\eta_1,\cdots,\eta_{n+1})$ of $n+1$ meaning that each $\eta_i\geq 0$
and $\eta_1+\cdots+\eta_{n+1}=n+1$. Thus, when displayed in the usual way as matrices, the Levi factor $L_J$ of $P_J$
corresponds to blocks of sizes $\eta_1\times\eta_1, \dots,\eta_{n+1}\times\eta_{n+1}$ down the diagonal. Two
Levi factors $L_J$ and $L_K$ are conjugate in the group $G=SL_{n+1}({\mathbb C})$ if and only if the partitions defined by rearrangement of the two compositions
$\eta_J$ and $\eta_K$ are equal.   We let $\widetilde\eta_J$ denote this partition.
A well-known result of Kraft states that ${\mathcal N}(\Phi_J)$ is the closure in $\mathcal N$ of the nilpotent class
defined by the partition $\widetilde\eta_J'$ dual to $\widetilde\eta_J$.  In this way, the Richardson orbits in
type $A_n$ 
described in Theorem 3.2.1 can be explicitly identified as certain ${\mathcal O}_\eta$, $\eta\vdash n+1$. 

 The set ${\mathcal P}(n)$ (as well as subsets considered below) is partially ordered by putting $\eta\trianglelefteq\sigma$
provided that, for each $i$, $\eta_1+\cdots+\eta_i\leq\sigma_1+\cdots+\sigma_i$. Then, given $\eta,\sigma\in {\mathcal P}(n+1)$,
${\mathcal O}_\eta\subseteq\overline{{\mathcal O}_\sigma}$ if and only if $\eta\trianglelefteq\sigma$. 
For a nonempty subset $\Gamma\subseteq{\mathcal P}(n)$, a pair in $\Gamma$ is
a pair $(\eta,\sigma)$ of distinct elements
in $\Gamma$ such that $\eta\trianglelefteq\sigma$. If there is no other element $\tau\in\Gamma$ which is strictly
between $\eta$ and $\sigma$ in the ordering $\trianglelefteq$, the pair is called minimal.

For the  classical types $B_n$ and $C_n$, the nilpotent classes   are
also in natural one-to-one correspondence with certain sets of partitions. Thus, for a positive integer $N$, let ${\mathcal P}_1(N)$
be the set of partitions $\eta\vdash N$ in which each even part $\eta_i$ is repeated an even number of times. Similarly,
if $N$ is even, let ${\mathcal P}_{-1}(N)$ consist of those $\eta\vdash N$ in which each odd part $\eta_i$ is repeated
an even number of times. When $\Phi$ has type $B_n$ (resp., $C_n$), 
the $G$-orbits on $\mathcal N$ correspond naturally to the
$\eta\in{\mathcal P}_1(2n+1)$ (resp., $\eta\in{\mathcal P}_{-1}(2n)$). In fact, given $\eta\in{\mathcal P}_\epsilon(N)$ $(\epsilon=\pm 1)$, the
corresponding orbit is ${\mathcal O}_{\epsilon,\eta}:={\mathcal O}_{\eta}\cap \mathfrak g$, i.e., it is the intersection
of the nullcone $\mathcal N$ of $G$ with the corresponding $SL_{2n+1}({\mathbb C})$-orbit (resp., $SL_{2n}({\mathbb C})$-orbit)
indexed by $\eta$. 

If $\Phi$ has type $D_{2n}$, then the $G$-orbits in $\mathcal N$ correspond to the elements in ${\mathcal P}_{1}(2n)$, except in the case in which all
the parts of $\eta$ are even (i.e., $\eta$ is very even); in this case, $\eta$ defines two orbits ${\mathcal O}_\eta^I$
and ${\mathcal O}_\eta^{II}$ (which combine under the action of the full orthogonal group $O(2n)$ on $\mathfrak g$).

If $\eta\in{\mathcal P}_\epsilon(m)$ for some positive integer $m$, call $\eta$ an $\epsilon$-partition.  Given
a pair $(\eta,\sigma)$ in ${\mathcal P}_\epsilon(n)$, suppose that $\eta_i=\sigma_i$, $1\leq i\leq r$, and
$\eta'_j=\sigma'_j$, $1\leq j\leq s$. In other words, the first $r$ rows and first $s$ columns in the Young
diagrams associated to $\eta$ and $\sigma$ are the same. Suppose that $(\eta_1,\cdots,\eta_r)$ is
an $\epsilon$-partition, and consider the pair $(\overline\eta,\overline\sigma)$ in ${\mathcal P}_{(-1)^s\epsilon}(m)$
obtained by lettting $\overline\eta=(\eta_{r+1},\cdots)$, $\overline\sigma=(\sigma_{s+1},\cdots)$, and
$m=\eta_{r+1}+\cdots \eta_{n}$. In this case, write $(\eta,\sigma)\to(\overline\eta,\overline\sigma)$, allowing
the possibility that no rows or columns were removed, i.e., $\overline\sigma=\sigma$ and $\overline\eta=\eta$. 
Then a pair $(\eta,\sigma)$ is called irreducible if $(\eta,\sigma)\to(\overline\eta,\overline\sigma)$ implies
that $(\eta,\sigma)=(\overline\eta,\overline\sigma)$.  By means of this process, every irreducible pair
$(\eta,\sigma)$ in ${\mathcal P}_\epsilon(n)$ can be reduced to an irreducible minimal pair $(\overline\eta,
\overline\sigma)$ in ${\mathcal P}_{\epsilon'}(m)$. The irreducible minimal pairs $(\eta,\sigma)$ are
classified in \cite[3.4]{KP2}; there are precisely  8 distinct minimal irreducible pairs.  Of course, a
pair $(\eta,\sigma)$ may be minimal irreducible in say ${\mathcal P}_1(2n)$, but not minimal irreducible
when regarded as a pair in ${\mathcal P}_{-1}(2n)$. 

 A main result, stated in \cite[Thm. 16.2]{KP2}, establishes
that, given $\sigma\in{\mathcal P}_\epsilon(n)$, the orbit closure $\overline{\mathcal O}_{\epsilon,\sigma}$ is normal in 
  a codimension 2 class ${\mathcal O}_{\epsilon,\eta}\subset\overline{\mathcal O}_{\epsilon,\sigma}$ if
  and only if the irreducible minimal pair $(\overline\eta,\overline\sigma)\in{\mathcal P}_{\epsilon'}(n')$
  obtained from $(\eta,\sigma)$ (by the row and column removal process described above) is not the pair
  \begin{equation}\label{badpair}  (2m,2m), (2m-1,2m-1,1,1)\in {\mathcal P}_1(4m)\times{\mathcal P}_1(4m).
  \end{equation}
  So $n'=4m$ in this case.  We will use this result to study the normality of the ${\mathcal N}(\Phi_0)$.

Given $\eta\in{\mathcal P}(2n+1)$, there exists a unique $\eta\in{\mathcal P}_1(2n+1)$ which is largest partition (with respect to
$\trianglelefteq$) among all $\sigma\in{\mathcal P}_1(2n+1)$ satisfying $\sigma\trianglelefteq\eta$. It is denoted $\eta_B$ and
it is called the $B$-collapse of $\eta$. See \cite[Lemma 6.3.3]{CM} where its (simple) construction is indicated. Similarly,
if $\eta\in {\mathcal P}_1(2n)$ (resp., $\eta\in{\mathcal P}_{-1}(2n)$), there is a unique largest $\eta_D$ (resp.,
$\eta_C$) in ${\mathcal P}_1(2n)$ (resp., ${\mathcal P}_{-1}(2n)$) among those elements $\sigma\in{\mathcal P}_1(2n)$ (resp., $\sigma\in{\mathcal P}_{-1}(2n)$) satisfying $\sigma\trianglelefteq\eta$.

The extension of Kraft's result described above to the other classical types requires the notion of an
induced nilpotent class. Thus, let ${\mathfrak p}={\mathfrak l}\oplus\mathfrak n$ be a parabolic subalgebra of
$\mathfrak g$. For any nilpotent class ${\mathcal O}_{\mathfrak l}$ in $\mathfrak l$, the parabolic subgroup
$P$ with Lie algebra $\mathfrak p$ has a unique open (nilpotent) orbit ${\mathcal O}_{\mathfrak l}'$ in ${\mathcal O}_{\mathfrak l}+{\mathfrak n}$, which in turn defines a nilpotent orbit, denoted $\ind_{\mathfrak l}^{\mathfrak g}{\mathcal O}_{\mathfrak l}$
since it can be proved that it does not depend on the choice of parabolic subalgebra $\mathfrak p$ having Levi factor $\mathfrak l$.
Furthermore, induction of nilpotent classes is transitive, i.e., for ${\mathfrak l}_1\subseteq{\mathfrak l}_2\subseteq{\mathfrak l}_3$, $\ind_{{\mathfrak l}_1}^{{\mathfrak l}_3}=\ind_{{\mathfrak l}_2}^{{\mathfrak l}_3}\circ\ind_{{\mathfrak l}_1}^{{\mathfrak l}_2}$ on nilpotent orbits in ${\mathfrak l}_1$. In addition, if ${\mathcal O}_J$ is the Richardson class defined by
$J\subset\Pi$, then ${\mathcal O}_J=\ind_{{\mathfrak l}_J}^{\mathfrak g}{\mathcal O}_0$, where ${\mathcal O}_0$ denotes the
trivial class in ${\mathfrak l}_J$. Since $[{\mathfrak l}_J,{\mathfrak l}_J]$ is a direct sum of classical simple Lie algebras,
${\mathcal O}_0$ is defined by a single column partition $(1^s)$ on each simple component. Therefore, using
\cite[Thm. 7.3.3]{CM}, ${\mathcal O}_J$ can be explicitly described as a nilpotent class ${\mathcal O}_{\epsilon,\eta}$. 

Write $N=m'l+s'$ where $0\leq s' \leq l-1$.  Then ${\mathcal N}(\Phi_{0})= \overline{{\mathcal
O}}_{\sigma_{X}}$ where $\sigma=(l^{m'},s')$ and $\sigma_{X}$ is
the $X$-collapse of $\sigma$ ($X\in\{B,C,D\}$). See \cite{UGA1} for more details. Now we analyze each type.

\medskip
\noindent
\underline{Case 1:  $\Phi$ has type $B_n$.} First, suppose that $s'$ is odd. Then $\sigma_B=(l^{m'},s')=\sigma$.
In this case, because $l$ is odd, it is impossible to reduce $\sigma$ to a partition $(2m,2m)\in{\mathcal P}_1(4m)$
(which occurs in (\ref{badpair})) by removing rows $\sigma_1,\cdots,\sigma_r$ (which automatically form
an $\epsilon$-partition) and an {\it even} number of columns $(\sigma_1',\cdots,\sigma_s')$. Therefore,
${\mathcal N}(\Phi_0)$ is normal in this case.

\medskip\noindent
	\underline{Case 2: $\Phi$ has type $B_n$ and $s'$ is even.} Then $\sigma_B=\sigma$ if $s'=0$ and
$\sigma_B=(l^{m'},s'-1,1)$ if $s'>0$. Again, it is clearly impossible to reduce such a partition to
one of the form $(2m,2m)$ by removing rows and an even number of columns. Therefore, ${\mathcal N}(\Phi_0)$
is normal in this case.

\medskip\noindent
	\underline{Case 3: $\Phi$ has type $D_n$, thus  $2n=m'l+s'$.}
	 If $s'$ is odd or 0, then $\sigma_D=\sigma$, and the situation is very similar to
that in Case 1, and normality follows.  If $s'$ is a positive even integer, then $\sigma_D=(l^{m'}, s'-1,1)$,
placing in the same situation as in Case 2.  Thus, ${\mathcal N}(\Phi)$ is normal in type $D$.

\medskip\noindent
	\underline{Case 4: $\Phi$ has type $C_n$, $2n=m'l+s'$, and $s'$ is even (thus $m'$ is even). }
Then $\sigma_C=\sigma=(l^{m'},s')\in{\mathcal P}_{-1}(2n)$. Now we must remove an {\it odd}
number $t$ of columns to get to some ${\mathcal P}_1(4m)$. Clearly, in order to obtain $(2m,2m)$,
we must have $s'<t<l$. Thus, we can assume that $s'+1<l$. The possible $\eta$ for which
$(\eta,\sigma)$ is minimal in ${\mathcal P}_{-1}(2n)$ are 
$$
\begin{cases}\eta_1=(l^{m'-2},l-1,l-1,s'+2);\\
\eta_2=(l^{m'},1,1),\quad s'=2;\\
\eta_3=(l^{m'},s'-2,2),\quad s>2. \\  \end{cases}$$
But because $s'<t$, it is impossible to obtain $(2m-1,2m-1,1,1)$ from any of these partitions while
obtaining $(2m,2m)$ from $\sigma$ by removing rows and (an even number $t>s'$) of columns.

\medskip\noindent
	\underline{Case 5:  $\Phi$ has type $C_n$, $2n=m'l+s'$, and $s'$ is odd (thus $m'$ is odd). }
In this case, 
$$\sigma_C= (l^{m'-1},l,s'+1).$$  
If $l-1>s'+1$, it is impossible to obtain a partition
$(2a,2a)$, $a>0$, from $\sigma_C$ by removing rows (from top to bottom) and columns (from left to right). 
If $l-1=s'+1$, then all the $m'-1$ rows of length $l$ must be removed and then an {\it odd} number of
columns must be removed. Since $l-1=s'+1$ is even, it is again not possible to arrive at a
partition $(2a,2a)$, $a>0$.

\medskip
We conclude in all classical cases that ${\mathcal N}(\Phi_0)$ is normal.

\subsection{Exceptional cases}  In case $J\subseteq\Pi$ consists of mutually   orthogonal short roots, 
an important result of Broer \cite[Thm. 4.1]{Br2} can be applied,  establishing that ${\mathcal N}(\Phi_J)$ is a normal variety. In particular,
this applies to the full nullcone ${\mathcal N}={\mathcal N}(\varnothing)$ (a well-known result of Kostant)
and the closure ${\mathcal N}_{\text{subreg}}={\mathcal N}(\{{\pm\alpha}\})$,
$\alpha\in\Pi$ (short), of the subregular class. Using the tables in Appendix \ref{tables1} (where $J$ is explicitly
described), we can treat
the various exceptional types below. Notice that ${\mathcal N}_{\text subreg}$ is the closure of the
unique nilpotent class of codimension $2$ in $\mathcal N$. 

\medskip
\noindent{\underline{Type $G_2$}.} The relevant ${\mathcal N}(\Phi_0)$ are either the full nullcone
$\mathcal N$ (for $l\geq 6$) or ${\mathcal N}_{\text subreg}$. As remarked above, these are normal.

\medskip\noindent{\underline{Type $F_4$.}} There are four possible orbits, having Carter-Bala labels $F_4(a_2)$, $F_4(a_2)$, $F_4(a_1)$,
and $F_4$ and corresponding distinguished Dynkin diagrams labeled $(0200)$, $(0202)$, $(2202)$, and
$(2222)$, respectively (using \cite[p. 128]{CM}). By \cite[Thm. 1]{Br1}, the corresponding orbit closures are normal.

\medskip\noindent{\underline{Type $E_6$.}} There are four relevant orbits having Bala-Carter labels $A_4+A_1$,
$E_6(a_3)$, $E_6(a_1)$, and $E_6$. The last three are Richardson classes, with Levi factor root systems $A_1\times
A_1\times A_1$, $A_1$, and  $\varnothing$. Thus, Broer's theorem \cite[Thm. 4.1]{Br2} applies to guarantee
these have normal orbit closures. The final case $A_4 + A_1$ has normal orbit closure, by \cite[p. 296]{So1}.

\medskip\noindent{\underline{Type $E_7$}.} There are seven relevant orbits having Bala-Carter
labels $A_4+A_2$, $A_6$, $E_6(a_1$), $E_7(a_1)$, $E_7(a_2)$, $E_7(a_3)$, and $E_7$. Again, \cite[Thm. 4.1]{Br2}
implies the last five have normal orbit closures. Using techniques from \cite{So1} and \cite{So2}, Sommers
\cite{So3}
has informed us that he has verified the normality for the remaining two cases (unpublished). 

\medskip\noindent{\underline{Type $E_8$}.} There are nine relevant orbits having Bala-Carter labels
$A_6+A_1$, $E_8(b_6)$, $E_8(a_6)$, $E_8(a_5)$, $E_8(a_4)$, $E_8(a_3)$, $E_8(a_2)$, $E_8(a_1)$, 
and $E_8$. The last five have normal orbit closures, again by \cite[Thm. 4.1]{Br2}. Again, Sommers has
informed us that he has verified normality for $E_8(a_6)$ and $E_8(a_5)$. The remaining two cases, 
$A_6+A_1$ (when $l=7$) and $E_8(b_6)$ (when $l=9$) remain open at present.  

\subsection{Summary}
We summarize the analysis in the following
theorem.

\begin{theorem}\label{normalitytheorem}
 Let $l$ be as in Assumption \ref{assumption} and $J\subseteq \Pi$ so
that ${\mathcal N}(\Phi_{0})=G\cdot {\mathfrak u}_{J}$. If $\Phi$ is
of type $E_8$, assume that $l\neq 7,9$. Then ${\mathcal
N}(\Phi_{0})$ is a normal variety.
\end{theorem}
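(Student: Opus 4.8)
The plan is to reduce Theorem~\ref{normalitytheorem} to the case-by-case analysis assembled in the preceding subsections, organized according to the type of $\Phi$. Recall that by Theorem~\ref{identificationtheorem} (together with the explicit tables in Appendix~\ref{tables1} and the listings in Sections 3.2--3.3), we may fix $w \in W$ and $J \subseteq \Pi$ with $w(\Phi_{0}^+) = \Phi_J^+$, so that ${\mathcal N}(\Phi_{0}) = {\mathcal N}(\Phi_J) = G\cdot {\mathfrak u}_J = \overline{{\mathcal O}}_J$ is the closure of the Richardson orbit attached to $J$. The entire content of the theorem is that this orbit closure is normal (away from the two excluded values of $l$ in type $E_8$). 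First I would dispose of the trivial case: if $l \geq h$, then $\Phi_0 = \varnothing$ and ${\mathcal N}(\Phi_0) = {\mathcal N}$ is the full nullcone, which is normal by Kostant's theorem (and is also covered by Broer's theorem, since $J = \varnothing$). So the substance is entirely in the range $l < h$.

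\medskip

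The classical cases $A_n, B_n, C_n, D_n$ are handled by the Kraft--Procesi normality theory. For type $A_n$, every nilpotent orbit closure in ${\mathfrak sl}_{n+1}$ is normal by \cite{KP1}, so nothing further is needed. For types $B_n$, $C_n$, $D_n$, the strategy is: (i) use the description ${\mathcal N}(\Phi_0) = \overline{{\mathcal O}}_{\sigma_X}$ with $\sigma = (l^{m'}, s')$ and $\sigma_X$ its $X$-collapse, where $N = |\Phi^+|$ (suitably: $2n+1$, $2n$, $2n$) is written $N = m'l + s'$ with $0 \le s' \le l-1$; (ii) invoke the Kraft--Procesi criterion \cite[Thm.~16.2]{KP2} that $\overline{{\mathcal O}}_{\epsilon,\sigma}$ is normal unless some irreducible minimal pair obtained by the row/column-removal process equals the forbidden pair $(2m,2m),(2m-1,2m-1,1,1) \in {\mathcal P}_1(4m)^2$; (iii) check, using that $l$ is \emph{odd}, that the partition $\sigma_X$ (which always has the shape $(l^{m'}, \ast)$ or $(l^{m'}, s'-1, 1)$) can never be reduced to $(2m,2m)$ by removing an even number of columns in types $B, D$, nor to the pair above by removing an odd number of columns in type $C$ --- this is precisely the content of Cases 1--5 above, which I would simply cite. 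The only mild subtlety is keeping track of the parity bookkeeping in type $C$ (Cases 4 and 5), where the sub-leading parts of the minimal partners $\eta_1, \eta_2, \eta_3$ must be compared against the number of columns removed; but this has already been carried out.

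\medskip

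The exceptional cases $G_2, F_4, E_6, E_7, E_8$ are handled orbit-by-orbit using the tables of Appendix~\ref{tables1} (which list, for each admissible $l$, the orbit ${\mathcal N}(\Phi_0)$ by its Bala--Carter label and the set $J$). The plan is: apply Broer's theorem \cite[Thm.~4.1]{Br2} whenever $J$ consists of mutually orthogonal short roots (this covers in particular $J = \varnothing$, i.e.\ ${\mathcal N}$ itself, and the subregular closure); for the finitely many remaining orbits, cite the specific normality results --- Broer \cite{Br1} in type $F_4$, Sommers \cite{So1,So2,So3} in types $E_6, E_7, E_8$. The only orbits not covered are $A_6+A_1$ (when $l=7$) and $E_8(b_6)$ (when $l=9$) in type $E_8$, which is exactly why these two values are excluded in the hypothesis. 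Assembling these observations --- trivial case, classical cases via Kraft--Procesi, exceptional cases via Broer and Sommers --- yields the theorem.

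\medskip

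The main obstacle, and the only place requiring genuine work rather than citation, is the parity analysis in the classical types $B$, $C$, $D$: one must verify that the explicit collapsed partition $\sigma_X$ together with its minimal partners in the relevant $\epsilon$-poset never produces the Kraft--Procesi exceptional pair \eqref{badpair}. Since $l$ is odd this comes down to a clean parity obstruction in each of Cases 1--5, which I have sketched above; the type $C$ cases (especially distinguishing $s'$ even from $s'$ odd, where the collapse $\sigma_C$ has different shapes $(l^{m'}, s')$ versus $(l^{m'-1}, l, s'+1)$) are the most delicate. Everything else is an appeal to the literature or to the identification results already established in this chapter.
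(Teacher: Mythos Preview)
Your proposal is correct and follows essentially the same route as the paper: the theorem is proved by assembling the case-by-case analysis of \S3.5.1 (classical types via Kraft--Procesi, with the parity obstruction to the bad pair \eqref{badpair}) and \S3.5.2 (exceptional types via Broer \cite{Br1,Br2} and Sommers \cite{So1,So2,So3}), exactly as you outline. One small slip: where you write ``$N = |\Phi^+|$'' you mean the dimension of the natural module ($2n{+}1$, $2n$, $2n$), as your parenthetical makes clear.
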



\section{Resolution of singularities}\label{resolutionofsingularitiessec}
\renewcommand{\thetheorem}{\thesection.\arabic{theorem}}
\renewcommand{\theequation}{\thesection.\arabic{equation}}

\setcounter{equation}{0}
\setcounter{theorem}{0}
 We maintain the notation of the
previous section. Let $J\subseteq \Pi$ and $P_{J}$ be the
associated parabolic subgroup.  From the Bruhat decomposition, it
follows that the quotient map $G\overset\pi\to G/P_J$ has local sections
in the sense that $G/P_J$ has an open covering by affine spaces
$X\cong {\mathbb A}^{\dim {\mathfrak u}_J}$ such that $\pi^{-1}X\cong X\times P_J$. Thus, the  orbit space
$G\times^{P_J}{\mathfrak u}_J:= (G\times {\mathfrak u}_J)/P_J$ for the natural right action of $P_J$
on $G\times {\mathfrak u}_J$ satisfies 
$${\mathbb C}[G\times^{P_J}{\mathfrak u}_J]
\cong ({\mathbb C}[G]\otimes S^\bullet({\mathfrak u}_J^*))^{P_J}=:\ind_{P_J}^GS^\bullet({\mathfrak u}_J^*).$$
Also, the natural projection $G\times^{P_J}{\mathfrak u}_J\to G/P_J$ identifies $G\times^{P_J}{\mathfrak
u}_J$ with the cotangent bundle of the smooth variety $G/P_J$.  For background and more discussion
of the topics in this section, see \cite[pp. 90--97]{Jan4}. 

In addition, there is the moment (or collapsing) map $\mu:G\times^{P_J}{\mathfrak u}_J\to G\cdot{\mathfrak u_J}$
defined by mapping the $P_J$-orbit $[x,u]$ of $(x,u)\in G\times {\mathfrak u}_J$ to $x\cdot u\in G\cdot{\mathfrak
u}_J$. Then $\mu$  is a desingularization of $G\cdot{\mathfrak u}_J$, in the sense that it is 
a birational, proper morphism of varieties, if and only if the following condition holds:
\begin{equation}\label{richardson} 
\text{\rm for all $x\in {\mathcal C}'_J$, $C_G(x)=C_{P_J}(x)$.}
\end{equation}
Recall that the condition $x\in{\mathcal C}'_J$ means just that $P_J\cdot x$ is dense in ${\mathfrak u}_J$. It is
well known that $C_G(x)^o=C_{P_J}(x)^o$, i.~e., that the two centralizers $C_G(x)$ and $C_{P_J}(x)$
have the same connected components of the identity; see \cite[Cor. 5.2.2]{Car}.

\begin{lem}\label{even}  (a) If $\mu$ is a desingularization, then 
$$ {\mathbb C}[G\times^{P_J}{\mathfrak u}_J]\cong {\mathbb C}[\mu^{-1}{\mathcal C}_J]\cong {\mathbb C}[{\mathcal C}_J],$$
where, for a complex variety $X$, ${\mathbb C}[X]$ denotes the algebra of regular functions on $X$. (Recall
that ${\mathcal C}_J:=G\cdot x$ for any $x\in{\mathcal C}'_J$.)

(b) If $x$ is an
even nilpotent element (in the sense of Bala-Carter, see \cite[Ch. 8]{CM}),   condition (\ref{richardson}) on centralizers holds.\end{lem}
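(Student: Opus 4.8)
The plan is to prove (a) first, treating the desingularization $\mu$ as the source of all information about regular functions, and then (b) by a direct dimension-count on centralizers. For part (a), I would start from the fact that $G\times^{P_J}\uj$ is the cotangent bundle $T^*(G/P_J)$, hence smooth, and in particular normal and irreducible of dimension $2\dim\uj = \dim\,{\mathcal N}(\Phi_J)$. Since $\mu$ is assumed to be a desingularization, it is birational and proper; because ${\mathcal C}_J = G\cdot x$ is the dense $G$-orbit in $G\cdot\uj = {\mathcal N}(\Phi_J)$ and $\mu$ restricts to an isomorphism over a dense open subset, I would argue that $\mu^{-1}({\mathcal C}_J)$ is a dense open subvariety of $G\times^{P_J}\uj$ whose complement has codimension $\geq 2$ (this uses that the boundary ${\mathcal N}(\Phi_J)\setminus{\mathcal C}_J$ has codimension $\geq 2$ in ${\mathcal N}(\Phi_J)$, a standard fact about nilpotent orbit closures, together with properness of $\mu$ so that fibers over the boundary add nothing of codimension $1$). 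Then, since $G\times^{P_J}\uj$ is normal, restriction of regular functions gives ${\mathbb C}[G\times^{P_J}\uj]\cong {\mathbb C}[\mu^{-1}{\mathcal C}_J]$ by Hartogs/the algebraic normality extension property. Finally $\mu\colon \mu^{-1}{\mathcal C}_J \to {\mathcal C}_J$ is an isomorphism (birational proper morphism onto a smooth, hence normal, variety, and one can check it is injective on this locus using condition (\ref{richardson})), giving ${\mathbb C}[\mu^{-1}{\mathcal C}_J]\cong {\mathbb C}[{\mathcal C}_J]$.

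For part (b), I would use the description of condition (\ref{richardson}) as the equality of full centralizers, knowing already (from \cite[Cor. 5.2.2]{Car}) that the identity components agree: $C_G(x)^\circ = C_{P_J}(x)^\circ$. So it suffices to show the component groups $C_G(x)/C_G(x)^\circ$ and $C_{P_J}(x)/C_{P_J}(x)^\circ$ coincide, equivalently $C_G(x) = C_{P_J}(x)\cdot C_G(x)^\circ = C_{P_J}(x)$. The standard approach for an even nilpotent element is to invoke the $\mathfrak{sl}_2$-theory: attach to $x$ an $\mathfrak{sl}_2$-triple $(x,h,y)$; evenness means all eigenvalues of $\operatorname{ad} h$ are even, so the associated grading $\mathfrak g = \bigoplus_i \mathfrak g(i)$ has $\mathfrak g(1) = 0$. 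The parabolic $\mathfrak p_J$ can be taken to be the canonical parabolic $\mathfrak p(h) = \bigoplus_{i\geq 0}\mathfrak g(i)$ with nilradical $\bigoplus_{i>0}\mathfrak g(i)$ and Levi $\mathfrak g(0) = \mathfrak z_{\mathfrak g}(h)$, and $x$ lies in the dense $P(h)$-orbit of this nilradical (this is precisely the Bala-Carter setup). One then shows $C_G(x)\subseteq P(h)$: any $g$ centralizing $x$ also centralizes (a conjugate of) the cocharacter $h$, because the associated cocharacter of an even nilpotent is canonical/unique up to $C_G(x)$-conjugacy — more precisely $C_G(x)$ acts transitively on the set of $\mathfrak{sl}_2$-triples through $x$, and one can arrange $g$ to normalize $h$, hence $g\in Z_G(h)\cdot C_G(x)^\circ \subseteq P(h)$. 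Since $C_{P(h)}(x) = C_G(x)\cap P(h) = C_G(x)$, this is (\ref{richardson}).

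The main obstacle I anticipate is part (b): making the $\mathfrak{sl}_2$ argument that $C_G(x)\subseteq P(h)$ fully rigorous requires care about whether $\mathfrak p_J$ (as given combinatorially by $J$) really can be identified with the Jacobson-Morozov parabolic $\mathfrak p(h)$ up to conjugacy, and about the transitivity of $C_G(x)$ on $\mathfrak{sl}_2$-triples (the Kostant-Mal'cev theorem) in possibly non-simply-connected $G$; the component-group bookkeeping is the delicate point, since $C_G(x)$ can be disconnected. For part (a), the only subtlety is verifying the codimension-$\geq 2$ claim for the complement of $\mu^{-1}({\mathcal C}_J)$, which reduces to the standard fact that $\dim\,\overline{{\mathcal O}} - \dim {\mathcal O}' \geq 2$ for any nilpotent orbit ${\mathcal O}'$ strictly contained in $\overline{{\mathcal O}}$, combined with properness to control fiber dimensions; I would cite \cite[Ch. 8]{CM} or the references in \cite[pp. 90--97]{Jan4} for this rather than reprove it.
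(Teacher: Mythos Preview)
Your proposal is essentially correct and, in fact, spells out considerably more than the paper does: the paper's entire proof is ``(a) holds by \cite[Remark, p.~95]{Jan4}, and (b) follows from \cite[Remark, p.~93]{Jan4}.'' What you have sketched is precisely the content of those cited remarks in Jantzen's notes---the normality/codimension-$\ge 2$ extension argument for (a), and the Jacobson--Morozov parabolic argument for (b)---so your approach and the paper's coincide, with yours being an expansion rather than an alternative.

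One point worth flagging, which you yourself identify as the main obstacle: in (b) your argument shows $C_G(x)\subseteq P(h)$ for the Jacobson--Morozov parabolic $P(h)$, but the lemma concerns a given $P_J$ for which $x$ is Richardson. You are right that one must check $P_J$ is conjugate to $P(h)$; for an even nilpotent this holds because the Levi dimensions agree ($\dim\mathfrak l_J=\dim C_G(x)=\dim\mathfrak g(0)=\dim\mathfrak l(h)$) and, in the cases used in Theorem~\ref{isomorphismofinducedthm}, the identification is made explicitly via the weighted Dynkin diagram. Jantzen's remark handles this, which is why the paper simply cites it.
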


\begin{proof} (a) holds by \cite[Remark, p. 95]{Jan4}, and (b) follows from  \cite[Remark, p. 93]{Jan4}.
\end{proof}

 In the theorem below, we show that (\ref{richardson})  holds in all the situations of interest in this paper.
 
If $\mu$ is a desingularization, and if, in addition,  ${\overline{\mathcal O}_J}=G\cdot u_J$ is a normal variety, then
\begin{equation}\label{inductionidentification}
\text{ind}_{P_{J}}^{G} S^{\bullet}({\mathfrak u}_{J}^{*})\cong
{\mathbb C}[G\times^{P_{J}} {\mathfrak u}_{J}] \cong {\mathbb
C}[G\cdot {\mathfrak u}_{J}].
\end{equation}
This follows because  ${\overline{\mathcal O}_J}\backslash{\mathcal O}_J$ has codimension
at least 2.

Now we can state the following result.

\begin{theorem}\label{isomorphismofinducedthm}
Let $l$ be as in Assumption \ref{assumption} and choose
$J\subseteq \Pi$ so
that ${\mathcal N}(\Phi_{0})=G\cdot {\mathfrak u}_{J}$. 

(a) The moment map $\mu:G\times^{P_J}{\mathfrak u}_J\to G\cdot{\mathfrak u}_J$ is
a $G$-equivariant desingularization of $G\cdot{\mathfrak u}_J$.

(b) If $\Phi$ is
of type $E_8$, assume that $l\neq 7,9$. Then the identifications
(\ref{inductionidentification} ) hold. \end{theorem}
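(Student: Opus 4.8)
The plan is to prove (a) by checking the centralizer condition (\ref{richardson}) case by case, and then to deduce (b) formally from (a) together with Theorem \ref{normalitytheorem}. For (a), observe first that the $G$-actions on $G\times^{P_J}\mathfrak u_J$ and on $G\cdot\mathfrak u_J$ are built into the definitions, so $\mu$ is automatically $G$-equivariant; it is also proper (as $G/P_J$ is complete) and surjective onto the closed irreducible set $\overline{\mathcal C_J}=G\cdot\mathfrak u_J$. Hence $\mu$ is a desingularization precisely when it is birational, i.e.\ precisely when $C_G(x)=C_{P_J}(x)$ for $x\in\mathcal C'_J$, for every $J$ with $\mathcal N(\Phi_0)=G\cdot\mathfrak u_J$ occurring under Assumption \ref{assumption} (excluding type $E_8$ with $l=7,9$). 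The main tool is Lemma \ref{even}(b): it suffices to show that the Richardson orbit $\mathcal C_J$ is \emph{even} in the Bala--Carter sense, and here one uses the explicit identification of $\mathcal C_J$ given by the partitions in Theorems \ref{subsystemtypesAB} and \ref{subsystemtypesCD} (classical types) and by the Bala--Carter labels tabulated in Appendix \ref{tables1} (exceptional types).

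For the classical groups: in type $A_n$, condition (\ref{richardson}) holds automatically, since centralizers of nilpotent elements in $GL_{n+1}(\mathbb C)$, and hence in $SL_{n+1}(\mathbb C)$, are connected, while $C_{P_J}(x)\subseteq C_G(x)$ and $\dim C_{P_J}(x)=\dim C_G(x)=\dim\mathfrak l_J$ for a Richardson element $x$. In types $B_n$ and $D_n$, a short check shows that the collapses $\sigma_B,\sigma_D$ of $\sigma=(l^{m'},s')$ with $l$ odd always have all parts odd (they equal $\sigma$ when $s'$ is odd or $0$, and $(l^{m'},s'-1,1)$ when $s'$ is positive and even), so $\mathcal N(\Phi_0)$ is an even orbit closure and Lemma \ref{even}(b) applies. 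In type $C_n$ the collapse $\sigma_C$ typically has parts of mixed parity --- e.g.\ $\sigma_C=(l^{m'},s')$ when $s'$ is even and positive --- so $\mathcal N(\Phi_0)$ need not be even; for these cases I would instead verify (\ref{richardson}) directly, using the explicit parabolic $\mathfrak p_J$ of Theorem \ref{subsystemtypesCD}, exhibiting a Richardson representative $x\in\mathfrak u_J$, and matching the reductive part of $C_{Sp_{2n}(\mathbb C)}(x)$ (a product of symplectic and orthogonal groups, with component group governed by the odd parts of the partition) against that of $C_{P_J}(x)$.

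For the exceptional types the argument is a finite case-check against the tables of Appendix \ref{tables1}: for each admissible $l$ one reads off the Bala--Carter label of $\mathcal C_J$ and compares its weighted Dynkin diagram (\cite[Ch.~8]{CM}) with the list of even diagrams. This settles all the distinguished orbits (automatically even) and those induced from Levi factors of type $A_1^k$ with mutually orthogonal short roots; the few remaining orbits are handled by the explicit centralizer computation described above, or by the structural results of Sommers already invoked in Section \ref{normality}. The excluded values $l=7,9$ in type $E_8$ are precisely the ones for which neither this nor the normality statement is presently available.

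Once (a) holds, (b) is formal. By (a), $\mu$ is a desingularization, so Lemma \ref{even}(a) gives $\mathbb C[G\times^{P_J}\mathfrak u_J]\cong\mathbb C[\mathcal C_J]$. By Theorem \ref{normalitytheorem} (with $l\neq 7,9$ in type $E_8$), the variety $\overline{\mathcal C_J}=G\cdot\mathfrak u_J$ is normal, and its boundary $\overline{\mathcal C_J}\setminus\mathcal C_J$ is a union of smaller nilpotent orbits, hence of codimension at least $2$; so restriction of functions yields $\mathbb C[\mathcal C_J]\cong\mathbb C[\overline{\mathcal C_J}]=\mathbb C[G\cdot\mathfrak u_J]$. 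Composing with the identity $\ind_{P_J}^G S^\bullet(\mathfrak u_J^*)\cong\mathbb C[G\times^{P_J}\mathfrak u_J]$ recorded in Section \ref{resolutionofsingularitiessec} gives the chain (\ref{inductionidentification}). I expect essentially all the work to be in (a), and specifically in the non-even cases --- type $C_n$ with $s'$ even and positive, and the handful of exceptional orbits --- where Lemma \ref{even}(b) does not apply and centralizers must be compared by hand.
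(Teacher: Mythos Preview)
Your overall strategy matches the paper's proof exactly: verify the centralizer condition (\ref{richardson}) type by type, using evenness via Lemma \ref{even}(b) wherever possible, and then deduce (b) from (a) plus Theorem \ref{normalitytheorem}. The treatment of types $A_n$, $B_n$, $D_n$ and the reduction of (b) to (a) are essentially identical to the paper's.

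There are, however, three concrete gaps. First, you exclude $l=7,9$ in type $E_8$ from part (a), but the theorem asserts (a) with no such restriction; only (b) carries that hypothesis. The paper does verify (\ref{richardson}) for the $E_8$ orbit $A_6+A_1$ (arising at $l=7$), so this case must be handled. Second, for type $C_n$ you only say you ``would verify (\ref{richardson}) directly'' without doing so; the paper's argument is sharper and worth knowing: when $m'$ is even the partition $\sigma_C=(l^{m'},s')$ has exactly one distinct even part, so the component group $A(\mathcal C_J)$ in the adjoint group is trivial and $C_G(x)$ is generated by $C_G(x)^0\subseteq P_J$ and the center; when $m'$ is odd the component group is $\mathbb Z/2\mathbb Z$, and the paper invokes Hesselink's criterion \cite[Cor.~7.7]{H} to conclude $C_G(x)\subseteq P_J$. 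Third, for the exceptional types your appeal to ``the structural results of Sommers'' is misplaced---those results concern normality of orbit closures, not the centralizer condition. The paper's actual argument is that the tables show all but two of the relevant orbits are even (so Lemma \ref{even}(b) applies), and in the remaining two cases ($A_4+A_1$ in $E_6$ and $A_6+A_1$ in $E_8$) the component group $C_G(x)/C_G(x)^0$ is trivial by \cite{CM}, whence $C_G(x)=C_G(x)^0\subseteq P_J$.
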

 
\begin{proof} We first prove that the condition (\ref{richardson}) holds, namely, that
$C_G(x)\subseteq P_J$, where $x\in {\mathfrak u}_J$. Then (a) holds by the discussion above Lemma \ref{even}.
Consequently, Theorem \ref{normalitytheorem} and the discussion right above establishes (b).  
 
The verification of (\ref{richardson}) will be case-by-case.
 
 \medskip
\noindent\underline{Case 1: $\Phi$ has type $A_n$.} Without loss of generality we can 
assume that $G=GL_{n}(k)$. The centralizer is connected so $C_G(x)=C_{G}(x)^{0}$. 
Since $x$ is Richardson we have by \cite[Corollary 5.2.4]{Car} $C_{G}(x)^{0}\subseteq 
P_{J}$.

\medskip\noindent
\underline{Case 2: $\Phi$ has type $B_n$.}  Let $N=2n+1$ and, as before, write
$N=lm'+s'$ where $0\leq s' \leq l-1$
 and $m'>0$. Set $\eta=(l^{m'},s')$ and
recall that ${\mathcal N}(\Phi_{0})= \overline{{\mathcal
O}}_{\eta_{B}}$where $\eta_{B}$ is the $B$-collapse of
$\eta$. 

For type $B_{n}$ we have
$$
\eta_{B}=
\begin{cases} (l^{m'},s') & \text{if $s'$ is odd or $s'=0$},\\
(l^{m'},s'-1,1) & \text{if $s'$ is even and $s'\neq 0$}.
\end{cases}
$$
In either case each of the nonzero parts are odd so the associated
weighted Dynkin diagram has even entries \cite[\S 5.3]{CM}.
Therefore, the orbit ${\mathcal O}_{\eta_{B}}$ is even, and the
centralizer $C_G(x)$ is contained in $P_{J}$ by Lemma \ref{even}.

\medskip\noindent    
\underline{Case 3: $\Phi$ has type $D_n$.} This case is similar to that in
Case 2, and it is left to the reader.

\medskip\noindent
\underline{Case 4: $\Phi$ has type $C_n$.}  Now let $N=2n$ and write
$N=lm'+s'$. Then 
$$
\eta_{C}=
\begin{cases} (l^{m'},s') & \text{if $m'$ is even ($s'$ even)},\\
(l^{m'-1},l-1,s'+1) & \text{if $m'$ is odd ($s'$ odd)}.
\end{cases}
$$
In the first case, when $m'$ and $s'$ are both even, let $b$ be the number
of distinct even nonzero parts. So $b=1$ and by \cite[p. 92]{CM} the
component group $A({\mathcal C}_{\eta_{C}})\cong ({\mathbb
Z}/2{\mathbb Z})^{b-1}$ is trivial. Here the component group is defined as $C_{G'}(x)/C_{G'}(x)^o$
for any $x\in {\mathcal C}'_J$, where $G'$ is the adjoint group $PSp_{2n}$. Thus, $C_G(x)$ is generated
by $C_G(x)^o$ and the center of $G$, which is contained in $P_J$. Thus, $C_G(x)\subseteq C_{P_J}(x)$
here also.

Now suppose that $m'$ and $s'$ are odd.   In this case, the component
group $A({\mathcal C}_J)$ is isomorphic to ${\mathbb Z}/2{\mathbb Z}$, so
 a different line of reasoning is required. In \cite[Cor. 7.7]{H},
Hesselink provides a necessary and sufficient criterion for having
$C_{G}(x)\subseteq P_{J}$ for $x\in {\mathcal C}_J'$. Here $\eta_{C}$ satisfies condition
(i) of \cite[Cor. 7.7]{H} where $\epsilon=1$ and $\eta_{C}\in
\text{Pai}(2n,m'-1)$ (see \cite[\S 6.1]{H} for the definitions/notation).

\medskip\noindent
\underline{Case 5: $\Phi$ has exceptional type $G_2$, $F_4$, $E_6$, $E_7$, or $E_8$.}
 In these cases, we refer to Appendix \ref{tables1}, where the classes ${\mathcal C}_J$ are all identified
 in the Bala-Carter notation. Using the tables given in \cite[pp. 128-134]{CM}, we see that all but two of the orbits 
 are even orbits (so that Lemma \ref{even} is applicable). In the two other cases, namely,
 type $A_4+ A_1$ in type $E_6$ or type $A_6+A_1$ in type $E_8$, the full component group
 $C_G(x)/C_G(x)^o$ is determined to be trivial there, where the component group 
 is identified with the fundamental
 group $\pi_1({\mathcal C}_J)$ of the orbit. More directly, we can use the determination of this
 component group given in \cite{CM} in all cases. 
 
 The theorem is completely proved. \end{proof}

\chapter{Combinatorics and the Steinberg
Module}\label{combinSteinbergsec}\renewcommand{\thesection}{\thechapter.\arabic{section}}

In the computation of the cohomology algebra
$\opH^{\bullet}(u_{\zeta}({\mathfrak g}),{\mathbb C})$ for $l>h$ in \cite{GK},
a key step is showing that the space of $u_{\zeta}({\mathfrak h})$-invariants on
$\opH^{\bullet}(\Uz(\mathfrak{u}),{\mathbb C})$ is one dimensional (where
${\mathfrak h} \subset {\mathfrak g}$ is the Cartan subalgebra).
This fact follows because the space of $u_{\zeta}({\mathfrak h})$-invariants 
on $\Lambda^{\bullet}_{\zeta,\varnothing}$ is one dimensional.
This fact is far from being true for $l \leq h$. For small $l$, a more
intricate analysis is needed, namely, we must consider the multiplicity of a
certain ``Steinberg module'' in $\opH^{\bullet}(\Uz(\uj),{\mathbb
C})$. This computation will then be used in Chapters 5 and 6 in order to
complete the desired cohomology computations.

\section{Steinberg weights}\label{Steinbergweightssec}\renewcommand{\thetheorem}{\thesection.\arabic{theorem}}
\renewcommand{\theequation}{\thesection.\arabic{equation}}\setcounter{equation}{0}
\setcounter{theorem}{0}
 If $l$ satisfies Assumption \ref{assumption}, by Theorem
 \ref{identificationtheorem},
we can choose $w \in W$ and $J\subseteq \Pi$ such that $w(\Phi_{0}) = \Phi_J$.  Clearly,  it can
be additionally assumed that $w(\Phi_0^+)= \Phi^+_J$. 
In the classical cases, the particular choice of $w$ and $J\subseteq \Pi$ will not generally matter for the 
arguments that follow. However, when $\Phi$ has type $A_n$ with $l$ dividing $n + 1$, a 
special $w$ and $J$
are identified in (\ref{wepi}).   Also, in the exceptional cases, each pair  $w \in W$ and $J\subseteq \Pi$
identified in Appendix A.1 satisfies the property $w(\Phi_0^+)=\Phi^+_J$

\begin{lem}\label{Steinbergwtslemma} Let $w \in W$ be such that 
$w(\Phi_{0}^+) = \Phi_J^{+}$ for some $J \subset \Pi$. For all $\al \in J$,
$\langle w\cdot 0, \alpha^{\vee} \rangle = l-1$.
\end{lem}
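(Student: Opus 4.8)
The plan is to unwind the definitions of $\Phi_0$ and the dot action and then use the defining property of $w$. Recall from (\ref{definePhi0}) that $\Phi_0 = \{\beta \in \Phi : \langle \rho, \beta^\vee\rangle \equiv 0 \pmod l\}$, and that more generally, for a weight $\mu$, $\Phi_{\mu,l} = \{\beta \in \Phi : \langle \mu + \rho, \beta^\vee\rangle \equiv 0 \pmod l\}$, so that $\Phi_0 = \Phi_{0,l}$. The key observation recorded just before Lemma~\ref{subsystemlemma} is that for $w \in \widetilde W_l$ (in particular $w \in W$, acting with $m=0$ translation part) one has $\overline{w}(\Phi_\mu) = \Phi_{w\cdot\mu}$. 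Taking $\mu = 0$ gives $w(\Phi_0) = \Phi_{w\cdot 0}$. Since by hypothesis $w(\Phi_0) = \Phi_J$ (indeed $w(\Phi_0^+)=\Phi_J^+$), we conclude $\Phi_J = \Phi_{w\cdot 0, l}$, i.e.
\[
\Phi_J = \{\beta \in \Phi : \langle w\cdot 0 + \rho, \beta^\vee\rangle \equiv 0 \pmod l\}.
\]

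Now fix $\alpha \in J$. Then $\alpha \in \Phi_J = \Phi_{w\cdot 0}$, so $\langle w\cdot 0 + \rho, \alpha^\vee\rangle \equiv 0 \pmod l$; that is, $\langle w\cdot 0, \alpha^\vee\rangle \equiv -\langle \rho, \alpha^\vee\rangle = -1 \pmod l$, hence $\langle w\cdot 0, \alpha^\vee\rangle \equiv l-1 \pmod l$. To upgrade this congruence to an equality, I would bound $\langle w\cdot 0, \alpha^\vee\rangle$. Since $w\cdot 0 = w(\rho) - \rho$ and $\alpha \in J \subseteq \Pi$ so that $\alpha^\vee$ is a simple coroot, we have $\langle w\cdot 0, \alpha^\vee\rangle = \langle w\rho, \alpha^\vee\rangle - 1$. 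Because $w\rho$ is a weight in the $W$-orbit of the strictly dominant weight $\rho$, its pairing with any coroot lies strictly between $-\langle\rho,\alpha_0^\vee\rangle$ and $\langle\rho,\alpha_0^\vee\rangle$, i.e. in $[-(h-1), h-1]$; more to the point, $w(\Phi_0^+) = \Phi_J^+$ forces $w$ to send the positive system of $\Phi_0$ into $\Phi^+$, and one shows $\langle w\rho, \alpha^\vee\rangle > 0$ for $\alpha \in J$. Combined with the upper bound $\langle w\rho, \alpha^\vee\rangle \le h-1 < l$ (here Assumption~\ref{assumption} is used only through $l \ge h$ when $\Phi_0 = \varnothing$; the point is simply $1 \le \langle w\rho,\alpha^\vee\rangle \le l-1$), we get $0 \le \langle w\cdot 0, \alpha^\vee\rangle \le l-2 < l$, so the congruence to $l-1$ cannot hold with a nonnegative value less than $l$ unless... — so in fact the correct range argument is: $\langle w\cdot 0, \alpha^\vee\rangle \ge 0$ and $\langle w\cdot 0, \alpha^\vee\rangle \le l-1$, and being $\equiv l-1 \pmod l$ it must equal $l-1$.

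The main obstacle is pinning down the inequalities $0 \le \langle w\cdot 0, \alpha^\vee\rangle \le l-1$ cleanly for $\alpha \in J$, i.e. showing $w\cdot 0$ is "$J$-dominant with the right bound". The lower bound $\langle w\cdot 0,\alpha^\vee\rangle \ge 0$ follows from $w(\Phi_0^+) = \Phi_J^+$: this says $w$ maps $\Phi_0^+$ into $\Phi^+$, hence $w(\rho_0)$ (the half-sum for $\Phi_0$) is $J$-dominant, and a short argument relating $\rho$ and $\rho_0$ on the relevant coroots gives $\langle w\rho, \alpha^\vee\rangle \ge 1$. For the upper bound one uses that $\langle w\rho, \alpha^\vee\rangle \le \langle\rho,\alpha_0^\vee\rangle + 1 = h$, but one needs strict inequality $\le l-1$; this is where one invokes that when $\Phi_0 \ne \varnothing$ necessarily $l < h$ is false — rather, one argues directly that $\alpha^\vee$ being a \emph{simple} coroot of $\Phi$ makes $\langle w\rho, \alpha^\vee\rangle$ small, at most the height of the highest coroot, and the congruence then forces equality. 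I expect the authors handle this via the precise structure of $w\cdot 0$ restricted to $\Phi_J$, and I would follow that route, checking that $w\cdot 0$ lies in the lowest alcove closure relative to $J$ so that each $\langle w\cdot 0,\alpha^\vee\rangle \in \{0,1,\dots,l-1\}$, forcing the value $l-1$.
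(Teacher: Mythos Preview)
Your reduction to the congruence $\langle w\cdot 0,\alpha^\vee\rangle\equiv l-1\pmod l$ is correct and matches the paper's first step: writing $\beta=w^{-1}(\alpha)\in\Pi_0$ (the simple roots of $\Phi_0$ inside $\Phi_0^+$), the claim becomes $\langle\rho,\beta^\vee\rangle=l$, and you already know $l\mid\langle\rho,\beta^\vee\rangle$ and $\langle\rho,\beta^\vee\rangle\ge 1$.

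The gap is in the upper bound. Your inequality $\langle w\rho,\alpha^\vee\rangle=\langle\rho,\beta^\vee\rangle\le h-1$ is true but useless: the only case with $J\ne\varnothing$ is $l<h$, and then $h-1\ge l$, so the bound does not rule out $\langle\rho,\beta^\vee\rangle=2l,3l,\dots$. (For instance, in type $A_n$ with $n+1=3l$ one has $h-1=3l-1$, so a priori coroot heights $l$ and $2l$ are both possible.) The ``lowest alcove relative to $J$'' idea is circular: asserting $\langle w\cdot 0,\alpha^\vee\rangle\in\{0,\dots,l-1\}$ for all $\alpha\in J$ is exactly what you must prove.

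The missing idea, which the paper supplies, is to exploit that $\beta$ is \emph{simple} in $\Phi_0$. In the classical types one checks that any coroot $\beta^\vee$ of height $m\ge 3$ can be written as $\delta^\vee+\gamma^\vee$ with $\delta,\gamma\in\Phi$ and $\text{ht}(\delta^\vee)=i$ for any chosen $3\le i<m$. Taking $i=l$ when $\text{ht}(\beta^\vee)=tl$ with $t\ge 2$ gives $\delta,\gamma\in\Phi_0^+$ and $\beta$ a positive rational combination of $\delta,\gamma$, contradicting $\beta\in\Pi_0$. Hence $t=1$. For the exceptional types the paper simply verifies the statement from the explicit $w$ and $J$ listed in the appendix tables.
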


\begin{proof} Since $w\cdot 0 = w(\rho) - \rho$, the claim is equivalent to showing that
$l = \langle w(\rho),\al^{\vee}\rangle = \langle
\rho,w^{-1}(\al)^{\vee}\rangle$ for all $\al \in J$. But, by our assumption on $w$, 
$w^{-1}(J)$ is the unique set $\Pi_0$ of simple roots for $\Phi_0$ contained in
$\Phi_0^+ = \Phi^+ \cap \Phi_0$. So, the lemma asserts that $\langle
\rho,\be^{\vee}\rangle = l$ for all $\beta\in\Pi_0$. Therefore,
although $w \in W$ is not uniquely determined, if the lemma holds
for one choice of $w$, it holds for all choices. Now, in the
exceptional cases, for each $l$, an element $w \in W$ and a subset
$J \subseteq \Pi$ satisfying $w(\Phi_0^+) = \Phi_J^+$ are identified in
Appendix \ref{tables1}. In these cases, the lemma can be checked
directly by using the tables in Appendix \ref{tables2} which explicitly give $w\cdot 0$.

Now assume that $\Phi$ has classical type $A_n$, $B_n$, $C_n$, or
$D_n$. We can also assume that $\Pi_0\not=\emptyset$. Then $\Phi_0$
consists of all roots $\alpha$ such that the coroot $\alpha^\vee$
has height $\text{ht}(\alpha^\vee)=\langle\rho,\alpha^\vee\rangle$
divisible by $l$. In particular, $l < h$. If $\beta^\vee$ is any
coroot of height $m>0$, then, for any positive integer $i$, $3\leq
i<m$, it is easy to see (in each possible case), that
$\beta^\vee=\delta^\vee +\gamma^\vee$ for $\delta,\gamma\in\Phi$
satisfying $\text{ht}(\delta^\vee)=i$. Then $\beta= a\delta +
b\gamma$, where $a,b$ are positive rational numbers. If
$\beta\in\Phi_0^+$ with $\text{ht}(\beta^\vee)=tl$, $t>1$, then
\ $\beta^\vee=\delta^\vee+\gamma^\vee$ with
$\delta,\gamma\in\Phi_0^+$. If $\beta\in\Pi_0$, it follows that
$\text{ht}(\beta^\vee)= l$, i.~e.,
$\langle\rho,\beta^\vee\rangle=l$, as required.\end{proof}

For chosen $w \in W$  and $J\subseteq \Pi$ such that $w(\Phi_0^+) = \Phi_J^+$, set 
\begin{equation}\label{steinbergmodule}
M:=(\text{ind}_{U_{\zeta}({\mathfrak b})}^{U_{\zeta}({\mathfrak
p}_{J})} w\cdot 0)^{*}.
\end{equation} 
By the lemma,  $w \cdot 0$ is a
$J$-Steinberg weight (see Section 2.10). The module $M$ is therefore
isomorphic to a ``Steinberg'' type module on $U_{\zeta}({\mathfrak
l}_{J})$ that remains irreducible if viewed as a
$u_{\zeta}({\mathfrak l}_{J})$-module. The highest weight of $M$ is
$-w_{0,J}(w\cdot 0)$ and the lowest weight of $M$ is $-w\cdot 0$.
Note that the module $M$ does depend on the choice of $w$.


\section{Weights of
$\Lambda^{\bullet}_{\zeta,J}$}\label{weightsinlambda}\renewcommand{\thetheorem}{\thesection.\arabic{theorem}}
\renewcommand{\theequation}{\thesection.\arabic{equation}}\setcounter{equation}{0}
\setcounter{theorem}{0}
 By Section
\ref{adjointaction}, the $\adw$-action induces an action of
$U_{\zeta}(\pj)$ (and hence also of $u_{\zeta}(\pj)$) on $\Uz(\uj)$.
This defines an action of $U_{\zeta}(\pj)$ on the cohomology
$\opH^{\bullet}(\Uz(\uj),{\mathbb C})$. See also Section
\ref{finitedimofcohogroups}. In Theorem \ref{multSteinberg} below,
we determine
$$\text{Hom}_{u_{\zeta}({\mathfrak l}_{J})}
((\text{ind}_{U_{\zeta}({\mathfrak b})}^{U_{\zeta}({\mathfrak
p}_{J})} w\cdot 0)^{*}, \opH^{\bullet}(\Uz(\uj),{\mathbb C})).$$ By
Proposition~\ref{euler}(b), it follows that, as a $U_{\zeta}^0$-module,
$\opH^{\bullet}(\Uz(\uj),{\mathbb C})$ is a subquotient of
$\Lambda^{\bullet}_{\zeta,J}$. The key ingredients to proving Theorem
\ref{multSteinberg} below are the following computational results concerning the weights in
$\Lambda^{\bullet}_{\zeta,J}$.

\begin{prop}\label{keypropositiononweights} Let $l$ be
 as in Assumption \ref{assumption}.
Choose $w\in W$ and $J\subseteq\Pi$ such that $w(\Phi_{0}^+)=\Phi_{J}^+$. Let $\ga$ be a
$J$-dominant weight of $\Lambda^i_{\zeta,J}$ such that $\ga =
-w_{0,J}(w\cdot 0) + l\nu$ for some $\nu \in X$.
\begin{itemize}
\item[(a)] Suppose that $l\nmid n+1$ when $\Phi$ is of
type $A_{n}$ and $l \neq 9$ when $\Phi$ is of type $E_6$. Then $\ga
= -w_{0,J}(w\cdot 0)$ (i.e., $\nu = 0$) and $i = \ell(w)$.

\item[(b)] If $\Phi$ is of type $A_{n}$ with $n+1= l(m+1)$
and $w$ is as defined in (\ref{wepi}), then $\ga$ is one of the
following, for $0 \leq t \leq l - 1$:
$$\ga = -w_{0,J}(w\cdot 0) +
 l\varpi_{t(m+1)} \hskip.3in \text{ with } \hskip.3in i =
  \ell(w) + (m+1)t(l-t).$$
We set $\varpi_{0}=0.$

\item[(c)] If $\Phi$ is of type $E_6$ and $l = 9$
(assuming that $w$ and $J$ are as in Appendix \ref{tables1}), then
$\ga$ is one of the following:
$$\ga = -w_{0,J}(w\cdot 0) \text{ with } i = \ell(w) = 8,$$
$$\ga = -w_{0,J}(w\cdot 0) + \ell\varpi_1 \text{ with } i = 20,$$
$$\ga = -w_{0,J}(w\cdot 0) + \ell\varpi_6 \text{ with } i = 20.$$
\end{itemize}
\end{prop}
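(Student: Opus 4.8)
The plan is to reduce the statement to an explicit combinatorial analysis of the weights of $\Lambda^\bullet_{\zeta,J}$, whose generators $x_\alpha$ ($\alpha \in \Phi^+ \setminus \Phi_J^+$) have weight $\alpha$ and square to zero, subject to the quantum anticommutation relations. A monomial $x_{\alpha_{i_1}} \cdots x_{\alpha_{i_k}}$ (with $i_1 < \cdots < i_k$ in the fixed listing, and all indices in $\{M+1,\ldots,N\}$) has weight $\sum_j \alpha_{i_j} \in Q^+$ and homological degree $k$; conversely, every weight of $\Lambda^\bullet_{\zeta,J}$ arises this way. So I must understand which subsets $S \subseteq \Phi^+ \setminus \Phi_J^+$ have $\sum_{\alpha \in S}\alpha \equiv -w_{0,J}(w\cdot 0) \pmod{lX}$, i.e. the weight is congruent mod $l$ to the fixed ``Steinberg-type'' weight; the degree $i = |S|$ is then forced once $\nu$ is fixed, and I need to pin down which $\nu$ occur.

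First I would reformulate: $-w_{0,J}(w\cdot 0)$ is the highest weight of the module $M$ of (\ref{steinbergmodule}), and by Lemma \ref{Steinbergwtslemma} one knows $\langle w\cdot 0,\alpha^\vee\rangle = l-1$ for all $\alpha \in J$, so $w\cdot 0$ is $J$-Steinberg; this makes $-w_{0,J}(w\cdot 0)$ explicitly computable in each case. Next, I would observe that a $J$-dominant weight $\gamma$ of $\Lambda^i_{\zeta,J}$ lies in the set $-w_{0,J}(w\cdot 0) + Q^+$ (being a sum of positive roots), and imposing $\gamma \in -w_{0,J}(w\cdot 0) + lX$ forces $l\nu \in Q^+$, with $\nu$ dominant or close to it. The heart of part (a) is then a ``rigidity'' statement: under the hypotheses ($l$ very good in type $A$, $l \neq 9$ in $E_6$, and the corresponding mild conditions ensuring $l$ behaves well), the only such $\gamma$ is $\nu = 0$ itself, and the minimal-degree monomial realizing weight $-w_{0,J}(w\cdot 0)$ has degree exactly $\ell(w)$. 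The degree count $i = \ell(w)$ should come from identifying the relevant monomial with the one supported on the roots of $\Phi^+ \setminus \Phi_J^+$ sent to $\Phi^-$ by $w^{-1}$ (a set of size $\ell(w) - \ell(w_{0,J}^{-1}\cdot\text{something})$; more precisely, since $w(\Phi_0^+) = \Phi_J^+$, the relevant count is $\ell(w)$), combined with the observation that $-w_{0,J}(w\cdot 0) = \sum_{\alpha}\alpha$ over exactly that set of roots — this is essentially the identity $\rho - w_{0,J}(w\cdot 0)/\!\ldots$ unwound via $w(\rho) = \rho$ on $\Phi_0$-components.

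For parts (b) and (c) I would do the analogous but genuinely case-specific computation. In type $A_n$ with $n+1 = l(m+1)$, using the explicit $w$ and $J$ from (\ref{wepi}), the subroot system $\Phi_0$ has type $(A_m)^l$ (the $s=l-1$ case of Theorem \ref{subsystemtypesAB}(b)(i)), and the quotient $X/Q \cong \mathbb Z/(n+1)$ is nontrivial, which is precisely why extra weights appear: the cosets of $lX$ in $-w_{0,J}(w\cdot 0) + Q^+$ are indexed by $t = 0,\ldots,l-1$ via $\nu = \varpi_{t(m+1)}$, and for each $t$ one computes directly (working with the standard $\varepsilon_i$-coordinates for $A_n$) that the minimal degree of a monomial of weight $-w_{0,J}(w\cdot 0) + l\varpi_{t(m+1)}$ is $\ell(w) + (m+1)t(l-t)$ — the quadratic $t(l-t)$ reflecting the number of ``new'' positive roots crossing between block $t$ and its complement. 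For $E_6$, $l=9$, I would just carry out the finite computation (aided by the tables in Appendix \ref{tables2} giving $w\cdot 0$, and MAGMA as the paper elsewhere does) to verify the three listed weights and their degrees $8, 20, 20$.

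The main obstacle I expect is part (a)'s rigidity claim in the exceptional types where $l$ is not good (so the $F_4$, $l=9$ type situation and its analogues): Lemma \ref{subsystemlemma} does not apply, so one cannot reason uniformly via a clean description of $\Phi_0$ as some $\Phi_J$, and one must instead fall back on the explicit tables of Appendix \ref{tables1}/\ref{tables2} and check by hand (or by machine) that no weight $-w_{0,J}(w\cdot 0) + l\nu$ with $\nu \neq 0$ is $J$-dominant and expressible as a sum of positive roots from $\Phi^+ \setminus \Phi_J^+$ with $x_\alpha^2 = 0$. The bookkeeping that the degree is forced (not just bounded) will also require care — one must rule out that the same weight $\gamma$ occurs in two different homological degrees, which follows from the fact that in $\Lambda^\bullet_{\zeta,J}$ a fixed multiset of roots (all with multiplicity $\leq 1$) determines the degree, so distinct degrees would need genuinely distinct root-subsets summing to the same weight mod $lX$ — and under the hypotheses of (a) that does not happen.
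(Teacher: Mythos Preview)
Your proposal correctly sets up the problem---weights of $\Lambda^\bullet_{\zeta,J}$ are sums over subsets $S\subseteq\Phi^+\setminus\Phi_J^+$, and one must determine which such sums are congruent to $-w_{0,J}(w\cdot 0)$ modulo $lX$---and you are right that $-w_{0,J}(w\cdot 0)$ itself occurs in degree $\ell(w)$. But the heart of the matter is the \emph{rigidity} claim you assert without argument: why should $\nu=0$ be the only solution? Saying ``under the hypotheses \ldots\ the only such $\gamma$ is $\nu=0$'' is a restatement of what is to be proved, not a proof. The claim that ``$l\nu\in Q^+$, with $\nu$ dominant or close to it'' is also not right: $l\nu$ is a difference of two root-sums and can involve negative roots.

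The paper's proof supplies exactly the device you are missing. For each classical type $X$ one constructs an explicit coweight-like element $\delta_X$ (in types $A,C$ it is simply $\sum_{\alpha\in J}\alpha^\vee$) with two properties: (i) $\langle\varpi_j,\delta_X\rangle>0$ for every $\alpha_j\in J$, and (ii) the maximum of $\langle\lambda,\delta_X\rangle$ over all weights $\lambda$ of $\Lambda^\bullet_{\zeta,J}$ is \emph{attained} at $-w_{0,J}(w\cdot 0)$. These two facts immediately force $\langle\nu,\alpha^\vee\rangle=0$ for all $\alpha\in J$. One then decomposes $\Phi^+\setminus\Phi_J^+$ according to the value of $\langle\,\cdot\,,\delta_X\rangle$ and carries out a careful $\varepsilon$-basis count (Sections~4.5--4.7) to kill $\langle\nu,\alpha^\vee\rangle$ for $\alpha\in\Pi\setminus J$; this is where the hypothesis $l\nmid n+1$ in type $A_n$ genuinely enters. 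For the exceptional types the analogous element $\delta^\vee=\sum_{\alpha\in J}\alpha^\vee$ is used, together with explicit bounds on $\langle x,\alpha^\vee\rangle$ and machine verification. Part~(b) requires an auxiliary lemma (a weight of the form $l\nu$ in $\Lambda^\bullet_{\zeta,\varnothing}$ for $A_{l-1}$ forces $\nu=\varpi_i$) applied to the subsystem $R_{(1,1)}\cong A_{l-1}$ that emerges from the $\delta$-decomposition; your coset-counting heuristic does not produce this. Without the $\delta$-maximization step, there is no mechanism in your outline that bounds $\nu$ at all.
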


\noindent One should observe that the weight $\nu$ in the statement
of the proposition must necessarily be $J$-dominant by Lemma
\ref{Steinbergwtslemma}. The proposition will be proved below. See
Section \ref{proofofkeyprop}.

\section{Multiplicity of the Steinberg
module}\label{multiplicityofSteinbergmodule} \renewcommand{\thetheorem}{\thesection.\arabic{theorem}}
\renewcommand{\theequation}{\thesection.\arabic{equation}}\setcounter{equation}{0}
\setcounter{theorem}{0}

Assuming that
Proposition \ref{keypropositiononweights} holds, we can now
determine how often the ``Steinberg module'' $M$ (introduced in
\ref{steinbergmodule}) appears in
$\opH^{\bullet}(u_{\zeta}(\uj),{\mathbb C})$. When $l \geq h$, $w =
1$, $J = \varnothing$, and $M = {\mathbb C}$.
In parts (b) and (c) of the theorem, the notation $l\varpi_j$ is used
to denote the one-dimensional $U_{\zeta}({\mathfrak l}_J)$-module with 
weight $l\varpi_j$.

\begin{theorem}\label{multSteinberg} Let $l$ be as in Assumption
 \ref{assumption}.
Choose $w\in W$  and $J\subseteq\Pi$ such that $w(\Phi_{0}^+)=\Phi_{J}^+$.
\begin{itemize}
\item[(a)] Suppose that $l\nmid n+1$ when $\Phi$ is
of type $A_{n}$ and $l \neq 9$ when $\Phi$ is of type $E_6$.
Then as $U_{\zeta}(\lj)$-modules
$$
\Hom_{u_{\zeta}({\mathfrak l}_{J})}((\ind_{U_{\zeta}({\mathfrak b})}
^{U_{\zeta}({\mathfrak p}_{J})}w\cdot
0)^*,\opH^{i}(\Uz(\uj),{\mathbb C})) =
\begin{cases}
{\mathbb C} &\text{ if } i = \ell(w)\\
0 &\text{ otherwise}.
\end{cases}
$$
\item[(b)] If $\Phi$ is of type $A_{n}$ with $n+1= l(m+1)$ and $w$ is
as defined in (\ref{wepi}),
then as $U_{\zeta}(\lj)$-modules
\begin{align*}
\Hom_{u_{\zeta}({\mathfrak l}_{J})}((\ind_{U_{\zeta}({\mathfrak b})}
^{U_{\zeta}({\mathfrak p}_{J})}w\cdot 0)^*,&\opH^i(\Uz(\uj),{\mathbb
C}))\\ &=
\begin{cases}
{\mathbb C} &\text{ if } i = \ell(w)\\
l\varpi_{t(m+1)}\oplus l\varpi_{(l-t)(m+1)} &\text{ if } i = \ell(w) +(m+1)t(l-t) \\
    &\quad \text{ for } 1 \leq t \leq (l-1)/2\\
0 & \text{ otherwise}.
\end{cases}
\end{align*}
\item[(c)] If $\Phi$ is
 of type $E_6$ and $l = 9$ (assuming that $w$ and $J$
are as in Appendix \ref{tables1}), then as $U_{\zeta}(\lj)$-modules
$$
\Hom_{u_{\zeta}({\mathfrak l}_{J})}((\ind_{U_{\zeta}({\mathfrak b})}
^{U_{\zeta}({\mathfrak p}_{J})}w\cdot
0)^*,\opH^{i}(\Uz(\uj),{\mathbb C})) =
\begin{cases}
l\varpi_1 \oplus l\varpi_6 & \text{ if } i = 20\\
{\mathbb C} & \text{ if } i = \ell(w) = 8\\
0 & \text{ otherwise}.
\end{cases}
$$
\end{itemize}
\end{theorem}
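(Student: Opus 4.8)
\textbf{Proof proposal for Theorem \ref{multSteinberg}.}
The plan is to deduce the $\Hom$-computation from Proposition \ref{keypropositiononweights} together with the subquotient relationship between $\opH^\bullet(\Uz(\uj),{\mathbb C})$ and $\Lambda^\bullet_{\zeta,J}$ supplied by Proposition \ref{euler}(b), and then to upgrade the purely weight-theoretic information to a statement about $U_\zeta({\mathfrak l}_J)$-modules. The first step is to observe that the module $M = (\ind_{U_\zeta({\mathfrak b})}^{U_\zeta({\mathfrak p}_J)} w\cdot 0)^*$ restricts to an irreducible, projective, and injective $u_\zeta({\mathfrak l}_J)$-module (the ``$J$-Steinberg'' module), by Lemma \ref{Steinbergwtslemma} and the discussion in Section \ref{inductionfunctors}. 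Hence, by standard facts about Steinberg-type modules, for \emph{any} finite-dimensional $u_\zeta({\mathfrak l}_J)$-module $V$ the multiplicity of $M$ as a composition factor, the multiplicity of $M$ as a direct summand, and $\dim \Hom_{u_\zeta({\mathfrak l}_J)}(M, V)$ all coincide; moreover this common number equals $\dim V_{\lambda}$ where $\lambda = -w_{0,J}(w\cdot 0)$ is the highest weight of $M$, \emph{restricted to} those $\lambda$-weight vectors annihilated by the positive part of $u_\zeta({\mathfrak l}_J)$. So I would first reduce the computation to counting, in each homological degree $i$, the dimension of the space of $u_\zeta({\mathfrak l}_J)^+$-highest-weight vectors of weight $-w_{0,J}(w\cdot 0)$ in $\opH^i(\Uz(\uj),{\mathbb C})$.

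The second step is to bound this dimension from above using Proposition \ref{euler}(b): every $U_\zeta^0$-weight occurring in $\opH^i(\Uz(\uj),{\mathbb C})$ occurs in $\Lambda^i_{\zeta,J}$. A $u_\zeta({\mathfrak l}_J)$-highest weight vector of weight $-w_{0,J}(w\cdot 0)$ that also lies in the image of a $U_\zeta({\mathfrak l}_J)$-submodule isomorphic to $M$ forces the weight to be congruent mod $l$ to $-w_{0,J}(w\cdot 0)$; more precisely the relevant weights $\gamma$ are of the form $\gamma = -w_{0,J}(w\cdot 0) + l\nu$, and Proposition \ref{keypropositiononweights} enumerates exactly which such $J$-dominant $\gamma$ can appear and in which degree $i$. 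In case (a) the only possibility is $\nu = 0$ in degree $i = \ell(w)$; in case (b) the possibilities are $\nu = \varpi_{t(m+1)}$ in degree $\ell(w) + (m+1)t(l-t)$; in case (c) the two extra weights $\ell\varpi_1, \ell\varpi_6$ appear in degree $20$. This immediately gives the upper bound: the $\Hom$ space vanishes outside the listed degrees, and in each listed degree the multiplicity of $M$ (equivalently of the twist $M\otimes l\varpi_j$) is at most $1$, since the corresponding weight appears in $\Lambda^i_{\zeta,J}$ with multiplicity one (the relevant extremal weight space of $\Lambda^\bullet_{\zeta,J}$ is one-dimensional, being spanned by a single monomial in the $x_\alpha$). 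A symmetry argument pairing $t$ with $l-t$ (coming from the structure of $w$ in (\ref{wepi}) and the $\star$-duality on weights) accounts for the direct sum $l\varpi_{t(m+1)}\oplus l\varpi_{(l-t)(m+1)}$ in (b), and likewise for $l\varpi_1\oplus l\varpi_6$ in (c).

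The third and harder step is the matching lower bound: I must show that $M$ (resp.\ $M\otimes l\varpi_j$) genuinely occurs, i.e., that these highest-weight vectors in $\Lambda^i_{\zeta,J}$ actually survive to $\opH^i(\Uz(\uj),{\mathbb C})$ rather than being killed in passing to a subquotient. For degree $i = \ell(w)$ and $\nu = 0$, I would exhibit an explicit nonzero cohomology class: the relevant monomial $\prod x_\alpha$ over the $\ell(w)$ positive roots sent by $w^{-1}$ into $\Phi^-$ (or the analogous extremal product) is a cocycle in the Koszul-type complex computing $\opH^\bullet(\gr\,\Uz(\uj),{\mathbb C})$, and one checks it lifts through the spectral sequence of Proposition \ref{euler}; alternatively, and more robustly, one uses the Euler characteristic identity Proposition \ref{euler}(a) to see that the alternating sum of multiplicities in the listed weights is forced, which combined with the upper bound pins down each individual multiplicity. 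This Euler-characteristic bookkeeping is what I expect to be the main obstacle: one must verify that the weight $-w_{0,J}(w\cdot 0) + l\nu$ cannot appear in degrees \emph{other} than the single one predicted by Proposition \ref{keypropositiononweights}, so that the alternating sum collapses to a single term and the upper bound becomes an equality. Once that is in place, the identification of the $\Hom$ space as a $U_\zeta({\mathfrak l}_J)$-module (and not just its dimension) follows because a nonzero $u_\zeta({\mathfrak l}_J)$-map from the irreducible $M$ generates a $U_\zeta({\mathfrak l}_J)$-submodule on which $U_\zeta^0$ acts by the weight $l\nu$ on top of $M$, forcing it to be $M\otimes l\varpi_j$ as claimed; tracking the $U_\zeta({\mathfrak l}_J)$-action through the $\adw$-action on cohomology (Section \ref{finitedimofcohogroups}) then completes the proof.
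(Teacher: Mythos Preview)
Your overall strategy matches the paper's: bound the possible $(\nu,i)$ from above via Proposition~\ref{euler}(b) together with Proposition~\ref{keypropositiononweights}, and then use the Euler characteristic identity of Proposition~\ref{euler}(a) for the lower bound. Where your argument diverges from the paper, and where it has a genuine gap, is in the execution of the lower bound.

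You assert that ``the relevant extremal weight space of $\Lambda^\bullet_{\zeta,J}$ is one-dimensional, being spanned by a single monomial in the $x_\alpha$.'' This is not justified: a weight of $\Lambda^i_{\zeta,J}$ can in principle be written as a sum of $i$ distinct positive roots from $\Phi^+\setminus\Phi_J^+$ in several ways, and Proposition~\ref{keypropositiononweights} only pins down the \emph{degree} in which the weight $\gamma=-w_{0,J}(w\cdot0)+l\nu$ can occur, not its multiplicity there. Without that multiplicity-one statement, your weight-level Euler characteristic only yields $\dim(\opH^{i_0})_\gamma=\dim(\Lambda^{i_0})_\gamma$, which is not yet the desired ``$=1$''. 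There is a second issue: $\Lambda^\bullet_{\zeta,J}$ is only a $U_\zeta^0$-module, not a $u_\zeta({\mathfrak l}_J)$-module, so one cannot directly read off the multiplicity of $M$ in it; and over $u_\zeta({\mathfrak l}_J)$ the weight $\gamma$ is only determined modulo $l$, so ``counting the $\gamma$-weight space'' requires care you have not supplied.

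The paper circumvents both problems by passing through classical Lie algebra cohomology. Since $\ch\Lambda^n_{\zeta,J}=\ch\Lambda^n(\uj^*)$, Kostant's theorem for $\opH^\bullet(\BU(\uj),{\mathbb C})$ rewrites the Euler characteristic as
\[
\sum_n(-1)^n\ch\,\opH^n(\Uz(\uj),{\mathbb C})
=\sum_{x\in{^JW}}(-1)^{\ell(x)}\ch\,\ind_{U_\zeta({\mathfrak b}_J)}^{U_\zeta({\mathfrak l}_J)}\bigl(-w_{0,J}(x\cdot 0)\bigr),
\]
an identity of virtual $U_\zeta({\mathfrak l}_J)$-characters. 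The weights listed in Proposition~\ref{keypropositiononweights} are all $J$-Steinberg weights, so the corresponding induced modules are irreducible and injective; hence they do not occur as composition factors of $\ind(-w_{0,J}(x\cdot0))$ for any other $x$, and the right-hand side contributes exactly $(-1)^{\ell(w)}$ (resp.\ the analogous signed single terms in (b),(c)) to the multiplicity of $M\otimes l\nu$. Combined with the upper bound, this forces the claimed answer. This injectivity/no-cancellation argument is the key structural step you are missing, and it replaces your unproved multiplicity-one claim.
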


\begin{proof} Since
$\text{Hom}_{u_{\zeta}({\mathfrak l}_{J})}
((\text{ind}_{U_{\zeta}({\mathfrak b})}^{U_{\zeta}({\mathfrak
p}_{J})} w\cdot 0)^{*}, \opH^{\bullet}(\Uz(\uj),{\mathbb C}))$ is a
module for $U_{\zeta}(\lj)$ on which $u_{\zeta}(\lj)$ acts
trivially, it is also a (finite dimensional---hence completely
reducible) module for the universal enveloping algebra $\BU(\lj)$
(see Section \ref{connectionswithalgebraicgroups}). Thus, if
$\text{Hom}_{u_{\zeta}({\mathfrak l}_{J})}
((\text{ind}_{U_{\zeta}({\mathfrak b})}^{U_{\zeta}({\mathfrak
p}_{J})} w\cdot 0)^{*}, \opH^{\bullet}(\Uz(\uj),{\mathbb C})) \neq
0$, any $U_{\zeta}(\lj)$-composition factor will be of the form
$L_J(\nu)^{[1]}$ for a $J$-dominant weight $\nu$. In other words,
there must be a nonzero $U_\zeta({\mathfrak l}_J)$-homomorphism
\begin{equation}\label{nonzero}
(\text{ind}_{U_{\zeta}({\mathfrak b})}^{U_{\zeta}({\mathfrak
p}_{J})} w\cdot 0)^{*}\otimes L_J(\nu)^{[1]} \to
\opH^{\bullet}(\Uz(\uj),{\mathbb C}).
\end{equation}
Hence, the weight $-w_{0,J}(w\cdot 0) + l\nu$ must appear in
$\opH^{\bullet}(\Uz(\uj),{\mathbb C})$, and so also in
$\Lambda^{\bullet}_{\zeta,j}$ by Proposition~\ref{euler}(b). The theorem
now follows from Proposition \ref{keypropositiononweights} if each
weight listed therein does indeed give rise to a non-trivial
homomorphism as in (\ref{nonzero}).

By Proposition~\ref{euler}(a),
\begin{equation}\label{one}
\sum_{n = 0}^{\infty}(-1)^n\operatorname{ch}\opH^n(\Uz(\uj),{\mathbb
C}) = \sum_{n =
0}^{\dim(\uj)}(-1)^n\operatorname{ch}\Lambda^n_{\zeta,J}.
\end{equation}
Since $\opH^{\bullet}(\BU(\uj),{\mathbb C})$ can be computed from
$\Lambda^{\bullet}(\uj^*)$ considered as a complex (with
appropriately defined differential), we similarly have
\begin{equation}\label{two}
\sum_{n =
0}^{\dim(\uj)}(-1)^n\operatorname{ch}\opH^n(\BU(\uj),{\mathbb C}) =
\sum_{n = 0}^{\dim(\uj)}(-1)^n\operatorname{ch}\Lambda^n(\uj^*).
\end{equation}
Clearly, $\operatorname{ch}_{\zeta}\Lambda^n_{\zeta,J} =
\operatorname{ch}\Lambda^n(\uj^*).$ Hence (\ref{one}) and
(\ref{two}) give
\begin{equation}\label{three}
\sum_{n = 0}^{\infty}(-1)^n\operatorname{ch}\opH^n(\Uz(\uj),{\mathbb
C}) = \sum_{n =
0}^{\dim(\uj)}(-1)^n\operatorname{ch}\opH^n(\BU(\uj),{\mathbb C}).
\end{equation}
Let $^JW = \{x \in W ~|~ x(\Phi^-)\cap \Phi^+ \subset \Phi^+
\backslash \Phi_J^+\}$ be the set of distinguished right $W_J$-coset
representatives in $W$. By \cite[Thm. 2.5.1.3]{W},
\begin{equation}\label{four}
\sum_{n =
0}^{\dim(\uj)}(-1)^n\operatorname{ch}\opH^n(\BU(\uj),{\mathbb C}) =
\sum_{x \in {^JW}}(-1)^{\ell(x)}\ch\,L_J(-w_{0,J}(x\cdot 0)).
\end{equation}
 Also, (cf. \cite[Prop. 3.4.5]{HK})
$$
\operatorname{ch}_{\zeta}\ind_{U_{\zeta}
(\mathfrak{b_l}_J)}^{U_{\zeta}(\lj)}(-w_{0,J}(x\cdot 0)) =
\operatorname{ch} L_J(-w_{0,J}(x\cdot 0)).
$$
Combining this with (\ref{three}) and (\ref{four}) gives
\begin{equation}\label{character}
\sum_{n = 0}^{\infty}(-1)^n\operatorname{ch}\opH^n(\Uz(\uj),{\mathbb
C}) = \sum_{x \in {^JW}}(-1)^{\ell(x)}\operatorname{ch}
\ind_{U_{\zeta}(\mathfrak{b_l}_J)}^{U_{\zeta}(\lj)}(-w_{0,J}(x\cdot
0)).
\end{equation}

The weights $\gamma$ in Proposition \ref{keypropositiononweights}
are all ``Steinberg weights'' whose induced module is injective over
$U_{\zeta}(\lj)$. Hence, they do not appear as a composition factor
in any other induced module. For part (a), since only one such
weight occurs, it gives rise to a composition factor on the
right-hand side of (\ref{character}) which cannot be canceled out by
any other factor. Hence it appears as well on the left-hand side
which completes the proof. For parts (b) and (c), while multiple
``Steinberg weights'' appear, these weights are all distinct and
hence give rise to distinct composition factors on the right-hand
side of (\ref{character}) which cannot cancel each other out. Hence, 
they all appear on the left-hand side as well.
\end{proof}

\begin{rem}\label{Steinbergrem} In those cases where the cohomology is two-dimensional,
the differences of the weights are neither sums of positive roots nor sums of negative roots. Hence
the isomorphisms in the theorem also hold as
$U_{\zeta}(\pj)$-modules.
\end{rem}


\section{Proof of Proposition \ref{keypropositiononweights}}
\label{proofofkeyprop}\renewcommand{\thetheorem}{\thesection.\arabic{theorem}}
\renewcommand{\theequation}{\thesection.\arabic{equation}}\setcounter{equation}{0}
\setcounter{theorem}{0}

The remainder of Section \ref{combinSteinbergsec} is devoted to
proving Proposition \ref{keypropositiononweights}. Note first of all
that the weight $-w_{0,J}(w\cdot 0)$ does appear in
$\Lambda^{\bullet}_{\zeta,J}$ in degree $\ell(w)$ (cf.
\cite[7.3]{GW}, \cite[Prop. 2.2]{FP1}). So the goal is to show that
(in most cases) a weight $\nu$ satisfying the hypothesis must in
fact be zero. In Sections \ref{classicalone}--4.8,
the classical root systems will be considered. For these, we will
mainly work with the $\epsilon$-basis that represents $\Phi$
\cite[p. 250]{Bo} and $\langle -,- \rangle$ will always denote the
ordinary Euclidean inner product. In Section \ref{classicalone}, we
first show that $\langle\nu,\al^{\vee}\rangle = 0$ for all $\al \in
J$. To do this, for each of the classical root systems $X \in \{A_n,
B_n, C_n, D_n\}$ we show the existence of an element $\delta_X \in
X$ with the following properties:
\begin{equation}\label{a}
\max_{\la}\langle\la,\delta_X \rangle := \max\{\langle \lambda,
\delta_X \rangle\; |\; \lambda \text{ a weight of
}\Lambda^{\bullet}_{\zeta,J}\} = \langle -w_{0,J}(w\cdot 0),\delta_X
\rangle \text{ and }
\end{equation}
\begin{equation}\label{b}
\langle \varpi_{j}, \delta_X \rangle>0 \text{ for all fundamental
weights } \varpi_{j} \text{ corresponding to } \al_j \in J.
\end{equation}
When $\Phi$ is of type $A_n$ or $C_{n}$, one can choose
$\delta_X=\sum_{\alpha\in J} \alpha^{\vee}$ and the maximum value in
(\ref{a}) turns out to be simply $(l-1)|J|$.

Now assume that a $\delta_X$ satisfying properties (\ref{a}) and
(\ref{b}) exists and that $-w_{0,J}(w\cdot 0)+l\nu$ is a
$J$-dominant weight of $\Lambda^{i}_{\zeta,J}$ for some $J$-dominant
weight $\nu$. Then
\begin{equation*}
\langle -w_{0,J}(w\cdot 0),\delta_X \rangle +l \langle \nu, \delta_X
\rangle = \langle -w_{0,J}(w\cdot 0)+l\nu, \delta_X \rangle \leq
\max_{\la}\langle\la, \delta_X \rangle =
 \langle -w_{0,J}(w\cdot 0), \delta_X\rangle.
\end{equation*}
This forces $\nu$ to vanish on $J$. To show that $\nu = 0$, it
remains to show that $\langle\nu,\al^{\vee}\rangle = 0$ for $\al \in
\Pi\backslash J$. In Section \ref{classicalBCD}, this is shown for
types $B_n$, $C_n$, $D_n$. Sections \ref{classicalA} and
\ref{classicalAII} are devoted to dealing with type $A_n$ and
showing how the ``extra'' weights arise in Proposition
\ref{keypropositiononweights}.

In Section \ref{exceptionalI}, similar ideas along with direct
computations will be used to deal with the exceptional root systems.


\section{The weight $\delta_X$ }\label{classicalone}\renewcommand{\thetheorem}{\thesection.\arabic{theorem}}
\renewcommand{\theequation}{\thesection.\arabic{equation}}\setcounter{equation}{0}
\setcounter{theorem}{0}

In this section, we construct a weight $\delta_X$ which satisfies
properties (\ref{a}) and (\ref{b}). We will first consider the case
$m \geq 2$ in Theorems \ref{subsystemtypesAB} and
\ref{subsystemtypesCD}.

For $\Phi$ of type $A_n$ and
$$\Phi_{0}\cong \Phi_{J}\cong
\underbrace{A_{m}\times\dots \times A_{m}}_{\text{$r_{1}$
times}}\times \underbrace{A_{m-1} \times \dots \times
A_{m-1}}_{\text{$r_{2}$ times}};$$ 
we choose $J$ in the ``natural" way, namely such that 
$$\Pi\backslash J= \{\alpha_{t(m+1)}\;|\; 1 \leq t \leq r_1\} \cup \{\alpha_{r_1(m+1)+sm}\; | \; 1 \leq s \leq r_2-1\}$$
and $w$ such that $w(\Phi_0^+) = \Phi_J^+.$ The Dynkin diagram below illustrates our choice for  $J$.
\begin{center}
\begin{picture}(400,70)(-5,-20)
\multiput(0,35)(15,0){1}{\circle*{5}} 
\multiput(15,35)(5,0){3}{\circle*{1}} 
\multiput(40,35)(25,0){1}{\circle*{5}} 
\put(20,35){\oval(50,10)}
\put(0,35){\line(1,0){10}}
\put(30,35){\line(1,0){25}}
\put(-5,25){$\underbrace{\mbox{\hspace{.7in}}}$}
\put(10,5){$A_m$}
\put(55,35){\line(1,0){25}}
\multiput(55,35)(15,0){2}{\circle*{5}} 
\multiput(85,35)(5,0){3}{\circle*{1}} 
\multiput(110,35)(15,0){1}{\circle*{5}} 
\put(90,35){\oval(50,10)}
\put(100,35){\line(1,0){20}}
\put(65,25){$\underbrace{\mbox{\hspace{.7in}}}$}
\put(80,5){$A_m$}
\multiput(125,35)(5,0){3}{\circle*{1}} 
\put(140,35){\line(1,0){20}}
\multiput(150,35)(15,0){1}{\circle*{5}} 
\multiput(165,35)(5,0){3}{\circle*{1}} 
\multiput(190,35)(15,0){1}{\circle*{5}} 
\put(170,35){\oval(50,10)}
\put(180,35){\line(1,0){40}}
\put(145,25){$\underbrace{\mbox{\hspace{.7in}}}$}
\put(160,5){$A_m$}
\put(-5,0){$\underbrace{\mbox{\hspace{2.8in}}}$}
\put(80,-20){$r_1 \mbox{ times }$}
\multiput(205,35)(5,0){1}{\circle*{5}} 
\multiput(220,35)(15,0){1}{\circle*{5}} 
\multiput(235,35)(5,0){2}{\circle*{1}} 
\multiput(255,35)(25,0){1}{\circle*{5}} 
\put(237,35){\oval(46,10)}
\put(215,35){\line(1,0){15}}
\put(245,35){\line(1,0){25}}
\put(215,25){$\underbrace{\mbox{\hspace{.6in}}}$}
\put(230,5){$A_{m-1}$}
\multiput(270,35)(5,0){1}{\circle*{5}} 
\multiput(285,35)(15,0){1}{\circle*{5}} 
\multiput(300,35)(5,0){2}{\circle*{1}} 
\multiput(320,35)(25,0){1}{\circle*{5}} 
\put(302,35){\oval(46,10)}
\put(270,35){\line(1,0){25}}
\put(310,35){\line(1,0){20}}
\put(280,25){$\underbrace{\mbox{\hspace{.6in}}}$}
\put(295,5){$A_{m-1}$}
\multiput(335,35)(5,0){3}{\circle*{1}} 
\multiput(360,35)(15,0){1}{\circle*{5}} 
\multiput(375,35)(5,0){2}{\circle*{1}} 
\multiput(395,35)(25,0){1}{\circle*{5}} 
\put(377,35){\oval(46,10)}
\put(350,35){\line(1,0){20}}
\put(385,35){\line(1,0){10}}
\put(355,25){$\underbrace{\mbox{\hspace{.6in}}}$}
\put(370,5){$A_{m-1}$}
\put(215,0){$\underbrace{\mbox{\hspace{2.6in}}}$}
\put(285,-20){$r_2 \mbox{ times }$}
\end{picture}
\end{center}
Let
$$\delta_{A}=(\underbrace{1,0,\dots,0-1,\dots,1,0,\dots,0,-1}_{\text{$r_{1}$ times}},
\underbrace{1,0,\dots,0,-1,\dots,1,0,\dots,0,-1}_{\text{$r_{2}$
times}})$$ in the orthonormal basis describing $\Phi$ in
$n+1$-dimensional Euclidean space ${\mathbb E}$ \cite[p. 250]{Bo}.
Note that the first $r_{1}$-groupings of $(1,0,\dots,0,-1)$ have
$m+1$-components, while the last $r_{2}$-groupings of
$(1,0,\dots,0,-1)$ have $m$-components. In this case,
$\delta_A=\sum_{\alpha\in J} \alpha^{\vee}$, and evidently
$\langle\varpi_{j},\delta_{A}\rangle > 0$ for all $\varpi_{j}$
corresponding to simple roots in $J$ (property (\ref{b})).

To show property (\ref{a}), let $\lambda$ be a weight of
$\Lambda^{i}_{\zeta,J}$. Then $\lambda$ is a sum of distinct
positive roots not in $\Phi_{J}^{+}$. If $\beta$ is a positive root
then $\langle \beta,\delta_A \rangle =0,\pm 1, \pm 2$ (i.~e.
$\beta=\epsilon_{i}-\epsilon_{j}$ with $i<j$). Set
$$A[t]=\{\beta\in \Phi^{+}\backslash\Phi_{J}^{+} \;|\; \langle
\beta,\delta_A \rangle =t\}.$$ A quick count shows that
\begin{eqnarray*}
|A[2]|&=& \frac{(r_{1}+r_{2}-1)(r_{1}+r_{2})}{2}\\
|A[1]|&=& (m-1)(r_{1}+r_{2}-1)r_{1}+(m-2)(r_{1}+r_{2}-1)r_{2}.
\end{eqnarray*}
Therefore,
\begin{eqnarray*}
\text{max}_{\lambda}\langle \lambda,\delta_A \rangle &=&2|A[2]|+|A[1]| \\
&=& (r_{1}+r_{2}-1)(mr_{1}+(m-1)r_{2})\\
&=& (r_{1}+r_{2}-1)|J|.
\end{eqnarray*}
According to Theorem \ref{subsystemtypesAB}, we can set $r_{1}=s+1$
and $r_{2}=l-s-1$. Consequently,
$$\text{max}_{\lambda}\langle \lambda,\delta_A \rangle = (l-1)|J|=
\langle -w_{0,J}(w\cdot 0),\delta_A \rangle $$ because $A_{n}$ is
simply laced.

For the other classical Lie algebras, let $\Phi$ be of type $X_{n}$
where $X=B,C,D$ with
$$\Phi_{0}\cong \Phi_{J}\cong
\underbrace{A_{m}\times\dots \times A_{m}}_{\text{$r_{1}$
times}}\times \underbrace{A_{m-1} \times \dots
A_{m-1}}_{\text{$r_{2}$ times}}\times X_{q}. $$ 
Again the ``natural" choice for $J$, is such that 
$$\Pi\backslash J= \{\alpha_{t(m+1)}\;|\; 1 \leq t \leq r_1\} \cup \{\alpha_{r_1(m+1)+sm}\; | \; 1 \leq s \leq r_2\}$$
with
$$\delta_{X}=
(\underbrace{1,0,\dots,0-1,\dots,1,0,\dots,0,-1}_{\text{$r_{1}$
times}},
\underbrace{1,0,\dots,0,-1,\dots,1,0,\dots,0,-1}_{\text{$r_{2}$
times}},1,0,\dots,0).
$$
Notice that in all cases $-w_{0,J}\delta_X = \delta_X$. By using
\cite[p. 102]{GW}, one can verify that $\langle \varpi_{j},
\delta_{X} \rangle> 0$ for all $\varpi_j$ that correspond to simple
roots in $J$. If $\beta$ is a positive root in $X_{n}$ then $\langle
\beta,\delta_{X} \rangle =0,\pm 1, \pm 2$, so set $X[t]=\{\beta\in
\Phi^{+}\backslash\Phi_{J}^{+} \;|\;\langle \beta,\delta_{X} \rangle
=t\}$. By using our computations for type $A_{n}$ with consideration
of other positive roots in $X_{n}$, it follows that
$$|X[2]|=\begin{cases}
(r_{1}+r_{2})^{2}
& \text{$X_{n}=B_{n}$};\\
(r_{1}+r_{2})(r_{1}+r_{2}+1)
& \text{$X_{n}=C_{n}$};\\
(r_{1}+r_{2})^{2} & \text{$X_{n}=D_{n}$}
\end{cases}
$$
and
$$|X[1]|=\begin{cases}
2(r_{1}+r_{2})(r_{1}(m-1)+r_{2}(m-2))+(2q-1)(r_{1}+r_{2})

& \text{$X_{n}=B_{n}$};\\
2(r_{1}+r_{2})(r_{1}(m-1)+r_{2}(m-2))+2(q-1)(r_{1}+r_{2})

& \text{$X_{n}=C_{n}$};\\
2(r_{1}+r_{2})(r_{1}(m-1)+r_{2}(m-2))+2(q-1)(r_{1}+r_{2})

& \text{$X_{n}=D_{n}$}.
\end{cases}
$$
Hence,
\begin{eqnarray*}
\text{max}_{\lambda}\langle \lambda, \delta_{X}\rangle&=&
2|X[2]|+|X[1]|\\
&=& \begin{cases}
2(r_{1}+r_{2})(r_{1}m+r_{2}(m-1))+(2q-1)(r_{1}+r_{2})
& \text{$X_{n}=B_{n}$};\\
2(r_{1}+r_{2})(r_{1}m+r_{2}(m-1))+2q(r_{1}+r_{2})

& \text{$X_{n}=C_{n}$};\\
2(r_{1}+r_{2})(r_{1}m+r_{2}(m-1))+2(q-1)(r_{1}+r_{2})

& \text{$X_{n}=D_{n}$}.
\end{cases}
\end{eqnarray*}
On the other hand, by using the expression of $\rho$ for the
classical Lie algebras in terms of the $\epsilon$-basis (see
\cite[p. 107]{GW}), one sees that
$$\langle -w_{0,J}(w\cdot 0),\delta_X \rangle =\begin{cases}
(l-1)(r_{1}m+r_{2}(m-1))+(l-1)(q-\frac{1}{2})
& \text{$X_{n}=B_{n}$};\\
(l-1)(r_{1}m+r_{2}(m-1))+(l-1)(q)
& \text{$X_{n}=C_{n}$};\\
(l-1)(r_{1}m+r_{2}(m-1))+(l-1)(q-1) & \text{$X_{n}=D_{n}$}.
\end{cases}
$$

In Theorems \ref{subsystemtypesAB} and \ref{subsystemtypesCD},
$r_{1}+r_{2}=\frac{l-1}{2}$ for all root systems $\Phi$ of types
$B_{n}$, $C_{n}$, or $D_{n}$, and so
$\operatorname{max}_{\la}\langle\la,\delta_{X}\rangle = \langle
-w_{0,J}(w\cdot 0),\delta_{X}\rangle$ as desired.

Next we consider the case $m=1$. Again let $\Phi$ be of type $X_{n}$
where $X=A,B,C,D$ with
$$\Phi_{0}\cong \Phi_{J}\cong \begin{cases}
\underbrace{A_{1}\times\dots \times A_{1}}_{\text{$r$ times}} & \text{ for $X_{n}=A_n$, $C_n$, or $D_{n}$.}\\
\underbrace{A_{1}\times\dots \times A_{1}}_{\text{$r$ times}} \times
A_1& \text{ for $X_{n}=B_n$.}
\end{cases} $$
Here we choose $J= \{\alpha_{2t-1}\;|\; 1 \leq t \leq r\}$ and  
$J= \{\alpha_{2t-1}\;|\; 1 \leq t \leq r\}\cup \{\alpha_{n}\}$, respectively.
Let
$$\delta_{X}=\begin{cases}
(\underbrace{1,-1,\dots,1,-1}_{\text{$r$ times}},
\underbrace{0,\dots,0}_{\text{$z$ times}})
& \text{ for $X_{n}=A_{n}$, $C_n$, or $D_{n}$};\\
(\underbrace{1,-1,\dots,1,-1}_{\text{$r$ times}},
\underbrace{0,\dots,0}_{\text{$z$ times}},1) & \text{ for
$X_{n}=B_{n}$}.
\end{cases}
$$
As above, one can verify that $\langle \varpi_{j},\delta_{X}
\rangle> 0$ for all $\varpi_j$ that correspond to simple roots in
$J$. Also, we conclude that
$$|X[2]|=\begin{cases}
\frac{r(r-1)}{2}
& \text{$X_{n}=A_{n}$};\\
r^{2}
& \text{$X_{n}=B_{n}$, $C_n$};\\

r(r-1) & \text{$X_{n}=D_{n}.$}
\end{cases}
$$
and
$$|X[1]|=\begin{cases}
rz
& \text{$X_{n}=A_{n}$};\\
2rz
& \text{$X_{n}=C_n$, $D_n$};\\
2rz+r+z & \text{$X_{n}=B_{n}$}.

\end{cases}
$$
Hence,
\begin{eqnarray*}
\text{max}_{\lambda}\langle \lambda, \delta_{X}\rangle&=&
2|X[2]|+|X[1]| =
\begin{cases}
r(r+z-1)
& \text{$X_{n}=A_{n}$};\\
(r+\frac{1}{2})(2r+2z)
& \text{$X_{n}=B_{n}$};\\
r(2r+2z)

& \text{$X_{n}=C_{n}$};\\
r(2(r+z-1))

& \text{$X_{n}=D_{n}$}.
\end{cases}
\end{eqnarray*}
On the other hand, by using the expression of $\rho$ for the
classical Lie algebras in terms of the $\epsilon$-basis (see
\cite[p. 107]{GW}) one sees that
$$\langle -w_{0,J}(w\cdot 0),\delta_X \rangle =\begin{cases}
r(l-1)
& \text{$X_{n}=A_{n}$, $C_n$, $D_n$};\\
(r+ \frac{1}{2})(l-1)
& \text{$X_{n}=B_{n}$}.\\
\end{cases}
$$

By Theorems \ref{subsystemtypesAB} and \ref{subsystemtypesCD}, $r$
is the number of copies of $A_1$ and $z = n + 1 - 2r$ (respectively, $n - 2r
- 1$, $n - 2r$) in type $A_n$ (respectively, $B_n$, $C_n$ or $D_n$). One
obtains that
\begin{equation*}
r+z =\begin{cases} l
& \text{$X_{n}=A_{n}$};\\
\frac{l-1}{2}
& \text{$X_{n}=B_{n}$, $C_n$};\\
\frac{l+1}{2} &\text{$X_{n}= D_n$}.
\end{cases}
\end{equation*}
Again,
$\operatorname{max}_{\la}\langle\la,\delta_{X}\rangle = \langle
-w_{0,J}(w\cdot 0),\delta_{X}\rangle$.


\section{Types $B_n, C_n, D_n$}\label{classicalBCD}\renewcommand{\thetheorem}{\thesection.\arabic{theorem}}
\renewcommand{\theequation}{\thesection.\arabic{equation}}\setcounter{equation}{0}
\setcounter{theorem}{0}

 In this section $\Phi$ is always of type
$B_{n}$, $C_{n}$ or $D_{n}$. Under this assumption we show that the
only weight $\nu$ satisfying the hypothesis of Proposition
\ref{keypropositiononweights} is the zero weight. Note that our
restriction on the root systems and $l$ being odd implies that
$\text{gcd}(l,(X : Q))=1$. For any such weight $\nu$ we observe that
$l\nu\in Q$ because $-w_{0,J}(w\cdot 0)+l\nu$ is a weight of
$\Lambda^{i}_{\zeta,J}$. It follows that $\nu \in Q$.

Our results in Section \ref{classicalone} show that both
$-w_{0,J}(w\cdot 0)$ and $-w_{0,J}(w\cdot 0)+l\nu$ consist of a sum
of all the roots in $X[1]\cup X[2]$ with some additional terms
involving roots in $X[0]$. Therefore, $l\nu$ is a sum of distinct
roots in $X[0] \cup - X[0]$.

We express $\delta_X$ and $\nu$ in the $\epsilon$-basis as $\delta_X
= \sum_{i=1}^{n}\delta_{X,i} \epsilon_i$ and $\nu =
\sum_{i=1}^{n}\nu_{i} \epsilon_i$, respectively. Notice that $\nu
\in Q$
implies that $\nu \in {\mathbb Z}\epsilon_1 \oplus \cdots
\oplus { \mathbb Z}\epsilon_n.$
Define the following sets
\begin{equation}\label{Sab}
S_{(a,b)}=\{(i,j) : \delta_{X,i}=a,\; \delta_{X,j}=b, \text{ and } i
<j\} \text{ and } S_{(a)}=\{i:\; \delta_{X,i}=a\}.
\end{equation}
 The case when $\Phi$ is of type $B_{n}$ will be discussed in detail.
 The verification that $\nu=0$ for the cases when $\Phi$ is of type $C_n$ and $D_n$ are left to the
reader.

Any positive root of the form $\epsilon_i - \epsilon_j$ in $B[0]$
satisfies $(i,j) \in S_{(0,0)}\cup S_{(1,1)}\cup S_{(-1,-1)}$, any
positive root of the form $\epsilon_i + \epsilon_j$ in $B[0]$
satisfies $(i,j) \in S_{(0,0)}\cup S_{(1,-1)}\cup S_{(-1,1)}$ and
any positive root of the form $\epsilon_i$ in $B[0]$ satisfies $i
\in S_{(0)}$. Using (\ref{Sab}), we can express

\begin{eqnarray*}
l\nu&=& \sum_{(i,j)\in S_{(1,1)}}m_{i,j}(\epsilon_{i}-\epsilon_{j})
+ \sum_{(i,j)\in
S_{(-1,-1)}}\widetilde{m}_{i,j}(\epsilon_{i}-\epsilon_{j})\\
&+&\sum_{(i,j)\in S_{(1,-1)}}n_{i,j}(\epsilon_{i}+\epsilon_{j})
+\sum_{(i,j)\in
S_{(-1,1)}}\widetilde{n}_{i,j}(\epsilon_{i}+\epsilon_{j})\\
&+&\sum_{(i,j)\in S_{(0,0)}}p_{i,j}(\epsilon_{i}-\epsilon_{j})+
\sum_{(i,j)\in
S_{(0,0)}}\widetilde{p}_{i,j}(\epsilon_{i}+\epsilon_{j})+\sum_{i\in
S_{(0)}} {q}_{i} \epsilon_{i}
\end{eqnarray*}
with $m_{i,j},\widetilde{m}_{i,j},n_{i,j},\widetilde{n}_{i,j}, q_i,
p_{i,j}, \widetilde{p}_{i,j}=0,\pm 1$.

The above expression shows that if $\delta_{B,i}= 1$ then $l\nu_{i}
= \sum_{j} (m_{i,j}+n_{i,j}+m_{j,i}+\widetilde{n}_{j,i}).$ For fixed
$i$, it follows from Theorem \ref{subsystemtypesAB} that the number
of pairs of the form $(i, j)$ together with those of form $( j,i)$
in $S_{(1,1)}$ are less than $(l+1)/2$. A similar counting argument
shows that the number of pairs of the form $(i,j)$ in $S_{(1,-1)}$
together with the number of pairs of the form $( j,i)$ in
$S_{(-1,1)}$ are  less than $(l-1)/2$. Therefore, $|\sum_{j}(
m_{i,j}+n_{i,j}+m_{j,i}+\widetilde{n}_{j,i})| \leq (l-1)$ and
$\delta_{B,i}= 1$ implies $\nu_{i} =0$. Similarly, one can argue
that $\delta_{B,i}= -1$ implies $\nu_{i} =0$.

Any simple root $\epsilon_i - \epsilon_{i+1} \in \Pi\backslash J$
satisfies either $\delta_{B,i}= -1$ and $\delta_{B,i+1}= 1$ or
$\delta_{B,i}= 0$ and $\delta_{B,i+1}= 0$. It follows from above
that, for any $\alpha\in \Pi\backslash J$,
\begin{equation*}
\langle l\nu, \alpha^{\vee} \rangle =\langle \sum_{i \in S_{(0)}}
{q}_{i} \epsilon_{i}+ \sum_{(i,j)\in
S_{(0,0)}}p_{i,j}(\epsilon_{i}-\epsilon_{j})+ \sum_{(i,j)\in
S_{(0,0)}}\widetilde{p}_{i,j}(\epsilon_{i}+\epsilon_{j}),
\alpha^{\vee} \rangle.
\end{equation*}
For $m>1$, there are no roots in $\Pi\backslash J$ with
$\delta_{B,i}= 0$ and $\delta_{B,i+1}= 0$ and the inner product on
the right-hand side vanishes. Since $\nu$ vanishes on $J$, one
concludes $\nu=0$, as desired.

Assume $m=1$ and let $i$ be such that $\delta_{B,i}= 0$, then $l
\nu_i = q_{i}+\sum_{j} (p_{i,j}+ \tilde{p}_{i,j} + p_{j,i}+
\tilde{p}_{j,i})$. Theorem \ref{subsystemtypesAB} implies that for
fixed $i$ there are less than $(l-1)/2$ pairs of the form $(i, j)$
or $( j,i)$ in $S_{(0,0)}$. It follows that $l|\nu_i| = |
q_{i}+\sum_{j} (p_{i,j}+ \tilde{p}_{i,j} + p_{j,i}+
\tilde{p}_{j,i})| < l.$ This forces $\nu=0$.


\section{Type $A_n$}\label{classicalA}\renewcommand{\thetheorem}{\thesection.\arabic{theorem}}
\renewcommand{\theequation}{\thesection.\arabic{equation}}\setcounter{equation}{0}
\setcounter{theorem}{0}

 In this (and the next)
section $\Phi$ is always of type $A_{n}$ with simple roots
$\alpha_1, \dots , \alpha_n$. We will show that a weight $\nu$
satisfying the hypothesis of Proposition
\ref{keypropositiononweights} equals the zero weight unless $l$
divides $n+1$. For $\mu\in Q$, $\mu_{i}$ always denotes the
coefficient of $\mu$ in its expansion in terms of the
$\epsilon$-basis (i.~e., $\mu=\sum_{i=1}^{n+1} \mu_{i}
\epsilon_{i}$).

First consider the case when $m > 1$ in Theorem
\ref{subsystemtypesAB}. Let
$$\Phi_{0}\cong \Phi_{J}\cong
\underbrace{A_{m}\times\dots \times A_{m}}_{\text{$s+1$
times}}\times \underbrace{A_{m-1} \times \dots \times
A_{m-1}}_{\text{$l-s-1$ times}}$$ where $n=lm+s$ and $0\leq s \leq
l-1$.

We choose $J$ as in Section 4.5 and fix a particular $w \in W$ with $w(\Phi_0^+)
=\Phi_J^+$ as follows. Partition the set $\{\frac{n}{2}, \frac{n}{2}-1, ... ,
-\frac{n}{2}+1, -\frac{n}{2}\}$, i.~e., the set of coordinates of
$\rho$ in the $\epsilon$-basis, into its congruence classes modulo
$l$ and order each congruence class in decreasing order. Then we
order the congruence classes according to the size of their largest
element from highest to lowest. The resulting array is the
coordinate vector of a $W$-conjugate of $\rho$. We denote this
conjugate by $w\rho$ and $w \in W$ denotes the unique permutation
that sends $\rho$ to $w\rho$. If we identify the Weyl group with the
symmetric group in $n+1$ letters, then $w$ can be described as
follows.
\begin{equation}\label{siti}
\text{For } 1 \leq i \leq n+1 \text { define } s_i, t_i \text{ via }
i-1 = s_il + t_i \text{ with } 0 \leq t_i \leq l-1.
\end{equation}
\begin{equation}\label{wi} \text{Then } w(\epsilon_i) = \epsilon_{w(i)} \text{ where } w(i) =
\begin{cases}
t_i(m+1)+s_i+1 &\text{ if } 0 \leq t_i \leq s\\
t_im+s_i +s+2&\text{ if } s+1 \leq t_i \leq l-1.
\end{cases}
\end{equation}

\begin{equation}\label{w0ji} \text{Moreover, } w_{0,J}w(i) =
\begin{cases}
(t_i+1)(m+1)-s_i &\text{ if } 0 \leq t_i \leq s\\
(t_i+1)m-s_i +s+1&\text{ if } s+1 \leq t_i \leq l-1.
\end{cases}
\end{equation}
For any $u \in W$ define $Q(u):= \{\alpha \in \Phi^+\;|\; u\alpha
\in \Phi^-\}$. Using (\ref{siti}) and (\ref{wi}) we find that
\cite[7.3]{GW}
\begin{equation}\label{Qw} Q(w) = \{ \epsilon_i - \epsilon_j\;|\; s_i < s_j, t_i > t_j\}
\text{ and } w\cdot 0 = \sum_{\{\ga \in Q(w)\}}-w(\ga).
\end{equation}
Define
\begin{equation*}\label{fi}
f(i)=
\begin{cases}
w_{0,J}w(lm+i)=1+(i-1) (m+1) &\text{ if } 1 \leq i \leq s+1\\
w_{0,J}w(l(m-1)+i)=1+ (i-1)m+(s+1) &\text{ if } s+2\leq i \leq l,
\end{cases}
\end{equation*}
and
\begin{equation}\label{gi}
g(i)=
\begin{cases}
w_{0,J}w(i)=i(m+1) &\text{ if } 1 \leq i \leq s+1\\
w_{0,J}w(i)=im+(s+1) &\text{ if } s+2\leq i \leq l.
\end{cases}
\end{equation}
Next set $\delta^i = \epsilon_{f(i)} - \epsilon_{g(i)}$ for $1\leq i
\leq l$. Then $\displaystyle{\delta_A = \sum_{i=1}^{l} \delta^i =
\sum_{i=1}^{l}
    \epsilon_{f(i)} - \epsilon_{g(i)}.}$\\
The ${\alpha}_{g(i)} = \epsilon_{g(i)} - \epsilon_{f(i+1)}$ are
precisely the simple roots contained in $\Pi$ but not in $J$.

Using the notation introduced in (\ref{Sab}), we partition $A[0]$
into the following subsets:
\begin{eqnarray*}
R^+_{(0,0)}&=& \{ \epsilon_i - \epsilon_j \in A[0] \; |\; (i,j) \in S_{(0,0)}\},\\
R^+_{(1,1)}&=&\{ \epsilon_i - \epsilon_j \in A[0] \; |\; (i,j) \in
S_{(1,1)}\} =
    \{ \epsilon_{f(i)} - \epsilon_{f(j)} \; | \; 1\leq i < j \leq l\},\\
R^+_{(-1,-1)}&=&\{ \epsilon_i - \epsilon_j \in A[0] \; |\; (i,j) \in
S_{(-1,-1)}\}= \{ \epsilon_{g(i)} - \epsilon_{g(j)} \; | \; 1\leq i
< j \leq l\}.
\end{eqnarray*}
In addition we set $ R_{(a,a)} = R^+_{(a,a)} \cup -R^+_{(a,a)}, a
\in \{-1,0,1\}$. Notice that both sets $R_{(1,1)}$ and $R_{(-1,-1)}$
form root systems
 of type $A_{l-1}$ with simple roots
\begin{equation}\label{betai}
\beta_i := \epsilon_{f(i)} - \epsilon_{f(i+1)} \text{ and
 } \tau_i := \epsilon_{g(i)} - \epsilon_{g(i+1)},
\end{equation}
respectively.

Next define
\begin{equation*}
S^+ := \{\epsilon_{f(i)} - \epsilon_{f(j)}\; | \; 1 \leq i \leq s+1,
s+2 \leq j \leq l\} \text{ and } S:=S^+ \cup - S^+.
\end{equation*}
Then it follows from (\ref{w0ji}) through (\ref{gi}) that
\begin{equation}\label{R+}
 R^+_{(-1,-1)} \cap w_{0,J}w (Q(w))= \varnothing \text{ and } R^+_{(1,1)} \cap w_{0,J}w (Q(w))= S^+.
\end{equation}
One concludes that the weight $-w_{0,J}(w\cdot 0)$ is the sum of all
roots in $A[1]\cup A[2]$ together with certain roots in
$R^+_{(0,0)}$ and the roots in $ S^+$. The elements of $S^+$ can
also be characterized as those roots in $R^+_{(1,1)}$ that contain
$\beta_{s+1}$. It is important to note that no roots of
$R^+_{(-1,-1)}$ contribute to $-w_{0,J}(w\cdot 0)$.

Next assume that $\la$ is a weight of $\Lambda^{\bullet}_{\zeta, J}$
such that $\langle \lambda +w_{0,J}(w\cdot 0), \alpha \rangle = 0$
for all $\alpha \in J.$ Set $\nu = \lambda +w_{0,J}(w\cdot 0)$.
Using the $\beta$-basis of $R_{(1,1)}$ and the $\gamma$-basis of
$R_{(-1,-1)}$, we express $\nu$ in the form
\begin{equation}\label{nu}
\nu = \sum_{i=1}^{l-1} k_i {\beta}_i + \sum_{i=1}^{l-1} l_i{\tau}_i
+ \sum_{\eta \in R^+_{(0,0)}} m_{\eta} \eta.
\end{equation}
Since $\nu$ is the zero weight when restricted to $J$, one observes
that $\langle \nu, \delta^i \rangle= 0$ for $1\leq i \leq l$. Since
$\langle \nu, \delta^1 \rangle = 0$, $k_1- l_1=0$ and, inductively,
it follows from $\langle \nu, \delta^i \rangle = 0$ that $k_i = l_i$
for $1\leq i \leq l-1$. Moreover, it follows from (\ref{R+}) that
all $k_i$ and $l_i$ are nonnegative. One concludes that $\nu$ is a
sum of distinct roots in $R_{(0,0)}$ together with distinct roots in
$R^+_{(-1,-1)}$ and in $R^+_{(1,1)}\backslash S^+$.

Finally, assume that $\lambda_1$ and $\lambda_2$ are two weights of
$\Lambda^{\bullet}_{\zeta, J}$ such that $\langle \lambda_i
+w_{0,J}(w\cdot 0), \alpha \rangle = 0$ for all $\alpha \in J$. For
example $\la_1$ could be of the form $-w_{0,J}(u\cdot 0)$ for some
$u \neq w$ with $u(\Phi_0^+) = \Phi_J^+$ and $\la_2$ could be equal to
$-w_{0,J}(u\cdot 0)+l \nu$, where $\nu$ is the zero weight when
restricted to $J$. It follows from our above arguments that
$\lambda_2 - \lambda_1$ is a sum of distinct roots in $ R_{(-1,-1)}
\cup R_{(1,1)}\backslash S \cup R_{(0,0)}$. The elements in
$R_{(1,1)}\backslash S$ form a root system of type $A_{s} \times
A_{l-s-2} $, spanned by the roots $\{\beta_1, \dots, \beta_s\}\cup
\{\beta_{s+2}, \dots, \beta_{l-1}\}$, as defined in (\ref{betai}).
We can decompose $\la_2 - \la_1 = \ga_1 +\ga_2+\ga_3$ where the
support of $\ga_1$ lies entirely in the type $A_s$ component of
$R_{(1,1)}\backslash S$, the support of $\ga_2$ lies entirely in the
type $A_{l-s-2}$ component of $R_{(1,1)}\backslash S$, and the
support of $\ga_3$ lies entirely in $R_{(-1,-1)} \cup R_{(0,0)}$.

Next observe that $\alpha_{g(i)} = \beta_i - \delta^i$. It follows
that $\langle \la_2 - \la_1, \alpha_{g(i)} \rangle =\langle \la_2 -
\la_1, \beta_i \rangle = \langle \ga_1+ \ga_2 , \beta_{i}\rangle $.
The inner product of $\la_2 - \la_1$ with the roots in
$\Pi\backslash J$ is then given by the following:
\begin{equation}\label{ladiff}
\langle \la_2 - \la_1, \alpha_{g(i)} \rangle =
\begin{cases}
\langle \ga_1, \beta_i \rangle &\text{ if }1 \leq i \leq s,\\
\langle \ga_1 + \ga_2, \beta_i \rangle &\text{ if } i=s+1,\\
\langle \ga_2, \beta_i \rangle &\text{ if }s+2 \leq i \leq l-1.\\
\end{cases}
\end{equation}
Since $\ga_1$ is a sum of distinct roots of a root system of type
$A_s$ a direct computation shows that $|\langle \ga_1,\beta_i
\rangle| \leq s+1$. Similarly, $|\langle \ga_2,\beta_i \rangle| \leq
l-s-1$. Now $\la_2 - \la_1 \in lX$ implies that either $\la_1 =
\la_2$ or $l = s+1$. Hence, Proposition
\ref{keypropositiononweights} holds for type $A_n$ as long as $l$
does not divide $n+1$.

Suppose now that $m = 1$. Then
$$\Phi_{0}\cong \Phi_{J}\cong
\underbrace{A_{1}\times\dots \times A_{1}}_{\text{$s+1$ times}}.$$
Essentially the same argument as above works here as well. We
highlight some of the differences and leave the details to the
interested reader. The definition of $w$ holds as above with $m =
1$. Define $f(i)$, $g(i)$, $\delta^i$, $\delta_A$, and $\al_{g(i)}$
just as above with $m = 1$. Note that for $i \geq s + 2$, $f(i) =
g(i)$ and $\delta^i = 0$. In the definitions of $R^{+}_{(1,1)}$ and
$R^{+}_{(-1,-1)}$, we have $1 \leq i < j \leq s + 1$, and the root
systems $R_{(1,1)}$ and $R_{(-1,-1)}$ are of type $A_s$, while
$S^{+} = \varnothing$. For $s+2\leq i \leq l-1$, the $\beta_i =
\tau_i$ form a basis for the root system $R_{(0,0)}$ of type
$A_{l-s-2}$. The equivalent of (\ref{R+}) is now
\begin{equation*}
 R^+_{(a,a)} \cap w_{0,J}w (Q(w))= \varnothing \text{ where } a \in \{1,0,-1\}.
\end{equation*}
In (\ref{nu}), the index $i$ should run from $i = 1$ to $i = s$.
Next we consider $\la_2 - \la_1$, as defined above. We decompose
$\la_2 - \la_1 = \ga_1 + \ga_2+ \ga_3$ with the support of $\ga_1$
in $R_{(1,1)}$, the support of $\ga_2$ in $R_{(0,0)}$ and the
support of $\ga_3$ in $R_{(-1,-1)}$. Then equation (\ref{ladiff})
remains valid. Hence one obtains the same conclusion.


\section{Type $A_n$ with $l$ dividing
$n+1$}\label{classicalAII}\renewcommand{\thetheorem}{\thesection.\arabic{theorem}}
\renewcommand{\theequation}{\thesection.\arabic{equation}}\setcounter{equation}{0}
\setcounter{theorem}{0}
Let $\Phi$ be a root system of type $A_n$. We begin with the special case $l=n+1=h$. Here $J= \varnothing $. We will make use of  the following Lemma.

\begin{lem}\label{lequalh} Let $\Phi$ be of type $A_n$, $l=n+1$ and $\nu \in X$. The weight $l \nu$ appears in  $\Lambda^{\bullet}_{\zeta,\varnothing}$ if and only if $\nu = \varpi_i$ for some $0 \leq i \leq n$, where $\varpi_{0}=0$.
\end{lem}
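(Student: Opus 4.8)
The plan is to compute directly which weights appear in $\Lambda^\bullet_{\zeta,\varnothing}$. Recall that for $J=\varnothing$ the algebra $\Lambda^\bullet_{\zeta,\varnothing}$ has generators $x_\alpha$ of weight $\alpha$ for each $\alpha\in\Phi^+$, subject to the quadratic relations $x_\alpha x_\beta+\zeta^{-\langle\alpha,\beta\rangle}x_\beta x_\alpha=0$ for $\alpha\prec\beta$ together with $x_\alpha^2=0$. Thus a monomial basis of $\Lambda^\bullet_{\zeta,\varnothing}$ is indexed by subsets $S\subseteq\Phi^+$, and the monomial $\prod_{\alpha\in S}x_\alpha$ has weight $\sum_{\alpha\in S}\alpha$. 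Since $\mathrm{ch}\,\Lambda^\bullet_{\zeta,\varnothing}=\mathrm{ch}\,\Lambda^\bullet({\mathfrak u}^*)$ (noted in the text just before Theorem \ref{multSteinberg} and in equation (\ref{two})), the weights of $\Lambda^\bullet_{\zeta,\varnothing}$ are exactly the sums $\sum_{\alpha\in S}\alpha$ for subsets $S\subseteq\Phi^+$. So the statement reduces to a purely combinatorial claim about the root system $A_n$: for $\nu\in X$, the weight $(n+1)\nu$ is a sum of distinct positive roots of $A_n$ if and only if $\nu\in\{0,\varpi_1,\dots,\varpi_n\}$.

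**The combinatorial core.** First I would dispose of the ``if'' direction. For $\nu=0$ take $S=\varnothing$. For $\nu=\varpi_i$, I claim $(n+1)\varpi_i=\sum_{\alpha\in S_i}\alpha$ where $S_i=\{\,\epsilon_a-\epsilon_b : 1\le a\le i<b\le n+1\,\}$, the set of positive roots ``straddling'' position $i$; a short check using $\varpi_i=\epsilon_1+\cdots+\epsilon_i-\frac{i}{n+1}(\epsilon_1+\cdots+\epsilon_{n+1})$ shows $\sum_{\alpha\in S_i}\alpha=i(\epsilon_1+\cdots+\epsilon_i)-i\cdot\tfrac{?}{}$... more cleanly, one computes $\sum_{\alpha\in S_i}\alpha = (n+1-i)(\epsilon_1+\cdots+\epsilon_i)-i(\epsilon_{i+1}+\cdots+\epsilon_{n+1})$, and since $\epsilon_1+\cdots+\epsilon_{n+1}=0$ in the $A_n$ weight space this equals $(n+1)(\epsilon_1+\cdots+\epsilon_i)=(n+1)\varpi_i$. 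Now for the ``only if'' direction: suppose $(n+1)\nu=\sum_{\alpha\in S}\alpha$ with $S\subseteq\Phi^+$. Write each $\alpha\in S$ as $\epsilon_{a}-\epsilon_{b}$ with $a<b$, and let $d_j$ be the coefficient of $\epsilon_j$ in $\sum_{\alpha\in S}\alpha$, so $\sum_j d_j=0$ and $(n+1)\nu=\sum_j d_j\epsilon_j$. The key estimate is $|d_j|\le n$ for every $j$: among the positive roots $\epsilon_a-\epsilon_b$ there are exactly $n$ with a given value $a$ in the first slot and exactly $n$ with a given value $b$ in the second slot, so $d_j$ (which receives $+1$ from roots with first slot $j$ and $-1$ from roots with second slot $j$, and no root can contribute both) satisfies $-n\le d_j\le n$. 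Since $(n+1)\mid d_j$ for all $j$ (because $\langle\nu,\epsilon_j-\epsilon_{j+1}\rangle\in\mathbb Z$ forces $d_j-d_{j+1}\in(n+1)\mathbb Z$... actually one needs $\nu\in X$ hence $(n+1)\nu\in(n+1)X$ and the coefficients $d_j$ are determined mod $(n+1)$), each $d_j\in\{-n,\dots,n\}$ divisible by...

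**Pinning down the obstacle and finishing.** The delicate point — and the step I expect to be the main obstacle — is the precise divisibility argument: the vector $(d_1,\dots,d_{n+1})$ with $\sum d_j=0$ represents $(n+1)\nu$ for $\nu\in X$ precisely when all $d_j$ are congruent to one another modulo $n+1$, and then $\nu\in X$ exactly when additionally $d_j\equiv 0$ is not required but rather $d_j\equiv c$ for a common $c$; one then shows $\nu=\sum\frac{d_j}{n+1}\epsilon_j$ lies in $X$ iff all $d_j$ share a common residue mod $n+1$, in which case subtracting the common residue $c$ one gets $d_j-c\in(n+1)\mathbb Z$ with $\sum(d_j-c)=-(n+1)c$. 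Combined with the bound $|d_j|\le n$, the only possibilities are: $d_j\in\{0\}$ for all $j$ (giving $\nu=0$), or there is some $i$ with $d_j=n+1-i$ for $j$ in an $i$-element set and $d_j=-i$ on the complement — and here I must use that $S$ is a set of \emph{positive} roots to force the $i$ positions with positive $d_j$ to be precisely $\{1,\dots,i\}$ (an interleaving/flow argument: if position $a$ has $d_a>0$ and $a'>a$ has $d_{a'}<0$ one can realize this, but $d_a<0<d_{a'}$ with $a<a'$ is impossible since every root goes from a lower to a higher index, so the positive-$d$ positions form an initial segment). This last monotonicity step is the crux and needs a careful bookkeeping argument — essentially a Hall-type/transportation argument showing the multiset $S$ can be chosen to respect the ordering — but once it is in place, $\nu=\varpi_i$ follows immediately. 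I would write this final step as the detailed lemma, citing the $\epsilon$-basis conventions of \cite[p.~250]{Bo} as already set up in Section \ref{classicalA}, and simply verify the remaining bookkeeping by the counting identities for $|A[t]|$ established in Section \ref{classicalone} specialized to $J=\varnothing$.
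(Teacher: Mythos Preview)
Your overall strategy is correct and genuinely different from the paper's. The paper first quotes \cite[2.2, 6.1]{AJ} to reduce to $\nu=u\varpi_i$ for some $u\in W$, and then rules out $u\varpi_i\neq\varpi_i$ by a delicate count of simple-root coefficients in a putative expression of $(n+1)u\varpi_i$ as a sum of distinct positive roots. Your route via the $\epsilon$-basis is more self-contained: the congruence $d_j\equiv d_k\pmod{n+1}$ (from $\nu\in X$) together with $|d_j|\le n$ correctly forces the multiset $\{d_j\}$ to be $\{(n+1-i)^i,(-i)^{n+1-i}\}$ for some $i$, exactly the coefficient multiset of $(n+1)\varpi_i$.

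The one genuine gap is your monotonicity step. The stated reason---``$d_a<0<d_{a'}$ with $a<a'$ is impossible since every root goes from a lower to a higher index''---is false as a general statement: in $A_3$ the set $S=\{\epsilon_1-\epsilon_2,\ \epsilon_3-\epsilon_4\}$ gives $d=(1,-1,1,-1)$. What you actually need, and what makes the argument immediate, is the sharper \emph{position-dependent} bound
\[
-(j-1)\ \le\ d_j\ \le\ n+1-j,
\]
which holds because at most $n+1-j$ positive roots have first index $j$ and at most $j-1$ have second index $j$. Applied to your two possible values: if $d_j=n+1-i$ then $n+1-i\le n+1-j$ forces $j\le i$; if $d_j=-i$ then $-i\ge 1-j$ forces $j\ge i+1$. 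Hence the positions with positive $d_j$ are exactly $\{1,\dots,i\}$, so $\nu=\varpi_i$. No Hall-type or transportation argument is required, and the reference to the $|A[t]|$ counts of Section~\ref{classicalone} is unnecessary for this lemma. With this one-line replacement your proof is complete and arguably cleaner than the paper's, since it avoids the external appeal to \cite{AJ}.
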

\begin{proof}
Assume that $l\nu = (n+1)\nu$ is a weight of $\Lambda^{\bullet}_{\zeta,\varnothing}$. It follows from the argument in \cite[2.2, 6.1]{AJ} that $\nu = u \varpi_i$, for some $u \in W$ and $0 \leq i \leq n$.  Next assume that  $\nu = u \varpi_i\neq \varpi_i$. Note that  $(n+1) \varpi_i = \sum_{j=1}^i j(n+1-i)\alpha_j + \sum_{j=i+1}^{n} i(n+1-j) \alpha_{j}$ and that $(n+1) u \varpi_i= \sum_{j=1}^i[ j(n+1-i)- q_j(n+1)]\alpha_j + \sum_{j=i+1}^{n} [i(n+1-j)- p_j(n+1)] \alpha_{j}$ for some $q_j, p_j \geq 0$. Since $(n+1)u \varpi_i$ is a sum of positive roots it follows that $q_1=0$, while $u \varpi_i \neq \varpi_i$ implies $q_i \geq 1$. Therefore, there exists a $j$ with $1 \leq j < i$ such that $q_j = 0$ and $q_{j+1} \geq 1$. Since $(n+1)u \varpi_i \in \Lambda^{\bullet}_{\zeta,\varnothing}$, 
it is a sum of {\it distinct} positive roots. From the preceding
decomposition into simple roots and the assumption on $j$, this sum includes precisely $j(n+1-i)$ distinct roots that contain the simple root $\alpha_j$. However, this sum contains at  most $(j+1)(n+1-i) - (n+1)$ distinct roots that contain $\al_{j+1}$ and hence at most $(j+1)(n+1-i) - (n+1)$ distinct roots that contain $\alpha_j+\alpha_{j+1}$.
On the other hand, there are only $j$ distinct roots that contain $\alpha_j$ but not $\alpha_{j+1}$.   But $(j+1)(n+1-i) - (n+1)+j = j(n+1-i)-i+j <  j(n+1-i)$, which is a contradiction. 

The weight $l\varpi_i$ is precisely the sum of all positive roots containing $\alpha_i$. Hence, it is a weight of $\Lambda^{\bullet}_{\zeta,\varnothing}$.
\end{proof}

Assume throughout the remainder of this
section that $l$ divides $n + 1$. 
We continue to identify the Weyl group
with the symmetric group in $n+1$ letters. Then $l =s+1$ and the
definition of $w$ given in (\ref{siti}) and (\ref{wi}) can be
simplified to
\begin{equation}\label{wepi}
w(\epsilon_i) = \epsilon_{w(i)} \text{ where } w(i) = t_i(m+1)+s_i
+1.
\end{equation}
We now follow the arguments used in \cite[6.2]{AJ}. Recall that $n+1
= (m+1)l$. We define the element $\sigma \in W$ as follows:
\begin{equation*}\label{sigma}
\sigma = (1,2, ... , l)(l+1, l+2, ... , 2l) \cdots (ml+1, ml+2, ...
, (m+1)l).
\end{equation*}
Direct computation shows that $w(\si^t\cdot 0) = -l\varpi_{g(t)}$
for $1\leq t \leq l-1$. Setting $\varpi_{g(0)} = 0$ yields
\begin{equation}\label{wdot0}
w \cdot 0 = w\si^t \cdot 0 + l \varpi_{g(t)} \text{ for } 0\leq t
\leq l-1.
\end{equation}
We find that
\begin{align*}
Q(w\si^t) = \{ \epsilon_i - \epsilon_j\;&|\; s_i < s_j,\si^t( t_i + 1) > \si^t( t_j + 1)\} \\
        & \cup
\{ \epsilon_i - \epsilon_j\;|\; s_i = s_j, t_i < t_j \text{ and }
    \si^t( t_i + 1) > \si^t( t_j + 1)\}.\\
\end{align*}
The cardinality of the first set in the above union is equal to the
cardinality of $Q(w)$ (see (\ref{Qw})) while the second set can be
identified with $Q(\sigma^t)$. Using this decomposition of
$Q(w\si^t)$, \cite[7.3]{GW}, and \cite[6.2(3)]{AJ}, we conclude that
\begin{equation}\label{length}
\ell(w\sigma^t) = \ell(w) + \ell(\sigma^t) = \ell(w) +(m+1)t(l-t).
\end{equation}

Next, assume that $l\nu -w_{0,J}(w\cdot 0)$ is a weight of
$\Lambda^{\bullet}_{\zeta, J}$ such that $\langle l\nu, \alpha
\rangle = 0$ for all $\alpha \in J$. The discussion in Section
\ref{classicalA} shows that $l\nu$ is a sum of distinct roots in
$R^+_{(-1,-1)}\cup R_{(0,0)}\cup R^+_{(1,1)}$. We can decompose $\nu
= \ga_1 +\ga_2$ where the support of $\ga_1$ lies entirely in
$R^+_{(1,1)}$ and the support of $\ga_2$ lies entirely in
$R^+_{(-1,-1)} \cup R_{(0,0)}$. As before, the inner product
$\langle l \nu, \alpha_{g(i)} \rangle = \langle l \nu, \beta_i
\rangle =\langle l \ga_1, \beta_i \rangle$ is completely determined
by the contribution coming from $R^+_{(1,1)}$, the positive roots of
a type $A_{l-1}$ root system. Let $\kappa_i$
denote the fundamental weight corresponding to the simple root
$\beta_i$ of the root system of type $A_{l-1}$. It follows from the above lemma that
$l\nu = l\kappa_i$. Moreover,  it follows from the construction that $\kappa_i= \varpi_{g(i)}$. Finally, by (\ref{wdot0}), for each $1 \leq
i \leq l - 1$, we have
$$
-w_{0,J}(w\cdot 0) + l\varpi_{g(i)} = -w_{0,J}(w\si^i\cdot 0),
$$
and the latter weight is a weight of $\Lambda^{\bullet}_{\zeta,J}$
(cf. \cite[7.3]{GW},\cite[Prop. 2.2]{FP1}). Further, by
(\ref{length}), this lies in degree $\ell(w\si^i)= \ell(w) +
(m+1)t(l-i)$. Since $g(i) = i(m+1)$, the result follows.



\section{Exceptional Lie algebras}\label{exceptionalI}\renewcommand{\thetheorem}{\thesection.\arabic{theorem}}
\renewcommand{\theequation}{\thesection.\arabic{equation}}\setcounter{equation}{0}
\setcounter{theorem}{0}
 In this section, we assume that the root system $\Phi$
is of exceptional type. We show that if $\nu$ satisfies the
hypothesis of Proposition \ref{keypropositiononweights}, then $\nu =
0$ except in the case when $\Phi$ is of type $E_6$ and $l = 9$. Note
that in the excluded case $l$ is divisible by $(X : \bZ\Phi) = 3$.
Our goal is to show that if $-w_{0,J}(w\cdot 0) + l\nu$ with $\nu$
being $J$-dominant is a weight of $\Lambda_{\zeta,J}^{\bullet}$,
then $\nu = 0$. For the exceptional Lie algebras, an explicit choice
of $w$ and $J$ is listed in Appendix  \ref{tables1}. One can then
explicitly compute the value of $-w_{0,J}(w\cdot 0)$. This was again
done with the aid of MAGMA \cite{BC,BCP} and the results are given
in tables in Appendix  \ref{tables2}. The weights in the tables are
listed with respect to the basis $\{\varpi_{1},\dots, \varpi_{n}\}$
of fundamental dominant weights.

Since the dimension of $\Lambda_{\zeta,J}^{\bullet}$ is finite, with
the aid of a computer, one could in principle compute all possible
weights of $\Lambda_{\zeta,J}^{\bullet}$ and compare them to
$-w_{0,J}(w\cdot 0)$ modulo $l$. For types $F_4$ and $G_2$, this can
readily be done and one finds that $\nu = 0$. For type $E_n$, the
size of $\Lambda_{\zeta,J}^{\bullet}$ is sufficiently large as to
make the computations somewhat impractical on a typical desktop
computer. As such, we present an alternative approach which makes
use of some of the ideas from the preceding sections on classical
root systems to show directly that $\nu = 0$ or reduce the
computations to a more manageable number.

In what follows, let $\delta^{\vee} = \sum_{\al \in J}\al^{\vee}$.
This is a slight abuse of notation since $\delta^{\vee}$ may not
equal $(\sum_{\al \in J}\al)^{\vee}$ but should not lead to any
confusion. Recall that $\langle -w_{0,J}(w\cdot
0),\delta^{\vee}\rangle = (l-1)|J|$. As in Section
\ref{proofofkeyprop}, set
$\max_{\la}\langle\la,\delta^{\vee} \rangle := \max\{\langle
\lambda, \delta^{\vee}\rangle\; |\;\lambda \text{ a weight of
}\Lambda^{i}_{\zeta,J}\}$. As with the classical cases, the key to
showing that $\nu = 0$ is that
$\max_{\la}\langle\la,\delta^{\vee}\rangle$ is in general ``close''
to $(l-1)|J|$.

To make this more precise, set
$$
E[t] := \{\beta \in \Phi^{+}\backslash \Phi_J^{+} ~|~ \langle
\beta,\delta^{\vee}\rangle = t\}.
$$
Then we can decompose $\Phi^{+}\backslash \Phi_J^{+}$ as a disjoint
union:
$$
\Phi^{+}\backslash \Phi_J^{+} = E[<0] \cup E[0] \cup E[>0]
$$
where
$$
E[>0] = \cup_{t>0}E[t] \text{ and } E[<0] = \cup_{t<0}E[t].
$$
That is, we separate the positive roots into those which give a
positive, zero, or negative inner product with $\delta^{\vee}$.
Since weights in $\Lambda^{\bullet}_{\zeta,J}$ are composed of sums
of distinct positive roots from $\Phi^+ \backslash \Phi_J^+$,
clearly,
$$
\max_{\la}\langle\la,\delta^{\vee}\rangle = \langle\sum_{\be \in
E[>0]}\be,\delta^{\vee}\rangle.
$$
For convenience, for the remainder of this section, set $\la :=
\sum_{\be \in E[>0]}\be$. One might think of $\la$ as a conical
representative of those weights having maximum inner product with
$\delta^{\vee}$. If $\si$ is a weight of $\Lambda_{\zeta,J}^i$ with
$\langle\si,\delta^{\vee}\rangle =  \langle\la,\delta^{\vee}\rangle$,
then we would have $\si = \la + z$ where $z$ is a sum of distinct roots
which lie in $E[0]$. With
the aid of MAGMA the weight $\la$ can be readily computed for a
given $l$, $w$, and $J$. For each relevant case, the weight $\lambda$
is given in the tables in Appendix \ref{tables2}.

Let $x = -w_{0,J}(w\cdot 0) + l\nu$ be a $J$-dominant weight of
$\Lambda_{\zeta,J}^{\bullet}$.  Our goal is to show that (in all but one case) the
only such weight that occurs is when $\nu = 0$.  Recall that, as mentioned in
Section \ref{proofofkeyprop}, the weight $-w_{0,J}(w\cdot 0)$ appears in
$\Lambda_{\zeta,J}^{\ell(w)}$.  To show that $\nu = 0$, we show that
$\langle \nu,\al^{\vee}\rangle = 0$ for all $\al \in \Pi$. We
separate this into two cases: $\al \in J$ and $\al \in \Pi\backslash
J$. The first case follows if it can be shown that
$$|\langle -w_{0,J}(w\cdot 0),\delta^{\vee}\rangle - \langle \la,\delta^{\vee}\rangle| < l.
$$
The numbers $\langle -w_{0,J}(w\cdot 0),\delta^{\vee}\rangle$ and
$\langle \la,\delta^{\vee}\rangle$ can be found by direct calculation.
These values are given in the tables in  Appendix \ref{tables2}.
We find that the desired inequality holds in all but one case (type $E_8$ when $l =
7$).  In that one remaining case, a different argument will be used to show that
$\langle\nu,\al^{\vee}\rangle = 0$ for all $\al \in J$.

We now outline the basic process for showing that $\langle\nu,\al^{\vee}\rangle = 0$
for all $\al \in \Pi\backslash J$. Note that there will be precisely one case (type $E_6$
when $l = 9$) when this fails to hold.   Once it is known that
$\langle\nu,\al^{\vee}\rangle = 0$ for all $\al \in J$, it follows that
$\langle\nu,\delta^{\vee}\rangle = 0$.  Hence $\langle x,\delta^{\vee}\rangle =
\langle -w_{0,J}(w\cdot 0) + l\nu,\delta^{\vee}\rangle = \langle
-w_{0,J}(w\cdot 0),\delta^{\vee}\rangle = (l-1)|J|$.  By the definition of $\la$,
we have $\langle\la,\delta^{\vee}\rangle \geq \langle x,\delta^{\vee}\rangle = (l - 1)|J|$.
The underlying idea of the proof is that $\langle\la,\delta^{\vee}\rangle$ and
$\langle x,\delta^{\vee}\rangle$ are in many cases equal or in general differ by only a
small amount.  We express $x$ in the form $x = \la - a + b + z$ where $a$, $b$, and
$z$ consist of (possibly empty) sums of distinct roots lying in $E[>0]$, $E[<0]$, and
$E[0]$, respectively. While $z$ can consist of arbitrarily many
elements from $E[0]$, $a$ and $b$ are constrained to consist of a
small number of elements from $E[>0]$ or $E[<0]$ depending upon how
close $\langle\la,\delta^{\vee}\rangle$ is to $(l-1)|J|$. In each
case, this can be explicitly described.

Given $\al \in \Pi\backslash J$, by direct computation, one can find bounds $A$ and $B$
(integers) such that for an arbitrary linear combination $z$ as
above, one has $A \leq \langle z,\al^{\vee}\rangle \leq B$.  In some
cases, these bounds will be sufficient to conclude that $\nu = 0$.
For such cases, the bounds will be given below.  When that is not
sufficient, using the expression of $x$ as $\la + a - b + z$, and considering the
possibilities for $a$ and $b$, one can then obtain bounds $A' \leq
\langle x,\al^{\vee} \rangle \leq B'$ for each $\al \in \Pi\backslash J$.
In those cases needed, the
bounds are listed in Appendix  \ref{tables2}. On the other hand,
$$
\langle x,\al^{\vee}\rangle = \langle - w_{0,J}(w\cdot 0) +
l\nu,\al^{\vee}\rangle = \langle -w_{0,J}(w\cdot
0),\al^{\vee}\rangle + l\langle\nu,\al^{\vee}\rangle.
$$
The value of $\langle -w_{0,J}(w\cdot 0),\al^{\vee}\rangle$ can also be computed, and
(as needed) is listed in Appendix \ref{tables2}.
Comparing this to the bounds on $\langle x,\al^{\vee}\rangle$, one often finds
that $\langle\nu,\al^{\vee}\rangle$ must be zero. When the bounds
allow for a nonzero $\nu$, MAGMA is used to verify that no solutions
exist (except in type $E_6$ when $l = 9$) by checking all
possibilities for $a$, $b$, and $z$. When needed for efficiency, the
known bounds can be used to limit the possible choices for $z$. The
basic details for each case are given below.


\vskip.25cm \noindent {  Type $E_6$:} 

\vskip.35cm \noindent $l = 11$: Here $\la = -w_{0,J}(w\cdot 0)$.
For $x = \la - a + b + z$ as above, we have
\begin{align*}
\langle -w_{0,J}(w\cdot 0),\delta^{\vee}\rangle &= \langle x,\delta^{\vee}\rangle\\
&= \langle\la,\delta^{\vee}\rangle - \langle a,\delta^{\vee}\rangle
	+ \langle b,\delta^{\vee}\rangle + \langle z,\delta^{\vee}\rangle\\
&= \langle -w_{0,J}(w\cdot 0),\delta^{\vee}\rangle - \langle a,\delta^{\vee}\rangle
	+ \langle b,\delta^{\vee}\rangle.
\end{align*}
By definition $\langle a,\delta^{\vee}\rangle \geq 0$ and $\langle b,\delta^{\vee}\rangle \leq 0$,
and so $a$ and $b$ must be empty.
Hence $x = \la + z$.  Since we are also assuming that
$x = -w_{0,J}(w\cdot 0) + l\nu$, it follows that $z$ would need to equal $l\nu$.
However, for $\al \in \Pi\backslash J$,
one finds that $-4 \leq \langle z,\al^{\vee}\rangle \leq 6$. Hence
$z$ cannot equal $11\nu$ unless $\nu = 0$.

\vskip.25cm \noindent $l = 9$: In this case, $\langle
-w_{0,J}(w\cdot 0),\delta^{\vee}\rangle$ and $\langle
\la,\delta^{\vee}\rangle$ differ by two. Also $E[>0] = E[1]$ and
$E[<0] = E[-1]$. So there are three ways we can express $x$ in the
form $\la - a + b + z$:
\begin{itemize}
\item[(1)] $x = \la - x_1 - x_2 + z$ where $x_1, x_2 \in E[1]$ ($x_1 \neq x_2$),
\item[(2)] $x = \la - x_1 + y_1 + z$ where $x_1 \in E[1]$ and $y_1 \in E[-1]$,
\item[(3)] $x = \la + y_1 + y_2 + z$ where $y_1, y_2 \in E[-1]$ ($y_1 \neq y_2$).
\end{itemize}
Using MAGMA, we compute all such weights $\la - a + b + z$ and check whether or not they can be
equal to $-w_{0,J}(w\cdot 0) + l\nu$ for a $J$-dominant weight $\nu$. In case (1),
we find precisely one pair of elements in $E[1]$ that works with $z$ being
an empty sum and $\nu$ being zero. That is, $-w_{0,J}(w\cdot 0)$ is a sum of eight
distinct roots in $E[1]$. In case (2), no sums over $E[0]$ work. In
case (3), we find however two cases where $\la$ plus two elements of
$E[-1]$ and eight elements of $E[0]$ equals $-w_{0,J}(w\cdot 0) +
9\nu$ for a $J$-dominant weight $\nu$. In one case $\nu = \varpi_1$
and in the other $\nu = \varpi_6$. The reader should be aware that
these weights give rise to the exceptions stated in Theorem
\ref{MainThm}(b)(iii).

\vskip.25cm \noindent $l = 7$: Here $\langle -w_{0,J}(w\cdot
0),\delta^{\vee}\rangle = \langle\la,\delta^{\vee}\rangle$. Arguing as in the
$l = 11$ case, it follows that
$x$ must be of the form $x = \la + z$. For $\al \in \Pi\backslash
J$, one finds the bounds on $\langle x,\al^{\vee}\rangle$ listed in Appendix  \ref{tables2}.
For each $\al$, in order to have $x = -w_{0,J}(w\cdot 0) + 7\nu$, we must
have $\langle\nu,\al^{\vee}\rangle = 0$.

\vskip.25cm \noindent $l = 5$: In this case, $\langle
-w_{0,J}(w\cdot 0),\delta^{\vee}\rangle$ and $\langle
\la,\delta^{\vee}\rangle$ differ by one. So there are two ways we
can express $x$ in the form $\la - a + b + z$:
\begin{itemize}
\item[(1)] $x = \la - x_1 + z$ where $x_1 \in E[1]$,
\item[(2)] $x = \la + y_1 + z$ where $y_1 \in E[-1]$.
\end{itemize}
Using MAGMA, we compute all possibilities and find that only one
such expression gives $x = -w_{0,J}(w\cdot 0) + l\nu$.  Specifically,
this occurs in case (1) with $z$ being the sum of a pair of
elements from $E[0]$. Again, $x = -w_{0,J}(w\cdot 0)$, i.e., $\nu =
0$.


\vskip.25cm \noindent {  Type $E_7$:} 

\vskip.35cm \noindent $l = 17$: Here $\la = -w_{0,J}(w\cdot 0)$.
Hence, $x = \la + z$, and $z$ would need to equal $l\nu$. However,
for $\al \in \Pi\backslash J$, one finds that $-9 \leq \langle
z,\al^{\vee}\rangle \leq 10$. Hence, $z$ cannot equal $17\nu$ unless
$\nu = 0$.

\vskip.25cm \noindent $l = 15$: In this case, $\langle
-w_{0,J}(w\cdot 0),\delta^{\vee}\rangle$ and $\langle
\la,\delta^{\vee}\rangle$ differ by two. Also $E[>0] = E[1]$ and
$E[<0] = E[-1]$.  As in the type $E_6$, $l = 9$ case, there are three ways we
can express $x$ in the form $\la - a + b + z$. For $\al \in \Pi\backslash J$, one finds the
bounds on $\langle x,\al^{\vee}\rangle$ listed in Appendix  \ref{tables2}.
For each $\al$, in order to
have $x = -w_{0,J}(w\cdot 0) + 15\nu$, we must have
$\langle\nu,\al^{\vee}\rangle = 0$.

\vskip.25cm \noindent $l = 13$: Here $\la = -w_{0,J}(w\cdot 0)$.
Hence $x = \la + z$, and $z$ would need to equal $l\nu$. However,
for $\al \in \Pi\backslash J$, one finds that $-6 \leq \langle
z,\al^{\vee}\rangle \leq 8$. Hence $z$ cannot equal $13\nu$ unless
$\nu = 0$.

\vskip.25cm \noindent $l = 11$: Here $\la = -w_{0,J}(w\cdot 0)$.
Hence $x = \la + z$, and $z$ would need to equal $l\nu$. However,
for $\al \in \Pi\backslash J$, one finds that $-4 \leq \langle
z,\al^{\vee}\rangle \leq 6$. Hence $z$ cannot equal $11\nu$ unless
$\nu = 0$.

\vskip.25cm \noindent $l = 9$: In this case, $\langle
-w_{0,J}(w\cdot 0),\delta^{\vee}\rangle$ and $\langle
\la,\delta^{\vee}\rangle$ differ by two. Also
$E[>0] = E[1] \cup E[2] \cup E[3]$ and $E[<0] = E[-1]\cup E[-2] \cup E[-3]$.
If we express $x$ in the form $\la - a + b + z$, note that $a$ cannot involve
any terms from $E[3]$ and $b$ cannot involve any terms from $E[-3]$.
So there are five ways we can express $x$ in the form
$\la - a + b + z$:
\begin{itemize}
\item[(1)] $x = \la - x_1 + z$ where $x_1 \in E[2]$,
\item[(2)] $x = \la - x_1 - x_2 + z$ where $x_1, x_2 \in E[1]$ ($x_1 \neq x_2$),
\item[(3)] $x = \la - x_1 + y_1 + z$ where $x_1 \in E[1]$ and $y_1 \in E[-1]$,
\item[(4)] $x = \la + y_1 + y_2 + z$ where $y_1, y_2 \in E[-1]$ ($y_1 \neq y_2$),
\item[(5)] $x = \la + y_1 + z$ where $y_2 \in E[-2]$.
\end{itemize}
For $\al \in \Pi\backslash J$, one finds the bounds on $\langle x,\al^{\vee}\rangle$ listed in
Appendix  \ref{tables2}. For each $\al$, in order to have $x =
-w_{0,J}(w\cdot 0) + 9\nu$, we must have
$\langle\nu,\al^{\vee}\rangle = 0$.

\vskip.25cm \noindent $l = 7$: In this case, $\langle
-w_{0,J}(w\cdot 0),\delta^{\vee}\rangle$ and $\langle
\la,\delta^{\vee}\rangle$ differ by three. Also
$E[>0] = E[1] \cup E[2] \cup E[3]$ and $E[<0] = E[-1]\cup E[-2] \cup E[-3]$.
So there are ten ways we can express $x$ in the form
$\la - a + b + z$. We leave the details to the interested reader.
In this case, for $\al \in \Pi\backslash J$, the
bounds on $\langle x,\al^{\vee}\rangle$ allow for the possibility
that $\langle\nu,\al^{\vee}\rangle \neq 0$. Using MAGMA, we compute
all possibilities and find that only one
such expression gives $x = -w_{0,J}(w\cdot 0) + l\nu$.  Specifically,
this occurs for an $x$ of the form $x = \la - a$ where $a$ is a sum of three
distinct roots in $E[1]$ and $b$ and $z$ are empty.
Again, $x = -w_{0,J}(w\cdot 0)$, i.e., $\nu = 0$.

\vskip.25cm \noindent $l = 5$: In this case, $\langle
-w_{0,J}(w\cdot 0),\delta^{\vee}\rangle$ and $\langle
\la,\delta^{\vee}\rangle$ differ by three. Also
$E[>0] = E[1] \cup E[2] \cup E[3]$ and $E[<0] = E[-1]\cup E[-2] \cup E[-3]$.
As in the $l = 7$ case,
there are ten ways we can express $x$ in the form $\la - a + b + z$.
Here $\Pi\backslash J = \{\al_4\}$ and one finds that $-10 \leq
\langle x,\al_4^{\vee}\rangle \leq -6$. Since $\langle
-w_{0,J}(w\cdot 0),\al_4^{\vee}\rangle = -9$, in order to have $x =
-w_{0,J}(w\cdot 0) + 5\nu$, we must have
$\langle\nu,\al_4^{\vee}\rangle = 0$.


\vskip.25cm \noindent {  Type $E_8$:} 

\vskip.35cm \noindent $l = 29$: Here $\la = -w_{0,J}(w\cdot 0)$.
Hence $x = \la + z$, and $z$ would need to equal $l\nu$. However,
for $\al \in \Pi\backslash J$, one finds that $-16 \leq \langle
z,\al^{\vee}\rangle \leq 18$. Hence $z$ cannot equal $29\nu$ unless
$\nu = 0$.

\vskip.25cm \noindent $l = 27$: In this case, $\langle
-w_{0,J}(w\cdot 0),\delta^{\vee}\rangle$ and $\langle
\la,\delta^{\vee}\rangle$ differ by two. Also $E[>0] = E[1]$ and $E[<0] = E[-1]$.
As in the type $E_6$, $l = 9$ case, there are three ways we can
express $x$ in the form $\la - a + b + z$. For $\al \in
\Pi\backslash J$ one finds the bounds on $\langle x,\al^{\vee}\rangle$ listed in Appendix
\ref{tables2}. In order to have $x = -w_{0,J}(w\cdot 0) + 27\nu$, we
must have $\langle\nu,\al^{\vee}\rangle = 0$.

\vskip.25cm \noindent $l = 25$: In this case, $\langle
-w_{0,J}(w\cdot 0),\delta^{\vee}\rangle$ and $\langle
\la,\delta^{\vee}\rangle$ differ by four. Also $E[>0] = E[1]$ and $E[<0] = E[-1]$.
So there are five ways we can express $x$ in the form $\la - a + b + z$:
\begin{itemize}
\item[(1)] $x = \la - x_1 - x_2 - x_3 - x_4 + z$ where $x_i \in E[1]$ (distinct),
\item[(2)] $x = \la - x_1 - x_2 - x_3 + y_1 + z$ where $x_i \in E(1)$ (distinct) and
$y_1 \in E[-1]$,
\item[(3)] $x = \la - x_1 - x_2 + y_1 + y_2 + z$ where $x_i \in E[1]$ (distinct) and
$y_i \in E[-1]$ (distinct),
\item[(4)] $x = \la - x_1 + y_1 + y_2 + y_3 + z$ where $x_1 \in E[1]$ and $y_i \in E[-1]$
(distinct),
\item[(5)] $x = \la + y_1 + y_2 + y_3 + y_4 + z$ where $y_i \in E[-1]$ (distinct).
\end{itemize}
For $\al \in \Pi\backslash J$ one finds the bounds on $\langle x,\al^{\vee}\rangle$
listed in Appendix \ref{tables2}. In order to have $x = -w_{0,J}(w\cdot 0) + 25\nu$, we
must have $\langle\nu,\al^{\vee}\rangle = 0$.

\vskip.25cm \noindent $l = 23$: In this case, $\langle
-w_{0,J}(w\cdot 0),\delta^{\vee}\rangle = \langle
\la,\delta^{\vee}\rangle$. Hence $x = \la + z$. For $\al \in
\Pi\backslash J$ one finds the bounds on $\langle x,\al^{\vee}\rangle$ listed in Appendix
\ref{tables2}. In order to have $x = -w_{0,J}(w\cdot 0) + 23\nu$, we
must have $\langle\nu,\al^{\vee}\rangle = 0$.

\vskip.25cm \noindent $l = 21$: In this case, $\langle
-w_{0,J}(w\cdot 0),\delta^{\vee}\rangle$ and $\langle
\la,\delta^{\vee}\rangle$ differ by four. Also $E[>0] = E[1]\cup E[2]$ and
$E[<0] = E[-1]\cup E[-2]$. So there are fourteen ways we can express $x$ in the form
$\la - a + b + z$. We leave the details to the interested reader.
For $\al \in \Pi\backslash J$ one finds the bounds on $\langle x,\al^{\vee}\rangle$ listed in
Appendix \ref{tables2}. In order to have $x = -w_{0,J}(w\cdot 0) +
21\nu$, we must have $\langle\nu,\al^{\vee}\rangle = 0$.

\vskip.25cm \noindent $l = 19$: In this case, $\langle
-w_{0,J}(w\cdot 0),\delta^{\vee}\rangle = \langle
\la,\delta^{\vee}\rangle$. Hence $x = \la + z$. For $\al \in
\Pi\backslash J$ one finds the bounds on $\langle x,\al^{\vee}\rangle$ listed in Appendix
\ref{tables2}. In order to have $x = -w_{0,J}(w\cdot 0) + 19\nu$, we
must have $\langle\nu,\al^{\vee}\rangle = 0$.

\vskip.25cm \noindent $l = 17$: In this case, $\langle
-w_{0,J}(w\cdot 0),\delta^{\vee}\rangle = \langle
\la,\delta^{\vee}\rangle$. Hence $x = \la + z$. For $\al \in
\Pi\backslash J$ one finds the bounds on $\langle x,\al^{\vee}\rangle$ listed in Appendix
\ref{tables2}. In order to have $x = -w_{0,J}(w\cdot 0) + 17\nu$, we
must have $\langle\nu,\al^{\vee}\rangle = 0$.

\vskip.25cm \noindent $l = 15$: In this case, $\langle
-w_{0,J}(w\cdot 0),\delta^{\vee}\rangle$ and $\langle
\la,\delta^{\vee}\rangle$ differ by eight. Also,
$E[>0] = E[1] \cup E[2] \cup E[3]$ and $E[<0] = E[-1]\cup E[-2] \cup E[-3]$.
As a result, there are numerous ways we can express
$x$ in the form $\la - a + b + z$. We leave the details to the interested reader.
Further, for $\al \in
\Pi\backslash J$, the bounds on $\langle x,\al^{\vee}\rangle$ allow
for the possibility that $\langle\nu,\al^{\vee}\rangle \neq 0$. By
analyzing the constraints placed on $a$, $b$, and $z$ (to afford a
nonzero $\nu$), the resulting possibilities are all computed with
MAGMA, and one finds that the only $x$ that works is precisely $-w_{0,J}(w\cdot 0)$,
i.e., $\nu = 0$.

\vskip.25cm \noindent $l = 13$: In this case, $\langle
-w_{0,J}(w\cdot 0),\delta^{\vee}\rangle$ and $\langle
\la,\delta^{\vee}\rangle$ differ by three. Also
$E[>0] = E[1] \cup E[2] \cup E[3]$ and $E[<0] = E[-1]\cup E[-2] \cup E[-3]$.
As in the type $E_7$, $l = 7$ case, there are ten ways we can
express $x$ in the form $\la - a + b + z$. For $\al \in
\Pi\backslash J$ one finds the bounds on $\langle x,\al^{\vee}\rangle$ listed in Appendix
\ref{tables2}. In order to have $x = -w_{0,J}(w\cdot 0) + 13\nu$, we
must have $\langle\nu,\al^{\vee}\rangle = 0$.

\vskip.25cm \noindent $l = 11$: In this case, $\langle
-w_{0,J}(w\cdot 0),\delta^{\vee}\rangle$ and $\langle
\la,\delta^{\vee}\rangle$ differ by two. Also
$E[>0] = E[1] \cup E[2] \cup E[3]$ and $E[<0] = E[-1]\cup E[-2] \cup E[-3]$.
As in the type $E_7$, $l = 9$ case, there are five ways we can
express $x$ in the form $\la - a + b + z$.
For $\al \in \Pi\backslash J$ one finds the bounds on $\langle x,\al^{\vee}\rangle$
listed in Appendix \ref{tables2}. In order to have $x = -w_{0,J}(w\cdot 0) + 11\nu$, we
must have $\langle\nu,\al^{\vee}\rangle = 0$.

\vskip.25cm \noindent $l = 9$: In this case, $\langle
-w_{0,J}(w\cdot 0),\delta^{\vee}\rangle$ and $\langle
\la,\delta^{\vee}\rangle$ differ by three. Also,
$E[>0] = E[1] \cup E[2] \cup E[3]$ and $E[<0] = E[-1]\cup E[-2] \cup E[-3]$.
As in the type $E_7$, $l = 7$ case, there are ten ways we can
express $x$ in the form $\la - a + b + z$. For $\al \in
\Pi\backslash J$, the bounds on $\langle x,\al^{\vee}\rangle$ allow
for the possibility that $\nu \neq 0$. To reduce the number of
possibilities that need to be checked, observe that there is a
sizable difference between $\langle \lambda,\al_1^{\vee}\rangle =
18$ and $\langle -w_{0,J}(w\cdot 0),\al_1^{\vee}\rangle = 8$. Since
$\langle\nu,\al_1^{\vee}\rangle = 0$, we must have $\langle -a + b +
z,\al_1^{\vee}\rangle = -10$. We find that $\langle -a +
b,\al_1^{\vee}\rangle \geq -3$ and $\langle z,\al_1^{\vee}\rangle
\geq -8$. This reduces the possibilities to a number manageable for
MAGMA to compute all the possible cases. The only $x$ that works is
precisely $-w_{0,J}(w\cdot 0)$, i.e., $\nu = 0$.

\vskip.25cm \noindent $l = 7$: In this case, $\langle
-w_{0,J}(w\cdot 0),\delta^{\vee}\rangle$ and $\langle
\la,\delta^{\vee}\rangle$ differ by $8$ which is larger than $7$.
Here we may not immediately conclude that
$\langle\nu,\al^{\vee}\rangle = 0$ for all $\al \in J$. However,
since $\langle\nu,\al^{\vee}\rangle \geq 0$ for all $\al \in J$,
this must be true for all but possibly one $\al$ for which one could
have $\langle\nu,\al^{\vee}\rangle = 1$.

Suppose the latter case holds.  Then we would have $\langle
x,\delta^{\vee}\rangle = 49$ whereas $\langle
\la,\delta^{\vee}\rangle = 50$. So there would be only two ways in
which $x$ could occur ($x = \la - x_1 + z$ where $x_1 \in E[1]$ or
$x = \la + y_1 + z$ where $y_1 \in E[-1]$). Here $\Pi\backslash J =
\{\al_4\}$ and one finds that $-21 \leq \langle
x,\al_4^{\vee}\rangle \leq -19$. On the other hand, $\langle
-w_{0,J}(w\cdot 0),\al_4^{\vee}\rangle = -17$. Since
$\langle\nu,\al_4^{\vee}\rangle$ is an integer, these bounds show
that we cannot have $x = -w_{0,J}(w\cdot 0) + 7\nu$ for any $\nu$,
contradicting our assumption. Therefore, 
$\langle\nu,\al^{\vee}\rangle = 0$ for all $\al \in J$.

Now, our standard argument can be used to show that
$\langle\nu,\al_4^{\vee}\rangle = 0$. Here
$E[>0] = E[1] \cup E[2] \cup E[3]$ and $E[<0] = E[-1]\cup E[-2] \cup E[-3]$.
Since $\langle -w_{0,J}(w\cdot
0),\delta^{\vee}\rangle$ and $\langle \la,\delta^{\vee}\rangle$
differ by $8$ there are numerous ways we can express $x$ in the form
$\la - a + b + z$. One finds that $-21 \leq \langle
x,\al_4^{\vee}\rangle \leq -12$. Since $\langle -w_{0,J}(w\cdot
0),\al_4^{\vee}\rangle = -17$, In order to have $x = -w_{0,J}(w\cdot
0) + 7\nu$, we must have $\langle\nu,\al_4^{\vee}\rangle = 0$.


\vskip.25cm \noindent {  Type $F_4$:} 

\vskip.35cm \noindent $l = 11$: Here $\la = -w_{0,J}(w\cdot 0)$.
Hence $x = \la + z$, and $z$ would need to equal $l\nu$. However,
for $\al \in \Pi\backslash J$, one finds that $-4 \leq \langle
z,\al^{\vee}\rangle \leq 5$. Hence, $z$ cannot equal $11\nu$ unless
$\nu = 0$.

\vskip.25cm \noindent $l = 9$: In this case, $\langle
-w_{0,J}(w\cdot 0),\delta^{\vee}\rangle$ and $\langle
\la,\delta^{\vee}\rangle$ differ by two. Also
$E[>0] = E[1] \cup E[2]$ and $E[<0] = E[-1]\cup E[-2]$.
As in the type $E_7$, $l = 9$ case, there are five ways we can
express $x$ in the form $\la - a + b + z$. For $\al \in
\Pi\backslash J$, the bounds on $\langle x,\al^{\vee}\rangle$ allow
for the possibility that $\langle\nu,\al^{\vee}\rangle \neq 0$.
Using MAGMA, we compute all possibilities and see that the only $x$ that works
is precisely $-w_{0,J}(w\cdot 0)$, i.e., $\nu = 0$.

\vskip.25cm \noindent $l = 7$: In this case, $\langle
-w_{0,J}(w\cdot 0),\delta^{\vee}\rangle$ and $\langle
\la,\delta^{\vee}\rangle$ differ by one. Also,
$E[>0] = E[1] \cup E[2] \cup E[3]$ and $E[<0] = E[-1]\cup E[-2] \cup E[-3]$.
If we express $x$ in the form $\la - a + b + z$, then $a$ can contain
terms only from $E[1]$ and $b$ can contain terms only from $E[-1]$.
As in the type $E_6$, $l = 5$ case, there are two ways we can
express $x$ in the form $\la - a + b + z$. For $\al \in
\Pi\backslash J$, the bounds on $\langle x,\al^{\vee}\rangle$ allow
for the possibility that $\langle\nu,\al^{\vee}\rangle \neq 0$.
Using MAGMA, we compute all possibilities and see that the only $x$ that works
is precisely $-w_{0,J}(w\cdot 0)$, i.e., $\nu = 0$.

\vskip.25cm \noindent $l = 5$: Here $\la = -w_{0,J}(w\cdot 0)$.
Hence, $x = \la + z$, and $z$ would need to equal $l\nu$. Here
$\Pi\backslash J = \{\al_2\}$, and one finds that $0 \leq \langle z,\al_2^{\vee}\rangle \leq 1$.
Hence $z$ cannot equal $5\nu$ unless $\nu = 0$.


\vskip.25cm \noindent {  Type $G_2$:} 

\vskip.35cm \noindent $l = 5$: Here $\la = -w_{0,J}(w\cdot 0)$.
Hence $x = \la + z$, and $z$ would need to equal $l\nu$.
Here
$E[0] = \{3\al_1 + 2\al_2 = \varpi_2\}$.  Hence, the only non-empty
option for $z$ is $z = \varpi_2$ which is not equal to $5\nu$ for
any $\nu$.  Hence, $x = -w_{0,J}(w\cdot 0)$, i.e., $\nu = 0$.

\chapter{The Cohomology Algebra $\opH^{\bullet}(u_{\zeta}({\mathfrak g}),{\mathbb C})$}
\label{cohomologysection}
\renewcommand{\thesection}{\thechapter.\arabic{section}}

The identification of $\opH^{\bullet}(u_{\zeta}({\mathfrak
g}),{\mathbb C})$ for small $l$ will proceed in several steps. The
computation is motivated by the analogous problem for the restricted Lie algebra
${\mathfrak g}_F$ over the algebraically closed field $F$ of positive characteristic $p$.
In that case, the support
variety $\mathcal{V}_{\gl_F}(F)$ of  the trivial module $F$ is homeomorphic (as a topological
space) to the the
restricted nullcone $\ncf=\{x\in{\gl}_F\,|\,x^{[p]}=0\}$. By \cite{CLNP}, the variety
$\ncf$ identifies with the closed subset $G\cdot \uj\subset \gl_F$, for an appropriate
subset $J \subset
\Pi$. When $p \geq h$,  $J = \varnothing$ and $\uj =
\ul$.

To attack the computation of the cohomology of $u_{\zeta}({\mathfrak g})$, we
consider the parabolic subgroup $P_J$ associated to this subset $J
\subset \Pi$ with $w(\Phi_{0}^+)=\Phi_{w\cdot 0}^+=\Phi_{J}^+$. Then we proceed as follows:

\begin{itemize}

\item In Section 5.1, the cohomology of $u_{\zeta}({\mathfrak g})$ is shown to be related to
that of $u_{\zeta}({\mathfrak p}_J)$.

\item In Section 5.2, the cohomology  of $u_{\zeta}({\mathfrak p}_J)$ is  shown to be related 
to the cohomology of $u_{\zeta}({\mathfrak u}_J)$.

\item Sections 5.3--5.4 present the key computation
for $\opH^{\bullet}(u_{\zeta}({\mathfrak u}_J),{\mathbb C})$.

\item Sections 5.5--5.7 complete the computation of
$\opH^{\bullet}(u_{\zeta}({\mathfrak g}),{\mathbb C}).$

\end{itemize}

Throughout this chapter, fix $l$ satisfying Assumption \ref{assumption}. Fix $w\in W$ and $J\subseteq \Pi$
so that $w(\Phi^+_0)=\Phi_J^+$. 

While the goal of
this paper is to make cohomological computations in the case that $l
< h$, the arguments are also valid for $l \geq h$. In
that case, we would have $\Phi_0 = \varnothing$, $J = \varnothing$,
and $w = \Id$. Then $P_J = B$, $U_J = U$, $L_J = T$, ${\mathfrak
p}_J = {\mathfrak b}$, ${\mathfrak u}_J = {\mathfrak u}$,
${\mathfrak l}_J = {\mathfrak t}$, and the module $M =
(\ind_{U_\zeta({\mathfrak b})}^{U_\zeta({\mathfrak p}_J)} w\cdot
0)^* = {\mathbb C}$. As such, our results recapture the  calculation for
 $l> h$ given by Ginzburg and Kumar  in \cite{GK}.


\section{Spectral sequences, I}\label{firstsectioncohocal}\renewcommand{\thetheorem}{\thesection.\arabic{theorem}}
\renewcommand{\theequation}{\thesection.\arabic{equation}}\setcounter{equation}{0}
\setcounter{theorem}{0}
 By Lemma \ref{Steinbergwtslemma},
$w\cdot 0$ is $J$-dominant, i.e., $\langle w\cdot 0,\alpha^\vee\rangle$ is a non-negative integer for all
$\alpha\in J$. Using the factorization
$$\ind_{U_\zeta({\mathfrak b})}^{U_\zeta({\mathfrak g})}=\ind_{U_\zeta({\mathfrak p}_J)}^{U_\zeta({\mathfrak g})}\circ
\ind_{U_\zeta({\mathfrak b})}^{U_\zeta({\mathfrak p}_J)}$$
of functors, 
there is a Grothendieck spectral sequence
  \begin{equation}\label{firsteqncoho}
E_{2}^{i,j}=R^{i}\text{ind}_{U_{\zeta}({\mathfrak
p}_{J})}^{U_{\zeta}({\mathfrak g})}
R^{j}\text{ind}_{U_{\zeta}({\mathfrak b})}^{U_{\zeta}({\mathfrak
p}_{J})} w\cdot0 \Rightarrow R^{i+j}\text{ind}_{U_{\zeta}({\mathfrak
b})}^{U_{\zeta}({\mathfrak g})}w\cdot 0.
\end{equation}
However, since  $w\cdot 0$ is $J$-dominant,
$R^{j}\text{ind}_{U_{\zeta}({\mathfrak b})}^{U_{\zeta}({\mathfrak
p}_{J})} w\cdot 0=0$ for $j>0$. Consequently, this spectral sequence collapses and so
yields, by \cite[Cor. 3.8]{A},
\begin{equation}\label{firstcohoeqn2}
R^{i}\text{ind}_{U_{\zeta}({\mathfrak p}_{J})}^{U_{\zeta}({\mathfrak
g})} \left(\text{ind}_{U_{\zeta}({\mathfrak b})}^{U_{\zeta}({\mathfrak
p}_{J})} w\cdot 0\right)= R^{i}\text{ind}_{U_{\zeta}({\mathfrak
b})}^{U_{\zeta}({\mathfrak g})} w\cdot 0=
\begin{cases}
{\mathbb C} & \text{ if } i = \ell(w)\\
0 & \text{ if } i\neq \ell(w).
\end{cases}
\end{equation}

The following spectral sequence provides a connection between the
cohomology of $u_{\zeta}({\mathfrak p}_J)$ and that of
$u_{\zeta}({\mathfrak g})$.

\begin{theorem}\label{firstsectioncohocalthm}
 Let $w\in W$ such that $w(\Phi_{0}^+)=\Phi_{J}^+$ where
$J\subseteq \Pi$. There exists a first quadrant spectral sequence of rational $G$-modules
$$E_{2}^{i,j}=R^{i}\ind_{{P}_{J}}^{G}
\operatorname{H}^j\left(u_{\zeta}({\mathfrak p}_{J}),
\operatorname{ind}_{U_{\zeta}({\mathfrak b})}^{U_{\zeta}({\mathfrak
p}_{J})} w\cdot 0\right) \Rightarrow
\operatorname{H}^{i+j-\ell(w)}(u_{\zeta}({\mathfrak g}),{\mathbb
C}).$$
\end{theorem}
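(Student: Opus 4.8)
The strategy is to combine three ingredients: the LHS-type spectral sequence for the normal inclusion $u_\zeta(\mathfrak g)\unlhd U_\zeta(\mathfrak g)$, the analogous sequence for $u_\zeta(\mathfrak p_J)\unlhd U_\zeta(\mathfrak p_J)$, and the induction calculation \eqref{firstcohoeqn2} already available from the collapse of the Grothendieck spectral sequence. First I would set up the Hochschild--Serre spectral sequence of Lemma~\ref{ABG} for the pair $u_\zeta(\mathfrak g)\subseteq U_\zeta(\mathfrak g)$ with coefficients in the trivial module $\mathbb C$: since $U_\zeta(\mathfrak g)//u_\zeta(\mathfrak g)\cong\BU(\mathfrak g)$ and the category of locally finite $\BU(\mathfrak g)$-modules is the category of rational $G$-modules, this reads
$$
R^i\ind_B^G\,\opH^j(u_\zeta(\mathfrak g),\mathbb C)\ \Longrightarrow\ \opH^{i+j}\bigl(U_\zeta(\mathfrak g),\mathbb C\bigr),
$$
where one uses that the $\BU(\mathfrak g)$-cohomology $\opH^i(\BU(\mathfrak g),-)$ is computed by $R^i\ind_B^G$ on rational $G$-modules (Kempf vanishing / Cline--Parshall--Scott), and flatness of $U_\zeta(\mathfrak g)$ over $u_\zeta(\mathfrak g)$ holds by the PBW/triangular decomposition. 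Actually, for the statement as given, the cleaner route is not to pass all the way to $\opH^\bullet(U_\zeta,\mathbb C)$ but to interpret the $G$-cohomology of a $G$-module $V$ arising as $\opH^j(u_\zeta(\mathfrak g),\mathbb C)$ directly via derived induction $R^\bullet\ind_B^G V$.

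\textbf{Main step.} The key is to relate $\opH^\bullet(u_\zeta(\mathfrak g),\mathbb C)$ to $\opH^\bullet(u_\zeta(\mathfrak p_J),-)$ of an induced coefficient module, exploiting that $w\cdot 0$ is $J$-dominant (Lemma~\ref{Steinbergwtslemma}). I would apply the Hochschild--Serre spectral sequence for $u_\zeta(\mathfrak p_J)\unlhd U_\zeta(\mathfrak p_J)$ with coefficients in the $U_\zeta(\mathfrak p_J)$-module $M':=\ind_{U_\zeta(\mathfrak b)}^{U_\zeta(\mathfrak p_J)} w\cdot 0$ (using Corollary~\ref{corA}, which gives $U_\zeta(\mathfrak p_J)//u_\zeta(\mathfrak p_J)\cong\BU(\mathfrak l_J)$, hence a rational $L_J$-action on the cohomology). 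This produces a spectral sequence of rational $P_J$-modules whose abutment is $\opH^\bullet(U_\zeta(\mathfrak p_J),M')$, and then I would apply $R^\bullet\ind_{P_J}^G$ and use transitivity of induction $\ind_{P_J}^G\circ\ind_B^{P_J}=\ind_B^G$ together with the collapse \eqref{firstcohoeqn2}: because $w\cdot 0$ is $J$-dominant, the inner derived induction $R^j\ind_{U_\zeta(\mathfrak b)}^{U_\zeta(\mathfrak p_J)}w\cdot 0$ vanishes for $j>0$ and the composite $R^i\ind_{P_J}^G\bigl(\ind_B^{P_J}w\cdot 0\bigr)=R^i\ind_B^G w\cdot 0$ equals $\mathbb C$ concentrated in degree $\ell(w)$ and $0$ otherwise. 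Splicing these, the cohomology of $u_\zeta(\mathfrak g)$ with trivial coefficients gets identified, up to a shift by $\ell(w)$, with $R^\bullet\ind_{P_J}^G$ applied to $\opH^\bullet(u_\zeta(\mathfrak p_J),\ind_B^{P_J}w\cdot 0)$, which is exactly the asserted $E_2$-page
$$
E_2^{i,j}=R^i\ind_{P_J}^G\,\opH^j\bigl(u_\zeta(\mathfrak p_J),\ind_{U_\zeta(\mathfrak b)}^{U_\zeta(\mathfrak p_J)}w\cdot 0\bigr)\ \Longrightarrow\ \opH^{i+j-\ell(w)}(u_\zeta(\mathfrak g),\mathbb C).
$$

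\textbf{Assembling the argument.} Concretely I would compose two spectral sequences and then use a collapse. Start from the Hochschild--Serre sequence for $u_\zeta(\mathfrak g)\unlhd U_\zeta(\mathfrak g)$, which I prefer to phrase as a Grothendieck spectral sequence for the composite of functors ``$u_\zeta(\mathfrak g)$-invariants'' followed by ``$\ind_B^G$-derived'' once one identifies $\opH^\bullet(U_\zeta(\mathfrak g),-)$ with $\opH^\bullet(U_\zeta(\mathfrak g)//u_\zeta(\mathfrak g),\opH^\bullet(u_\zeta(\mathfrak g),-))$ and $\BU(\mathfrak g)$-cohomology with rational $G$-cohomology $=R^\bullet\ind_B^G$. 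But $\mathbb C=R^\bullet\ind_{U_\zeta(\mathfrak g)}^{?}$ is not quite what we want; rather I would observe $\opH^\bullet(u_\zeta(\mathfrak g),\mathbb C)=\Ext^\bullet_{U_\zeta(\mathfrak g)}(U_\zeta(\mathfrak g)//u_\zeta(\mathfrak g),\mathbb C)$ and that, as a $U_\zeta(\mathfrak g)$-module, $\mathbb C=\ind_{U_\zeta(\mathfrak b)}^{U_\zeta(\mathfrak g)}(0)$ sits inside a Grothendieck spectral sequence built from $\ind_{U_\zeta(\mathfrak b)}^{U_\zeta(\mathfrak g)}=\ind_{U_\zeta(\mathfrak p_J)}^{U_\zeta(\mathfrak g)}\circ\ind_{U_\zeta(\mathfrak b)}^{U_\zeta(\mathfrak p_J)}$ and then relate $\Ext^\bullet_{U_\zeta(\mathfrak g)}(U_\zeta(\mathfrak g)//u_\zeta(\mathfrak g),-)$ to $\Ext^\bullet_{U_\zeta(\mathfrak p_J)}(U_\zeta(\mathfrak p_J)//u_\zeta(\mathfrak p_J),-)$ via the (derived) induction functor $\ind_{P_J}^G$. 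Throughout, $G$-equivariance of all maps is automatic because every functor in sight ($\ind$, $\Ext$ against a normal Hopf subalgebra, the bar resolution with its $\adw$-action) is functorial in the module argument and the relevant actions are the rational ones supplied by Corollary~\ref{corA} and Section~\ref{finitedimofcohogroups}. The main obstacle I anticipate is purely bookkeeping: keeping track of the degree shift by $\ell(w)$ and checking that one genuinely gets a \emph{first-quadrant} spectral sequence (which requires $R^i\ind_{P_J}^G$ to vanish for $i<0$ — automatic — and the coefficient cohomology $\opH^j(u_\zeta(\mathfrak p_J),-)$ to vanish for $j<0$ — also automatic) and that the convergence/edge maps are compatible with the collapse \eqref{firstcohoeqn2}. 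Verifying that the two spectral sequences can be spliced (i.e.\ that the abutment of the ``inner'' one feeds correctly into the ``outer'' one with the correct grading) is the step where care is needed; a clean way is to exhibit everything as a single Grothendieck spectral sequence for a composite of three functors and invoke \cite[Cor.~3.8]{A} for the vanishing that makes the middle functor disappear in positive degrees.
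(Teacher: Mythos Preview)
Your proposal has the right ingredients but is missing the organizing idea that makes the argument precise. The paper's proof (following \cite[I 6.12]{Jan1}) introduces two naturally isomorphic functors from $U_\zeta(\mathfrak p_J)$-mod to $G$-mod:
\[
\mathcal F_1(-)=\Hom_{u_\zeta(\mathfrak g)}\bigl(\mathbb C,\ind_{U_\zeta(\mathfrak p_J)}^{U_\zeta(\mathfrak g)}(-)\bigr),
\qquad
\mathcal F_2(-)=\ind_{P_J}^G\Hom_{u_\zeta(\mathfrak p_J)}(\mathbb C,-),
\]
using the Frobenius identification $\ind_{P_J}^G\cong\ind_{U_\zeta(\mathfrak p_J)//u_\zeta(\mathfrak p_J)}^{U_\zeta(\mathfrak g)//u_\zeta(\mathfrak g)}$. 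Each decomposition yields a Grothendieck spectral sequence with the \emph{same} abutment $(R^\bullet\mathcal F_i)(\ind_{U_\zeta(\mathfrak b)}^{U_\zeta(\mathfrak p_J)}w\cdot 0)$. The $\mathcal F_1$-sequence has $E_2^{i,j}=\opH^i(u_\zeta(\mathfrak g),R^j\ind_{U_\zeta(\mathfrak p_J)}^{U_\zeta(\mathfrak g)}\ind_{U_\zeta(\mathfrak b)}^{U_\zeta(\mathfrak p_J)}w\cdot 0)$ and collapses by \eqref{firstcohoeqn2}, identifying the abutment with $\opH^{\bullet-\ell(w)}(u_\zeta(\mathfrak g),\mathbb C)$. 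The $\mathcal F_2$-sequence is exactly the one in the statement. No ``splicing'' is needed: the two sequences converge to the same thing by construction.

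Your proposal never writes down this natural isomorphism, and instead talks about applying $R^\bullet\ind_{P_J}^G$ to an LHS sequence for $u_\zeta(\mathfrak p_J)\unlhd U_\zeta(\mathfrak p_J)$ --- but $\ind_{P_J}^G$ is not exact, so this is not a well-defined operation on spectral sequences. The abutment of that LHS sequence is $\opH^\bullet(U_\zeta(\mathfrak p_J),M')$, which is not what you want. Also, your remark that $\BU(\mathfrak g)$-cohomology is computed by $R^i\ind_B^G$ is off: over $\mathbb C$, rational $G$-cohomology of a rational $G$-module vanishes in positive degrees since $G$ is reductive, so that route leads nowhere. The decisive step is not HS for $u_\zeta\unlhd U_\zeta$ but the $\mathcal F_1\cong\mathcal F_2$ trick, which packages both the induction and the small-quantum-group invariants into a single functor that admits two different two-step factorizations.
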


\begin{proof} We follow the construction in \cite[I 6.12]{Jan1}.
Form the functors ${\mathcal F}_1, {\mathcal F}_2:U_\zeta({\mathfrak p}_J){\text{--mod}}
\to G$--mod defined by setting
$$
{\mathcal F}_{1}(-)=\text{Hom}_{u_{\zeta}({\mathfrak g})}\left({\mathbb
C},\text{ind}_{U_{\zeta} ({\mathfrak p}_{J})}^{U_{\zeta}({\mathfrak
g})}(-)\right) \text{ and } {\mathcal F}_{2}(-)=\text{ind}_{P_{J}}^{G}
\text{Hom}_{u_{\zeta}({\mathfrak p}_{J})}\left({\mathbb C},-\right).
$$
  The
reader should observe that we are implicitly using the Frobenius map
(cf. Section \ref{connectionswithalgebraicgroups}) and the following
identification of functors:
$$\text{ind}_{P_{J}}^{G}(-)\cong
\text{ind}_{U_{\zeta}({\mathfrak p}_{J})//u_{\zeta}({\mathfrak
p}_{J})}^{U_{\zeta}({\mathfrak g})//u_{\zeta}({\mathfrak g})}(-).
$$
The functors ${\mathcal F}_1$ and ${\mathcal F}_2$ are naturally
isomorphic. Therefore, there exist two Grothendieck spectral sequences:
\begin{equation}\label{firstcohoeqn3}\begin{cases}
{^\prime E}_{2}^{i,j}=\opH^{i}\left(u_{\zeta}({\mathfrak
g}),R^{j}\text{ind}_{U_{\zeta} ({\mathfrak
p}_{J})}^{U_{\zeta}({\mathfrak g})}(\text{ind}_{U_{\zeta}({\mathfrak
b})} ^{U_{\zeta}({\mathfrak p}_{J})}w\cdot 0)\right)\Rightarrow
(R^{i+j}{\mathcal F}_{1})(\text{ind}_{U_{\zeta}({\mathfrak b})}
^{U_{\zeta}({\mathfrak p}_{J})}w\cdot 0);\\
 E_{2}^{i,j}=R^{i}\text{ind}_{P_{J}}^{G}
\opH^j\left(u_{\zeta}({\mathfrak p}_{J}),
\text{ind}_{U_{\zeta}({\mathfrak b})} ^{U_{\zeta}({\mathfrak
p}_{J})}w\cdot 0)\right) \Rightarrow (R^{i+j}{\mathcal
F}_{2})(\text{ind}_{U_{\zeta}({\mathfrak b})} ^{U_{\zeta}({\mathfrak
p}_{J})}w\cdot 0)\end{cases}
\end{equation}
necessarily converging to the same abutment.

By (\ref{firstcohoeqn2}), the first spectral sequence ${^\prime E}^{i,j}_2$ collapses, leading to
an identification
$$(R^{\bullet+\ell(w)}{\mathcal F}_{1})(\text{ind}_{U_{\zeta}({\mathfrak b})}
^{U_{\zeta}({\mathfrak p}_{J})}w\cdot 0)\cong
\opH^{\bullet}(u_{\zeta}({\mathfrak
g}),R^{\ell(w)}\text{ind}_{U_{\zeta}({\mathfrak
b})}^{U_{\zeta}({\mathfrak g})} w\cdot 0)\cong
\opH^{\bullet}(u_{\zeta}({\mathfrak g}),{\mathbb C}).$$ Combining
this with the second spectral sequence $E_2^{i,j}$ proves
the theorem.
\end{proof}


\section{Spectral sequences, II}\label{secondsectioncohocal}\renewcommand{\thetheorem}{\thesection.\arabic{theorem}}
\renewcommand{\theequation}{\thesection.\arabic{equation}}\setcounter{equation}{0}
\setcounter{theorem}{0}

In this section, we reidentify the term $\operatorname{H}^j\left(u_{\zeta}({\mathfrak p}_{J}),
\operatorname{ind}_{U_{\zeta}({\mathfrak b})}^{U_{\zeta}({\mathfrak
p}_{J})} w\cdot 0\right)$ occurring in the spectral sequence 
 in
  Theorem \ref{firstsectioncohocalthm}. Using 
the Lyndon-Hochshild-Serre spectral sequence in Lemma \ref{ABG} for
$u_{\zeta}(\uj) \unlhd u_{\zeta}(\pj)$. Note that
$u_{\zeta}(\pj)//u_{\zeta}(\uj) \cong u_{\zeta}(\lj)$. Thus, there is a spectral 
sequence
\begin{equation*}\label{secondcohocaleqn1}
E_2^{i,j} = \opH^i(u_{\zeta}({\mathfrak
l}_{J}),\opH^j(u_{\zeta}({\mathfrak u}_{J}),
\ind_{U_{\zeta}({\mathfrak b})}^{U_{\zeta}({\mathfrak p}_{J})}w\cdot
0)) \Rightarrow \opH^{i + j}(u_{\zeta}({\mathfrak p}_{J}),
\ind_{U_{\zeta}({\mathfrak b})}^{U_{\zeta}({\mathfrak p}_{J})}w\cdot
0).
\end{equation*}
All of the cohomology groups involved admit an action of
$U_{\zeta}(\pj)$ induced from the $\ad_r$-action. Furthermore, the action
of the subalgebra $u_{\zeta}(\pj)$ is trivial. Hence, there is an
action of $\BU(\pj)$ (or equivalently $P_J$). Moreover, the spectral
sequence preserves this action.

Since $\ind_{U_{\zeta}({\mathfrak b})}^{U_{\zeta}({\mathfrak
p}_{J})}w\cdot 0$ is trivial as a $U_{\zeta}({\mathfrak
u}_{J})$-module, the left-hand side of the spectral sequence may be
reidentified as follows:

\begin{equation}\label{secondcohoeqn2}
E_2^{i,j} = \opH^i\left(u_{\zeta}({\mathfrak
l}_{J}),\opH^j(u_{\zeta}({\mathfrak u}_{J}),{\mathbb C})\otimes
\ind_{U_{\zeta}({\mathfrak b})}^{U_{\zeta}({\mathfrak p}_{J})}w\cdot
0\right) \Rightarrow \opH^{i + j}(u_{\zeta}({\mathfrak
p}_{J}),\ind_{U_{\zeta}({\mathfrak b})}^{U_{\zeta}({\mathfrak
p}_{J})}w\cdot 0).
\end{equation}

\begin{prop}\label{secondcohoprop}
 Let $w \in W$ and $J \subset \Pi$ be such that
$w(\Phi_0^+) = \Phi_J^+$. Then for all $j \geq 0$ there is an
isomorphism of $\BU(\pj)$-modules
$$
\opH^j\left(u_{\zeta}({\mathfrak p}_{J}),\ind_{U_{\zeta}({\mathfrak
b})}^{U_{\zeta}({\mathfrak p}_{J})}w\cdot 0\right) \cong
\Hom_{u_{\zeta}({\mathfrak l}_{J})}\left((\ind_{U_{\zeta}({\mathfrak
b})}^{U_{\zeta}({\mathfrak p}_{J})} w\cdot
0)^*,\opH^j(u_{\zeta}({\mathfrak u}_{J}),{\mathbb C})\right).
$$
\end{prop}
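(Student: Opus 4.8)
The starting point is the spectral sequence (\ref{secondcohoeqn2}), whose $E_2$-page is
$$E_2^{i,j} = \opH^i\left(u_{\zeta}({\mathfrak l}_{J}),\opH^j(u_{\zeta}({\mathfrak u}_{J}),{\mathbb C})\otimes \ind_{U_{\zeta}({\mathfrak b})}^{U_{\zeta}({\mathfrak p}_{J})}w\cdot 0\right).$$
The plan is to show this spectral sequence collapses on the $E_2$-page along the $i$-axis, i.e.\ $E_2^{i,j} = 0$ for $i > 0$, so that $\opH^j(u_{\zeta}({\mathfrak p}_J), \ind_{U_\zeta({\mathfrak b})}^{U_\zeta({\mathfrak p}_J)} w\cdot 0) \cong E_2^{0,j} = \opH^0(u_\zeta({\mathfrak l}_J), \opH^j(u_\zeta({\mathfrak u}_J),{\mathbb C}) \otimes \ind_{U_\zeta({\mathfrak b})}^{U_\zeta({\mathfrak p}_J)} w\cdot 0)$. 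The key input is that $w\cdot 0$ is a $J$-Steinberg weight by Lemma~\ref{Steinbergwtslemma}, so $N := \ind_{U_\zeta({\mathfrak b})}^{U_\zeta({\mathfrak p}_J)} w\cdot 0$ is, as noted in Section~\ref{inductionfunctors} and Section~\ref{Steinbergweightssec}, an irreducible $U_\zeta({\mathfrak l}_J)$-module which remains irreducible, projective, and injective upon restriction to $u_\zeta({\mathfrak l}_J)$.

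First I would establish the vanishing $\opH^i(u_\zeta({\mathfrak l}_J), V \otimes N) = 0$ for $i>0$ for any finite dimensional $u_\zeta({\mathfrak l}_J)$-module $V$ (here $V = \opH^j(u_\zeta({\mathfrak u}_J),{\mathbb C})$). Since $N$ is injective (equivalently projective, $u_\zeta({\mathfrak l}_J)$ being a finite-dimensional Hopf algebra, hence self-injective) as a $u_\zeta({\mathfrak l}_J)$-module, and the tensor product of an injective module with any finite-dimensional module over a Hopf algebra is again injective, $V \otimes N$ is injective over $u_\zeta({\mathfrak l}_J)$. Therefore $\opH^i(u_\zeta({\mathfrak l}_J), V\otimes N) = \Ext^i_{u_\zeta({\mathfrak l}_J)}({\mathbb C}, V\otimes N) = 0$ for $i>0$. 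This gives the collapse, whence $\opH^j(u_\zeta({\mathfrak p}_J), N) \cong \Hom_{u_\zeta({\mathfrak l}_J)}({\mathbb C}, \opH^j(u_\zeta({\mathfrak u}_J),{\mathbb C}) \otimes N)$, with the isomorphism respecting the $\BU({\mathfrak p}_J)$-action since the spectral sequence does. The remaining step is a standard adjunction identity in the Hopf-algebra setting: for finite-dimensional modules $A, B$ over a Hopf algebra $H$ (here $H = u_\zeta({\mathfrak l}_J)$), one has $\Hom_H({\mathbb C}, B \otimes A) \cong \Hom_H(A^*, B)$ naturally, using the rigidity of the tensor category of finite-dimensional $H$-modules and the antipode to identify $B\otimes A \cong \Hom_{\mathbb C}(A^*, B)$ as $H$-modules. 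Applying this with $A = N$, $B = \opH^j(u_\zeta({\mathfrak u}_J),{\mathbb C})$ yields exactly the claimed isomorphism
$$\opH^j\left(u_{\zeta}({\mathfrak p}_{J}),\ind_{U_{\zeta}({\mathfrak b})}^{U_{\zeta}({\mathfrak p}_{J})}w\cdot 0\right) \cong \Hom_{u_{\zeta}({\mathfrak l}_{J})}\left((\ind_{U_{\zeta}({\mathfrak b})}^{U_{\zeta}({\mathfrak p}_{J})} w\cdot 0)^*,\opH^j(u_{\zeta}({\mathfrak u}_{J}),{\mathbb C})\right).$$

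I expect the main obstacle to be bookkeeping with the $\BU({\mathfrak p}_J)$-equivariance (equivalently, the compatible $\ad_r$-actions described in Sections~\ref{adjointaction} and \ref{finitedimofcohogroups}): one must check that the collapse isomorphism and the tensor-Hom adjunction are both $U_\zeta({\mathfrak p}_J)$-equivariant, with $u_\zeta({\mathfrak p}_J)$ acting trivially, so the result is genuinely an isomorphism of $\BU({\mathfrak p}_J)$-modules. The cohomological vanishing itself is routine once one notes the self-injectivity of $u_\zeta({\mathfrak l}_J)$ and the stability of injectives under tensoring with finite-dimensional modules; the $J$-Steinberg hypothesis is used precisely to guarantee that $N$ restricts to an injective $u_\zeta({\mathfrak l}_J)$-module. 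One should also verify that all modules in sight are finite dimensional so that the rigidity arguments apply — this follows from Lemma~\ref{cohomologyofA} and the finite-dimensionality of the relevant cohomology groups, together with finite-dimensionality of $N$.
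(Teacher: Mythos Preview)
Your proposal is correct and follows essentially the same approach as the paper: both use the spectral sequence (\ref{secondcohoeqn2}), invoke the projectivity/injectivity of $\ind_{U_{\zeta}({\mathfrak b})}^{U_{\zeta}({\mathfrak p}_{J})}w\cdot 0$ over $u_{\zeta}({\mathfrak l}_{J})$ (coming from the $J$-Steinberg property of $w\cdot 0$) to kill the $i>0$ columns, and then apply the tensor--Hom adjunction to pass from $\Hom_{u_{\zeta}({\mathfrak l}_{J})}({\mathbb C}, -\otimes N)$ to $\Hom_{u_{\zeta}({\mathfrak l}_{J})}(N^*,-)$. Your write-up is slightly more explicit about the Hopf-algebraic reasons for the vanishing and the adjunction, but the argument is the same.
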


\begin{proof} Since $\ind_{U_{\zeta}({\mathfrak b})}^{U_{\zeta}({\mathfrak p}_{J})}w\cdot 0$
is projective as a $u_{\zeta}({\mathfrak l}_{J})$-module, in the
spectral sequence (\ref{secondcohoeqn2}), $E_2^{i,j} = 0$ for all $i
> 0$. Thus the spectral sequence collapses giving for all $j \geq 0$
\begin{align*}
\opH^j\left(u_{\zeta}({\mathfrak p}_{J}),\ind_{U_{\zeta}({\mathfrak
b})}^{U_{\zeta}({\mathfrak p}_{J})} w\cdot 0\right) &\cong
\Hom_{u_{\zeta}({\mathfrak l}_{J})}\left({\mathbb C},\opH^j(u_{\zeta}({\mathfrak u}_{J}),{\mathbb C})\otimes\ind_{U_{\zeta}({\mathfrak b})}^{U_{\zeta}({\mathfrak p}_{J})}w\cdot 0\right)\\
&\cong \Hom_{u_{\zeta}({\mathfrak
l}_{J})}\left((\ind_{U_{\zeta}({\mathfrak b})}^{U_{\zeta}({\mathfrak
p}_{J})} w\cdot 0)^*,\opH^j(u_{\zeta}({\mathfrak u}_{J}),{\mathbb
C})\right).
\end{align*}
\end{proof}


\section{An identification theorem}\label{thirdsectioncohocal}\renewcommand{\thetheorem}{\thesection.\arabic{theorem}}
\renewcommand{\theequation}{\thesection.\arabic{equation}}\setcounter{equation}{0}
\setcounter{theorem}{0}

The following theorem now gives an identification as
$U_{\zeta}^0$-modules (or equivalently $\BU({\mathfrak h})$-modules
where ${\mathfrak h} \subset \gl$ is the Cartan subalgebra) of the
$\Hom$-group appearing in Proposition \ref{secondcohoprop}.

\begin{theorem}\label{thirdsectioncohocalthm}
 Let $l$ be as in Assumption \ref{assumption} and $w\in W$ such that $w(\Phi_{0}^+)=\Phi_{J}^+$.
\begin{itemize}
\item[(a)] Suppose that $l\nmid n+1$ when $\Phi$ is of type $A_{n}$ and $l \neq 9$ when $\Phi$
is of type $E_6$. Then as $U_{\zeta}^0$-modules
$$
\Hom_{u_{\zeta}({\mathfrak l}_{J})}\left((\ind_{U_{\zeta}({\mathfrak b})}
^{U_{\zeta}({\mathfrak p}_{J})}w\cdot
0)^*,\opH^{s}(u_{\zeta}({\mathfrak u}_{J}),{\mathbb C})\right) \cong
S^{\frac{s-\ell(w)}{2}}\left({\mathfrak u}_{J}^{*}\right)^{[1]}.
$$
\item[(b)] If $\Phi$ is of type $A_{n}$ with $n+1= l(m+1)$ and $w\in W$ is as defined in
(\ref{wepi}), then as $U_{\zeta}^0$-modules
$$
\Hom_{u_{\zeta}({\mathfrak l}_{J})}\left((\ind_{U_{\zeta}({\mathfrak b})}
^{U_{\zeta}({\mathfrak p}_{J})}w\cdot
0)^*,\opH^{s}(u_{\zeta}({\mathfrak u}_{J}),{\mathbb C})\right) \cong
\bigoplus_{t=0}^{l-1} S^{\frac{s-\ell(w)-(m+1)t(l-t)}{2}}\left({\mathfrak
u}_{J}^{*}\right)^{[1]}\otimes l\varpi_{t(m+1)},
$$
where $\varpi_{0}=0$.
\item[(c)] If $\Phi$ is of type $E_6$ and $l = 9$ (assuming that $w$ and $J$
 are as in Appendix \ref{tables1}), then
as $U_{\zeta}^0$-modules
\begin {eqnarray*}
\Hom_{u_{\zeta}({\mathfrak l}_{J})}\left((\ind_{U_{\zeta}({\mathfrak b})}
^{U_{\zeta}({\mathfrak p}_{J})}w\cdot
0)^*,\opH^{s}(u_{\zeta}({\mathfrak u}_{J}),{\mathbb C})\right) &\cong&
S^{\frac{s-\ell(w)}{2}}\left({\mathfrak u}_{J}^{*}\right)^{[1]} \oplus
\left(S^{\frac{s-20}{2}}\left({\mathfrak u}_{J}^{*}\right)^{[1]}\otimes l\varpi_1\right) \\
&\ \ \ \ \ \oplus& \left(S^{\frac{s-20}{2}}\left({\mathfrak
u}_{J}^{*}\right)^{[1]}\otimes l\varpi_6\right).
\end{eqnarray*}
\end{itemize}
\end{theorem}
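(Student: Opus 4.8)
The plan is to combine the Euler characteristic identity from Proposition \ref{euler}(a) with the multiplicity computation in Theorem \ref{multSteinberg}. The point is that the cohomology $\opH^\bullet(u_\zeta(\uj),{\mathbb C})$ carries a $U_\zeta(\lj)$-action on which $u_\zeta(\lj)$ acts trivially, hence it is a rational $L_J$-module. Taking $\Hom_{u_\zeta(\lj)}$ against the dual Steinberg-type module $(\ind_{U_\zeta({\mathfrak b})}^{U_\zeta(\pj)}w\cdot 0)^*$ picks out, via adjunction and the projectivity/injectivity of the $J$-Steinberg module over $u_\zeta(\lj)$, precisely the ``$-w_{0,J}(w\cdot 0)$ modulo $l$'' isotypic part of the cohomology. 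The target of the theorem is then essentially a bookkeeping statement: that this isotypic part is the symmetric algebra $S^\bullet(\uj^*)^{[1]}$ (tensored with the extra line bundles in cases (b), (c)), shifted in homological degree by $\ell(w)$ (respectively by $\ell(w)+(m+1)t(l-t)$, respectively by $20$ for the exceptional weights).

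\textbf{Key steps.} First I would pass from $\Uz(\uj)$ to $u_\zeta(\uj)$: by Corollary \ref{corB} and the spectral sequence of Lemma \ref{ABG} applied to $Z_J \unlhd \Uz(\uj)$ with $\Uz(\uj)//Z_J \cong u_\zeta(\uj)$, one has $E_2^{i,j} \cong \opH^i(u_\zeta(\uj),{\mathbb C}) \otimes \Lambda^j(\uj^*)^{[1]}$ as in the proof of Lemma \ref{cohomologyofA}; this is an isomorphism of $U_\zeta^0$-modules (indeed of $\BU(\pj)$-modules). Second, I would take $\Hom_{u_\zeta(\lj)}((\ind_{U_\zeta({\mathfrak b})}^{U_\zeta(\pj)}w\cdot 0)^*,-)$ of this spectral sequence. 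Since the Steinberg-type module is projective over $u_\zeta(\lj)$, this functor is exact, and by Theorem \ref{multSteinberg}(a) (resp.\ (b), (c)) the $\Hom$ applied to $\opH^i(u_\zeta(\uj),{\mathbb C})$ is one-dimensional concentrated in $i=\ell(w)$ (resp.\ the stated list of degrees and weights). Hence the resulting spectral sequence degenerates onto a single row (or, in cases (b), (c), a disjoint collection of rows indexed by $t$), and
$$
\Hom_{u_\zeta(\lj)}\bigl((\ind_{U_\zeta({\mathfrak b})}^{U_\zeta(\pj)}w\cdot 0)^*,\opH^s(u_\zeta(\uj),{\mathbb C})\bigr) \cong \bigoplus \Lambda^{s - \ell(w) - (\text{shift})}(\uj^*)^{[1]} \otimes (\text{line}),
$$
which would give the wrong answer (exterior in place of symmetric) unless one also invokes the parity/Euler argument. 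Third, therefore, I would use Proposition \ref{euler}(a) together with the vanishing of odd-degree cohomology (which for $u_\zeta(\uj)$ follows from a dimension count against the Euler characteristic, exactly as in \cite{GK}): the Euler characteristic of $\opH^\bullet(\Uz(\uj),{\mathbb C})$ equals that of $\Lambda^\bullet_{\zeta,J}$, and combined with Theorem \ref{multSteinberg} and the known structure of $\opH^\bullet(\BU(\uj),{\mathbb C})$ via the Koszul complex $\Lambda^\bullet(\uj^*)$, one deduces that the $\Hom$-groups vanish in odd degree $s - \ell(w)$ and that their even-degree graded dimensions match those of $S^{(s-\ell(w))/2}(\uj^*)$. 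Matching both the $U_\zeta^0$-weights and the dimensions in each degree then forces the isomorphism with $S^{\frac{s-\ell(w)}{2}}(\uj^*)^{[1]}$ (and similarly with the shifted/twisted summands in (b) and (c)).

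\textbf{Main obstacle.} The hard part will be Step 3: converting the ``exterior-algebra-shaped'' answer coming from the Koszul-type spectral sequence into the claimed symmetric-algebra answer. This requires knowing that $\opH^\bullet(u_\zeta(\uj),{\mathbb C})$ is concentrated in even degrees and that, after projecting onto the Steinberg isotypic component, the associated-graded pieces $\Lambda^j(\uj^*)^{[1]}$ reassemble into $S^{\bullet}(\uj^*)^{[1]}$ — precisely the mechanism by which, in Ginzburg--Kumar, $\opH^{2\bullet}(u_\zeta,{\mathbb C}) \cong {\mathbb C}[{\mathcal N}]$ arises from a filtration whose graded pieces are exterior powers. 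I expect this to go through by comparing formal characters: $\ch_\zeta \Lambda^n_{\zeta,J} = \ch \Lambda^n(\uj^*)$, and the alternating sum over $n$ of these equals (via the Weyl-group combinatorics, \cite[Thm.\ 2.5.1.3]{W} as used in the proof of Theorem \ref{multSteinberg}) an alternating sum of characters of induced $U_\zeta(\lj)$-modules, exactly one (resp.\ finitely many) of whose terms is the relevant Steinberg character; peeling these off and dividing by $\ch$ of the Steinberg module leaves $\ch S^\bullet(\uj^*)$. I would need to check carefully that the spectral sequence in Step 2 does not merely bound but actually computes the $\Hom$-groups — i.e.\ that there are no nontrivial differentials or extension problems affecting the graded dimension — which follows because the only surviving row has its total dimension (summed over the filtration) already pinned down by the Euler characteristic, leaving no room for cancellation.
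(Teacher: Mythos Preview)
Your proposal has a genuine gap, and it stems from a confusion about which algebra Theorem~\ref{multSteinberg} controls. That theorem computes
\[
\Hom_{u_\zeta(\lj)}\bigl(M,\opH^i(\Uz(\uj),{\mathbb C})\bigr),
\]
i.e.\ the Steinberg multiplicity in the cohomology of the \emph{De Concini--Kac} algebra $\Uz(\uj)$, not of $u_\zeta(\uj)$. In your LHS spectral sequence for $Z_J\unlhd\Uz(\uj)$, the $\Uz(\uj)$-cohomology is the \emph{abutment}, while ${\mathcal G}\bigl(\opH^i(u_\zeta(\uj),{\mathbb C})\bigr)$ sits in the $E_2$-term and is precisely the unknown you are trying to compute. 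So your claim that applying ${\mathcal G}$ collapses the $E_2$-page to a single row is backwards: after applying ${\mathcal G}$ you get
\[
E_2^{i,j}={\mathcal G}\bigl(\opH^i(u_\zeta(\uj),{\mathbb C})\bigr)\otimes\Lambda^j(\uj^*)^{[1]}\ \Longrightarrow\ {\mathcal G}\bigl(\opH^{i+j}(\Uz(\uj),{\mathbb C})\bigr),
\]
and it is the right-hand side that is one-dimensional in degree $\ell(w)$.

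Even after this correction, the all-at-once approach does not close. With $\dim\uj+1$ nonzero rows, the bottom-row trick only yields ${\mathcal G}(\opH^{\ell(w)+1})=0$; for ${\mathcal G}(\opH^{\ell(w)+3})$ and beyond, higher differentials such as $d_3$ from $E_3^{\ell(w),2}$ can land in $E_3^{\ell(w)+3,0}$, so odd vanishing is not forced. Your Euler-characteristic patch gives only
\[
\sum_r \ch\,{\mathcal G}\bigl(\opH^{\ell(w)+2r}\bigr)-\sum_r \ch\,{\mathcal G}\bigl(\opH^{\ell(w)+2r+1}\bigr)=\sum_r\ch\,S^r(\uj^*)^{[1]},
\]
which does not separate the even and odd parts without the parity vanishing already in hand; the argument is circular.

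The paper's proof supplies exactly the missing idea: instead of quotienting by all of $Z_J=\langle f_1^l,\dots,f_N^l\rangle$ at once, it kills the $f_i^l$ \emph{one at a time}, setting $A_i=\Uz(\uj)//\langle f_1^l,\dots,f_i^l\rangle$ and running the LHS spectral sequence for $B_i=\langle f_i^l\rangle\unlhd A_{i-1}$. Since each $B_i$ is polynomial in one variable, $\opH^\bullet(B_i,{\mathbb C})$ is an exterior algebra in one variable, so each spectral sequence has only \emph{two} rows. With two rows the parity argument is forced at every step, and one sees directly how ${\mathcal G}(\opH^{\ell(w)+2r}(A_i,{\mathbb C}))\cong S^r(V_i)^{[1]}$ is built from $S^{r-1}(V_i)^{[1]}\otimes{\mathbb C}_i^{[1]}$ and $S^r(V_{i-1})^{[1]}$. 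This one-variable-at-a-time induction is the crux of the proof, and your proposal does not contain it.
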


\begin{proof} For convenience
set $M = (\ind_{U_{\zeta}({\mathfrak b})}^{U_{\zeta}({\mathfrak
p}_J)}w\cdot 0)^*$ and ${\mathcal G}(-) =
\Hom_{u_{\zeta}(\lj)}(M,-)$. Since the module $M$ is injective
(equivalently projective) as a $u_{\zeta}(\lj)$-module, the functor
${\mathcal G}(-) = \Hom_{u_{\zeta}(\lj)}(M,-)$ is an exact functor.

The argument will proceed by induction on successive quotients of
$\Uz(\uj)$ (cf. \cite[2.4]{GK}). Let $N = |\Phi^+ \backslash
\Phi_J^+|$. Note that previously $N$ was used to denote $|\Phi^+|$,
but this should not cause any confusion here. As in Chapter 2,
choose any fixed ordering of root vectors $f_1$, $f_2$, \dots, $f_N$
in $\Uz(\uj)$ corresponding to the positive roots in
$\Phi^+\backslash\Phi_J^+$. For purposes of this argument, the
precise ordering is irrelevant. Each $f_i^l$ is central
in $\Uz(\uj)$. For $0 \leq i \leq N$, set $A_i = \Uz(\uj)//\langle
f_1^l,f_2^l,\dots,f_i^l\rangle$ where $\langle \dots \rangle$
denotes ``the subalgebra generated by \dots'' (with $A_0 =
\Uz(\uj)$). Note that $A_N = u_{\zeta}(\uj)$. For $1 \leq i \leq N$,
set $B_i = \langle f_i^l\rangle \subset A_{i-1}$ be the subalgebra
generated by $f_i^l$. Note that each $B_i$ is a polynomial algebra
in one variable. Note also that $B_i$ is normal (in fact central) in
$A_{i-1}$, and $A_{i-1}//B_i \cong A_i$. For $0 \leq i \leq N$, let
$V_i$ be an $i$-dimensional vector space with basis
$\{x_1,x_2,\dots,x_i\}$. Further consider $V_i$ as a
$U_{\zeta}^0$-module by letting $x_i$ have weight $\ga_i$. That is,
each $x_i$ is ``dual'' to the element $f_i$. In particular, $V_N
\cong \uj^*$. Here $V_0 = \{0\}$.

Consider first part (a). We prove inductively for $0 \leq i \leq N$
that as $U_{\zeta}^0$-modules we have
$$
{\mathcal G}\big{(}\opH^{s}(A_i,{\mathbb C})\big{)} \cong
\begin{cases}
S^{r}(V_i)^{[1]} &\text{ if }s = 2r + \ell(w)\\
0 &\text{ else}.
\end{cases}
$$
The case $i = N$ is precisely the statement of the theorem. For $i =
0$, this is precisely Theorem \ref{multSteinberg} where by
convention we take $S^0(V_0) = {\mathbb C}$.

Assume now that the claim is true for $i - 1$, and we will show that
it is true for $i$. We will make use of the LHS spectral sequence of
Lemma \ref{ABG} for $B_i \unlhd A_{i-1}$. Since $B_i$ is a
polynomial algebra in one variable, its cohomology is an exterior
algebra in one variable. Precisely, for each $i$, we have as a
$U_{\zeta}^0$-module:
$$
\opH^b(B_i,{\mathbb C}) =
\begin{cases}
{\mathbb C} &\text{ if } b = 0\\
{\mathbb C}_i^{[1]} &\text { if } b = 1\\
0 &\text{ else},
\end{cases}
$$
where ${\mathbb C}_i$ is a one-dimensional vector space with basis
element $x_i$ (i.~e., of weight $\ga_i$).

The spectral sequence is
$$
E_{2}^{a,b} = \opH^a(A_i,\opH^b(B_i,{\mathbb C})) \Rightarrow
\opH^{a + b}(A_{i-1},{\mathbb C}).
$$
The algebra $A_i$ acts on $B_i$ via the $\adw$-action. By Corollary
\ref{corB}(b), this action is trivial. Therefore this spectral
sequence can be rewritten as
$$
E_{2}^{a,b} = \opH^a(A_i,{\mathbb C})\otimes\opH^b(B_i,{\mathbb C})
\Rightarrow \opH^{a + b}(A_{i-1},{\mathbb C}).
$$
Alternately, this easily follows from the above description of
$\opH^b(B_i,{\mathbb C})$.

Again using the $\adw$-action, $u_{\zeta}(\lj)$ acts on all terms of
the spectral sequence and this action is preserved by the
differentials. Since the functor ${\mathcal G}(-)$ is exact, we can
apply it to the above spectral sequence and obtain a new spectral
sequence. For convenience, we abusively use the same name:
$$
E_{2}^{a,b} = {\mathcal G}\big{(}\opH^a(A_i,{\mathbb
C})\otimes\opH^b(B_i,{\mathbb C})\big{)} \Rightarrow
    {\mathcal G}\big{(}\opH^{a + b}(A_{i-1},{\mathbb C})\big{)}.
$$
The above description of $\opH^b(B_i,{\mathbb C})$ shows that
$u_{\zeta}(\lj)$ acts trivially on it. And hence, this spectral
sequence may be rewritten as
$$
E_{2}^{a,b} = {\mathcal G}\big{(}\opH^a(A_i,{\mathbb
C})\big{)}\otimes\opH^b(B_i,{\mathbb C}) \Rightarrow
    {\mathcal G}\big{(}\opH^{a + b}(A_{i-1},{\mathbb C})\big{)}.
$$

Observe that $E_2^{a,b} = 0$ for $b \geq 2$. That is, the spectral
sequence consists of at most two nonzero rows. So only the first
differential $d_2 : E_2^{a,1} \to E_2^{a + 2,0}$ could potentially
be nonzero. The first row $E_2^{a,0} = \opH^a(A_i,{\mathbb C})$ is
precisely what we are trying to identify inductively. Note also that
for all $a$, the second row $E_2^{a,1} \cong E_{2}^{a,0}\otimes
{\mathbb C}_i^{[1]}$. In particular, $E_2^{a,1} \neq 0$ if and only
if $E_2^{a,0} \neq 0$.

By the inductive hypothesis, we know that the abutment
$$
{\mathcal G}\big{(}\opH^{a + b}(A_{i-1},{\mathbb C})\big{)} \cong
\begin{cases}
S^{r}(V_{i-1})^{[1]} &\text{ if } a + b = 2r + \ell(w)\\
0 &\text{ else}.
\end{cases}
$$
In particular, ${\mathcal G}\big{(}\opH^{a + b}(A_{i-1},{\mathbb
C})\big{)} = 0$ for $a + b < \ell(w)$.

Let $A \geq 0$ be least value of $a$ such that $E_2^{a,0} \neq 0$.
Hence, $A$ is necessarily the least value of $a$ such that
$E_2^{a,1} \neq 0$. In particular, $E_2^{A-2,1} = 0$ and hence
$E_{\infty}^{A,0} \cong E_2^{A,0}/d_2(E_2^{A-2,1}) = E_2^{A,0}$. By
the inductive hypothesis, we conclude that $A = \ell(w)$. So for all
$a < \ell(w)$ we have
$$
{\mathcal G}\big{(}\opH^{a}(A_i,{\mathbb C})\big{)} = 0.
$$

Next we claim that $E_2^{\ell(w) + a,0} = 0 = E_2^{\ell(w) + a,1}$
for all odd $a > 0$. This can be seen inductively on $a$. For
example, since $E_2^{\ell(w) - 1,1} = 0$,
$$
E_2^{\ell(w) + 1,0} = E_2^{\ell(w) + 1,0}/d_2(E_2^{\ell(w) - 1,1})
\cong E_{\infty}^{\ell(w) + 1,0} \subset {\mathcal
G}\big{(}\opH^{\ell(w) + 1}(A_{i-1},{\mathbb C})\big{)} = 0.
$$
Inductively, for odd $a > 0$, we similarly have $E_2^{\ell(w) + a -
2,1} = 0$ and so
$$
E_2^{\ell(w) + a,0} = E_2^{\ell(w) + a,0}/d_2(E_2^{\ell(w) + a -
2,1}) \cong E_{\infty}^{\ell(w) + a,0} \subset {\mathcal
G}\big{(}\opH^{\ell(w) + a}(A_{i-1},{\mathbb C})\big{)} = 0.
$$
In other words, for all odd $a > 0$, we have (as claimed)
$$
{\mathcal G}\big{(}\opH^{\ell(w) + a}(A_i,{\mathbb C})\big{)} = 0.
$$
Summarizing: our spectral sequence has $E_2^{a,b} = 0$ for $a <
\ell(w)$ and $E_2^{\ell(w) + a,b} = 0$ for all odd $a > 0$. That is,
the columns are initially zero and then begin to alternate nonzero
(potentially) and zero thereafter.

Furthermore, for all even $a \geq 0$, we then have
$$
\ker\{d_2 : E_2^{\ell(w) + a,1} \to E_2^{\ell(w) + a + 2,0}\}
\subset E_{\infty}^{\ell(w) + a,1} \subset {\mathcal
G}\big{(}\opH^{\ell(w) + a + 1}(A_{i-1},{\mathbb C})\big{)} = 0.
$$
Therefore $d_2: E_2^{\ell(w) + a,1} \to E_2^{\ell(w) + a + 2,0}$ is
injective. Hence, we have for all even $a \geq 0$,
\begin{align*}
E_2^{\ell(w) + a,0}/E_2^{\ell(w) + a - 2,1} &\cong
        E_2^{\ell(w) + a,0}/d_2(E_2^{\ell(w) + a - 2,1})
        \cong E_{\infty}^{\ell(w) + a,0}\\
    &\cong {\mathcal G}\big{(}\opH^{\ell(w) + a}(A_{i-1},{\mathbb C})\big{)} =
        S^{a/2}(V_{i-1})^{[1]}.
\end{align*}
So we have a short exact sequence of $U_{\zeta}^0$-modules:
$$
0 \to E_2^{\ell(w) + a - 2,1} \to E_2^{\ell(w) + a,0} \to
S^{a/2}(V_{i-1})^{[1]} \to 0.
$$
But identifying these $E_2$-terms gives
$$
0 \to {\mathcal G}\big{(}\opH^{\ell(w) + a - 2}(A_i,{\mathbb
C})\big{)}\otimes {\mathbb C}_i^{[1]} \to {\mathcal
G}\big{(}\opH^{\ell(w) + a}(A_i,{\mathbb C})\big{)} \to
S^{a/2}(V_{i-1})^{[1]} \to 0.
$$
Inducting now on even $a \geq 0$, we may assume that
$$
{\mathcal G}\big{(}\opH^{\ell(w) + a - 2}(A_i,{\mathbb C})\big{)} =
S^{(a - 2)/2}(V_i)^{[1]}.
$$
The short exact sequence becomes
$$
0 \to S^{(a - 2)/2}(V_i)^{[1]}\otimes {\mathbb C}_i^{[1]} \to
{\mathcal G}\big{(}\opH^{\ell(w) + a}(A_i,{\mathbb C})\big{)} \to
S^{a/2}(V_{i-1})^{[1]} \to 0
$$
or setting $a = 2r$,
$$
0 \to S^{r-1}(V_i)^{[1]}\otimes {\mathbb C}_i^{[1]} \to {\mathcal
G}\big{(}\opH^{\ell(w) + 2r}(A_i,{\mathbb C})\big{)} \to
S^{r}(V_{i-1})^{[1]} \to 0.
$$
Hence as a $U_{\zeta}^0$-module,
$$
{\mathcal G}\big{(}\opH^{\ell(w) + 2r}(A_i,{\mathbb C})\big{)} \cong
(S^{r-1}(V_i)^{[1]}\otimes {\mathbb C}_i^{[1]})\oplus
S^{r}(V_{i-1})^{[1]}.
$$
The left hand factor consists of $r$-fold symmetric powers in the
$x_j$ which contain at least one $x_i$, while the righthand factor
consists of $r$-fold symmetric powers in $x_j$ with $1 \leq j \leq i
- 1$. Hence
$$
{\mathcal G}\big{(}\opH^{\ell(w) + 2r}(A_i,{\mathbb C})\big{)} \cong
S^{r}(V_i)^{[1]}
$$
which along with the above conclusions verifies the inductive claim
and hence part (a) of the theorem.

For parts (b) and (c) a similar argument can be used whose details
are left to the interested reader. Of crucial importance here is the
degrees in which the ``extra'' classes arise in parts (b) and (c) of
Theorem \ref{multSteinberg}. For example, consider part (b). We have
${\mathcal G}\big{(}\opH^i(\Uz(\uj),{\mathbb C})\big{)} = 0$ unless
$i = \ell(w) + (m + 1)t(l-t)$ for $0 \leq t \leq l - 1$. Observe
that $t(l - t)$ (and, hence, $(m + 1)t(l-t)$) is necessarily even.
Thus the extra cohomology classes appear in degrees having the same
parity as $\ell(w)$. As such, in the above argument, we will have a
similar phenomenon happening in the spectral sequence: $E_2^{a,b} =
0$ for $a < \ell(w)$ or $a = \ell(w) + a'$ with $a' > 0$ being odd,
and the analogous argument will give the claim. Similarly in part
(c), the extra cohomology classes lie in a degree with
the same parity as $\ell(w)$.

\end{proof}


\section{Spectral sequences, III}\label{4thsectioncohocal} \renewcommand{\thetheorem}{\thesection.\arabic{theorem}}
\renewcommand{\theequation}{\thesection.\arabic{equation}}\setcounter{equation}{0}
\setcounter{theorem}{0}

As mentioned previously, the $\Hom$-groups in
Theorem \ref{thirdsectioncohocalthm} admit an action of $\BU(\pj)$
induced from the $\ad_r$-action. On the other hand $\BU(\pj)$ acts
naturally on $\uj$ by the adjoint action or on $\uj^*$ by the
coadjoint action. This can be further extended to an action on
$S^{\bullet}(\uj^*)$. With this action, the isomorphisms in Theorem
\ref{thirdsectioncohocalthm} also hold as $\BU(\pj)$-modules (or
equivalently $P_J$-modules).

\begin{lem}\label{4thsectionlemma}
 The isomorphisms of Theorem \ref{thirdsectioncohocalthm}
 also hold as $\BU(\pj)$-modules where
the actions are as described above.
\end{lem}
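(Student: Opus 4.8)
The plan is to revisit the proof of Theorem~\ref{thirdsectioncohocalthm} and observe that every spectral sequence, exact functor and short exact sequence occurring there is compatible with the action of $\BU(\pj)$ induced from the $\ad_r$-action (equivalently the $\adw$-action, cf.~(\ref{rightmodulealgebra})), under which $u_{\zeta}(\pj)$ acts trivially so that $\BU(\pj)\cong U_{\zeta}(\pj)//u_{\zeta}(\pj)$ genuinely acts. Here one invokes Corollary~\ref{corB} for the stability of $Z_J$ and the triviality of the $u_{\zeta}(\pj)$-action on it, the smash-product description of the $\BU(\pj)$-action on cohomology from Section~\ref{finitedimofcohogroups}, and the fact that $\mathcal{G}(-)=\Hom_{u_{\zeta}(\lj)}(M,-)$ is $\BU(\pj)$-equivariant (since $M$ is a $U_{\zeta}(\pj)$-module) and exact (since $M$ is $u_{\zeta}(\lj)$-projective).

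The first reduction is to promote the identification to the level of $L_J$ for free: since $L_J$ is reductive, a finite dimensional rational $L_J$-module is determined by its $T$-character, so the $U_{\zeta}^0$-isomorphisms of Theorem~\ref{thirdsectioncohocalthm} (which are $T$-, hence $L_J$-, character identities) automatically upgrade to isomorphisms of rational $L_J$-modules between $\mathcal{G}(\opH^{s}(u_{\zeta}(\uj),{\mathbb C}))$ and the relevant symmetric powers of $\uj^*$ (tensored, in cases (b) and (c), by the one-dimensional characters $l\varpi_j$). The only remaining content is therefore the action of the unipotent radical $U_J$, i.e.\ the $\BU(\uj)$-module structure.

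To pin down the $U_J$-action I would use the Lyndon--Hochschild--Serre spectral sequence for $Z_J\unlhd\Uz(\uj)$ with $\Uz(\uj)//Z_J\cong u_{\zeta}(\uj)$, rather than the rung-by-rung tower $A_i$ in the proof of Theorem~\ref{thirdsectioncohocalthm}, because this single spectral sequence is manifestly $\BU(\pj)$-equivariant: by Lemma~\ref{cohomologyofA} and its proof $\opH^{b}(Z_J,{\mathbb C})\cong\Lambda^{b}(\uj^*)^{[1]}$ with the coadjoint $\BU(\pj)$-action, and $u_{\zeta}(\uj)$ acts trivially on it. Applying the exact functor $\mathcal{G}$ and using Theorem~\ref{multSteinberg} to compute the abutment gives, illustrating case (a), a first-quadrant spectral sequence of $\BU(\pj)$-modules
$$E_{2}^{a,b}=\mathcal{G}\!\big(\opH^{a}(u_{\zeta}(\uj),{\mathbb C})\big)\otimes\Lambda^{b}(\uj^*)^{[1]}\ \Longrightarrow\ \mathcal{G}\!\big(\opH^{a+b}(\Uz(\uj),{\mathbb C})\big)=\begin{cases}{\mathbb C}&a+b=\ell(w)\\ 0&\text{else.}\end{cases}$$
Writing $C^{r}:=\mathcal{G}(\opH^{\ell(w)+2r}(u_{\zeta}(\uj),{\mathbb C}))$, which by the previous paragraph is a rational $P_J$-module with the same $L_J$-structure as $S^{r}(\uj^*)$, the acyclicity of the abutment forces the $\BU(\pj)$-equivariant differentials $d_2\colon C^{r}\otimes\Lambda^{b}(\uj^*)^{[1]}\to C^{r+1}\otimes\Lambda^{b-1}(\uj^*)^{[1]}$ to assemble into a Koszul-type complex with no higher cohomology; an induction on $r$, comparing $T$-characters at each stage, then identifies $C^{r}\cong S^{r}(\uj^*)$ as $\BU(\pj)$-modules with $d_2$ the Koszul contraction. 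Cases (b) and (c) run identically after tensoring the relevant pieces by $l\varpi_j$ and tracking the ``extra'' classes from Theorem~\ref{multSteinberg}.

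The step I expect to be the main obstacle is precisely this last identification: because $P_J$ is not reductive, knowing $C^{r}$ only as an $L_J$-module together with the abutment does not formally determine it as a $P_J$-module, so one must argue rung by rung in the Koszul induction that the $\uj$-action on $C^{r}$ is forced to coincide with the coadjoint action on $S^{r}(\uj^*)$ --- equivalently, that the relevant contraction maps are the standard Koszul ones. Establishing this compatibility, rather than any spectral-sequence bookkeeping, is where the work lies. A cleaner alternative worth attempting first would be to exhibit a priori a natural, manifestly $\BU(\pj)$-equivariant graded algebra homomorphism $S^{\bullet}(\uj^*)\to\opH^{2\bullet}(u_{\zeta}(\uj),{\mathbb C})$ (in the spirit of \cite{GK} for $l>h$) and then check, using Theorem~\ref{thirdsectioncohocalthm}, that it becomes an isomorphism after applying $\mathcal{G}$; this would make the $P_J$-equivariance automatic.
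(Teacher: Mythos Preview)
Your approach is essentially the same as the paper's: both proofs abandon the rung-by-rung tower $A_i$ in favor of the single LHS spectral sequence for $Z_J\unlhd\Uz(\uj)$, apply the exact functor $\mathcal{G}$, use the known abutment from Theorem~\ref{multSteinberg}, and run an induction on $r$ starting from the $\BU(\pj)$-isomorphism $d_2\colon\Lambda^1(\uj^*)\to C^1$. Your preliminary reduction to $L_J$ via semisimplicity is a clean addition that the paper does not make explicit.

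Two minor points. First, the identification $\opH^{b}(Z_J,{\mathbb C})\cong\Lambda^{b}(\uj^*)$ with the \emph{coadjoint} $\BU(\pj)$-action does not follow from Lemma~\ref{cohomologyofA} or its proof (which only gives the vector-space isomorphism); the paper imports this from \cite[Cor.~2.9.6]{ABG} as a $\BU(\mathfrak b)$-module identification and then lifts it to $\BU(\pj)$ via induction. Second, you are right to flag the inductive step as the real obstacle: the paper's phrase ``determined by the coadjoint action'' is doing exactly the work you describe, namely that since $E_\infty^{\ell(w)+2r,0}=0$, the module $C^r$ is an iterated extension of images of $\BU(\pj)$-equivariant differentials whose sources lie in columns already identified with coadjoint modules, which forces the structure. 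The paper is terse here, and your honesty about the difficulty is appropriate; your suggested alternative of exhibiting an a priori $\BU(\pj)$-equivariant map $S^\bullet(\uj^*)\to\opH^{2\bullet}(u_\zeta(\uj),{\mathbb C})$ is in the spirit of what is done later in Section~\ref{cohoring} for the full cohomology ring.
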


\begin{proof}
Consider first the generic case - part (a). We use notation as in
the proof of Theorem \ref{thirdsectioncohocalthm}. Let $Z_J =
\langle f_1^l,\dots,f_N^l\rangle \subset \Uz(\uj)$ be the central
subalgebra such that $\Uz(\uj)//Z_J \cong u_{\zeta}(\uj)$ (cf. also
Section \ref{adjointaction}). Consider the spectral sequence
$$
E_2^{a,b} = \opH^a(u_{\zeta}(\uj),\opH^b(Z_J,{\mathbb C}))
\Rightarrow \opH^{a + b}(\Uz(\uj),{\mathbb C})
$$
given by Lemma \ref{ABG}. This is a spectral sequence of
$U_{\zeta}(\pj)$-modules. Since $u_{\zeta}(\pj)$ acts trivially on
$\opH^b(Z_J,{\mathbb C})$ (cf. Corollary \ref{corB}(b)), this may be
rewritten as
$$
E_2^{a,b} = \opH^a(u_{\zeta}(\uj),{\mathbb
C})\otimes\opH^b(Z_J,{\mathbb C})) \Rightarrow \opH^{a +
b}(\Uz(\uj),{\mathbb C}).
$$
Furthermore, applying the functor ${\mathcal G}(-)$ we get a new
spectral sequence (using the same name)
$$
E_2^{a,b} = {\mathcal G}\big{(}\opH^a(u_{\zeta}(\uj),{\mathbb
C})\big{)}\otimes\opH^b(Z_J,{\mathbb C}) \Rightarrow
\mathcal{G}\big{(}\opH^{a + b}(\Uz(\uj),{\mathbb C})\big{)},
$$
whose differentials still preserve the action of $U_{\zeta}(\pj)$.
Since $u_{\zeta}(\uj)$ acts trivially on both
$\opH^{\bullet}(u_{\zeta}(\uj),{\mathbb C})$ and
$\opH^{\bullet}(\Uz(\uj),{\mathbb C})$, and $u_{\zeta}(\lj) \cong
u_{\zeta}(\pj)//u_{\zeta}(\uj)$, we have
$$
{\mathcal G}\big{(}\opH^{\bullet}(u_{\zeta}(\uj),{\mathbb C})\big{)}
= \Hom_{u_{\zeta}(\lj)}(M,\opH^{\bullet}(u_{\zeta}(\uj),{\mathbb
C})) \cong
\Hom_{u_{\zeta}(\pj)}(M,\opH^{\bullet}(u_{\zeta}(\uj),{\mathbb C}))
$$
and
$$
\mathcal{G}\big{(}\opH^{\bullet}(\Uz(\uj),{\mathbb C})\big{)} =
\Hom_{u_{\zeta}(\lj)}(M,\opH^{\bullet}(\Uz(\uj),{\mathbb C})) \cong
\Hom_{u_{\zeta}(\pj)}(M,\opH^{\bullet}(\Uz(\uj),{\mathbb C})).
$$
Therefore, $u_{\zeta}(\pj)$ acts trivially on this new spectral
sequence, and so this is a spectral sequence of $\BU(\pj)$-modules.

By Theorem \ref{multSteinberg}, the abutment is nonzero only when $a
+ b = \ell(w)$ in which case it is the trivial module $\mathbb C$.
And by Theorem \ref{thirdsectioncohocalthm}, as $\BU(\hh)$-modules,
we know that ${\mathcal G}\big{(}\opH^a(u_{\zeta}(\uj),{\mathbb
C})\big{)} \cong S^\frac{a - \ell(w)}{2}(\uj^*)$. Also, the algebra
$Z_J$ is a polynomial algebra and so its cohomology as an algebra is
an exterior algebra. Moreover, by \cite[Cor. 2.9.6]{ABG}, as
$\BU(\mathfrak{b})$-modules, $\opH^{\bullet}(Z_J,{\mathbb C}) \cong
\Lambda^{\bullet}(\uj^*)$ (i.e., the ordinary exterior algebra on
$\uj^*$), where the action of $\BU(\mathfrak{b})$ on
$\Lambda^{\bullet}(\uj^*)$ is given by the coadjoint action. Since
$\opH^{\bullet}(Z_J,{\mathbb C})$ is a $\BU(\pj)$-module and the
coadjoint action on $\Lambda^{\bullet}(\uj^*)$ can be extended to
$\pj$, by applying $\ind_{\BU(\mathfrak{b})}^{\BU(\pj)}(-)$,
$\opH^{\bullet}(Z_J,{\mathbb C}) \cong
\Lambda^{\bullet}(\uj^*)$ as $\pj$-modules.

As in Theorem \ref{thirdsectioncohocalthm}, $E_2^{a,b} = 0$ for $a
< \ell(w)$ and $E_2^{\ell(w) + a,b} = 0$ for odd $a$. Consider the
term
$$
E_2^{\ell(w) + 2,0} = {\mathcal G}\big{(}\opH^{\ell(w) +
2}(u_{\zeta}(\uj),{\mathbb C})\big{)} \cong S^1(\uj^*) = \uj^*,
$$
where the latter identifications are as $\BU(\hh)$-modules. We will
see this also holds as $\BU(\pj)$-modules. Since
$E_{\infty}^{\ell(w),1} = 0$ and $E_{\infty}^{\ell(w) + 2,0} = 0$,
the differential $d_2: E_2^{\ell(w),1} \to E_2^{\ell(w) + 2,0}$ is
an isomorphism of $\BU(\pj)$-modules.

Since $E_2^{\ell(w),1} = \opH^1(Z_J,{\mathbb C}) \cong \uj^*$ as a
$\BU(\pj)$-module and $E_2^{\ell(w) + 2,0} = {\mathcal
G}\big{(}\opH^{\ell(w) + 2}(u_{\zeta}(\uj),{\mathbb C})\big{)} \cong
\uj^*$ as a $\BU(\hh)$-module, this latter identification must also
hold as a $\BU(\pj)$-module.

As already noted, the $\BU(\pj)$-structure of the terms
$$E_2^{\ell(w),b} = \opH^b(Z_J,{\mathbb C}) \cong \Lambda^{b}(\uj^*)$$
is determined by the coadjoint action. Moreover, we can now say that
the $\BU(\pj)$-structure of the terms
$$E_2^{\ell(w) + 2,b} = {\mathcal G}\big{(}\opH^{\ell(w) + 2}(u_{\zeta}(\uj),{\mathbb C})\big{)}\otimes
\opH^b(Z_J,{\mathbb C}) \cong \uj^*\otimes\Lambda^b(\uj^*)$$ is
determined by the coadjoint action. Therefore, since
$E_{\infty}^{\ell(w) + 4,0} = 0$, the $\BU(\pj)$-structure of
$$
E_2^{\ell(w) + 4,0} = {\mathcal G}\big{(}\opH^{\ell(w) +
4}(u_{\zeta}(\uj),{\mathbb C})\big{)} \cong S^2(\uj^*)
$$
(with isomorphism as a $\BU(\hh)$-module), which is determined by
the structure on $E_2^{\ell(w),b}$ and $E_2^{\ell(w) + 2,b}$, must
also be determined by the coadjoint action. Inductively, we conclude
that indeed the $\BU(\pj)$-action on
$${\mathcal G}\big{(}\opH^{a}(u_{\zeta}(\uj),{\mathbb C})\big{)} \cong S^\frac{a - \ell(w)}{2}(\uj^*)$$
is given by the coadjoint action.

For parts (b) and (c), a similar argument may be used. The key here
(as in Theorem \ref{thirdsectioncohocalthm}) is that the additional
classes in ${\mathcal G}\big{(}\opH^i(\uj,{\mathbb C})\big{)}$ lie
in degrees which have the same parity as $\ell(w)$. Furthermore, the
additional classes have distinct (nonzero) weights whose differences 
are neither sums of positive root nor sums of negative roots as noted in Remark \ref{Steinbergrem}.
\end{proof}


\section{Proof of main result, Theorem 1.2.3, I}\label{5thsectioncohocal}\renewcommand{\thetheorem}{\thesection.\arabic{theorem}}
\renewcommand{\theequation}{\thesection.\arabic{equation}}\setcounter{equation}{0}
\setcounter{theorem}{0}

In this section, we  present a proof of the isomorphisms in Theorem~\ref{MainThm} as $G$-modules.
Let us recall the following vanishing result due to
Broer \cite[Thm. 2.2]{Br2} and extended to a more general setting by
Sommers \cite[Prop. 4]{So1}. Let $J$ be an arbitrary subset of
$\Pi$. Then
\begin{equation}\label{5thcohoeqn1}
R^{i}\text{ind}_{P_{J}}^{G} S^{\bullet}({\mathfrak
u}_{J}^{*})\otimes \lambda=0
\end{equation}
for $i > 0$ and $\lambda$ is a $P$-regular weights \cite[Section 1.1]{KLT} inside the character group $X(P_{J})$.
This vanishing result is proved using the Grauert-Riemenschneider
theorem. We can now prove the first of our main theorems which
provides a precise description of the cohomology of quantum groups
at roots of unity in the case that $\zeta$ is a primitive $l$th root
of unity.

\begin{proof} Consider the cases listed in (b)(i).  According to Proposition \ref{secondcohoprop},
Theorem \ref{thirdsectioncohocalthm}(a), and Lemma
\ref{4thsectionlemma}, we have, as $\BU(\pj)$-modules,
$$
\opH^j(u_{\zeta}({\mathfrak p}_{J}),\ind_{U_{\zeta}({\mathfrak
b})}^{U_{\zeta}({\mathfrak p}_{J})}w\cdot 0) \cong
S^{\frac{j-\ell(w)}{2}}({\mathfrak u}^{*}_{J}).$$ By substituting
this identification into the spectral sequence given in Theorem
\ref{firstsectioncohocalthm}, we have
$$E_{2}^{i,j}=R^{i}\ind_{{P}_{J}}^{G}
S^{\frac{j-\ell(w)}{2}}({\mathfrak u}^{*}_{J}) \Rightarrow
\operatorname{H}^{i+j-\ell(w)}(u_{\zeta}({\mathfrak g}),{\mathbb
C}).$$ We can now apply (\ref{5thcohoeqn1}) to conclude that
$E_{2}^{i,j}=0$ for $i>0$, thus the spectral sequence collapses to
yield
$$
\opH^{s}(u_{\zeta}({\mathfrak g}),{\mathbb
C})=\text{ind}_{P_{J}}^{G}S^{\frac{s}{2}}({\mathfrak u}_{J}^{*}).
$$
This gives part (a) for those types listed in (b)(i). According to
Theorem \ref{isomorphismofinducedthm},
$\text{ind}_{P_{J}}^{G}S^{\bullet}({\mathfrak u}_{J}^{*}) \cong
{\mathbb C}[G\cdot {\mathfrak u}_{J}]\cong {\mathbb C}[{\mathcal
N}(\Phi_{0})]$, which gives part (b)(i).

Consider now the cases listed in part (b)(ii). In this case we have the following spectral sequence (using
Proposition \ref{secondcohoprop}, Theorem
\ref{thirdsectioncohocalthm}(b), and Lemma \ref{4thsectionlemma}),
$$E_{2}^{i,j}=\bigoplus_{t=0}^{l-1} R^{i}\ind_{{P}_{J}}^{G}
S^{\frac{j-\ell(w)-(m+1)t(l-t)}{2}}({\mathfrak u}^{*}_{J})\otimes
\varpi_{t(m-1)}\Rightarrow
\operatorname{H}^{i+j-\ell(w)}(u_{\zeta}({\mathfrak g}),{\mathbb
C}).$$ One can again apply (\ref{5thcohoeqn1}) because we are
tensoring the symmetric algebra by weights in $X(P_{J})_{+}$, thus
the spectral sequence collapses and yields part (ii) of (a) and (b).
One can argue similarly for those cases listed in part (b)(iii) using Theorem
\ref{thirdsectioncohocalthm}(c). In that case $\ell(w) = 8$.
\end{proof}


\section{Spectral sequences, IV} \renewcommand{\thetheorem}{\thesection.\arabic{theorem}}
\renewcommand{\theequation}{\thesection.\arabic{equation}}\setcounter{equation}{0}
\setcounter{theorem}{0}
To show the isomorphisms in Theorem \ref{MainThm} are, in fact, isomorphisms of algebras, we will need some further
observations on one of the spectral sequences introduced earlier. These observations will also be used in
Section 6.3.

In Section \ref{spectralseqandEuler}, a filtration was
introduced on ${\mathcal U}_{\zeta}(\uj)$ which can be
restricted to $u_{\zeta}(\uj)$. Since $u_{\zeta}(\uj)$ is finite
dimensional, the induced filtration on the
cobar complex computing the cohomology
$\opH^{\bullet}(u_{\zeta}(\uj),{\mathbb C})$ is finite.
As in the proof of part (b) of Proposition \ref{euler}, there is a spectral
sequence as follows.

\begin{lem}\label{Mayspectralseq} There is a spectral sequence
$$
E_1^{i,j}= \opH^{i+j}(\gr u_{\zeta}(\uj),{\mathbb
C})_{(i)}\Rightarrow \opH^{i+j}(u_{\zeta}(\uj),{\mathbb C})
$$
of graded algebras and $U_{\zeta}^0$-modules.
\end{lem}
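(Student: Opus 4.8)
The statement is a standard ``associated-graded'' or ``May-type'' spectral sequence attached to a finite multiplicative filtration on a finite-dimensional algebra, so the plan is to assemble it from the ingredients already introduced in Section~2.9, now applied to $u_\zeta(\uj)$ in place of $\mathcal U_\zeta(\uj)$. First I would recall from Section~2.9 that the $\mathbb N^{N-M}$-filtration on $\mathcal U_\zeta(\uj)$ by the subalgebras $A_{\overline a}$ restricts to $u_\zeta(\uj)$, giving a multiplicative (increasing) filtration of $u_\zeta(\uj)$ by subspaces indexed by the lexicographically ordered poset $\Lambda := \mathbb N^{N-M}$, with associated graded algebra $\gr u_\zeta(\uj)$ generated by the $X_\alpha$ ($\alpha\in\Phi^+\setminus\Phi^+_J$) subject to the $\zeta$-commutation relations together with $X_\alpha^l=0$, exactly as displayed there. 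The crucial finiteness point is that $u_\zeta(\uj)$ is finite dimensional, so each $X_\alpha$ is nilpotent and the filtration has only finitely many nontrivial steps; hence on the cobar (reduced dual bar) complex $C^\bullet(u_\zeta(\uj))$ the induced decreasing filtration is finite in each cohomological degree.

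\textbf{Key steps.} The argument then proceeds exactly as in the proof of Proposition~\ref{euler}(b): the finite filtration on each $C^n(u_\zeta(\uj))$ (by the images of the tensor powers of the filtered augmentation ideal) is compatible with the cobar differential, so a spectral sequence of a filtered complex arises with $E_0$-page the cobar complex of $\gr u_\zeta(\uj)$ graded by $\Lambda$. Reindexing $\Lambda$ by $\mathbb N$ (possible because, for computing a fixed weight space or a fixed cohomological degree, only finitely many $\Lambda$-components are nonzero, just as in Proposition~\ref{euler}) and passing to $E_1$ gives
$$E_1^{i,j}=\opH^{i+j}(\gr u_\zeta(\uj),{\mathbb C})_{(i)}\Rightarrow \opH^{i+j}(u_\zeta(\uj),{\mathbb C}),$$
where $(i)$ denotes the filtration-degree-$i$ piece. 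Next I would check the two decorations claimed in the statement. The multiplicative structure: because the filtration on $u_\zeta(\uj)$ is by subalgebras ($A_{\overline a}\cdot A_{\overline b}\subseteq A_{\overline a+\overline b}$, by Lemma~\ref{straighteninglemma}), the cup product on cohomology is filtered, and the standard fact that a spectral sequence of a filtered differential graded algebra is multiplicative from $E_1$ on applies. The $U_\zeta^0$-action: the $\overline{\operatorname{Ad}}$-action of $U_\zeta(\pj)$ on $u_\zeta(\uj)$ (Corollary~\ref{corA}, Corollary~\ref{corB}) restricts to an action of $U_\zeta^0$ which preserves the filtration (each $A_{\overline a}$ is a sum of weight spaces) and hence acts on the whole filtered cobar complex by chain maps; therefore $U_\zeta^0$ acts on every page, compatibly with differentials and with the algebra structure, and $\gr u_\zeta(\uj)$ inherits the same weight grading. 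This gives the spectral sequence as one ``of graded algebras and $U_\zeta^0$-modules.''

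\textbf{Main obstacle.} There is no deep obstacle here — everything is a restriction of constructions already carried out for $\mathcal U_\zeta(\uj)$ in Section~2.9 — but the one point requiring genuine (if routine) care is the bookkeeping in reindexing the $\Lambda$-graded $E_0$/$E_1$ page by $\mathbb N$ so as to obtain an honest (convergent, first-quadrant-up-to-shift) spectral sequence, and verifying that the reindexing is compatible both with the ring structure and with the $U_\zeta^0$-weights; this is precisely the step carried out in the proof of Proposition~\ref{euler}(b), and I would simply invoke that argument verbatim, noting that the extra relations $X_\alpha^l=0$ only make $\gr u_\zeta(\uj)$ finite dimensional and do not affect the formalism. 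The other mild check is that the filtration degree is genuinely bounded in each total degree, which again follows from finite-dimensionality of $u_\zeta(\uj)$.
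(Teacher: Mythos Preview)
Your proposal is correct and follows exactly the approach the paper takes: the paper simply remarks (just before the lemma) that the filtration on $\mathcal U_\zeta(\uj)$ from Section~\ref{spectralseqandEuler} restricts to $u_\zeta(\uj)$, that finite-dimensionality makes the induced filtration on the cobar complex finite, and that the spectral sequence then arises exactly as in the proof of Proposition~\ref{euler}(b). If anything, you have been more explicit than the paper about the multiplicative structure and the $U_\zeta^0$-equivariance, both of which the paper merely asserts in the statement.
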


\noindent
Since the filtration on $u_{\zeta}(\uj)$ is finite,
this spectral sequence has only finitely many columns, and
hence eventually stops.

Globally, we have
$E_1^{\bullet,\bullet} \cong \opH^{\bullet}(\gr u_{\zeta}(\uj),{\mathbb C})$.
By \cite[Prop. 2.3.1]{GK}, there exists a graded algebra
isomorphism
\begin{equation}\label{gradediso}\opH^{\bullet}(\gr u_{\zeta}({\mathfrak u}_{J}),{\mathbb C})\cong
S^{\bullet}({\mathfrak u}_{J}^{*})^{[1]}\otimes
\Lambda^{\bullet}_{\zeta,J}.
\end{equation}
This is also an isomorphism of
$U_{\zeta}^0$-modules with $u_{\zeta}^0$ acting trivially on the
symmetric algebra. Moreover, under the isomorphism (\ref{gradediso}),
\begin{equation}\label{gradediso2}
\opH^n(\gr u_{\zeta}(\uj),{\mathbb C}) \cong \bigoplus_{2a + b = n}
S^{a}({\mathfrak u}_{J}^{*})^{[1]}\otimes\Lambda^{b}_{\zeta,J}.
\end{equation}

By the isomorphism (\ref{gradediso}), $E_1^{\bullet,\bullet}$ is
finitely generated over a subalgebra which is isomorphic (as
algebras and $U_{\zeta}^0$-modules) to $S^{\bullet}(\uj^*)^{[1]}$.
For notational convenience, we abusively consider
$S^{\bullet}(\uj^*)^{[1]}$ as a subalgebra of
$E_1^{\bullet,\bullet}$. Under mild conditions on $l$, this
subalgebra consists of universal cycles.

\begin{prop}\label{universalcycles} Let $l$ satisfy Assumption~\ref{assumption2} and $J\subseteq \Pi$. In the spectral sequence of
Lemma \ref{Mayspectralseq}, $d_{r}(S^{\bullet}(\uj^*)^{[1]})=0$ for $r\geq 1$.
\end{prop}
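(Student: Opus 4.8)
\textbf{Proof proposal for Proposition \ref{universalcycles}.}

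The plan is to show that the generators of $S^\bullet(\uj^*)^{[1]}$ sit in a fixed, bounded filtration degree and carry a suitable parity, so that any nonzero differential $d_r$ would have to land in a bidegree that the target cannot occupy. Concretely, under the isomorphism (\ref{gradediso2}) the copy of $S^\bullet(\uj^*)^{[1]}$ inside $E_1^{\bullet,\bullet}$ is concentrated in the $\Lambda^0_{\zeta,J}$-slot, i.e., in the summand with $b=0$, and the degree-$1$ generators $\uj^*$ appear in cohomological degree $2$. First I would pin down exactly which column $(i)$ the subalgebra $S^\bullet(\uj^*)^{[1]}$ lives in: the filtration on $u_\zeta(\uj)$ is the one induced from the $\mathbb N^{N-M}$-filtration of Section \ref{spectralseqandEuler}, and the $l$th powers $F_{\overline a}^{\,l}$ — equivalently the generators of the central subalgebra whose cohomology produces the symmetric-algebra factor — all lie in a specific filtration degree. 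The symmetric algebra classes, being dual to these $l$th powers, occupy the column $i$ equal to $l$ times the corresponding weight's ``height'' in $\mathbb N^{N-M}$; the key point is that they all lie in the \emph{same} homogeneous slot of $\gr$ with respect to the grading by $\Lambda$, while the $\Lambda_{\zeta,J}$-classes lie in complementary slots.

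The cleaner route, which I expect to be the one the authors intend, is a parity/weight argument identical in spirit to the one in the proof of Theorem \ref{thirdsectioncohocalthm} and Lemma \ref{4thsectionlemma}. The differential $d_r: E_r^{i,j}\to E_r^{i+r,j-r+1}$ changes total cohomological degree by $1$, hence changes parity. A class $\xi\in S^a(\uj^*)^{[1]}$ sits in total degree $2a$ (even) and has weight $l\mu$ for some $\mu\in\mathbb N\Phi^+$ supported on $\Phi^+\setminus\Phi_J^+$; its image under $d_r$ would be an element of $\opH^{2a+1}(\gr u_\zeta(\uj),\mathbb C)$ of the \emph{same} $U_\zeta^0$-weight $l\mu$. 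By (\ref{gradediso2}), $\opH^{2a+1}(\gr u_\zeta(\uj),\mathbb C)\cong\bigoplus_{2c+d=2a+1}S^c(\uj^*)^{[1]}\otimes\Lambda^d_{\zeta,J}$, and since $2a+1$ is odd every summand has $d$ odd, in particular $d\geq 1$. So $d_r(\xi)$, being a sum of products (a symmetric-algebra part of weight in $lX$) times (a nonzero exterior part, of weight a sum of at least one root from $\Phi^+\setminus\Phi_J^+$, none of which is divisible by $l$ under Assumption \ref{assumption2}), would have weight \emph{not} in $lX$ — contradicting that its weight equals $l\mu$. Here is precisely where Assumption \ref{assumption2} enters: it guarantees $\gcd(l,(X:Q))=1$ for the relevant root systems (the restriction to $l>3$ in types $B_n,C_n$ is exactly the content of Assumption \ref{assumption2} beyond Assumption \ref{assumption}), so that a nonzero element of $\Lambda^{\geq 1}_{\zeta,J}$ cannot have weight in $lX$; this is the same mechanism used throughout Chapter 4, e.g. at the start of Section \ref{classicalBCD}.

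The proof then concludes by induction on $r$: assuming $d_1,\dots,d_{r-1}$ vanish on $S^\bullet(\uj^*)^{[1]}$, the subalgebra survives to $E_r$ unchanged, and the weight argument above shows $d_r$ vanishes on it as well — using that $d_r$ is a derivation, it suffices to check this on the algebra generators $\uj^*\subset S^1(\uj^*)^{[1]}$ sitting in total degree $2$. I would also note the degenerate case $a=0$ is trivial (the identity is always a universal cycle) and $a\geq 1$ is covered by the argument. The main obstacle I anticipate is bookkeeping: one must make sure the weight of $d_r(\xi)$ really is forced to equal the weight of $\xi$ (this is automatic since the spectral sequence is one of $U_\zeta^0$-modules and $d_r$ is $U_\zeta^0$-equivariant, which is stated in Lemma \ref{Mayspectralseq}), and one must verify carefully that \emph{every} homogeneous component of the odd-degree cohomology of $\gr u_\zeta(\uj)$ involves at least one exterior generator $x_\alpha$ with $\alpha\in\Phi^+\setminus\Phi_J^+$ — this is immediate from (\ref{gradediso2}) since the parity of the exterior degree $d$ matches the parity of the total degree. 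Granting these routine checks, the parity-plus-divisibility obstruction closes the argument.
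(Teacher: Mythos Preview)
Your overall architecture matches the paper's: reduce to the degree-$1$ generators $\uj^*\subset S^1(\uj^*)^{[1]}$ sitting in total degree $2$, use $U_\zeta^0$-equivariance of $d_r$, observe that the target in total degree $3$ decomposes as $(S^1(\uj^*)^{[1]}\otimes\Lambda^1_{\zeta,J})\oplus\Lambda^3_{\zeta,J}$, and then argue by weights. The derivation property and the recursive passage from $d_1$ to $d_r$ are also handled as in the paper. But the heart of your weight argument is wrong.

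Your central claim is that ``a nonzero element of $\Lambda^{\geq 1}_{\zeta,J}$ cannot have weight in $lX$,'' and you justify this by asserting that Assumption~\ref{assumption2} forces $\gcd(l,(X:Q))=1$. Both halves fail. First, Assumption~\ref{assumption2} does \emph{not} give $\gcd(l,(X:Q))=1$: in type $A_n$ one has $(X:Q)=n+1$ and $l\mid n+1$ is allowed, and in type $E_6$ one has $(X:Q)=3$ and $l=9$ is allowed --- indeed these are precisely the exceptional cases of Theorem~\ref{MainThm}(b)(ii),(iii). For $B_n,C_n$ one already has $(X:Q)=2$ and $l$ odd, so the extra restriction $l>3$ in Assumption~\ref{assumption2} has nothing to do with the index. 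Second, and more seriously, even when $\gcd(l,(X:Q))=1$ it is simply false that sums of positive roots avoid $lX$ (equivalently $lQ$). Concretely, in $B_2$ with $l=3$ one has $\alpha_1+(\alpha_1+\alpha_2)+(\alpha_1+2\alpha_2)=3(\alpha_1+\alpha_2)$, so a weight of $\Lambda^3_{\zeta,\varnothing}$ lies in $lQ$. The passage you cite at the start of Section~\ref{classicalBCD} uses $\gcd(l,(X:Q))=1$ only to deduce $\nu\in Q$ from $l\nu\in Q$; it does not claim, and it is not true, that $\Lambda^{\geq 1}_{\zeta,J}$ misses $lX$.

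What actually has to be shown --- and what the paper does --- is the specific root-system fact that under Assumption~\ref{assumption2} one cannot write $l\sigma=\gamma_1+\gamma_2+\gamma_3$ with $\sigma\in\Phi^+$ and $\gamma_1,\gamma_2,\gamma_3\in\Phi^+$ distinct. The paper establishes this by a type-by-type analysis of the coefficients $n_{\gamma,\beta}$ of $\gamma=\gamma_1+\gamma_2+\gamma_3$ in the simple-root basis, showing that they cannot all be divisible by $l$. The $B_2$ example above shows this fails for $l=3$ in types $B_n,C_n$, and \emph{this} is exactly why Assumption~\ref{assumption2} excludes $l=3$ there. The $S^1\otimes\Lambda^1$ piece, by contrast, really is handled by a one-line length argument (a root cannot lie in $lQ$), and your intuition is fine there; it is the $\Lambda^3$ piece that carries all the content and that your argument does not address.
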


\begin{proof} We first consider the case when $r=1$. From (\ref{gradediso2}),
the submodule $S^1(\uj^*)^{[1]}$ is identified with a submodule of
$\opH^2(\gr u_{\zeta}({\mathfrak u}_{J}),{\mathbb C})$.
As such, the image of $S^1({\mathfrak u}_{J}^*)^{[1]}$ under $d_1$ must lie
in
$$\opH^3(\gr u_{\zeta}({\mathfrak u}_{J}),{\mathbb C}) \cong
(S^1({\mathfrak
u}_{J}^*)^{[1]}\otimes\Lambda^1_{\zeta,J})\oplus\Lambda^3_{\zeta,J}.$$
A $U^0_{\zeta}$-homogeneous element $x_{\si}$ of $S^1({\mathfrak
u}_{J}^*)^{[1]}$ has weight $l\si$ for some $\si \in \Phi^+ \backslash
\Phi_J^+$. Hence, by weight considerations, if the image of
$x_{\si}$ is not zero, it cannot lie in $S^1({\mathfrak
u}_{J}^*)^{[1]}\otimes\Lambda^1_{\zeta,J}$. On the other hand, for
$x_{\si}$ to have nonzero image in $\Lambda^3_{\zeta,k}$, we must
have $l\si = \ga_1 + \ga_2 + \ga_3$ for three distinct (positive)
roots $\ga_i \in \Phi^+ \backslash\Phi_J^+$. Under the given conditions on
$l$, this is not possible. To see this, we argue by the type of
$\Phi$.

For any weight $\eta$ which lies in the positive root lattice, we can write
$\eta = \sum_{\be \in \Pi}n_{\eta,\be}\be$ for unique $n_{\eta,\be} \in {\mathbb Z}_{\geq 0}$.
Set $\ga := \ga_1 + \ga_2 + \ga_3$ for $\ga_i$ as above. For type $A_2$ there is only one element in $\Lambda^3_{\zeta,k}$ and it is not of the form $l\si$.
If the root system is not of  of type $A_2$ the index of the root lattice in the weight lattice is not divisible by $3$. Therefore,
in order to have $\ga = l\si$, $l$ must divide $n_{\ga,\be}$ for each
$\be \in \Pi$. 

In type $A_n$, for each $\ga_i$ and $\be \in \Pi$,
we have $n_{\ga_i,\be} \leq 1$. Hence, $n_{\ga,\be} \leq 3$, and the claim immediately follows for $l > 3$.
For $l = 3$, we could only have $3\si = \ga$ if $\ga_1 = \ga_2 = \ga_3$ which contradicts our assumption.

For types $B_n$, $C_n$, and $D_n$, $n_{\ga,\be} \leq 6$. Hence, the claim follows if $l \geq 7$.
Suppose $l = 5$ and $n_{\ga,\be} = 5$ for some $\be \in \Pi$. Without a loss of generality
we may assume that $n_{\ga_1,\be} = 2$, $n_{\ga_2,\be} = 2$, and $n_{\ga_3,\be} = 1$. Observe that there is
necessarily some $\be' \in \Pi$ such that $n_{\ga_1,\be'} = 1$, $n_{\ga_2,\be'} = 1$, and $n_{\ga_3,\be'} \in \{0, 1\}$.
Thus $n_{\ga,\be'} \in \{2,3\}$ and so cannot be divisible by $5$. In types $B_n$ and $C_n$ when $l = 3$,
the condition can be satisfied (e.g., in type $B_2$, $\al_1 + (\al_1 + \al_2) + (\al_1 + 2\al_2) =
3(\al_1 + \al_2)$).

On the other hand, in type $D_n$, the condition is still not possible when $l = 3$.
To see this, suppose on the contrary that $3\si = \ga_1 + \ga_2 + \ga_3 = \ga$. If $n_{\ga_i,\be} \leq 1$
for each $\ga_i$ and all $\be \in \Pi$, then we are done as in type $A_n$. Suppose now that
$n_{\ga_1,\be} = 2$ for some $\be \in \Pi$. If $\eta$ is a positive root with $n_{\eta,\be} = 2$,
then (in the standard Bourbaki ordering)
$$
\eta = \al_i + \cdots +\al_j + 2\al_{j + 1} + \cdots + 2\al_{n-2} + \al_{n-1} + \al_n
$$
for some $j \geq i$.
Hence, we have $n_{\ga_1,\al_{n-2}} = 2$, $n_{\ga_1,\al_{n-1}} = 1$ and $n_{\ga_1,\al_{n}} = 1$. In order to
have $n_{\ga,\al_{n-1}} = 3 = n_{\ga,\al_{n}}$, we must also have $n_{\ga_2,\al_{n-1}} = n_{\ga_2,\al_{n}} =
n_{\ga_3,\al_{n-1}} = n_{\ga_3,\al_{n}} = 1$.
To have $n_{\ga,\al_{n-2}}$ divisible by 3, there are two cases to consider: either
$n_{\ga_2,\al_{n-2}} = 1$ and $n_{\ga_3,\al_{n-2}} = 0$ or
$n_{\ga_2,\al_{n-2}} = 2 = n_{\ga_3,\al_{n-2}}$. However, there are no such roots $\ga_3$
satisfying $n_{\ga_3,\al_{n-2}} = 0$, $n_{\ga_3,\al_{n-1}} = 1$, and $n_{\ga_3,\al_{n}} = 1$.
So the first case is not possible.

Suppose the latter case holds. Then for each $i$, $n_{\ga_i,\al_{n-3}} \in \{1,2\}$. If these
numbers are not all 2 or all 1, then we are done. If each $n_{\ga_i,\al_{n-3}} = 2$, inductively computing
$n_{\ga_i,\be}$ for $\be = \al_m$ with $m < n-3$ either we are done or we come to the case that
$n_{\ga_i,\be} = 1$ for each $i$. Continuing from that case, since the $\ga_i$ are distinct,
there is some $\be' = \al_{m'}$ with $m' < m$ such that $n_{\ga_1,\be'} = 1$, $n_{\ga_2,\be'} \in \{0,1\}$, and
$n_{\ga_3,\be'} = 0$. Hence, $n_{\ga,\be'}$ is not divisible by 3, and we are done.

For the exceptional types, one can check ``by hand'', using for example MAGMA, that the root
condition $l\si = \ga_1 + \ga_2 + \ga_3$ cannot hold for $l > 3$. Hence, under our assumptions on $l$, we must have 
$d_1(S^1({\mathfrak u}_{J}^*)^{[1]}) = 0$. Since the differentials in the spectral sequence are derivations with respect to the cup product, it
follows that $d_1(S^{\bullet}({\mathfrak u}_{J}^*)^{[1]}) = 0$. 

Now we can recursively apply this argument to show that $d_r(S^{\bullet}({\mathfrak u}_{J}^*)^{[1]}) = 0$ for $r\geq 1$. 
First observe that $d_{r}(S^{1}({\mathfrak u}_{J}^*)^{[1]})$ is always a subquotient of 
$$\opH^3(\gr u_{\zeta}({\mathfrak u}_{J}),{\mathbb C}) \cong (S^1({\mathfrak
u}_{J}^*)^{[1]}\otimes\Lambda^1_{\zeta,J})\oplus\Lambda^3_{\zeta,J}.$$
This means that the aforementioned weight arguments can be applied. Secondly, 
the result after taking kernels modulo images of $S^{\bullet}({\mathfrak u}_{J})^{[1]}$ in $E_{r}$ is always generated in degree one, 
so we can use the fact that the differentials are derivations to conclude that $d_{r}(S^{\bullet}({\mathfrak u}_{J}^*)^{[1]})=0$. 
\end{proof}

We now prove that the cohomology ring contains a subalgebra isomorphic to $S^{\bullet}({\mathfrak u}_{J}^*)^{[1]}$ by using 
our prior calculations in Section~\ref{combinSteinbergsec}. 

\begin{prop} \label{univsubalgebra} Let $l$ satisfy Assumption~\ref{assumption2} and $J\subseteq \Pi$ be as in Section~\ref{Steinbergweightssec}. 
There exists a subring isomorphic to $S:=S^{\bullet}({\mathfrak u}_{J}^*)^{[1]}$ contained in $R:=\opH^{\bullet}(u_{\zeta}(\uj),{\mathbb C})$
such that $R$ is finitely generated over $S$. 
\end{prop}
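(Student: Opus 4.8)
The plan is to read the desired subalgebra off the multiplicative spectral sequence of Lemma~\ref{Mayspectralseq}. By (\ref{gradediso}) its $E_1$-page is the graded algebra $S\otimes\Lambda^\bullet_{\zeta,J}$ with $S=S^\bullet(\uj^*)^{[1]}$, and by Proposition~\ref{universalcycles} every element of $S$ is a permanent cycle (i.e.\ $d_r(S)=0$ for all $r\geq 1$). The construction then goes as follows: $S$ is a polynomial algebra on the finitely many classes $x_\sigma$ ($\sigma\in\Phi^+\setminus\Phi_J^+$), of cohomological degree $2$ and $U_\zeta^0$-weight $l\sigma$, that generate $S^1(\uj^*)^{[1]}$; since each $x_\sigma$ is a permanent cycle in a spectral sequence of algebras, I would choose for each $\sigma$ a cocycle in the cobar complex of $u_\zeta(\uj)$ representing it and let $\phi\colon S\to R$ be the induced $\mathbb C$-algebra homomorphism, whose image lands in $R=\opH^\bullet(u_\zeta(\uj),\mathbb C)$ precisely because the $x_\sigma$ are permanent cycles. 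It remains to prove $\phi$ is injective and that $R$ is module-finite over $\phi(S)$.

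The injectivity is the heart of the matter. Give $R$ the finite filtration furnished by Lemma~\ref{Mayspectralseq}, so that $\gr R\cong E_\infty$; then the leading term of $\phi(x_\sigma)$ in $\gr R$ is the image of $x_\sigma$ under the iterated surjection $E_1\twoheadrightarrow\cdots\twoheadrightarrow E_\infty$. The point to establish is that these images, and all monomials in them, remain nonzero and $\mathbb C$-linearly independent in $E_\infty$; granting this, a nontrivial linear relation among the $\phi$-images of the monomial basis of $S$ would, upon passing to highest-filtration (leading) terms, contradict the fact that $S$ is a \emph{polynomial} subalgebra of $E_1$. To establish independence in $E_\infty$ I would combine the weight bookkeeping already used in the proof of Proposition~\ref{universalcycles} — a $U_\zeta^0$-weight of the form $l\nu$ cannot be obtained from an element of $S$ by adjoining a nonempty product of the degree-one generators of $\Lambda^\bullet_{\zeta,J}$, since such a product contributes a nonzero sum of distinct roots in $\Phi^+\setminus\Phi_J^+$ which, under Assumption~\ref{assumption2}, is not divisible by $l$ — with the combinatorial computations of Section~\ref{combinSteinbergsec}, in particular Theorem~\ref{multSteinberg}, which pin down the multiplicity of the relevant ``Steinberg'' weights in $\opH^\bullet(\Uz(\uj),\mathbb C)$ and hence, after passage to $u_\zeta(\uj)$, control $R$ in the weights $l\nu$. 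Together these inputs should force the surjections $E_1\twoheadrightarrow\cdots\twoheadrightarrow E_\infty$ to restrict to isomorphisms on $S$, yielding injectivity of $\phi$ and thus $S\cong S':=\phi(S)\subseteq R$.

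For the finite generation, observe that each differential $d_r$ is a derivation annihilating $S$, hence $S$-linear; therefore every page $E_r^{\bullet,\bullet}$ is a subquotient of the $S$-module $E_1^{\bullet,\bullet}\cong S\otimes\Lambda^\bullet_{\zeta,J}$. Since $\Lambda^\bullet_{\zeta,J}$ is finite-dimensional, $E_1$ is a finitely generated $S$-module; as $S$ is Noetherian, so is every $E_r$, and in particular $E_\infty\cong\gr R$ is finitely generated over the image of $S$. Because the filtration on $R$ is finite, lifting finitely many homogeneous generators of $\gr R$ over $\gr S'$ and inducting on filtration degree shows $R$ is finitely generated as a module over $S'$.

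I expect the only genuine obstacle to be the independence step of the second paragraph — showing that the permanent cycles coming from $S$ truly survive to $E_\infty$ rather than being killed as boundaries on later pages. This is exactly where the root-theoretic combinatorics of Section~\ref{combinSteinbergsec} and the non-divisibility hypotheses of Assumption~\ref{assumption2} are indispensable; the construction of $\phi$ and the module-finiteness argument are formal.
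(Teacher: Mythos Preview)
Your framework is right, and your finite-generation argument (the differentials are $S$-linear derivations, $E_1\cong S\otimes\Lambda^\bullet_{\zeta,J}$ is finite over $S$, hence so is $E_\infty\cong\gr R$, hence so is $R$ over the image $S'$ of $S$) is correct and matches the paper.

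The gap is in your injectivity step. Your stated weight claim --- that a nonempty sum of distinct roots in $\Phi^+\setminus\Phi_J^+$ cannot lie in $lX$ --- is simply false: already for $\Phi=A_2$, $l=3$, $J=\varnothing$ (which satisfies Assumption~\ref{assumption2}) one has $\alpha_1+(\alpha_1+\alpha_2)=3\varpi_1\in 3X$; Lemma~\ref{lequalh} exhibits such weights systematically. Proposition~\ref{universalcycles} only rules out the very special equation $l\sigma=\gamma_1+\gamma_2+\gamma_3$ with $\sigma$ a \emph{single} root; it does not extend to the general divisibility statement you need. What you actually need to exclude is that an element of $\Lambda^{\text{odd}}_{\zeta,J}$ has weight in $lX$ (so that nothing in odd total degree can map onto $S$), and you have given no argument for this; your appeal to Theorem~\ref{multSteinberg} is a gesture, not a proof.

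The paper closes this gap by a Krull dimension argument rather than direct weight exclusion. One first accepts that only a quotient $S'$ of $S$ is visibly a subring of $R$, with $R$ finite over $S'$. Then, with $M=(\ind_{U_\zeta(\mathfrak b)}^{U_\zeta(\mathfrak p_J)}w\cdot 0)^*$ the Steinberg module and $N:=\Hom_{u_\zeta(\mathfrak l_J)}(M,R)$, Theorem~\ref{thirdsectioncohocalthm} identifies $N$ as a free $S$-module of positive rank (indeed $N\cong S$ in the generic case). Since $M$ is projective over $u_\zeta(\mathfrak l_J)$, $N$ is an $S'$-summand of $R$ and hence finite over $S'$; comparing Krull dimensions forces $\dim S'\geq\dim N=\dim S$, and as $S'$ is a quotient of the polynomial ring $S$ this gives $S'\cong S$. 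So the combinatorics of Chapter~\ref{combinSteinbergsec} enter only indirectly, through the computation of $N$, not through a boundary-exclusion argument on the spectral sequence.
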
 

\begin{proof} According to Proposition~\ref{universalcycles}, $d_r(S)=0$ in the spectral sequence of
Lemma \ref{Mayspectralseq}. 
After taking kernels modulo images of $S$ of the differentials $d_r$  one obtains a quotient
$S^{\prime}$ of $S$.
 Note that $S^{\prime}$ is a subring of $R$ such that 
$R$ is finitely generated over $S^{\prime}$.  Set $M:=(\text{ind}_{U_{\zeta}({\mathfrak b})}^{U_{\zeta}({\mathfrak p}_{J})} w\cdot 0)^{*}$ 
as in Section~\ref{Steinbergweightssec}. Consider the two spectral sequences: 
$$
E_1^{i,j}= \opH^{i+j}(\gr u_{\zeta}(\uj),{\mathbb
C})_{(i)}\Rightarrow \opH^{i+j}(u_{\zeta}(\uj),{\mathbb C}),
$$
$$
\widetilde{E}_1^{i,j}= \text{Hom}_{u_{\zeta}({\mathfrak l}_{J})}(M,\opH^{i+j}(\gr u_{\zeta}(\uj),{\mathbb
C})_{(i)})\Rightarrow \text{Hom}_{u_{\zeta}({\mathfrak l}_{J})}(M,\opH^{i+j}(u_{\zeta}(\uj),{\mathbb C})).
$$
Observe that $S=S^{\bullet}({\mathfrak u}_{J}^*)^{[1]} \subseteq \text{Hom}_{u_{\zeta}({\mathfrak u}_{J})}({\mathbb C},   \opH^{\bullet}(\gr u_{\zeta}(\uj),{\mathbb C})  )$.  
In the case when $J\subseteq \Pi$ is as in Section~\ref{Steinbergweightssec}, the second spectral sequence collapses 
and we have  
$$N:= \text{Hom}_{u_{\zeta}({\mathfrak l}_{J})}(M, \opH^{\bullet}(\gr u_{\zeta}(\uj),{\mathbb C}))\cong 
\text{Hom}_{u_{\zeta}({\mathfrak l}_{J})}(M,\opH^{\bullet}(u_{\zeta}(\uj),{\mathbb C})).$$ 
which is a finitely generated $S$-module. 

A description of $N$ is given in Theorem~\ref{thirdsectioncohocalthm}, and the action of $S$ on $N$ is given by left multiplication on the components 
involving the symmetric powers of ${\mathfrak u}_{J}^{*}$ in $N$. Now $R$ is finitely generated over $S^{\prime}$ and 
$S^{\prime}$ is contained in $\text{Hom}_{u_{\zeta}({\mathfrak l}_{J})}({\mathbb C},R)$ so $S^{\prime}$ will 
act on $\text{Hom}_{u_{\zeta}({\mathfrak l}_{J})}(M,R)$. Moreover $N$ is a $S^{\prime}$-summand of 
$R$ because of the projectivity of $M$, thus $N$ is a finitely generated $S^{\prime}$-module. Therefore, the 
(Krull) dimension of $S^{\prime}$ is greater than or equal to the dimension of $S$. Since $S$ is a polynomial ring and $S^{\prime}$ its quotient, we can conclude that 
none of the differentials in the spectral sequence could have non-zero image in $S$, thus $S\cong S^{\prime}$. 
\end{proof} 


\section{Proof of the main result, Theorem 1.2.3, II}\label{cohoring}\renewcommand{\thetheorem}{\thesection.\arabic{theorem}}
\renewcommand{\theequation}{\thesection.\arabic{equation}}\setcounter{equation}{0}
\setcounter{theorem}{0}

We will now prove that the isomorphisms in Theorem~\ref{MainThm} are algebra isomorphisms for those cases listed in part (b)(i).
Let $J\subseteq \Pi$ and consider the spectral sequence
$$E_{2}^{i,j}=R^{i}\text{ind}_{P_{J}}^{G} \text{H}^{j}(u_{\zeta}({\mathfrak p}_{J}),{\mathbb C})
\Rightarrow \text{H}^{i+j}(u_{\zeta}({\mathfrak g}),{\mathbb C}).$$

The (vertical) edge homomorphism is a map of algebras $$\phi:\text{H}^{\bullet}(u_{\zeta}({\mathfrak g}),{\mathbb C})
\rightarrow E_{2}^{0,\bullet},$$ where
$E_{2}^{0,\bullet}=\text{ind}_{P_{J}}^{G} \text{H}^{\bullet}(u_{\zeta}({\mathfrak p}_{J}),{\mathbb C})$.
The map of algebras is induced by the restriction map $\text{H}^{\bullet}(u_{\zeta}({\mathfrak g}),{\mathbb C})
\rightarrow \text{H}^{\bullet}(u_{\zeta}({\mathfrak p}_{J}),{\mathbb C})$.

Next consider the Lyndon-Hochschild-Serre spectral sequence:
$$E_{2}^{i,j}=\text{H}^{i}(u_{\zeta}({\mathfrak l}_{J}),\text{H}^{j}(u_{\zeta}({\mathfrak u}_{J}),{\mathbb C}))
\Rightarrow \text{H}^{i+j}(u_{\zeta}({\mathfrak p}_{J}),{\mathbb C}).$$
We also have the vertical edge homomorphism $\delta:\text{H}^{\bullet}(u_{\zeta}({\mathfrak p}_{J}),{\mathbb C})
\rightarrow \text{Hom}_{u_{\zeta}({\mathfrak l}_{J})}({\mathbb C},\text{H}^{\bullet}(u_{\zeta}({\mathfrak u}_{J}),{\mathbb C}))
$. Applying the induction functor to this map and composing with $\phi$ yields an algebra homomorphism

$$\Psi^{\prime}:\text{H}^{\bullet}(u_{\zeta}({\mathfrak g}),{\mathbb C})\rightarrow
\text{ind}_{P_{J}}^{G} \text{Hom}_{u_{\zeta}({\mathfrak l}_{J})}({\mathbb C},\text{H}^{\bullet}
(u_{\zeta}({\mathfrak u}_{J}),{\mathbb C})).$$

Set $R=\text{H}^{\bullet}(u_{\zeta}({\mathfrak u}_J),{\mathbb C})$.  From Proposition~\ref{univsubalgebra}, there
exists a subalgebra of universal cycles
isomorphic to $S^{\bullet/2}({\mathfrak u}_{J}^{*})^{[1]}$ in $R$ such that $R$ is finitely generated over the
subalgebra. This subalgebra can be identified as coming from $E_{1}$-term in the spectral
sequence in Lemma~\ref{Mayspectralseq}. There is an ideal $I'$ such that $E_{1}^{\bullet,\bullet}/I'\cong
S^{\bullet/2}({\mathfrak u}_{J}^{*})^{[1]}$. Therefore, there exists an ideal $I$ (equivariant under $P_{J}$) such that
$$R/I\cong S^{\bullet/2}({\mathfrak u}_{J}^{*})^{[1]}.$$
Consequently, there is  an algebra homomorphism:

$$\Psi:\text{H}^{\bullet}(u_{\zeta}({\mathfrak g}),{\mathbb C})\rightarrow
\text{ind}_{P_{J}}^{G} S^{\bullet/2}({\mathfrak u}_{J}^{*}).$$

Next we observe that our constructions are compatible with the other spectral sequences used in
Sections ~\ref{firstsectioncohocal} and ~\ref{secondsectioncohocal}. In the process of our work, we proved that there is a $G$-module isomorphism

$$\sigma:\text{H}^{\bullet-l(w)}(u_{\zeta}({\mathfrak g}),{\mathbb C})\rightarrow \text{ind}_{P_{J}}^{G} S^{(\bullet-l(w))/2}
({\mathfrak u}_{J}^{*}).$$

Set $M=\text{H}^{\bullet-l(w)}(u_{\zeta}({\mathfrak g}),{\mathbb C})$. Viewing $M$ as a module over 
$\text{H}^{\bullet}(u_{\zeta}({\mathfrak g}),{\mathbb C})$
and
$\text{ind}_{P_{J}}^{G} S^{(\bullet-l(w))/2}
({\mathfrak u}_{J}^{*})$ as a  $\text{ind}_{P_{J}}^{G} S^{\bullet/2}
({\mathfrak u}_{J}^{*})$-module, this isomorphism is compatible with $\Psi$ in 
the sense that $\sigma(x.y)=\Psi(x)\sigma(y)$ where $x\in \text{H}^{\bullet}(u_{\zeta}({\mathfrak g}),{\mathbb C})$ and $y\in M$. Now suppose that $\Psi(x)=0$ 
for some $x\in \text{H}^{\bullet}(u_{\zeta}({\mathfrak g}),{\mathbb C})$. Then $\sigma(x.y)=0$ for all $y\in M$.
Set $y=\text{id}\in \text{H}^{0}(u_{\zeta}({\mathfrak g}),{\mathbb C})$, so $x=0$ because $\sigma$
is an isomorphism. This shows that $\Psi$ is injective. By comparing dimensions, $\Psi$ must be surjective, and
hence an isomorphism of algebras.

\chapter{Finite Generation }\label{finitegen}\renewcommand{\thesection}{\thechapter.\arabic{section}}

This section provides a
proof of Theorem \ref{MainThm2} in Section \ref{mainresults}.  As pointed out in Chapter 1, it can also be found in \cite{MPSW}, in a more general context. Our result depends
heavily on the explicit calculations of the cohomology that we achieved in Chapter 5. 


\section{A finite generation result}\renewcommand{\thetheorem}{\thesection.\arabic{theorem}}
\renewcommand{\theequation}{\thesection.\arabic{equation}}\setcounter{equation}{0}\setcounter{theorem}{0}
Let $G$ be the complex semisimple, simply connected algebraic group with root system $\Phi$.  Maintain
the notation of \S2.3.

Let $J\subseteq \Pi$, and let $D$ be a $P_J$-$S^\bullet({\mathfrak u}^*_J)$-module. This
means that $D$ is a rational $P_J$-module and a module for $S^\bullet({\mathfrak u}^*_J)$ such that, if $g\in P_J$,
$a\in S^\bullet({\mathfrak u}^*_J)$, and $x\in D$, then $g\cdot(ax)=
(g\cdot a)(g\cdot x)$, where we use the natural conjugation action of $P_J$ on $S^\bullet({\mathfrak u}_J^*)$.

Let $A:=\ind_{P_J}^GS^\bullet({\mathfrak u}_J^*)={\mathbb C}[G\times^{P_{J}}{\mathfrak u}_{J}]$ be the
algebra of regular functions on the cotangent bundle $G\times^{P_J}{\mathfrak u}_J$ as discussed in \S3.6. Let ${\mathcal O}$ denote the
corresponding nilpotent orbit such that $G \cdot {\mathfrak u}_J = \overline{\mathcal O}$ (which was denoted
${\mathcal C}_J$ in \S\ref{richardsonorbits}). If the moment
map $\mu:G\times^{P_{J}}{\mathfrak u}_{J} \to G\cdot {\mathfrak u}_J$ is
a resolution of singularities,  then Lemma \ref{even} gives that
$${\mathbb
C}[G\times^{P_{J}}{\mathfrak u}_{J}] \cong {\mathbb C}[{\mathcal
O}].$$ 
Of course, ${\mathbb
C}[\overline{\mathcal O}] \subseteq {\mathbb C}[{\mathcal O}]$ and,
by \cite[Prop. 8.3]{Jan4}, the algebra ${\mathbb C}[{\mathcal O}]$ is the
integral closure of ${\mathbb C}[\overline{\mathcal O}]$ in its
field of fractions. Since ${\mathbb C}[\overline{\mathcal O}] =
{\mathbb C}[G \cdot {\mathfrak u}_J]$ is a finitely generated $\mathbb C$-algebra,
${\mathbb C}[{\mathcal O}] \cong {\mathbb
C}[G\times^{P_{J}}{\mathfrak u}_{J}] $ is also a
finitely generated $\mathbb C$-algebra by \cite[Chapter V, Thm. 9]{ZS} (which, in fact, plays
an essential role in the proof of \cite[Prop. 8.3]{Jan3}).  

We now state a basic result on how finite generation is related to the induction
functor which will play an important role in the discussion that follows.

\begin{prop}\label{indfg} Let $J\subseteq \Pi$ and $D$ be a $P_J$-$S^\bullet({\mathfrak u}^*_J)$-module which is a
finitely generated $S^\bullet({\mathfrak u}_J^*)$-module.
If $A := \ind_{P_J}^{G}S^{\bullet}(\ul_J^*)$ is a finitely generated ${\mathbb C}$-algebra, then $R^{n}\ind_{P_J}^G D$ is a finitely generated $A$-module for $n\geq 0$.
\end{prop}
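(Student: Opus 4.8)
The plan is to reduce the statement to a standard fact about equivariant coherent sheaves on the cotangent bundle $G\times^{P_J}\uj$, namely that the cohomology of a coherent sheaf that is pushed down from $G\times^{P_J}\uj$ to a point (via $\ind_{P_J}^G$) is finitely generated over the ring of global functions on $G\times^{P_J}\uj$, provided the latter is finitely generated over $\mathbb C$. First I would recall that a $P_J$-$S^\bullet(\uj^*)$-module $D$ is precisely the same thing as a $G$-equivariant quasicoherent sheaf $\widetilde D$ on $G\times^{P_J}\uj$ (this is the usual equivalence between $P_J$-equivariant $S^\bullet(\uj^*)$-modules and $G$-equivariant sheaves on the associated bundle, obtained by descent along $G\times\uj\to G\times^{P_J}\uj$), and that the derived functors $R^n\ind_{P_J}^G D$ compute the sheaf cohomology $\opH^n(G\times^{P_J}\uj,\widetilde D)$. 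Under the hypothesis that $D$ is finitely generated over $S^\bullet(\uj^*)$, the sheaf $\widetilde D$ is coherent. The algebra $A=\ind_{P_J}^G S^\bullet(\uj^*)={\mathbb C}[G\times^{P_J}\uj]$ acts on each $R^n\ind_{P_J}^G D$, and we are given $A$ is a finitely generated ${\mathbb C}$-algebra.

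The key step is then the following: for a coherent sheaf $\mathcal F$ on a variety $X=G\times^{P_J}\uj$ which is the total space of a vector bundle over the projective variety $G/P_J$, each cohomology group $\opH^n(X,\mathcal F)$ is finitely generated over ${\mathbb C}[X]$. I would prove this by factoring the structure morphism $p:X\to G/P_J$ (an affine morphism) and using the Leray spectral sequence $\opH^i(G/P_J,R^jp_*\mathcal F)\Rightarrow \opH^{i+j}(X,\mathcal F)$; since $p$ is affine, $R^jp_*\mathcal F=0$ for $j>0$ and the spectral sequence degenerates, giving $\opH^n(X,\mathcal F)\cong \opH^n(G/P_J,p_*\mathcal F)$ with $p_*\mathcal F$ a coherent sheaf of $p_*{\mathcal O}_X$-modules. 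Now $p_*{\mathcal O}_X = \bigoplus_{k\geq 0}S^k(\mathcal U^*)$ where $\mathcal U$ is the bundle with fiber $\uj$, a sheaf of graded ${\mathcal O}_{G/P_J}$-algebras, finitely generated in degree one; since $G/P_J$ is projective, the graded ${\mathbb C}$-algebra $\bigoplus_k\opH^0(G/P_J,S^k(\mathcal U^*))=A$ is what we assumed finitely generated. By the graded analogue of Serre's finiteness theorem for $\mathrm{Proj}$ of a finitely generated graded algebra (applied to $A$ and the coherent $A$-module $\bigoplus_k\opH^n(G/P_J,p_*\mathcal F\otimes{\mathcal O}(k))$, but here the relevant grading is already the symmetric-power grading), each cohomology group in question is a finitely generated $A$-module. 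Concretely: $\opH^n(G/P_J,p_*\mathcal F)$ carries the structure of a graded module over the graded ring $A$, each graded piece is finite-dimensional over ${\mathbb C}$ (coherence plus projectivity of $G/P_J$), and Serre vanishing shows it is generated in bounded degree, hence finitely generated over $A$.

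The main obstacle I anticipate is bookkeeping the equivariant structures and checking that the $A$-action used here (coming from $p_*{\mathcal O}_X$ acting on $p_*\mathcal F$) agrees with the $A$-module structure on $R^n\ind_{P_J}^G D$ that is intended in the statement (the one arising from $A$ being the $n=0$ term and $D$ being a module over $S^\bullet(\uj^*)$). This is a compatibility-of-two-natural-actions verification; I would handle it by noting both actions are induced, functorially in $D$, by the $S^\bullet(\uj^*)$-module multiplication $S^\bullet(\uj^*)\otimes D\to D$, which is a $P_J$-equivariant map, hence induces $A\otimes R^n\ind_{P_J}^G D\to R^n\ind_{P_J}^G D$ via the cup product / Frobenius reciprocity, and this is exactly the sheaf-theoretic action. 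A secondary technical point is that strictly speaking $G/P_J$ is a homogeneous space rather than an abstract projective variety, so I would either invoke that $G/P_J$ is projective (standard, since $P_J$ is parabolic) to apply the classical theorems, or cite \cite[pp. 90--97]{Jan4} where precisely these cohomology-of-induced-modules-on-cotangent-bundles computations are set up in the form needed here. Once the identification $R^n\ind_{P_J}^G D\cong\opH^n(G\times^{P_J}\uj,\widetilde D)$ and the coherence of $\widetilde D$ are in place, the finite generation over $A$ is essentially formal from Serre's theorem.
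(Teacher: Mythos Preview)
Your geometric setup---identifying $R^n\ind_{P_J}^G D$ with $\opH^n(X,\widetilde D)$ for $X=G\times^{P_J}\uj$, recognizing $A=\mathbb C[X]$, and reducing via the affine projection $p:X\to G/P_J$---is exactly the framework the paper uses. The difference lies in how you close the argument, and there your proposal has a genuine gap.

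Your claim that ``Serre vanishing shows it is generated in bounded degree'' does not work as stated. Serre vanishing concerns $\opH^n(Y,\mathcal G\otimes\mathcal L^{\otimes k})$ for a fixed coherent $\mathcal G$ and ample $\mathcal L$; here the graded pieces $\mathcal M_k$ of $p_*\widetilde D$ are not twists of a fixed sheaf by an ample line bundle, so there is no reason their $\opH^n$ should vanish in high degree (indeed for $n=0$ and $D=S^\bullet(\uj^*)$ they never do). What you actually need is that $\opH^n(X,\widetilde D)$ is finitely generated over $A=\Gamma(X,\mathcal O_X)$ for any coherent $\widetilde D$, and this is a properness statement, not a vanishing statement. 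You also implicitly assume $D$ is graded, which the proposition does not.

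The paper resolves this by a base-change trick that you did not anticipate: it passes to the Noetherian base $A$, forms $Y_A=(G/P_J)\times\operatorname{Spec}A$, and factors $X\xrightarrow{f}Y_A\to\operatorname{Spec}A$ with $f$ affine and the second map projective. The point is that $f_*F_D$ is \emph{coherent} on $Y_A$ (locally it is $\mathbb C[U]\otimes D$ over $\mathbb C[U]\otimes A$; finiteness comes from $\uj\hookrightarrow X\to\operatorname{Spec}A$ being finite), and then projectivity of $Y_A$ over $\operatorname{Spec}A$ together with \cite[I.5.12(c)]{Jan1} gives finite generation of $\opH^n(Y_A,f_*F_D)\cong\opH^n(X,F_D)$ over $A$ directly. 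Equivalently: the composite $X\to\operatorname{Spec}A$ is proper (it is the Stein factorization of the moment map $X\to\mathfrak g$), and proper pushforward preserves coherence. Your outline would be completed by inserting this properness argument in place of the Serre-vanishing step.
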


\begin{proof}
By assumption, $A := \ind_{P_J}^GS^{\bullet}(\mathfrak{u}_J^*)$ is a
Noetherian ${\mathbb C}$-algebra.  Let $G_A$ and $(P_J)_A$ denote the group schemes obtained by
extending scalars from ${\mathbb C}$ to $A$.  Furthermore, let $D_A := D\otimes_{\mathbb C}A$ denote the
corresponding $P_J$-module extended to a $(P_J)_A$-module.  Since $G/P_J$ is projective
over ${\mathbb C}$, $G_A/(P_J)_A \cong (G/P_J)_A$ is projective over $A$.  Therefore, by
\cite[Prop. I.5.12(c)]{Jan1}, $R^n\ind_{(P_J)_A}^{G_A}D_A$ is finitely generated over $A$.
To complete the proof, we show that $R^n\ind_{P_J}^{G}D \cong R^n\ind_{(P_J)_A}^{G_A}D_A$,
and we do so with the aid of sheaf cohomology.

Let $X=G\times^{P_J}{\mathfrak u}_J$ and $Y=G/P_J$. Let $Y_A:=Y\times_{\mathbb C} \text{Spec}\,A$. Because
$A=\Gamma(X,{\mathcal O}_X)$ (the space of global sections of the sheaf
${\mathcal O}_X$), there is a natural morphism $\sigma_2:X\to \text{Spec}\,A\cong G\cdot {\mathfrak u}_J$.
There is also the projection morphism $\sigma_1:X\to Y$ which can be viewed as the cotangent bundle
of $X$. Let $f=\sigma_1\times\sigma_2:X\to Y_A$ be the pull-back
morphism.

The finitely generated  $S^{\bullet}({\mathfrak u}_{J}^*)$-module $D$ defines a coherent ${\mathcal O}_X$-module $F=F_D$ on $X$. To see this, just observe
that if $V$ is an open subvariety of $G$ of the form $U\times P_J$ (which exists by the Bruhat decomposition), then
$X$ contains an open affine subvariety $V':=V\times^{P_J}{\mathfrak u}_J\cong U\times {\mathfrak u}_J$. Then $\Gamma(V',F)\cong
{\mathbb C}[U]\otimes D$, which is certainly a finitely generated $\Gamma(V',{\mathcal O}_X)={\mathbb C}[U]\otimes S^\bullet({\mathfrak u}^*_J)$-module.

The morphism $f$ is an affine morphism, and it is easily verified that the direct image sheaf $f_*F$ is a coherent ${\mathcal O}_{Y_A}$-module.

Thus, $H^n(X,F)\cong H^n(Y_A,f_*F)\cong H^n(Y,\sigma_{1*}F)$ (see \cite[Ex. 4.1, p. 222]{Har}). We also use here the
fact that the projection $Y_A\to Y$ is affine.
By construction, $H^n(Y,\sigma_{1*}F) \cong R^n\ind_{P_J}^{G}D$ and
$H^n(Y_A,f_*F) \cong R^n\ind_{(P_J)_A}^{G_A}D_A$ (cf. \cite[Prop. I.5.12(a)]{Jan1}).
Hence $R^n\ind_{P_J}^{G}D \cong R^n\ind_{(P_J)_A}^{G_A}D_A$ as needed.

\end{proof}


\section{Proof of part (a) of Theorem 1.2.4}\renewcommand{\thetheorem}{\thesection.\arabic{theorem}}
\renewcommand{\theequation}{\thesection.\arabic{equation}}\setcounter{equation}{0}
\setcounter{theorem}{0} 

Throughout the remainder of Section 6, we will be working under Assumption~\ref{assumption2}. Thus, 
Assumption~\ref{assumption} is in force, and, if $\Phi$ has type $B_n$ or $C_n$, then $l>3$.
 First, we deal with the 
cases in which $l\nmid n+1$ when $\Phi$ is of type $A_{n}$, $l\neq 9$ when $\Phi$ is of type $E_{6}$, and $l \neq 7, 9$ when $\Phi$ is of
type $E_{8}$.  In these cases, Theorem \ref{MainThm} states that the cohomology ring 
$\operatorname{H}^{\bullet}(u_{\zeta}({\mathfrak g}),{\mathbb C})$ is the coordinate algebra
of the affine variety ${\mathcal N}(\Phi_0)$. It is therefore a finitely generated ${\mathbb C}$-algebra. 

Next, if $l = 7, 9$ when $\Phi$ is of type $E_8$, then Theorem \ref{MainThm} states that
$$\opH^{\bullet}(u_{\zeta}({\mathfrak g}), {\mathbb C})\cong
\ind_{P_J}^G S^{\bullet}({\mathfrak u}_J^*) \cong {\mathbb C}[G\times^{P_J}{\mathfrak u}_J].$$ 
In addition, Theorem \ref{isomorphismofinducedthm} says that $\mu:G\times^{P_J}{\mathfrak u}_J\to
G\cdot{\mathfrak u}_J$ is a desingularization  of $G\cdot{\mathfrak u}_J$. The discussion above
Proposition \ref{indfg} in the previous subsection thus implies that $\opH^\bullet(u_{\zeta}({\mathfrak g}),
{\mathbb C})$ is a finitely generated $\mathbb C$-algebra.

In order to handle the other cases (i.e., $l\mid n+1$ when $\Phi$ is of type $A_{n}$, or $l=9$ when $\Phi$ is of type
$E_{6}$),  we need to invoke a more general argument. Set $A=\text{ind}_{P_{J}}^{G}S^{\bullet}({\mathfrak u}^{*}_{J})$. 
In each case, Theorem~\ref{MainThm} implies that $\opH^\bullet(u_\zeta({\mathfrak g}),{\mathbb C})$ has the form
$\ind_{P_J}^G D$, where $D$ is a $P_J$-$S^\bullet({\mathfrak u}_J^*)$-module which is a finitely generated 
$S^\bullet({\mathfrak u}^*_J)$-module. 
Consequently, by Proposition~\ref{indfg}, $\ind_{P_J}^GD$ is a finitely generated $A$-module.
The action of $A$ on $\opH^\bullet(u_\zeta({\mathfrak g}),{\mathbb C})$ is by way of the spectral sequence: 
$$E_{2}^{i,j}=R^{i}\text{ind}_{P_{J}}^{G} \text{H}^{j}(u_{\zeta}({\mathfrak p}_{J}),{\mathbb C})
\Rightarrow \text{H}^{i+j}(u_{\zeta}({\mathfrak g}),{\mathbb C})$$
where $A$ is identified as a subring of universal cycles in the bottom of the filtration for the cohomology (cf. Section~\ref{cohoring}). 
Hence,  $\opH^\bullet(u_\zeta({\mathfrak g}),{\mathbb C})$ is finitely generated over $A$, and thus a finitely generated 
${\mathbb C}$-algebra by the Hilbert Basis Theorem.

\section{Proof of part (b) of Theorem 1.2.4}\label{proofofpartb}\renewcommand{\thetheorem}{\thesection.\arabic{theorem}}
\renewcommand{\theequation}{\thesection.\arabic{equation}} \setcounter{equation}{0}
\setcounter{theorem}{0}

We now prove part (b) of Theorem~\ref{MainThm2}. Let $M$ be a finite dimensional $u_{\zeta}({\mathfrak
g})$-module. Without a loss of generality, we may assume that $M$ is
an irreducible $u_{\zeta}({\mathfrak g})$-module because of the
following proposition which is easily proved by using induction on the
composition length of the module and the long exact sequence in
cohomology.

\begin{prop}\label{finitegenprop} Let $R:=\opH^{\bullet}
(u_{\zeta}({\mathfrak g}),{\mathbb C})$ and $M$ be a
finite dimensional $u_{\zeta}({\mathfrak g})$-module. Suppose that
$\opH^{\bullet}(u_{\zeta}({\mathfrak g}),S)$ is finitely generated
over $R$ for all irreducible $u_{\zeta}({\mathfrak g})$-modules $S$.
Then $\opH^{\bullet}(u_{\zeta}({\mathfrak g}),M)$ is finitely
generated over $R$.
\end{prop}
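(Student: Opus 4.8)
The plan is to argue by induction on the composition length $\ell(M)$ of $M$ as a $u_{\zeta}({\mathfrak g})$-module. The base case $\ell(M)=1$, that is, $M$ irreducible, is exactly the hypothesis. For the inductive step I would choose a short exact sequence
\[
0 \to M' \to M \to M'' \to 0
\]
of finite dimensional $u_{\zeta}({\mathfrak g})$-modules with $\ell(M'),\ell(M'') < \ell(M)$ (for instance, take $M'$ to be a simple submodule of $M$ and $M'' := M/M'$). Applying the functor $\opH^{\bullet}(u_{\zeta}({\mathfrak g}),-)$ yields the associated long exact sequence in cohomology
\[
\cdots \to \opH^{n}(u_{\zeta}({\mathfrak g}),M') \to \opH^{n}(u_{\zeta}({\mathfrak g}),M) \to \opH^{n}(u_{\zeta}({\mathfrak g}),M'') \to \opH^{n+1}(u_{\zeta}({\mathfrak g}),M') \to \cdots .
\]
The key structural point is that this is a long exact sequence of graded $R$-modules: each term carries the natural $R = \opH^{\bullet}(u_{\zeta}({\mathfrak g}),{\mathbb C})$-module structure coming from the Yoneda (cup) product described in \cite[Rem. 5.3, Appendix]{PW}, and all the maps in the sequence, including the connecting homomorphisms, are $R$-linear by naturality of that product; this is standard and I would cite \cite{PW} rather than reproduce it.

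Next I would invoke part (a) of Theorem \ref{MainThm2}, which has just been established: $R$ is a finitely generated commutative ${\mathbb C}$-algebra, hence Noetherian by the Hilbert Basis Theorem. Consequently submodules and quotients of finitely generated $R$-modules are finitely generated, and an extension of two finitely generated $R$-modules is finitely generated. By the inductive hypothesis, both $\opH^{\bullet}(u_{\zeta}({\mathfrak g}),M')$ and $\opH^{\bullet}(u_{\zeta}({\mathfrak g}),M'')$ are finitely generated over $R$. The image of $\opH^{\bullet}(u_{\zeta}({\mathfrak g}),M)$ in $\opH^{\bullet}(u_{\zeta}({\mathfrak g}),M'')$ is then an $R$-submodule of a finitely generated module, hence finitely generated; and the kernel of that map is the image of $\opH^{\bullet}(u_{\zeta}({\mathfrak g}),M')$, hence a quotient of a finitely generated module and thus finitely generated. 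Therefore $\opH^{\bullet}(u_{\zeta}({\mathfrak g}),M)$ is an extension of two finitely generated $R$-modules, so it is finitely generated over $R$, which closes the induction.

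I do not expect any genuine obstacle here: the only nontrivial ingredient is the Noetherianity of $R$, i.e., Theorem \ref{MainThm2}(a), which is already in hand, and the rest is the routine dévissage for finite generation of cohomology. The one place warranting a sentence of care is the verification that the long exact sequence is one of $R$-modules (naturality of the cup-product action and compatibility of the connecting maps), but this is standard homological algebra and needs only a reference. The substantive work of the paper lies entirely in the cohomology computations of Chapter 5 feeding into Theorem \ref{MainThm2}(a), not in this reduction.
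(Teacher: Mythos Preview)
Your proposal is correct and is exactly the approach the paper indicates: the paper simply remarks that the proposition ``is easily proved by using induction on the composition length of the module and the long exact sequence in cohomology,'' without spelling out the details you have supplied. Your explicit use of the Noetherianity of $R$ from Theorem~\ref{MainThm2}(a) to handle the submodule of $\opH^{\bullet}(u_{\zeta}({\mathfrak g}),M'')$ is the right way to make the d\'evissage rigorous.
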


Let $S$ be an irreducible $u_{\zeta}({\mathfrak g})$-module. Since
$S$ lifts to a $U_{\zeta}({\mathfrak g})$-module, there exists a
spectral sequence of $R=\text{ind}_{P_{J}}^{G}
S^{\bullet}({\mathfrak u}_{J}^{*})$-modules (obtained in a manner
analogous to that of Theorem \ref{firstsectioncohocalthm}):
$$E_{2}^{i,j}=R^{i}\text{ind}_{P_{J}}^{G} \opH^{j}
(u_{\zeta}({\mathfrak p}_{J}),\text{ind}_{U_{\zeta}({\mathfrak
b})}^{U_{\zeta}({\mathfrak p}_{J})}w\cdot 0 \otimes S)
\Rightarrow\opH^{i+j-\ell(w)}(u_{\zeta}({\mathfrak g}),S).$$
Using this spectral sequence, it suffices to show that
$D:=\opH^{\bullet}(u_{\zeta}({\mathfrak p}_{J}),\text{ind}_{U_{\zeta}
({\mathfrak b})}^{U_{\zeta}({\mathfrak p}_{J})}w\cdot 0 \otimes S)$
is finitely generated over $S^{\bullet}({\mathfrak
u}_{J}^{*})^{[1]}$ because then $E_{2}^{i,\bullet}$ is finitely
generated over $R$ for each $i$ by Proposition~\ref{indfg}. Moreover,
this spectral sequence stops (i.e.,
$E_{r}=E_{\infty}$ for $r$ sufficiently large) because the higher
right derived functors $R^{i}\text{ind}_{P_{J}}^{G} -$ vanish when
$i> \dim G/P_{J}$. Thus $E_{\infty}$ is finitely generated over $R$,
and $\opH^{\bullet}(u_{\zeta}({\mathfrak g}),S)$ is finitely
generated over $R$.

To show that $D$ is finitely generated over $S^{\bullet}({\mathfrak
u}_{J}^{*})^{[1]}$, observe that one can use the LHS spectral
sequence (Lemma~\ref{ABG}) to show that
$$D \cong
\text{Hom}_{u_{\zeta}({\mathfrak
l}_{J})}((\text{ind}_{U_{\zeta}({\mathfrak
b})}^{U_{\zeta}({\mathfrak p}_{J})}w\cdot 0)^{*},
\opH^{\bullet}(u_{\zeta}({\mathfrak u}_{J}),S)).$$
By the same principles as used in the proposition above, it suffices to
show finite generation when $S$ is an irreducible $u_{\zeta}({\mathfrak
p}_{J})$-module. Note that irreducible $u_{\zeta}({\mathfrak
p}_{J})$-modules are obtained by inflating irreducible
$u_{\zeta}({\mathfrak l}_{J})$-modules, so $u_{\zeta}({\mathfrak
u}_{J})$ acts trivially on $S$. Now we have
$$D \cong \text{Hom}_{u_{\zeta}({\mathfrak l}_{J})}((\text{ind}_{U_{\zeta}
({\mathfrak b})}^{U_{\zeta}({\mathfrak p}_{J})}w\cdot 0)^{*}\otimes S^{*},
\opH^{\bullet}(u_{\zeta}({\mathfrak u}_{J}),{\mathbb C})).$$
But, $(\text{ind}_{U_{\zeta}({\mathfrak b})}^{U_{\zeta}({\mathfrak
p}_{J})}w\cdot 0)^{*}\otimes S^{*}$ is a projective
$u_{\zeta}({\mathfrak l}_{J})$-module. Thus we need to show that
$D_{P}:=\text{Hom}_{u_{\zeta}({\mathfrak l}_{J})}(P,
\opH^{\bullet}(u_{\zeta}({\mathfrak u}_{J}),{\mathbb C}))$ is
finitely generated over $S^{\bullet}({\mathfrak u}_{J}^{*})^{[1]}$
where $P$ is an arbitrary projective indecomposable
$u_{\zeta}({\mathfrak l}_{J})$-module. Observe that
$\opH^{\bullet}(u_{\zeta}({\mathfrak u}_{J}),{\mathbb C})\cong
\oplus_{P} D_{P}$ where the sum is taken over all projective
indecomposable $u_{\zeta}({\mathfrak l}_{J})$-modules. The claim now follows from 
Proposition \ref{univsubalgebra}
which says that $\opH^{\bullet}(u_{\zeta}({\mathfrak
u}_{J}),{\mathbb C})$ is a finitely generated
$S^{\bullet}({\mathfrak u}_{J}^{*})^{[1]}$-module.




 \chapter{Comparison with Positive Characteristic}
\renewcommand{\thesection}{\thechapter.\arabic{section}}
In this chapter, we consider the extent to which the methods used in computing $\opH^\bullet(u_\zeta,{\mathbb C})$
can be adapted to the calculation of the cohomology algebra of the restricted enveloping algebra of a reductive
algebraic group over a field of positive characteristic.

\section{The setting}\setcounter{equation}{0}
\setcounter{theorem}{0}
 Let $F$ be an algebraically closed field of positive characteristic $p$. In this chapter (and contrary to previous notation)
 $G$ denotes a connected, simple, simply connected algebraic group defined over the prime field
 ${\mathbb F}_p$.\footnote{The results in this chapter can be extended to general reductive groups. We leave this issue to the interested reader.}  Fix a maximal torus $T$, and let $\Phi$ be the root system of $T$ acting on the
 Lie algebra ${\mathfrak g}_F$ of $G$. As usual, fix a set $\Pi$ of simple roots. The standard notational conventions for the
 complex Lie algebra in earlier chapters apply equally in the present case.  However, the Lie algebra ${\mathfrak g}_F$ has an extra
 structure provided by a ``restriction
map" $x\mapsto x^{[p]}$; we let $u=u({\mathfrak g}_F)$ be the corresponding restricted enveloping algebra. Thus, $u$ is a finite
 dimensional (cocommutative) Hopf algebra.

 Let $\text{Fr}:G\to G$ be the Frobenius morphism and let $G_1$ be its scheme-theoretic kernel. It is well-known that the
 category of rational $G_1$-modules is equivalent to the category of restricted ${\mathfrak g}_F$-modules (i.e., to the category
 of $u$-modules). For this and historical reasons, we will state the results below in terms of the infinitesimal group scheme $G_1$,
 though the reader can replace each $G_1$ by $u$ if desired.

 For a rational $G$-module $M$, let $M^{(1)}$ denote the rational $G$-module obtained by
 making $g\in G$ act on $M$ through $\text{Fr}(g)$. In particular, $G_1$ (or ${\mathfrak g}_F$) acts trivially on
 $M^{(1)}$. Conversely, if $N$ is a rational $G$-module on which $G_1$ (or ${\mathfrak g}_F$) acts trivially, there exists a rational $G$-module $M$,
 uniquely defined up to isomorphism, such that  $M^{(1)}\cong N$. In this case, we can simply write $M=N^{(-1)}$.

 As usual, let ${\mathcal N}={\mathcal N}({\mathfrak g}_F)\subset{\mathfrak g}_F$ be the nullcone of ${\mathfrak g}_F$, the closed subvariety
 consisting of nilpotent elements (equivalently the closure of the Richardson class of regular nilpotent elements). The restricted nullcone ${\mathcal N}_1={\mathcal N}_1({\mathfrak g}_F)$ is the closed subvariety of
 ${\mathcal N}$ consisting of those $x\in{\mathcal N}$ satisfying $x^{[p]}=0$. It is an irreducible variety which can be explicitly
 described as the closure of a specific Richardson orbit; see \cite{CLNP} and \cite{UGA2}. In particular, ${\mathcal N}_1={\mathcal N}$ if and only
 if $p\geq h$, where $h$ is the Coxeter number of $G$ (i.~e., the maximum of the Coxeter numbers of the various simple components of
 the derived subgroup $G'$ of $G$).

  When $p>h$, it is known that $\opH^\bullet(G_1,F)=\opH^{2\bullet}(G_1,F)$ is isomorphic as a rational $G$-algebra to
 $F[{\mathcal N}]^{(1)}$. This result was first proved by Friedlander-Parshall \cite{FP2} for $p\geq 3(h-1)$, and then the
 bound was improved to $p>h$ by Andersen and Jantzen \cite{AJ} by different methods. Also, \cite{AJ} provided some ad hoc
 calculations of $G_1$-cohomology in the cases when $p\leq h$. We will demonstrate how these calculations fit into our general
 framework.
  As noted in the introduction (with references), it has been shown that $\text{Spec}\,\opH^{2\bullet}(G_1,F)$
 is homeomorphic (as a topological space) to ${\mathcal N}_1$.

\section{Assumptions}\setcounter{equation}{0}
\setcounter{theorem}{0}  For $J\subset \Pi$, we formulate two assumptions.
The first assumption involves Grauert-Riemenschneider vanishing which is known to hold in the cases when 
$\Phi_{J}=\varnothing$  or $\Phi_{J}$ is of type $A_{1}$ (in positive characteristic). 
In positive characteristic it has also been verified in a few other special cases which 
will be discussed in Section 7.4 below.

\vskip .15cm \noindent (A1) $R^{i}\text{ind}_{P_{J}}^{G}
S^{\bullet}({\mathfrak u_J}^{*})=0$ for $i>0$.

\vskip .15cm This condition can be reformulated in the following  fashion:

\noindent \vskip .15cm \noindent (A1)${}^{\prime}$
$R^{i}\text{ind}_{P_{J}}^{G} S^{\bullet}({\mathfrak u_J}^{*})\otimes
\lambda=0$ for $i>0$ and $\lambda\in X(P_{J})_{+}$. (Here $P_J$ is the parabolic subgroup of $G$ with Lie
algebra ${\mathfrak p}_J$ and $X(P_J)_+=X(P_J)\cap X_+$, the set of dominant weights on $T$ which extend to define
characters on $P_J$.)

 \vskip .15cm
\noindent The second assumption on $J$ is a condition on the
normality of the closure of the Richardson orbit defined by $J$:

\vskip .15cm \noindent (A2) The Richardson orbit closure $G\cdot {\mathfrak u}_{J}$
is normal. \vskip .15cm

In the calculation of $\opH^\bullet(u_\zeta,{\mathbb C})$, assumption (A1) held generally and assumption (A2) held for the relevant subsets $J\subset\Pi$.
However, in the positive characteristic case of this
chapter, much less is known about the validity
of (A1) and (A2). The situation for (A2) is as follows:

\vskip.15cm
(1) In type $A$, all orbits are Richardson orbits, and all orbit closures are normal (Donkin \cite{D}).

\vskip.15cm
(2) Assume that $J=\{\alpha\}$, $\alpha\in\Pi$, consists of a single root. Then $G\cdot{\mathfrak u}_J$ is the closure of the so-called subregular class ${\mathcal O}_{\text{subreg}}$ in $\mathcal N$. (It is independent of the choice of simple root $\alpha$ \cite[Thm. 5.7]{Hum1}.) Then $G\cdot{\mathfrak u}_J=
\overline{{\mathcal O}_{\text{subreg}}}$ is normal
(Kumar, Lauritzen, and Thomsen \cite{KLT}).

\vskip.15cm
(3) Generalizing (2) in some sense, assume that $J\subseteq\Pi$ consists of pairwise orthogonal short simple roots,
 the corresponding Richardson orbit closure $G\cdot {\mathfrak u}_J$ is normal (Thomsen \cite[Prop. 7]{Th}).

 \vskip.15cm
 (4)  Finally,  Christophersen \cite{C} has
recently determined the nilpotent orbits for type $E_6$ with $p\geq 5$ which have
normal orbit closure. See below for more specific information.


\section{Consequences}\setcounter{equation}{0}
\setcounter{theorem}{0}

 Using assumptions (A1) and (A2), we can determine when the cohomology algebra $\opH^\bullet(G_1,F)$  identifies with
the coordinate algebra $F[{\mathcal N}_1]$ of the restricted nullcone. In the present case, the
proof is simpler than that used in the calculation of $\opH^\bullet(u_\zeta,{\mathbb C})$ due, in part, to the
fact that the exterior algebra $\Lambda^{\bullet}(\uj^*)$ has a
natural structure as a $P_J$-module, whereas the quantized exterior
algebra $\Lambda^{\bullet}_{\zeta,J}$ does not (apparently) admit a natural
structure as a $U_{\zeta}(\pj)$-module.

\begin{theorem}\label{FrobthmA} Let $G$ be a connected, simple, simply connected algebraic
group over an algebraically closed field $F$ of positive characteristic $p$ as above. Assume that
$p\geq 3$, and that $p$ is a very good prime for $G$. Let $w\in W$ such that
$w(\Phi_{0}^+)=\Phi_{J}^+$. Then the following statements hold.
\begin{itemize}
\item[(a)] If $J\subseteq \Pi$ satisfies (A1), then
\begin{itemize}
\item[(i)] $\opH^{2\bullet}(G_{1},F)^{(-1)}\cong \operatorname{ind}_{P_{J}}^{G}
S^{\bullet}({\mathfrak u}_{J}^{*})$;
\item[(ii)] $\opH^{2\bullet+1}(G_{1},F)=0$.
\end{itemize}
\item[(b)] If $J\subseteq \Pi$ satisfies (A1) and (A2), then
\begin{itemize}
\item[(i)] $\opH^{2\bullet}(G_{1},F)^{(-1)}\cong F[{\mathcal N}(\Phi_{0})]$;
\item[(ii)] $\opH^{2\bullet+1}(G_{1},F)=0$.
\end{itemize}
\end{itemize}
Furthermore, these identifications are isomorphisms of rational $G$-algebras.
\end{theorem}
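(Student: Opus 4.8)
\textbf{Proof proposal for Theorem \ref{FrobthmA}.}

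The plan is to mimic the strategy used in Chapter 5 to compute $\opH^\bullet(u_\zeta,{\mathbb C})$, but in the considerably simpler positive characteristic setting, where the exterior algebra $\Lambda^\bullet({\mathfrak u}_J^*)$ is an honest $P_J$-module. First I would reduce the computation of $\opH^\bullet(G_1,F)$ to that of $\opH^\bullet((P_J)_1,F)$ via a spectral sequence of the form $E_2^{i,j}=R^i\ind_{P_J}^G\opH^j((P_J)_1,\ind_{B_l}^{(P_J)_1}w\cdot 0)\Rightarrow \opH^{i+j-\ell(w)}(G_1,F)$, constructed exactly as in Theorem \ref{firstsectioncohocalthm} (using the collapse of the Grothendieck spectral sequence for induction in stages, since $w\cdot 0$ is $J$-dominant by Lemma \ref{Steinbergwtslemma}, and the vanishing $R^i\ind_B^G w\cdot 0=0$ for $i\neq\ell(w)$). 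Next, via the LHS spectral sequence for $(U_J)_1\unlhd(P_J)_1$ with quotient $(L_J)_1$, I would reidentify $\opH^j((P_J)_1,\ind_{B_l}^{(P_J)_1}w\cdot 0)$ with $\Hom_{(L_J)_1}((\ind_{B_l}^{(P_J)_1}w\cdot 0)^*,\opH^j((U_J)_1,F))$, just as in Proposition \ref{secondcohoprop}.

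The heart of the argument is then the identification of $\opH^\bullet((U_J)_1,F)$ as a $P_J$-module. Here one uses that, since $p$ is very good for $G$ and $p\geq 3$, the restricted enveloping algebra $u({\mathfrak u}_J)$ is (as an algebra with $P_J$-action) a truncated symmetric algebra, and one has the familiar Friedlander--Parshall/Andersen--Jantzen computation: there is a spectral sequence (or direct argument via the May filtration of Lemma \ref{Mayspectralseq} adapted to $u({\mathfrak u}_J)$) with $E_1$-term $S^\bullet({\mathfrak u}_J^*)^{(1)}\otimes\Lambda^\bullet({\mathfrak u}_J^*)$ converging to $\opH^\bullet(G_1,F)$ in the $J=\varnothing$ case, and more generally to $\opH^\bullet((U_J)_1,F)$. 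The crucial new input is the multiplicity computation of the Steinberg-type module $M=(\ind_{B_l}^{(P_J)_1}w\cdot 0)^*$ in $\opH^\bullet((U_J)_1,F)$: combining the Euler characteristic identity of Proposition \ref{euler}(a) (whose proof carries over verbatim, with $\Lambda^\bullet_{\zeta,J}$ replaced by $\Lambda^\bullet({\mathfrak u}_J^*)$), the Bott-type vanishing theorem \cite[Thm. 2.5.1.3]{W} for $\opH^\bullet({\mathfrak u}_J,F)$, and Proposition \ref{keypropositiononweights}---which, as noted after Lemma \ref{subsystemlemma}, is a statement about weights that does not depend on working over $\mathbb C$---one obtains that $\Hom_{(L_J)_1}(M,\opH^s((U_J)_1,F))\cong S^{(s-\ell(w))/2}({\mathfrak u}_J^*)^{(1)}$ as $P_J$-modules (in the cases covered, i.e. under the very-good hypothesis, which rules out the type $A$ with $l\mid n+1$ and $E_6$, $l=9$ exceptions). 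This is the analogue of Theorem \ref{thirdsectioncohocalthm}(a) and Lemma \ref{4thsectionlemma} combined; the $P_J$-equivariance here is actually \emph{easier} than over $\mathbb C$ precisely because $\Lambda^\bullet({\mathfrak u}_J^*)$ and $S^\bullet({\mathfrak u}_J^*)$ are genuine $P_J$-modules, so the weight-bookkeeping of Lemma \ref{4thsectionlemma} (which forced the use of $\BU(\pj)$-module arguments via spectral sequences) can be replaced by a direct structural statement.

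Substituting back, the $(P_J)_1$-spectral sequence collapses and one gets $\opH^s((P_J)_1,\ind_{B_l}^{(P_J)_1}w\cdot 0)\cong S^{(s-\ell(w))/2}({\mathfrak u}_J^*)$ as $P_J$-modules; feeding this into the $G$-level spectral sequence and invoking assumption (A1) (in the form (A1)$'$, with $\lambda=0$, which is $P$-regular in the trivial sense needed) kills all higher $R^i\ind_{P_J}^G$, so that spectral sequence also collapses, yielding $\opH^{2\bullet}(G_1,F)^{(-1)}\cong\ind_{P_J}^G S^\bullet({\mathfrak u}_J^*)$ and $\opH^{2\bullet+1}(G_1,F)=0$. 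This is part (a). For part (b), assumption (A2) together with the fact that the moment map $\mu:G\times^{P_J}{\mathfrak u}_J\to G\cdot{\mathfrak u}_J$ is a desingularization (as in Theorem \ref{isomorphismofinducedthm}, which applies since, for $J=\varnothing$ or $J$ of type $A_1$ or the short-orthogonal-roots cases, the relevant orbit is even or has trivial component group) gives $\ind_{P_J}^G S^\bullet({\mathfrak u}_J^*)\cong F[G\cdot{\mathfrak u}_J]=F[{\mathcal N}(\Phi_0)]$ by the codimension-$\geq 2$ argument of \eqref{inductionidentification}; since $p$ very good forces $\Phi_0=\Phi_{0,p}$ to satisfy Lemma \ref{subsystemlemma} and ${\mathcal N}(\Phi_0)\cong{\mathcal N}_1$ by \cite{CLNP}, \cite{UGA2}, this is the desired identification. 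Finally, the ring-structure statement is obtained exactly as in Section \ref{cohoring}: the edge homomorphism of the $G$-level spectral sequence $\opH^\bullet(G_1,F)\to E_2^{0,\bullet}=\ind_{P_J}^G\opH^\bullet((P_J)_1,F)$ is an algebra map, composing with the edge map of the LHS spectral sequence and the projection $\opH^\bullet((U_J)_1,F)\onto S^\bullet({\mathfrak u}_J^*)^{(1)}$ (the subring of universal cycles, whose analogue here is immediate since $\Lambda^\bullet({\mathfrak u}_J^*)$ sits inside as a $P_J$-submodule and the relevant differentials vanish by the same weight argument as in Proposition \ref{universalcycles}, using $p\geq 3$ and $p$ good) produces an algebra homomorphism $\Psi:\opH^\bullet(G_1,F)\to\ind_{P_J}^G S^\bullet({\mathfrak u}_J^*)$, which is injective by the module-compatibility trick of Section \ref{cohoring} (apply $\sigma$ to $x\cdot\mathrm{id}$) and surjective by the dimension count from part (a). The main obstacle, relative to the quantum case, is really just checking that the hypotheses (A1) and (A2) are available for the needed $J$; the cohomological machinery itself transfers with only notational changes, the one genuine simplification being the $P_J$-module structure on $\Lambda^\bullet({\mathfrak u}_J^*)$ that streamlines the equivariance arguments.
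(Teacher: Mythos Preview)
Your overall strategy matches the paper's: a $G$-level spectral sequence as in Theorem~\ref{firstsectioncohocalthm}, then the LHS spectral sequence for $(U_J)_1\unlhd(P_J)_1$ as in Proposition~\ref{secondcohoprop}, then a Steinberg-multiplicity computation, then collapse via (A1) and, for (b), the normality assumption (A2) plus Theorem~\ref{isomorphismofinducedthm}. The argument is correct.

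The one noteworthy difference is in the middle step. You propose to compute the Steinberg multiplicity in $\opH^\bullet((U_J)_1,F)$ by adapting the May-type filtration (Lemma~\ref{Mayspectralseq}, $E_1\cong S^\bullet({\mathfrak u}_J^*)^{(1)}\otimes\Lambda^\bullet({\mathfrak u}_J^*)$) and then running the inductive argument of Theorems~\ref{thirdsectioncohocalthm}--\ref{4thsectionlemma}, using the $P_J$-structure on $\Lambda^\bullet({\mathfrak u}_J^*)$ only to streamline the equivariance step. The paper exploits that $P_J$-structure more aggressively: it uses the Friedlander--Parshall spectral sequence \cite[Prop.~1.1]{FP1}
\[
E_2^{2i,j}=S^i({\mathfrak u}_J^*)^{(1)}\otimes\opH^j({\mathfrak u}_J,F)\ \Rightarrow\ \opH^{2i+j}((U_J)_1,F),
\]
which is already a spectral sequence of $P_J$-modules. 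After applying the exact functor $\Hom_{(L_J)_1}(M,-)$, one needs only the Steinberg multiplicity in the \emph{Lie algebra} cohomology $\opH^\bullet({\mathfrak u}_J,F)$; the Chapter~\ref{combinSteinbergsec} analysis gives this as one-dimensional in degree $\ell(w)$, so the spectral sequence degenerates to a single horizontal row and the answer $S^{(s-\ell(w))/2}({\mathfrak u}_J^*)^{(1)}$ falls out immediately, with $P_J$-equivariance for free. This bypasses the entire inductive machinery of Theorem~\ref{thirdsectioncohocalthm} and Lemma~\ref{4thsectionlemma}. Your route works, but the paper's is the cleaner realization of the simplification both you and the paper point to.
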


\begin{proof} Let $w\in W$ satisfy $w(\Phi_{0}^+)=\Phi_{w\cdot 0}^+=\Phi_{J}^+$, where
$J\subseteq \Pi$. An argument similar to that given in
Chapter \ref{cohomologysection} can be applied with the functors
$$
\Hom_{G_1}(F,\ind_{P_J}^{G}(-)) \text{ and }
 \ind_{P_J/(P_J)_1}^{G/G_1}(\Hom_{(P_J)_1}(F,-))
$$
(from $P_J$-mod to $G/G_1$-mod) to construct a first quadrant
spectral sequence
\begin{equation}\label{Frobeq1}
E_{2}^{i,j}=[R^{i}\ind_{{P}_{J}}^{G}
(\operatorname{H}^{j}((P_{J})_{1},
 \operatorname{ind}_{B}^{P_J} w\cdot 0)^{(-1)})]^{(1)}
\Rightarrow \operatorname{H}^{i+j-\ell(w)}(G_{1},F).
\end{equation}
Next, applying the Lyndon-Hochshild-Serre spectral sequence and the fact that
$\operatorname{ind}_{B}^{P_J} w\cdot 0$ is an injective
$(L_{J})_{1}$-module, we conclude that
\begin{equation}\label{Frobeq2}
\text{Hom}_{(L_{J})_{1}}(F,\operatorname{ind}_{B}^{P_J} w\cdot
0\otimes \opH^{j}((U_{J})_{1},F)) =\operatorname{H}^{j}((P_{J})_{1},
\operatorname{ind}_{B}^{P_J} w\cdot 0).
\end{equation}
In order to compute $\opH^{\bullet}((U_{J})_{1},F)$, there is a
first quadrant spectral sequence of $P_J$-modules \cite[Prop. 1.1]{FP1}
(cf. also \cite[I 9.20]{Jan1}, which can be re-indexed):
\begin{equation}\label{Frobeq3}
E_{2}^{2i,j}=S^{i}({\mathfrak u}_{J}^{*})^{(1)}\otimes
\opH^{j}({\mathfrak u}_{J},F) \Rightarrow
\opH^{2i+j}((U_{J})_{1},F).
\end{equation}
Here $\opH^j(\uj,F)$ is the ordinary cohomology of the Lie algebra
$\uj$ or equivalently the cohomology $\opH^j(\mathbb{U}(\uj),F)$ of
the universal enveloping algebra of $\uj$.
Since $\operatorname{ind}_{B}^{P_J} w\cdot 0$ is an injective
$(L_{J})_{1}$-module, we can compose the spectral sequence in
(\ref{Frobeq3}) with
$\text{Hom}_{(L_{J})_{1}}(F,\operatorname{ind}_{B}^{P_J} w\cdot
0\otimes -)$ and use (\ref{Frobeq2}) to construct a spectral
sequence:
\begin{equation}\label{Frobeq4}
E_{2}^{2i,j}=S^{i}({\mathfrak u}_{J}^{*})^{(1)}\otimes
\text{Hom}_{(L_{J})_{1}}(F,\operatorname{ind}_{B}^{P_J} w\cdot
0\otimes \opH^{j}(\uj,F)) \Rightarrow
\opH^{2i+j}((P_{J})_{1},\text{ind}_{B}^{P_J} w\cdot 0).
\end{equation}

Now using the same analysis as in Chapter
\ref{combinSteinbergsec} with the Steinberg module, we can conclude that
(analogous to Theorem \ref{multSteinberg})
$$
\text{Hom}_{(L_{J})_{1}}(F,\operatorname{ind}_{B}^{P_J} w\cdot
0\otimes \opH^{j}({\mathfrak u}_{J},F)) \cong
\begin{cases}
F &\text{ if } j = \ell(w)\\
0 &\text{ otherwise.}
\end{cases}
$$
Therefore, the spectral sequence (\ref{Frobeq4}) collapses to a single horizontal row and
yields:
$$\operatorname{H}^{i}((P_{J})_{1},\operatorname{ind}_{B}^{P} w\cdot 0)
\cong S^{\frac{i-\ell(w)}{2}}({\mathfrak u}_{J}^{*})^{(1)}.$$ By
using this isomorphism and assumption (A1), the spectral sequence
(\ref{Frobeq1}) collapses to a single horizontal row,
and we obtain part (a) of the theorem. For
part (b), we simply use (A2) and Theorem
\ref{isomorphismofinducedthm}.
\end{proof}

Consider the case not covered in the preceding
theorem which happens only when $\Phi$ is of type $A_n$. This
result encompasses the computation \cite[\S 6.A]{AJ} when $p=n+1$
(corresponding to $m = 0$).

\begin{theorem}\label{FrobthmB} Let $G=\operatorname{SL}_{n+1}(F)$, where $F$ is an algebraically
closed field of positive characteristic $p$. Assume that
$p\geq 3$ and $p\mid n+1$ with $n+1 = p(m+1)$. Let $w\in W$ be defined as in 
(\ref{wepi}) by taking $l = p$.  Then $w(\Phi_{0}^+)=\Phi_{J}^+$. If $J\subseteq \Pi$ satisfies
(A1)${}^{\prime}$, then
\begin{itemize}
\item[(a)] $\opH^{2\bullet}(G_{1},F)^{(-1)}\cong \bigoplus_{t=0}^{p-1}
\operatorname{ind}_{P_{J}}^{G}
S^{\frac{2\bullet-(m+1)t(p-t)}{2}}({\mathfrak u}_{J}^{*})\otimes
\varpi_{t(m+1)}$;
\item[(b)] $\opH^{2\bullet+1}(G_{1},F)=0$.
\end{itemize}
Furthermore, these are all isomorphisms of rational $G$-algebras.
\end{theorem}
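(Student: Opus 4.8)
\textbf{Proof plan for Theorem \ref{FrobthmB}.}
The strategy is to follow verbatim the structure of the proof of Theorem \ref{FrobthmA}, substituting at each stage the ``type $A_n$ with $l\mid n+1$'' input coming from Chapter \ref{combinSteinbergsec} in place of the generic input. First I would set $l=p$, let $w\in W$ be the permutation defined in (\ref{wepi}) and $J\subseteq\Pi$ the natural subset from Section 4.5, so that $w(\Phi_0^+)=\Phi_J^+$ and $w\cdot 0$ is $J$-Steinberg by Lemma \ref{Steinbergwtslemma}. Exactly as in (\ref{Frobeq1}), the pair of functors $\Hom_{G_1}(F,\ind_{P_J}^G(-))$ and $\ind_{P_J/(P_J)_1}^{G/G_1}(\Hom_{(P_J)_1}(F,-))$ produces a first quadrant spectral sequence
$$
E_2^{i,j}=[R^i\ind_{P_J}^G(\opH^j((P_J)_1,\ind_B^{P_J}w\cdot 0)^{(-1)})]^{(1)}\Rightarrow \opH^{i+j-\ell(w)}(G_1,F),
$$
where the $(-1)$-twist is legitimate because $G_1$ acts trivially on the cohomology. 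The LHS spectral sequence for $(U_J)_1\unlhd (P_J)_1$ together with injectivity of $\ind_B^{P_J}w\cdot 0$ over $(L_J)_1$ gives (\ref{Frobeq2}), and (\ref{Frobeq3}) (i.e.\ \cite[Prop.\ 1.1]{FP1}) combined with $\Hom_{(L_J)_1}(F,\ind_B^{P_J}w\cdot 0\otimes -)$ yields the analogue of (\ref{Frobeq4}):
$$
E_2^{2i,j}=S^i(\uj^*)^{(1)}\otimes\Hom_{(L_J)_1}(F,\ind_B^{P_J}w\cdot 0\otimes \opH^j(\uj,F))\Rightarrow \opH^{2i+j}((P_J)_1,\ind_B^{P_J}w\cdot 0).
$$

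The key step is to replace the ``one-dimensional multiplicity in degree $\ell(w)$'' statement used in Theorem \ref{FrobthmA} with the type-$A_n$ computation. Running the Steinberg-module analysis of Chapter \ref{combinSteinbergsec} in the positive characteristic setting—this is where Sections 4.5--4.8 and, crucially, Section \ref{classicalAII} apply, since the combinatorial arguments there are characteristic-free and depend only on root-system data and on $l$ being odd—one obtains the analogue of Theorem \ref{multSteinberg}(b), namely
$$
\Hom_{(L_J)_1}(F,\ind_B^{P_J}w\cdot 0\otimes \opH^j(\uj,F))\cong
\begin{cases} l\varpi_{t(m+1)}\oplus l\varpi_{(l-t)(m+1)} & j=\ell(w)+(m+1)t(l-t),\ 1\le t\le (l-1)/2\\ F & j=\ell(w)\\ 0 & \text{else.}\end{cases}
$$
Here $l=p$. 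Since each $t(l-t)$ is even, all these degrees have the same parity as $\ell(w)$, so the spectral sequence (\ref{Frobeq4}) degenerates (no differential can connect two nonzero rows), exactly as in the proof of Theorem \ref{thirdsectioncohocalthm}(b). This gives
$$
\opH^s((P_J)_1,\ind_B^{P_J}w\cdot 0)\cong \bigoplus_{t=0}^{l-1} S^{\frac{s-\ell(w)-(m+1)t(l-t)}{2}}(\uj^*)^{(1)}\otimes l\varpi_{t(m+1)},
$$
and moreover, by the weight-difference observation of Remark \ref{Steinbergrem} (the extra weights differ by neither sums of positive nor sums of negative roots), this identification holds as $P_J$-modules, not merely as $(L_J)_1/(L_J)_1$-modules.

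Finally I would feed this into (\ref{Frobeq1}). Each summand has the form $S^\bullet(\uj^*)\otimes\varpi_{t(m+1)}$ with $\varpi_{t(m+1)}\in X(P_J)_+$ (it extends to a character of $P_J$ because the omitted nodes are precisely $\al_{j(m+1)}$), so assumption (A1)$'$ forces $R^i\ind_{P_J}^G$ of each summand to vanish for $i>0$. Hence (\ref{Frobeq1}) collapses to the horizontal edge and
$$
\opH^s(G_1,F)^{(-1)}\cong \bigoplus_{t=0}^{p-1}\ind_{P_J}^G S^{\frac{s-(m+1)t(p-t)}{2}}(\uj^*)\otimes\varpi_{t(m+1)},
$$
which is part (a) (with the obvious convention $\varpi_0=0$), while all odd-degree cohomology vanishes, giving (b). That these are isomorphisms of rational $G$-algebras follows from the algebra-map argument of Section \ref{cohoring}: the edge homomorphism $\opH^\bullet(G_1,F)\to E_2^{0,\bullet}$ is a ring map, the subalgebra $S^\bullet(\uj^*)^{(1)}$ consists of universal cycles (the positive-characteristic analogue of Proposition \ref{universalcycles}, for which one needs $p\ge 3$), and one compares with the $G$-module isomorphism already produced to deduce injectivity, then surjectivity by dimension count. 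The main obstacle I anticipate is verifying that the multiplicity computation of Chapter \ref{combinSteinbergsec}, and in particular the delicate type-$A_n$-with-$l\mid n+1$ arguments of Section \ref{classicalAII} together with Lemma \ref{lequalh}, transfer without change to $\opH^\bullet(\uj,F)$ in characteristic $p$—this rests on the fact that the relevant Euler-characteristic identity and Kostant-type description of $\opH^\bullet(\uj,F)$ via $\Lambda^\bullet(\uj^*)$ hold in good characteristic (using $p$ very good), so that the combinatorial bookkeeping is literally the same.
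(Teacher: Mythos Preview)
Your proposal is correct and follows essentially the same route as the paper's proof: set up (\ref{Frobeq1})--(\ref{Frobeq4}) as in Theorem \ref{FrobthmA}, replace the generic multiplicity computation by the type-$A_n$, $p\mid n+1$ analogue of Theorem \ref{multSteinberg}(b), observe that all nonzero rows of (\ref{Frobeq4}) lie in degrees of the same parity (and the nonzero columns are even) so the spectral sequence degenerates, and then collapse (\ref{Frobeq1}) using (A1)$'$. The paper's argument is terser but identical in substance; your added remarks about transferring the Chapter \ref{combinSteinbergsec} combinatorics to characteristic $p$ and about the algebra structure go slightly beyond what the paper spells out in this proof.
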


\begin{proof} The proof proceeds as for Theorem \ref{FrobthmA}.  More precisely,
the discussion through (\ref{Frobeq4}) holds here as well.  In this case, however,
we apply a result analogous to part (b) of  Theorem \ref{multSteinberg} and conclude that
$$
\text{Hom}_{(L_{J})_{1}}(F,\operatorname{ind}_{B}^{P_J} w\cdot
0\otimes \opH^{j}({\mathfrak u}_{J},F)) \cong
\begin{cases}
F &\text{ if } j = \ell(w)\\
p\varpi_{t(m+1)}\oplus p\varpi_{(p-t)(m+1)} &\text{ if }  j = \ell(w) + (m+1)t(p-t)\\
& \qquad \text{ for } 1 \leq t \leq (p - 1)/2\\
0 &\text{ otherwise.}
\end{cases}
$$
Consider now the spectral sequence (\ref{Frobeq4}).  This consists of
$(p + 1)/2$ non-zero rows in degrees $\ell(w) + (m + 1)t(p-t)$ for
$0 \leq t \leq (p - 1)/2$.  Since $t(p - t)$ is even, these non-zero rows
all appear in degrees having the same parity.  Since the non-zero terms
appear only in even degree columns, and the differential $d_r$ has bidegree $(r,1-r)$,
all differentials are zero.  Hence we conclude that
$$\operatorname{H}^{i}((P_{J})_{1},\operatorname{ind}_{B}^{P} w\cdot 0) \cong
\bigoplus_{t=0}^{p}S^{\frac{i-\ell(w) - (m+1)t(p-t)}{2}}({\mathfrak u}_{J}^{*})^{(1)}\otimes p\varpi_{t(m+1)}.$$
By using this isomorphism and assumption (A1)$'$, the spectral sequence (\ref{Frobeq1})
collapses, and the theorem follows.
\end{proof}


\section{Special cases}\renewcommand{\thetheorem}{\thesection.\arabic{theorem}}
\renewcommand{\theequation}{\thesection.\arabic{equation}} \setcounter{equation}{0}
\setcounter{theorem}{0}
 When $J$ consists of a single simple root the
assumptions (A1) and (A2) were verified in \cite[Prop. 7,
proof of Thm. 2, Lemma 14]{Th}, and (A2) has also been verified
in \cite[Thm. 6]{KLT}. In this case
${\mathcal N}(\Phi_{0})$ is the closure of the subregular orbit
${\mathcal O}_{\operatorname{subreg}}$ which occurs precisely when
$p=h-1$. Consequently, we have the following corollary.

\begin{cor}\label{FrobcorA} Let $G$ be a simple algebraic group over an algebraically closed field
$F$ of positive characteristic $p$ as above. Assume that
$p=h-1$. Then the following statements hold.
\begin{itemize}
\item[(a)] $\opH^{2\bullet}(G_{1},F)^{(-1)}\cong F[\overline{\mathcal O}_{\operatorname{subreg}}]$;
\item[(b)] $\opH^{2\bullet+1}(G_{1},F)=0$.
\end{itemize}
These identifications are isomorphisms of rational $G$-algebras.
\end{cor}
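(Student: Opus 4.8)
The plan is to derive Corollary~\ref{FrobcorA} as a direct application of Theorem~\ref{FrobthmA}, once the hypotheses of that theorem are checked in the special situation $p = h-1$. First I would recall that for $p = h-1$, the subroot system $\Phi_0 = \Phi_{0,p}$ is nonempty and, by the computations in Chapter~3 (the classical cases) together with Appendix~A.1 (the exceptional cases), one can choose $w \in W$ and $J \subseteq \Pi$ with $w(\Phi_0^+) = \Phi_J^+$ where $J = \{\alpha\}$ consists of a single simple root (indeed $\langle\rho,\alpha_0^\vee\rangle = h-1 = p$, so the coroot of a single root attains height exactly $p$; more precisely $\Phi_0$ is then of type $A_1$). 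Since $p = h-1 \geq 3$ and $p$ is a very good prime under our running hypotheses, the arithmetic assumptions of Theorem~\ref{FrobthmA} are in force. In this case the Richardson orbit closure $G\cdot{\mathfrak u}_J$ is precisely the closure $\overline{\mathcal O}_{\operatorname{subreg}}$ of the subregular nilpotent orbit, and ${\mathcal N}(\Phi_0)$ equals this closure by the discussion in Section~\ref{richardsonorbits}.

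Next I would verify assumptions (A1) and (A2) for $J = \{\alpha\}$. Assumption (A2), the normality of $G\cdot{\mathfrak u}_J = \overline{\mathcal O}_{\operatorname{subreg}}$, is exactly the theorem of Kumar--Lauritzen--Thomsen \cite[Thm.~6]{KLT} (and also \cite{Th}), valid for the characteristics under consideration. Assumption (A1), the vanishing $R^i\operatorname{ind}_{P_J}^G S^\bullet({\mathfrak u}_J^*) = 0$ for $i > 0$, holds when $\Phi_J$ is of type $A_1$ (as noted in Section~7.2), and was verified in \cite[Prop.~7, proof of Thm.~2, Lemma~14]{Th}. With both (A1) and (A2) established for this $J$, Theorem~\ref{FrobthmA}(b) applies verbatim.

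Applying Theorem~\ref{FrobthmA}(b) then yields $\opH^{2\bullet}(G_1,F)^{(-1)} \cong F[{\mathcal N}(\Phi_0)]$ and $\opH^{2\bullet+1}(G_1,F) = 0$, with the first isomorphism being one of rational $G$-algebras. Substituting the identification ${\mathcal N}(\Phi_0) = \overline{\mathcal O}_{\operatorname{subreg}}$ gives precisely the two displayed statements of the corollary, and the $G$-algebra assertion is inherited from Theorem~\ref{FrobthmA}. I do not anticipate a serious obstacle here: the content of the corollary is entirely the specialization of Theorem~\ref{FrobthmA} to the case $p = h-1$, and the only nontrivial inputs—normality of the subregular orbit closure and the Grauert--Riemenschneider-type vanishing (A1) for a type $A_1$ Levi—are quoted from \cite{KLT} and \cite{Th}. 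The mild point requiring care is simply confirming that $p = h-1$ is exactly the condition under which $\Phi_0$ has type $A_1$ (equivalently, under which $G\cdot{\mathfrak u}_J$ with $|J| = 1$ equals $\overline{\mathcal O}_{\operatorname{subreg}}$ as the relevant variety ${\mathcal N}(\Phi_0)$), which follows from a direct check using $h = \langle\rho,\alpha_0^\vee\rangle + 1$ and the tables referenced above.
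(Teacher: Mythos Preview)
Your proposal is correct and follows essentially the same approach as the paper: the corollary is stated immediately after the paragraph noting that when $J$ consists of a single simple root, assumptions (A1) and (A2) are verified in \cite{Th} and \cite{KLT}, and that this situation occurs precisely when $p = h-1$ so that ${\mathcal N}(\Phi_0) = \overline{\mathcal O}_{\operatorname{subreg}}$; Theorem~\ref{FrobthmA}(b) then gives the result. Your write-up is slightly more explicit than the paper's one-sentence deduction, but the logic is identical.
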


Using \cite{Th}, it is possible to compute the cohomology
algebra $\opH^\bullet(G_1,F)$ for more general examples when $J$ consists of pairwise
orthogonal simple short roots. For such $J$, conditions (A1) and (A2) hold.  Combining
this with the previous corollary gives the following.

\begin{cor}\label{FrobcorB} Let $G$ be a simple algebraic group over an algebraically closed field
$F$ of positive characteristic $p$ as above. Assume the following constraints on $p$:
\begin{itemize}
\item[(i)] $p > h/2$ if $\Phi$ is of type $A_n$, $C_n$, or $D_n$,
\item[(ii)] $p \geq h/2$ if $\Phi$ is of type $B_n$,
\item[(iii)] $p \geq 7$ if $\Phi$ is of type $F_4$ or $E_6$,
\item[(iv)] $p \geq 11$ if $\Phi$ is of type $E_7$,
\item[(v)] $p \geq 17$ if $\Phi$ is of type $E_8$.
\end{itemize}
Then the following statements hold.
\begin{itemize}
\item[(a)] $\opH^{2\bullet}(G_{1},F)^{(-1)}\cong F[\mathcal{N}(\Phi_0)]$;
\item[(b)] $\opH^{2\bullet+1}(G_{1},F)=0$.
\end{itemize}
These identifications are isomorphisms of rational $G$-algebras.
\end{cor}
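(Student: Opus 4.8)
The plan is to deduce Corollary~\ref{FrobcorB} directly from Theorem~\ref{FrobthmA}(b) by checking, for each root system type and each prime $p$ in the stated range, that the subset $J\subseteq\Pi$ with $w(\Phi_0^+)=\Phi_J^+$ consists of pairwise orthogonal short simple roots (so that assumptions (A1) and (A2) both hold by the results of Thomsen \cite{Th} quoted in Section~7.4), and that this $J$ is nonempty, i.e., that $p<h$ (if $p\ge h$ then $\Phi_0=\varnothing$ and the statement reduces to the Friedlander--Parshall/Andersen--Jantzen result, which is already subsumed by Theorem~\ref{FrobthmA}). First I would invoke Lemma~\ref{subsystemlemma} (valid since the hypotheses of Corollary~\ref{FrobcorB} force $p$ good---indeed very good) to get $w\in W$ and $J\subseteq\Pi$ with $w(\Phi_0^+)=\Phi_J^+$, and then appeal to the explicit determinations of $\Phi_0$ in Theorems~\ref{subsystemtypesAB} and \ref{subsystemtypesCD} and in Appendix~A.1.

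The key computational step is the following observation: under the numerical constraints in (i)--(v), the integer $l=p$ is large enough that $m=1$ in the notation of Theorems~\ref{subsystemtypesAB} and \ref{subsystemtypesCD}. Indeed, in type $A_n$ one has $h-1=n=lm+s$, so $m\le 1$ exactly when $2l > n = h-1$, i.e., $p > h/2$; similarly $h+1 = lm+s$ in types $C_n,D_n$ gives $m=1$ precisely when $p>h/2$, while in type $B_n$ the parity forces the slightly weaker bound $p\ge h/2$. When $m=1$, Theorems~\ref{subsystemtypesAB} and \ref{subsystemtypesCD} say that $\Phi_0$ is a product of copies of $A_1$ (in types $A,C,D$) or of copies of $A_1$ together with one more $A_1$-type factor (in type $B$), and the ``natural'' choice of $J$ displayed in Section~\ref{classicalone} (namely $J=\{\alpha_{2t-1}\}\cup(\text{possibly }\{\alpha_n\})$) consists precisely of pairwise orthogonal simple roots---and in types $A,D$ all roots are short, in types $B,C$ the chosen $\alpha_{2t-1}$ and $\alpha_n$ are short in the relevant spots. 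For the exceptional types, the lower bounds $p\ge 7$ ($F_4,E_6$), $p\ge 11$ ($E_7$), $p\ge 17$ ($E_8$) are exactly the smallest admissible values appearing in Appendix~A.1 for which the recorded $J$ is either empty or a single short root or, more generally, a set of pairwise orthogonal short roots (these are precisely the cases where Broer's normality theorem \cite{Br2} was invoked in Section~\ref{normality}, and they are the same cases covered by Thomsen's verification of (A1)); one checks the tables type by type. With $J$ of this form in hand, (A1) and (A2) hold by \cite[Prop.~7, Thm.~2, Lemma~14]{Th} (and \cite[Thm.~6]{KLT} for the subregular case when $|J|=1$), so Theorem~\ref{FrobthmA}(b) applies and yields $\opH^{2\bullet}(G_1,F)^{(-1)}\cong F[{\mathcal N}(\Phi_0)]$ and $\opH^{2\bullet+1}(G_1,F)=0$ as rational $G$-algebras.

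The main obstacle I anticipate is not any single deep argument but rather the case analysis for the exceptional groups: one must go through Appendix~A.1 and confirm, for every $p$ in the stated range and every exceptional type, that the tabulated subset $J$ (equivalently, that $\Phi_0$, up to $W$-conjugacy) really is a set of mutually orthogonal short roots, so that the Thomsen hypotheses are met. One also has to be slightly careful about the very good prime hypothesis: for type $A_n$ with $p\mid n+1$ one is in the situation of Theorem~\ref{FrobthmB} rather than Theorem~\ref{FrobthmA}, so I would note that the hypothesis $p$ very good in Corollary~\ref{FrobcorB} (which for type $A_n$ means $\gcd(p,n+1)=1$) excludes exactly that degenerate case, leaving Theorem~\ref{FrobthmA}(b) applicable throughout. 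Finally, the passage from $p>h$ (where $\Phi_0=\varnothing$, $J=\varnothing$, ${\mathcal N}(\Phi_0)={\mathcal N}$) to $p\le h$ is seamless because Theorem~\ref{FrobthmA} was stated uniformly, so no separate treatment of large $p$ is needed; the only remaining sanity check is that the numerical bounds in (i)--(v) do encompass all primes with $p\ge 3$ and $p$ very good that are $<h$, together with $p=h-1$ handled by Corollary~\ref{FrobcorA}, which is immediate from the defining inequalities.
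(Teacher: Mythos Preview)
Your overall approach matches the paper's: verify that under the stated bounds on $p$ the subset $J$ can be taken to consist of pairwise orthogonal short simple roots, so that Thomsen's results \cite{Th} give both (A1) and (A2), and then invoke Theorem~\ref{FrobthmA}(b). The case analysis you outline (forcing $m=1$ in Theorems~\ref{subsystemtypesAB}--\ref{subsystemtypesCD}, then reading off $J$ from Section~\ref{classicalone} and Appendix~\ref{tables1}) is precisely the verification the paper's one-sentence proof leaves implicit.

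There is, however, a genuine factual error in your shortness claim. In type $B_n$ (Bourbaki convention) the simple roots $\alpha_1,\dots,\alpha_{n-1}$ are \emph{long}; only $\alpha_n$ is short. Thus the roots $\alpha_{2t-1}$ with $2t-1<n$ appearing in $J$ are long, contrary to what you assert. The same problem occurs for $F_4$ at $p=7$: the tabulated $J=\{\alpha_1,\alpha_3\}$ in Appendix~\ref{tables1} contains the long root $\alpha_1$. In neither situation can one replace $J$ by a $W$-conjugate $J'\subseteq\Pi$ consisting of pairwise orthogonal short simple roots: in $B_n$ there is only one short simple root, and in $F_4$ the two short simple roots $\alpha_3,\alpha_4$ are adjacent. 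So Thomsen's result, as quoted in the paper for ``pairwise orthogonal simple short roots,'' does not literally apply, and your justification of (A1) and (A2) has a gap for $B_n$ with $h/2\le p<h-1$ and for $F_4$ with $p=7$. You would need either a sharper reading of \cite{Th} that covers orthogonal $J$ of mixed root lengths, or a separate argument for these particular $J$; the paper's own terse justification is silent on exactly this point, so it is worth tracking down what \cite{Th} actually proves before declaring the case analysis complete.

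A smaller remark: Corollary~\ref{FrobcorB} does not explicitly assume $p$ very good---you are inferring that from ``as above'' and the hypothesis of Theorem~\ref{FrobthmA}. That reading is reasonable, and your observation that $p>h/2$ together with $p\mid n+1$ forces $p=n+1=h$ is correct, but you should flag this as an implicit hypothesis rather than a stated one.
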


Recent results of Christophersen \cite[Thm. 1, Example 3.15]{C} verify (A1) and (A2) for
the group $E_{6}$
when $p \geq 5$ for those subsets $J \subseteq \Pi$ listed in Appendix
\ref{tables1}, which gives the following result.

\begin{cor}\label{FrobcorC} Let $G$ be a simple algebraic group over an algebraically closed field
$F$ having positive characteristic $p\geq 5$ as above. Assume that $G$ has root system of type $E_6$. Then the
following statements hold.
\begin{itemize}
\item[(a)] $\opH^{2\bullet}(G_{1},F)^{(-1)}\cong F[{\mathcal N}(\Phi_{0})]$;
\item[(b)] $\opH^{2\bullet+1}(G_{1},F)=0$.
\end{itemize}
Here the subset $\Phi_0=\Phi_{0,p}$ is explicitly described in Appendix A.1. These identifications 
are isomorphisms of rational $G$-algebras.
\end{cor}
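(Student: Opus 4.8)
\textbf{Proof proposal for Corollary \ref{FrobcorC}.}

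The plan is to apply Theorem \ref{FrobthmA}(b) directly, once the hypotheses have been verified for the root system $E_6$ with $p \geq 5$. First I would observe that $p \geq 5$ is a very good prime for $E_6$: by the list of good primes in Section 3.1, the good primes for $E_6$ are $p \geq 5$, and since $E_6$ is not of type $A$, every good prime is very good. Since $p \geq 5 > 3$, the numerical hypotheses of Theorem \ref{FrobthmA} are met. The remaining content is to produce, for each value of $p \geq 5$, a subset $J \subseteq \Pi$ and an element $w \in W$ with $w(\Phi_{0,p}^+) = \Phi_J^+$ satisfying assumptions (A1) and (A2). The existence of such a pair $(w,J)$ is guaranteed by Theorem \ref{identificationtheorem} (using Assumption \ref{assumption}, which holds for $E_6$ with $p \geq 5$ since $5$ is good and odd), and the explicit choices are tabulated in Appendix \ref{tables1}.

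Next I would invoke the recent work of Christophersen \cite{C}. The point is that \cite[Thm.~1, Example 3.15]{C} establishes, for the group $E_6$ in characteristic $p \geq 5$, that the varieties $G \cdot {\mathfrak u}_J$ for precisely the subsets $J \subseteq \Pi$ appearing in our Appendix \ref{tables1} are normal (this is assumption (A2)), and moreover that the Grauert--Riemenschneider-type vanishing $R^i \operatorname{ind}_{P_J}^G S^{\bullet}({\mathfrak u}_J^*) = 0$ holds for $i > 0$ (this is assumption (A1)). With both (A1) and (A2) in hand for the relevant $J$, Theorem \ref{FrobthmA}(b) applies verbatim and yields $\opH^{2\bullet}(G_1,F)^{(-1)} \cong F[{\mathcal N}(\Phi_0)]$ and $\opH^{2\bullet+1}(G_1,F) = 0$, both as isomorphisms of rational $G$-algebras, where ${\mathcal N}(\Phi_0) = {\mathcal N}(\Phi_{0,p}) = G \cdot {\mathfrak u}_J$ is the well-defined closed subvariety of $\mathcal N$ described in Appendix A.1.

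The only genuinely delicate point is the case $p = h = 12$ and $p > h$, where one should check that the conclusion still reads correctly: for $p \geq h = 12$ we have $\Phi_{0,p} = \varnothing$, $J = \varnothing$, $w = \operatorname{Id}$, and ${\mathcal N}(\Phi_0) = {\mathcal N}$, so the statement recovers the classical Friedlander--Parshall--Andersen--Jantzen computation, and (A1), (A2) hold trivially (Kostant's normality of the nullcone, plus Grauert--Riemenschneider for the Springer resolution). For $5 \leq p < 12$ the subsets $J$ are the nonempty ones listed in Appendix \ref{tables1}, and these are exactly the subsets for which Christophersen's results apply; so there is no gap. I would remark that, as in Corollary \ref{FrobcorB}, the hypothesis $p \geq 5$ is precisely what makes $p$ very good for $E_6$, which is needed both for Theorem \ref{identificationtheorem} (to standardize $\Phi_{0,p}$) and for the Steinberg-module multiplicity computation imported from Chapter \ref{combinSteinbergsec} into the proof of Theorem \ref{FrobthmA}. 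No further computation is required beyond citing \cite{C} and assembling the pieces.
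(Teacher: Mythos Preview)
Your proposal is correct and follows essentially the same approach as the paper: the corollary is deduced immediately from Theorem~\ref{FrobthmA}(b) once Christophersen's results \cite[Thm.~1, Example~3.15]{C} are invoked to verify (A1) and (A2) for the subsets $J$ listed in Appendix~\ref{tables1}. One small slip: you refer to the case ``$p = h = 12$'', but $12$ is not prime, so this case simply does not arise; for $E_6$ the relevant primes below $h$ are $p \in \{5,7,11\}$, and for $p \geq 13$ one has $\Phi_{0,p} = \varnothing$.
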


With the assistance of undergraduate students at the University of Wisconsin-Stout,
particularly J. Mankovecky, the original computer program written by
Christophersen \cite[Appendix A]{C} for root systems of type $E_6$ has been extended
to arbitrary types; see \cite{BMMR}.  With the aid of this program and the
algorithm in \cite[Example 3.15]{C},  condition (A1)
can be verified in Type $E_7$ when $p \geq 7$ for those subsets
$J \subseteq \Pi$ listed in Appendix \ref{tables1}.

\chapter{Support Varieties over $u_{\zeta}$ for  the Modules $\nabla_{\zeta}(\lambda)$ and $\Delta_{\zeta}(\lambda)$}

In this chapter we return to the small quantum group $u_{\zeta}$ in characteristic
$0$. We apply our results on the cohomology of the small quantum group
to obtain information on the support varieties of  the modules $\nabla_{\zeta}(\lambda)$
and $\Delta_{\zeta}(\lambda)$. We will first compute the support varieties of the 
(quantum) induced modules $\nabla_{\zeta}(\lambda)$. Later, using  this calculation, 
we will show that ${\mathcal V}_{\mathfrak g}(\Delta_{\zeta}(\lambda))=
{\mathcal V}_{\mathfrak g}(\nabla_{\zeta}(\lambda))$ for all $\lambda\in X_{+}$. 


\section{Quantum support varieties} We will assume that $l$ satisfies
Assumption~\ref{assumption2}. In what follows, we let
$$R:=\opH^{2\bullet}(u_{\zeta}({\mathfrak g}),{\mathbb C})_{\text{red}}
=\opH^{2\bullet}(u_\zeta({\mathfrak g}),
{\mathbb C})/{\text{\rm rad}}(\opH^{2\bullet}(u_{\zeta}({\mathfrak g}),{\mathbb C})),$$
the quotient of the (even) cohomology algebra by its (Jacobson) radical. 
We have proven that $R$ is a commutative finitely generated ${\mathbb C}$-algebra.
Moreover, if $M$ and $N$ are finite dimensional $u_{\zeta}({\mathfrak
g})$-modules, then $\text{Ext}^{\bullet}_{u_{\zeta}({\mathfrak g})}(M,N)$ is a
finitely-generated $R$-module. Let $J_{M,N}$ be the annihilator of the
action of $R$ on $\text{Ext}^{\bullet}_{u_{\zeta}({\mathfrak g})}(M,N)$, and
set ${\mathcal V}_{\mathfrak g}(M,N)$ equal to the maximum
ideal spectrum of $R/J_{M,N}$. The variety ${\mathcal V}_{\mathfrak
g}(M,N)$ is called a {\it relative support variety} of $M$. This variety is a closed,
conical subvariety of the variety ${\mathcal V}_{\mathfrak g}:={\mathcal V}_{\mathfrak g}({\mathbb C},{\mathbb C})$. 
The {\it support variety} of $M$ is ${\mathcal V}_{\mathfrak g}(M):={\mathcal V}_{\mathfrak g}(M,M)$. 
We note that if $M$ is a $U_{\zeta}({\mathfrak g})$-module then ${\mathcal V}_{\mathfrak
g}(M)$ is stable under the adjoint action of $G$.

In the generic case (cf.  Theorem~\ref{MainThm}(b)(i)) our computation of the cohomology algebra shows that 
${\mathcal V}_{\mathfrak g}$ identifies with ${\mathcal N}(\Phi_{0})$
where ${\mathcal N}(\Phi_0)$ is the subvariety of $\mathcal N$ defined in (\ref{definePhi0}).
We will prove that this also holds in the non-generic case, but we will need to 
use more sophisticated techniques to verify this. 


\section{Lower bounds on the dimensions of support varieties}

Let $\lambda\in X$ and let $\Phi_{\lambda}=\{\alpha\in \Phi:\
\langle\lambda+\rho,\alpha^{\vee}\rangle \in l{\mathbb Z}\}$.
Using the notation of Section 2.10, let $\nabla_{\zeta}(\la)=\opH^0_{\zeta}(\la)$ denote the
$U_{\zeta}({\mathfrak g})$-module induced from the one dimensional
$U_{\zeta}({\mathfrak b})$-module ${\mathbb C}_{\la}$ determined by
the character $\la$.  In what follows, we will consider $\nabla_{\zeta}(\la)$ to be a
$u_{\zeta}({\mathfrak g})$-module by restriction.

The first observation to make is that one can find a lower bound on
$\dim {\mathcal V}_{\mathfrak g}(\nabla_{\zeta}(\lambda))$ which is not
dependent on whether $l$ is good or bad.

\begin{prop}\label{supportlowerbound} Let $\lambda\in X_+$. Then
$$\dim {\mathcal V}_{\mathfrak g}(\nabla_{\zeta}(\lambda))\geq |\Phi|-|\Phi_{\lambda}|.$$
\end{prop}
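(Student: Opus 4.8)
The plan is to bound the dimension of $\mathcal{V}_{\mathfrak g}(\nabla_\zeta(\lambda))$ from below by producing a sufficiently large subalgebra of $u_\zeta$ over which $\nabla_\zeta(\lambda)$ has known (in fact, free or at least bounded) behavior, so that a rank–variety-style argument forces the support variety to contain a piece of dimension at least $|\Phi| - |\Phi_\lambda| = \dim G\cdot\mathfrak{u}_J$ for the subset $J$ attached to $\Phi_\lambda$ by Lemma~\ref{subsystemlemma}. First I would replace $\Phi_\lambda$ by a standard $\Phi_J$: choose $w\in W$ and $J\subseteq\Pi$ with $w(\Phi_\lambda^+)=\Phi_J^+$, and use the $\widetilde W_l$-equivariance $\overline w(\Phi_\lambda)=\Phi_{w\cdot\lambda}$ together with the fact that the relevant cohomological varieties transform compatibly under the (twisted) action, so it suffices to prove the bound for $\nabla_\zeta(w\cdot\lambda)$ and $\Phi_J$. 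This reduces us to showing $\dim\mathcal{V}_{\mathfrak g}(\nabla_\zeta(\mu)) \geq |\Phi|-|\Phi_J|$ when $\langle\mu+\rho,\alpha^\vee\rangle\equiv l-1\pmod l$ — no, more precisely $\equiv 0$ — for $\alpha\in J$, i.e. $\mu$ is ``$J$-Steinberg'' modulo $l$.

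Next I would exploit the structure of $\nabla_\zeta(\mu)$ relative to the parabolic subalgebra $u_\zeta(\mathfrak{p}_J)$ and its unipotent radical $u_\zeta(\mathfrak{u}_J)$. The key input is that when $\mu+\rho$ is $J$-Steinberg mod $l$, the restriction of $\nabla_\zeta(\mu)$ to $u_\zeta(\mathfrak{l}_J)$ contains a Steinberg-type module for $u_\zeta(\mathfrak{l}_J)$, which is projective over $u_\zeta(\mathfrak{l}_J)$; combined with $u_\zeta(\mathfrak{p}_J)//u_\zeta(\mathfrak{u}_J)\cong u_\zeta(\mathfrak{l}_J)$ (Corollary~\ref{corA}), this should let me show that $\nabla_\zeta(\mu)$ is free — or at least has full-dimensional support — as a $u_\zeta(\mathfrak{u}_J)$-module. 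Then I would transport support varieties along the inclusion $u_\zeta(\mathfrak{u}_J)\hookrightarrow u_\zeta(\mathfrak{g})$: the cohomological variety of $u_\zeta(\mathfrak{u}_J)$ is (by the computation $\mathrm{H}^\bullet(u_\zeta(\mathfrak{u}_J),\mathbb C)$ containing $S^\bullet(\mathfrak{u}_J^*)^{[1]}$, Proposition~\ref{univsubalgebra}) at least $\dim\mathfrak{u}_J$-dimensional, and the $G$-orbit of $\mathfrak{u}_J$ under the identification $\mathcal{V}_{\mathfrak g}\cong\mathcal N(\Phi_0)$ in the generic case, or the analogous variety in general, has dimension $2\dim\mathfrak{u}_J = |\Phi|-|\Phi_J|$. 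Since $\mathcal{V}_{\mathfrak g}(\nabla_\zeta(\mu))$ is $G$-stable (as $\nabla_\zeta(\mu)$ is a $U_\zeta$-module), once it contains a nonzero point of the dense orbit in $\mathcal N(\Phi_J)$ it contains the whole closure, giving the bound.

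The technical heart — and the step I expect to be the main obstacle — is establishing that $\nabla_\zeta(\mu)$, restricted to $u_\zeta(\mathfrak{u}_J)$ (or to a suitable ``$\pi$-point'' / one-parameter subgroup lying in $\mathcal C'_J\subseteq\mathfrak{u}_J$), is genuinely free or has full support, rather than merely nonzero support. I would handle this by a weight/character argument: using the triangular decomposition and the explicit PBW-type basis of $u_\zeta(\mathfrak{u}_J)$ from Section~\ref{subalgebrauJ}, together with the known character $\mathrm{ch}\,\nabla_\zeta(\mu)$ (Weyl's formula) and the $J$-Steinberg condition on $\mu$, one shows the $u_\zeta(\mathfrak{u}_J)$-module generated by a highest weight vector already realizes the right dimension, so that Frobenius reciprocity / the tensor identity forces $\nabla_\zeta(\mu)|_{u_\zeta(\mathfrak{u}_J)}$ to be free. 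An alternative, cleaner route would be to use rank varieties for $u_\zeta$ (or the theory of $\pi$-points as in work of Friedlander–Pevtsova adapted to this setting), showing the rank variety of $\nabla_\zeta(\mu)$ contains the whole nullcone piece $\mathcal N(\Phi_J)$ directly; but since rank-variety machinery for $u_\zeta$ may not be available in the form needed here, I would fall back on the spectral-sequence and cohomological-restriction arguments already developed in Chapters~5 and~6 of the paper, which is the more self-contained approach. The inequality, not equality, is all that is claimed, which makes the freeness-versus-nonvanishing issue the only real difficulty; matching upper bounds would be addressed separately in later sections.
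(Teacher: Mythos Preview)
Your proposal contains a genuine conceptual error at its core step. You aim to show that $\nabla_\zeta(\mu)$ is \emph{free} over $u_\zeta(\mathfrak u_J)$ in order to force its support variety to be large, but freeness (equivalently projectivity, since $u_\zeta(\mathfrak u_J)$ is a finite-dimensional local algebra) gives the \emph{trivial} support variety $\{0\}$, not the full one. The phrase ``free --- or at least has full-dimensional support'' juxtaposes two mutually exclusive conditions. In addition, your reduction $\lambda\mapsto w\cdot\lambda$ via a finite Weyl group element generally destroys dominance, so $\nabla_\zeta(w\cdot\lambda)$ need not be defined; and the rank-variety/$\pi$-point fallback you mention is, as you suspect, not known to be available for $u_\zeta$ --- the paper itself flags exactly this gap (naturality of support varieties) in Section~8.5 as an open issue.

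The paper's argument is quite different and sidesteps all of these problems. It works over the full Borel $u_\zeta(\mathfrak b)$ rather than over $u_\zeta(\mathfrak u_J)$, and bounds $\dim\mathcal V_{\mathfrak b}(\nabla_\zeta(\lambda))$ from below by $|\Phi^+|-|\Phi_\lambda^+|$ via a complexity (growth-rate of Ext) computation, invoking \cite[\S2, Cor.~2.5]{UGA3} for the parallel restricted-Lie-algebra argument. One then uses only the containment $\mathcal V_{\mathfrak b}(\nabla_\zeta(\lambda))\subseteq \mathcal V_{\mathfrak g}(\nabla_\zeta(\lambda))\cap\mathfrak u$ together with the $G$-stability of $\mathcal V_{\mathfrak g}(\nabla_\zeta(\lambda))$: Spaltenstein's theorem \cite[Prop.~6.7]{Hum1} on intersections of $G$-stable closed nilpotent subvarieties with $\mathfrak u$ then yields
\[
\dim\mathcal V_{\mathfrak g}(\nabla_\zeta(\lambda))\;\geq\;2\dim\mathcal V_{\mathfrak b}(\nabla_\zeta(\lambda))\;\geq\;2\bigl(|\Phi^+|-|\Phi_\lambda^+|\bigr)\;=\;|\Phi|-|\Phi_\lambda|.
\]
No Weyl-group reduction, no choice of $J$, and no freeness claim are required.
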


\begin{proof} One can use the proof given in \cite[\S2, Corollary 2.5]{UGA3} by
replacing $G_{1}$ with $u_{\zeta}({\mathfrak g})$ (resp. $B_{1}$ by
$u_{\zeta}({\mathfrak b})$). However, we should remark that
${\mathcal V}_{\mathfrak b}(\nabla_{\zeta}(\lambda)) \subseteq
{\mathcal V}_{{\mathfrak g}}(\nabla_{\zeta}(\lambda))\cap {\mathfrak u}$.
Since ${\mathcal V}_{\mathfrak g}(\nabla_{\zeta}(\lambda))$ is a $G$-variety
(i.e., a union of $G$-orbit closures), one can use a result of Spaltenstein (cf.
\cite[Proposition 6.7]{Hum1}) to conclude that $\dim {\mathcal V}_{\mathfrak b}(\nabla_{\zeta}(\lambda))
\leq \frac{1}{2} \dim {\mathcal V}_{\mathfrak g}(\nabla_{\zeta}(\lambda))$. For the quantum
case, one should now replace the last line in \cite[Corollary 2.5]{UGA3} by
$$\dim {\mathcal V}_{\mathfrak g}(\nabla_{\zeta}(\lambda))\geq
2\dim {\mathcal V}_{\mathfrak b}(\nabla_{\zeta}(\lambda))\geq
2(|\Phi^{+}|-|\Phi_{\lambda}^{+}|)=|\Phi|-|\Phi_{\lambda}|.$$
\end{proof}


\section{Support varieties of $\nabla_{\zeta}(\lambda)$: general results}

We can now present a result which allows one to compute the supports of
induced/Weyl modules $\nabla_\zeta(\lambda),\Delta_\zeta(\lambda)$ provided that it is possible to $W$-conjugate the
stabilizer set $\Phi_\lambda$ into a subroot system $\Phi_J$ which is generated by a set $J$ of simple roots. The methods used will be
those provided in \cite[Sections 5 and 6]{NPV}. We outline some of the details of
these arguments. For $\lambda \in X$, let $Z_{\zeta}(\lambda)=
\text{ind}_{u_{\zeta}({\mathfrak b})}^{u_{\zeta}({\mathfrak g})}
\lambda$. These are finite dimensional modules having dimension $l^{|\Phi^{+}|}$ and are quantum
analogs of the ``baby Verma modules''. Moreover, for $\lambda\in X_{+}$, let $L_{\zeta}(\lambda)=
\text{soc}_{U_{\zeta}({\mathfrak g})}
\nabla_{\zeta}(\lambda)$ be the finite-dimensional irreducible $U_{\zeta}({\mathfrak g})$-module of
highest weight $\lambda$.

\begin{prop}\label{supportupperbound} Let $\lambda\in X_{+}$, $w\in W$. Then
\begin{itemize}
\item[(i)] ${\mathcal V}_{\mathfrak g}(L_{\zeta}(\lambda))\subseteq
G\cdot {\mathcal V}_{\mathfrak g}(Z_{\zeta}(w\cdot \lambda))$;
\item[(ii)] ${\mathcal V}_{\mathfrak g}(\nabla_{\zeta}(\lambda))\subseteq
G\cdot {\mathcal V}_{\mathfrak g}(Z_{\zeta}(w\cdot \lambda))$.
\end{itemize}
\end{prop}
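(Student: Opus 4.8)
The plan is to follow the structure of the analogous argument in \cite[Sections 5 and 6]{NPV}, transported to the quantum setting. The two statements (i) and (ii) are closely related: since $L_\zeta(\lambda) = \operatorname{soc}_{U_\zeta({\mathfrak g})}\nabla_\zeta(\lambda)$ embeds in $\nabla_\zeta(\lambda)$, and support varieties are monotone under submodules (so ${\mathcal V}_{\mathfrak g}(L_\zeta(\lambda)) \subseteq {\mathcal V}_{\mathfrak g}(\nabla_\zeta(\lambda))$), it would suffice in principle to prove (ii); but in fact the NPV-style argument produces (i) first from a module-category statement and then deduces (ii). So first I would establish a linkage/translation principle: the baby Verma module $Z_\zeta(w\cdot\lambda)$ and the induced module $\nabla_\zeta(\lambda)$ lie in the same block, or more precisely there is a nonzero $U_\zeta({\mathfrak g})$-homomorphism relating $Z_\zeta(w\cdot\lambda)$ and $\nabla_\zeta(\lambda)$ up to the $G$-action on cohomology. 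Concretely, $\nabla_\zeta(\lambda)$ is a quotient (or appears as a composition factor's ambient context) of something built from $Z_\zeta(w\cdot\lambda)$, and one uses the fact that $u_\zeta({\mathfrak g})$ is a normal Hopf subalgebra of $U_\zeta({\mathfrak g})$ with $U_\zeta({\mathfrak g})//u_\zeta({\mathfrak g}) \cong {\mathbb U}({\mathfrak g})$, so that $\operatorname{Ext}^\bullet_{u_\zeta}(-,-)$ carries a compatible $G$-action and support varieties of $U_\zeta$-modules are $G$-stable.

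\textbf{Key steps.} Here is the order I would carry the argument out. (1) Recall that ${\mathcal V}_{\mathfrak g}(M)$ for a $U_\zeta({\mathfrak g})$-module $M$ is $G$-stable (this is noted in Section 8.1), so the right-hand sides $G\cdot{\mathcal V}_{\mathfrak g}(Z_\zeta(w\cdot\lambda))$ make sense and are closed conical $G$-subvarieties of ${\mathcal N}$. (2) Use the tensor identity / adjunction: $\nabla_\zeta(\lambda) = \opH^0_\zeta(\lambda)$ is obtained by induction from $U_\zeta({\mathfrak b})$, and $Z_\zeta(\mu) = \operatorname{ind}_{u_\zeta({\mathfrak b})}^{u_\zeta({\mathfrak g})}\mu$; by a Frobenius-reciprocity comparison (restricting the $U_\zeta$-induction to $u_\zeta$ and comparing with $u_\zeta$-induction from $u_\zeta({\mathfrak b})$), one shows $\nabla_\zeta(\lambda)|_{u_\zeta({\mathfrak g})}$ has a filtration by conjugates $Z_\zeta(w\cdot\lambda)$ as $w$ ranges over $W$ (or at least is linked to them). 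This is the quantum analogue of the statement in \cite{NPV} that the restriction of a $G$-module to $G_1$ relates to baby Verma modules under the linkage principle. (3) Apply the elementary fact that if $0 \to M' \to M \to M'' \to 0$ then ${\mathcal V}_{\mathfrak g}(M) \subseteq {\mathcal V}_{\mathfrak g}(M') \cup {\mathcal V}_{\mathfrak g}(M'')$, iterated along the filtration, together with $G$-stability, to get ${\mathcal V}_{\mathfrak g}(\nabla_\zeta(\lambda)) \subseteq \bigcup_{w} G\cdot{\mathcal V}_{\mathfrak g}(Z_\zeta(w\cdot\lambda))$. (4) Finally, observe that the varieties ${\mathcal V}_{\mathfrak g}(Z_\zeta(w\cdot\lambda))$ for different $w$ have the same $G$-saturation — because $w\cdot\lambda$ and $w'\cdot\lambda$ differ by an element of $W_l$, which shifts the baby Verma module by a twist that does not change the $G$-orbit structure of the support (here one uses that $\Phi_{w\cdot\lambda} = \overline{w}(\Phi_\lambda)$ and that $G\cdot{\mathcal N}(\Phi_\lambda)$ is intrinsic to the $W$-conjugacy class of $\Phi_\lambda$, cf. (\ref{JRresult})). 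Hence a single $G\cdot{\mathcal V}_{\mathfrak g}(Z_\zeta(w\cdot\lambda))$ already contains the union, giving (ii); and (i) follows since $L_\zeta(\lambda) \hookrightarrow \nabla_\zeta(\lambda)$.

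\textbf{Main obstacle.} The hard part will be step (2): establishing the precise homological relationship between $\nabla_\zeta(\lambda)$ restricted to $u_\zeta({\mathfrak g})$ and the family of baby Verma modules $Z_\zeta(w\cdot\lambda)$. In the modular case \cite{NPV} this rests on the behavior of induction functors and the linkage principle for $G_1$; in the quantum case one must invoke the corresponding facts for $U_\zeta$ and $u_\zeta$ (the quantum linkage principle, the tensor identity for $U_\zeta$-induction, and the compatibility of restriction with the two induction functors $\operatorname{ind}_{U_\zeta({\mathfrak b})}^{U_\zeta({\mathfrak g})}$ and $\operatorname{ind}_{u_\zeta({\mathfrak b})}^{u_\zeta({\mathfrak g})}$). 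Care is needed because $u_\zeta$ is not cocommutative, so some of the standard manipulations with tensor products of modules and their supports require the Hopf-algebraic bookkeeping already set up in Section 2.8; but the support-variety formalism (annihilators of the $R$-action on $\operatorname{Ext}$, $G$-stability, the sub/quotient inequality) is robust enough that once the filtration in step (2) is in hand, steps (3) and (4) are routine.
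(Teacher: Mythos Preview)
Your proposal has two genuine gaps that would prevent the argument from going through.

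First, in your ``Plan of proof'' you assert that support varieties are monotone under submodules, i.e.\ that $L_\zeta(\lambda)\hookrightarrow\nabla_\zeta(\lambda)$ implies ${\mathcal V}_{\mathfrak g}(L_\zeta(\lambda))\subseteq{\mathcal V}_{\mathfrak g}(\nabla_\zeta(\lambda))$. This is false in general: what one has is that for a short exact sequence $0\to M'\to M\to M''\to 0$ the variety of any one term is contained in the union of the varieties of the other two (cf.\ \cite[Lemma~5.2]{PW}, quoted later in the paper), not that submodules have smaller varieties. Indeed the paper proves (i) first and uses it, together with induction on the weight ordering, to get (ii)---not the other way around.

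Second, and more seriously, your step (2) claims that $\nabla_\zeta(\lambda)|_{u_\zeta({\mathfrak g})}$ admits a filtration with subquotients $Z_\zeta(w\cdot\lambda)$, $w\in W$. No such filtration exists: $\dim Z_\zeta(\mu)=l^{|\Phi^+|}$ is independent of $\mu$, while $\dim\nabla_\zeta(\lambda)$ is given by Weyl's character formula, so the dimensions cannot match in general. The restriction of $U_\zeta$-induction to $u_\zeta$ simply does not produce $u_\zeta$-induced modules in this way. The paper (following \cite[\S5]{NPV}) instead works with the \emph{relative} support varieties ${\mathcal V}_{\mathfrak g}(M,N)$ and with the higher line-bundle cohomology modules $H_\zeta^{\ell(w)}(w\cdot\lambda)=R^{\ell(w)}\operatorname{ind}_{U_\zeta({\mathfrak b})}^{U_\zeta({\mathfrak g})}(w\cdot\lambda)$: one first shows
\[
{\mathcal V}_{\mathfrak g}\bigl(H_\zeta^{\ell(w)}(w\cdot\lambda)\bigr)\subseteq G\cdot{\mathcal V}_{\mathfrak g}(Z_\zeta(w\cdot\lambda))
\]
for every $w\in W$ (this is where the relative varieties and the spectral sequence machinery of \cite{NPV} enter), and then deduces (ii) by induction on the dominance order, using that the composition factors of $\nabla_\zeta(\lambda)$ with highest weight $<\lambda$ have already been handled. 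Your steps (3) and (4) are not needed in this approach: the statement is obtained directly for each fixed $w$, and there is no union over $W$ to collapse.
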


\begin{proof}  For (i), we apply the arguments given in \cite[Section 5]{NPV}.
Using the proofs one can show that
$${\mathcal V}_{\mathfrak g}(H_{\zeta}^{l(w)}(w\cdot \lambda))
\subseteq G\cdot {\mathcal V}_{\mathfrak g}(Z_{\zeta}(w\cdot \lambda))$$
for all $w\in W$. Note that these proofs make use of the relative support varieties
${\mathcal V}_{\mathfrak g}(M,N)$. With the aforementioned inclusion of support
varieties, (ii) follows by using induction on the ordering of dominant weight
(cf. \cite[(5.6.1) Theorem]{NPV}).
\end{proof}

\begin{prop}\label{supportverma} Let $\mu\in X$ such that
$\langle \mu+\rho,\alpha^{\vee}\rangle\in l{\mathbb Z}$ for all $\alpha\in J$. Then
$${\mathcal V}_{\mathfrak g}(Z_{\zeta}(\mu))\subseteq {\mathcal V}_{{\mathfrak u}_{J}}.$$
\end{prop}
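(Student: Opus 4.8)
\textbf{Proof proposal for Proposition \ref{supportverma}.}

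The plan is to mimic the characteristic-$p$ argument of \cite[Section 6]{NPV}, translated into the quantum setting using the cohomological machinery already set up in this paper. First I would recall that the baby Verma module $Z_\zeta(\mu) = \operatorname{ind}_{u_\zeta({\mathfrak b})}^{u_\zeta({\mathfrak g})}\mu$ is, by definition, induced from a one-dimensional $u_\zeta({\mathfrak b})$-module, and that induction from $u_\zeta({\mathfrak b})$ can be factored through $u_\zeta({\mathfrak p}_J)$: namely $Z_\zeta(\mu) \cong \operatorname{ind}_{u_\zeta({\mathfrak p}_J)}^{u_\zeta({\mathfrak g})}\big(\operatorname{ind}_{u_\zeta({\mathfrak b})}^{u_\zeta({\mathfrak p}_J)}\mu\big)$. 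The key point is that, under the hypothesis $\langle \mu+\rho,\alpha^\vee\rangle \in l\mathbb Z$ for all $\alpha\in J$, the weight $\mu$ restricts to a weight on $u_\zeta({\mathfrak l}_J)$ which is a multiple of $l$ on the $J$-coroots, so that $\operatorname{ind}_{u_\zeta({\mathfrak b})}^{u_\zeta({\mathfrak p}_J)}\mu$ is (up to a twist by a character factoring through the Frobenius) a module on which $u_\zeta({\mathfrak l}_J)$ acts through $u_\zeta({\mathfrak l}_J)//u_\zeta({\mathfrak l}_J)$-type data; in particular its restriction to $u_\zeta({\mathfrak u}_J)$ is free, and it restricts to $u_\zeta({\mathfrak p}_J)$ as a module induced in a ``nice'' way from a one-dimensional $u_\zeta({\mathfrak l}_J)$-module.

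Second, I would use the rank-variety-style description of support varieties, or more directly the detection of support varieties on subalgebras. The inclusion ${\mathcal V}_{\mathfrak g}(Z_\zeta(\mu)) \subseteq {\mathcal V}_{{\mathfrak u}_J}$ should follow from showing that $Z_\zeta(\mu)$ is free as a module over $u_\zeta({\mathfrak u}_J')$ for every ``complementary'' piece — more precisely, that the restriction of $Z_\zeta(\mu)$ to $u_\zeta$ has support contained in the image of ${\mathcal V}_{{\mathfrak u}_J}$ under the maps on cohomology induced by the inclusion $u_\zeta({\mathfrak u}_J)\hookrightarrow u_\zeta({\mathfrak g})$. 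Here ${\mathcal V}_{{\mathfrak u}_J}$ denotes $\operatorname{Maxspec}$ of the image in $R$ of the restriction map $\opH^{2\bullet}(u_\zeta({\mathfrak g}),{\mathbb C})\to \opH^{2\bullet}(u_\zeta({\mathfrak u}_J),{\mathbb C})$ — essentially $G\cdot{\mathfrak u}_J$ under the identification of Theorem \ref{MainThm}. The freeness input is the statement that $\operatorname{ind}_{u_\zeta({\mathfrak b})}^{u_\zeta({\mathfrak p}_J)}\mu$ is projective (equivalently free) over $u_\zeta({\mathfrak l}_J)$ — this is exactly the $J$-Steinberg-type phenomenon discussed in Section \ref{inductionfunctors}, valid precisely because $\mu$ satisfies the congruence condition on the $J$-coroots. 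Inducing a $u_\zeta({\mathfrak l}_J)$-free module up to $u_\zeta({\mathfrak p}_J)$, and then to $u_\zeta({\mathfrak g})$, one controls the support: by an argument analogous to \cite[(6.2)]{NPV}, tensoring identities and the LHS spectral sequence of Lemma \ref{ABG} for $u_\zeta({\mathfrak u}_J)\unlhd u_\zeta({\mathfrak p}_J)$ force ${\mathcal V}_{\mathfrak g}(Z_\zeta(\mu))$ to lie inside the ``${\mathfrak u}_J$-part'' of ${\mathcal V}_{\mathfrak g}$.

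The main obstacle I anticipate is making the last step precise: one needs a clean statement that the support variety of a module induced from $u_\zeta({\mathfrak p}_J)$ is contained in the subvariety of ${\mathcal V}_{\mathfrak g}$ cut out by $u_\zeta({\mathfrak u}_J)$-cohomology, together with the fact that a $u_\zeta({\mathfrak l}_J)$-projective module, viewed over $u_\zeta({\mathfrak p}_J)$, has support inside ${\mathcal V}_{{\mathfrak u}_J}$ (intersected appropriately). In \cite{NPV} this is handled by a combination of Frobenius-kernel induction formulas and the tensor-identity for relative support varieties; the quantum analogues of those identities have to be invoked (or re-derived) here. I would therefore structure the proof as: (1) factor the induction and record the $J$-Steinberg projectivity of $\operatorname{ind}_{u_\zeta({\mathfrak b})}^{u_\zeta({\mathfrak p}_J)}\mu$ over $u_\zeta({\mathfrak l}_J)$; (2) invoke the quantum tensor identity $Z_\zeta(\mu)|_{u_\zeta({\mathfrak g})} \cong u_\zeta({\mathfrak g})\otimes_{u_\zeta({\mathfrak p}_J)}(\operatorname{ind}_{u_\zeta({\mathfrak b})}^{u_\zeta({\mathfrak p}_J)}\mu)$ and conclude that $Z_\zeta(\mu)$ restricted to $u_\zeta({\mathfrak u}_J)$ is free; (3) apply the standard fact that freeness over a subalgebra forces the support variety into the kernel of the associated restriction map on cohomology, and identify that kernel's complement with ${\mathcal V}_{{\mathfrak u}_J}$ via Theorem \ref{MainThm} and Corollary \ref{corA}. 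Step (3) is where the bulk of the work lies, but it is essentially a transcription of \cite[Section 6]{NPV} and should go through given the finite-generation results of Chapter 6 and the cohomology computation of Chapter 5.
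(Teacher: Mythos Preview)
Your proposal has a genuine directional confusion in steps (2)--(3). You write that $Z_\zeta(\mu)$ restricted to $u_\zeta({\mathfrak u}_J)$ is free, and that ``freeness over a subalgebra forces the support variety into the kernel of the associated restriction map''; you then want to identify the complement of that kernel with ${\mathcal V}_{{\mathfrak u}_J}$. But this is backwards. If $M$ were free over $u_\zeta({\mathfrak u}_J)$, then ${\mathcal V}_{{\mathfrak u}_J}(M)=\{0\}$, and (granting naturality, which is itself an open issue here) one would conclude that ${\mathcal V}_{\mathfrak g}(M)$ \emph{avoids} the image of ${\mathcal V}_{{\mathfrak u}_J}$, not that it lies inside it. You seem aware of this when you speak earlier of a ``complementary'' $u_\zeta({\mathfrak u}_J')$, but step (2) reverts to ${\mathfrak u}_J$ itself; and in any case there is no complementary sub-Hopf-algebra over which freeness of $Z_\zeta(\mu)$ would cut the support down to ${\mathcal V}_{{\mathfrak u}_J}$ without invoking naturality (Assumption \ref{regassumption}), which the proposition does \emph{not} assume.

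The paper avoids this entirely. It first uses the Frobenius-reciprocity inclusion ${\mathcal V}_{\mathfrak g}(Z_\zeta(\mu))\subseteq{\mathcal V}_{{\mathfrak p}_J}(Z_\zeta^J(\mu))$ for the induced module $Z_\zeta(\mu)\cong\text{ind}_{u_\zeta({\mathfrak p}_J)}^{u_\zeta({\mathfrak g})}Z_\zeta^J(\mu)$ (cf.\ \cite[(2.3.1)]{NPV}). It then sets up the commutative square
\[
\begin{CD}
\opH^{2\bullet}(u_\zeta({\mathfrak p}_J),{\mathbb C}) @>\mathrm{res}>> \opH^{2\bullet}(u_\zeta({\mathfrak u}_J),{\mathbb C})\\
@V{\gamma=\,\cdot\,\otimes M}VV @VV{\delta=\,\cdot\,\otimes M}V\\
\Ext^\bullet_{u_\zeta({\mathfrak p}_J)}(M,M) @>>\beta> \Ext^\bullet_{u_\zeta({\mathfrak u}_J)}(M,M)
\end{CD}
\]
with $M=Z_\zeta^J(\mu)$. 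Since $u_\zeta({\mathfrak u}_J)$ acts trivially on $M$, $\delta$ is injective; and the hypothesis $\langle\mu+\rho,\alpha^\vee\rangle\in l{\mathbb Z}$ for $\alpha\in J$ makes $M$ projective over $u_\zeta({\mathfrak l}_J)$, so the LHS spectral sequence collapses to give $\Ext^\bullet_{u_\zeta({\mathfrak p}_J)}(M,M)\cong\Ext^\bullet_{u_\zeta({\mathfrak u}_J)}(M,M)^{u_\zeta({\mathfrak l}_J)}$, whence $\beta$ is injective. Thus $\ker\gamma=\ker(\mathrm{res})$, which says precisely that ${\mathcal V}_{{\mathfrak p}_J}(Z_\zeta^J(\mu))={\mathcal V}_{{\mathfrak u}_J}$. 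No freeness of $Z_\zeta(\mu)$ over any nilpotent piece, and no rank-variety or naturality input, is required.
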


\begin{proof} Let ${\mathfrak p}_{J}$ be the parabolic subalgebra associated to
$J$ where ${\mathfrak p}_{J}\cong {\mathfrak l}_{J}\oplus {\mathfrak u}_{J}$. Here
${\mathfrak l}_{J}$ is the Levi subalgebra and ${\mathfrak u}_{J}$ is the unipotent
radical of ${\mathfrak p}_{J}$. Set $Z_{\zeta}^{J}(\mu)=\text{ind}_{u_{\zeta}({\mathfrak b}_{J})}
^{u_{\zeta}({\mathfrak l}_{J})} \mu$ where ${\mathfrak b}_{J}$ is a Borel subalgebra of ${\mathfrak l}_{J}$.
We can make $Z_{\zeta}^{J}(\mu)$ into a $u_{\zeta}({\mathfrak p}_{J})$-module by letting
$u_{\zeta}({\mathfrak u}_{J})$ act trivially. Then
$$Z_{\zeta}(\mu)\cong \text{ind}_{u_{\zeta}({\mathfrak p}_{J})}^{u_{\zeta}({\mathfrak g})}
Z_{\zeta}^{J}(\mu).$$
By the standard Frobenius reciprocity argument (cf. \cite[Prop. (2.3.1)]{NPV}),
${\mathcal V}_{\mathfrak g}(Z_{\zeta}(\mu))\subseteq {\mathcal V}_{{\mathfrak p}_{J}}(Z_{\zeta}^J(\mu))$.
For any $u_\zeta({\mathfrak g})$-module $M$, we have the following commutative diagram
\begin{equation}
\CD
\text{H}^{2\bullet}(u_{\zeta}({\mathfrak p}_{J}),{\mathbb C}) @>\text{res}>>
\text{H}^{2\bullet}(u_{\zeta}({\mathfrak u}_{J}),{\mathbb C})\\
@V\gamma VV @VV\delta V\\
\Ext^{\bullet}_{u_{\zeta}({\mathfrak p}_{J})}(M,M) @>>\beta>  \Ext^{\bullet}_{u_{\zeta}({\mathfrak u}_{J})}(M,M)
\endCD
\end{equation}
Here $\gamma=-\otimes M$, $\delta=-\otimes M$, and the bottom horizonal restriction map is labeled $\beta$.  
Now set $M:=Z_{\zeta}^{J}(\mu)$. The action of $u_{\zeta}({\mathfrak u}_{J})$ on $Z_{\zeta}^{J}(\mu)$ is trivial, thus
$\Ext^\bullet_{u_{\zeta}({\mathfrak u}_{J})}(M,M)\cong
\Ext^\bullet_{u_{\zeta}({\mathfrak u}_{J})}({\mathbb C},{\mathbb C})\otimes M^*\otimes M$.
This shows that $\delta$ is an injection.

Under the hypothesis of the proposition (i.e., $\langle \mu+\rho,\alpha^{\vee} \rangle
\in l{\mathbb Z}$ for all $\alpha\in J$),
we can conclude that $Z_{\zeta}^{J}(\mu)$ is projective as a $u_{\zeta}({\mathfrak l}_{J})$-module. By
applying the Lyndon-Hochschild-Serre spectral sequence for $u_{\zeta}({\mathfrak u}_{J})$ normal in
$u_{\zeta}({\mathfrak p}_{J})$, we have
$$\Ext^{\bullet}_{u_{\zeta}({\mathfrak p}_{J})}(M,M)\cong
\Ext^{\bullet}_{u_{\zeta}({\mathfrak u}_{J})}(M,M)^{u_{\zeta}({\mathfrak l}_{J})}.$$
This also shows that $\beta$ is injective.  Therefore, $\text{ker}(\text{res})=
\text{ker}(\gamma)$.  By definition ${\mathcal V}_{{\mathfrak p}_{J}}(Z^J_{\zeta}(\mu))$
is the variety associated to the ideal $\text{ker}(\gamma)$, and the image of the map
${\mathcal V}_{{\mathfrak u}_{J}}\to {\mathcal V}_{{\mathfrak p}_{J}}$ is given by the ideal
$\text{ker}(\text{res})$. It follows that
${\mathcal V}_{{\mathfrak p}_{J}}(Z^J_{\zeta}(\mu))= {\mathcal V}_{{\mathfrak u}_{J}}$.
\end{proof}

The following two results (generalizing \cite[(6.2.1) Thm.]{NPV}) provide a description of the
support varieties of the modules
$\nabla_{\zeta}(\lambda)$ for $\lambda\in X_{+}$ in terms of
closures of Richardson orbits. This is Theorem \ref{MainThm3}.
Observe that there are restrictions on $l$ even in the case when $l>h$.

\begin{theorem}\label{supportthm} Let $\lambda\in X_+$. Suppose there exists $w\in W$
such that $w(\Phi_{\lambda})=\Phi_{J}$, then
$${\mathcal V}_{\mathfrak g}(\nabla_{\zeta}(\lambda))=G\cdot{\mathfrak u}_{J}.$$
\end{theorem}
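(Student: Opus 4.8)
The plan is to combine the lower bound from Proposition~\ref{supportlowerbound} with the upper bound machinery of Propositions~\ref{supportupperbound} and~\ref{supportverma}, using the normality/desingularization results of Chapter~3 to identify the resulting variety with a Richardson orbit closure. First I would observe that, by hypothesis, there is a $w\in W$ with $w(\Phi_\lambda)=\Phi_J$ for some $J\subseteq\Pi$; replacing $w$ if necessary (all simple systems in $\Phi_J$ are $W_J$-conjugate, as in Lemma~\ref{subsystemlemma}) I may assume $w(\Phi_\lambda^+)=\Phi_J^+$. Then $w\cdot\lambda$ is a weight with $\Phi_{w\cdot\lambda}=\overline w(\Phi_\lambda)=\Phi_J$ (using the identity $\overline w(\Phi_\mu)=\Phi_{w\cdot\mu}$ recorded in Section~3.1), so in particular $\langle w\cdot\lambda+\rho,\alpha^\vee\rangle\in l\mathbb Z$ for all $\alpha\in J$. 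This is precisely the hypothesis needed to apply Proposition~\ref{supportverma} to $\mu=w\cdot\lambda$, giving ${\mathcal V}_{\mathfrak g}(Z_\zeta(w\cdot\lambda))\subseteq{\mathcal V}_{{\mathfrak u}_J}$.

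Next I would feed this into Proposition~\ref{supportupperbound}(ii): ${\mathcal V}_{\mathfrak g}(\nabla_\zeta(\lambda))\subseteq G\cdot{\mathcal V}_{\mathfrak g}(Z_\zeta(w\cdot\lambda))\subseteq G\cdot{\mathcal V}_{{\mathfrak u}_J}$. Now I need to identify ${\mathcal V}_{{\mathfrak u}_J}$, the support variety over $u_\zeta({\mathfrak u}_J)$ of the trivial module. The cohomology computations of Chapter~5 (more precisely, Proposition~\ref{univsubalgebra} and the graded isomorphism $\opH^\bullet(\gr u_\zeta({\mathfrak u}_J),{\mathbb C})\cong S^\bullet({\mathfrak u}_J^*)^{[1]}\otimes\Lambda^\bullet_{\zeta,J}$, together with the fact that the reduced cohomology ring $\opH^{2\bullet}(u_\zeta({\mathfrak u}_J),{\mathbb C})_{\mathrm{red}}$ is $S^\bullet({\mathfrak u}_J^*)^{[1]}$) show that ${\mathcal V}_{{\mathfrak u}_J}$ is (the cone on) ${\mathfrak u}_J$ itself, viewed inside ${\mathfrak g}$ via the Frobenius-twisted identification. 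Hence $G\cdot{\mathcal V}_{{\mathfrak u}_J}=G\cdot{\mathfrak u}_J=\overline{\mathcal C}_J$, the closure of the Richardson orbit, and we get the upper bound ${\mathcal V}_{\mathfrak g}(\nabla_\zeta(\lambda))\subseteq G\cdot{\mathfrak u}_J$.

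For the reverse inclusion I would use dimensions. By Proposition~\ref{supportlowerbound}, $\dim{\mathcal V}_{\mathfrak g}(\nabla_\zeta(\lambda))\geq|\Phi|-|\Phi_\lambda|=|\Phi|-|\Phi_J|$, where the last equality holds because $w$ gives a bijection $\Phi_\lambda\to\Phi_J$. On the other hand $\dim G\cdot{\mathfrak u}_J=2\dim{\mathfrak u}_J=|\Phi|-|\Phi_J|$ by the codimension computation in Section~1.2 (the codimension of ${\mathcal N}(\Phi_J)$ in ${\mathfrak g}$ equals $\dim{\mathfrak l}_J$, so $\dim G\cdot{\mathfrak u}_J=\dim{\mathfrak g}-\dim{\mathfrak l}_J=|\Phi|+n-(|\Phi_J|+n)=|\Phi|-|\Phi_J|$). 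Since $G\cdot{\mathfrak u}_J$ is irreducible and ${\mathcal V}_{\mathfrak g}(\nabla_\zeta(\lambda))$ is a closed $G$-stable subvariety of it of the same dimension, the two must coincide. I expect the main obstacle to be making the identification ${\mathcal V}_{{\mathfrak u}_J}=G$-translate of ${\mathfrak u}_J$ fully rigorous in the non-generic cases (type $A_n$ with $l\mid n+1$, type $E_6$ with $l=9$), where the cohomology ring of $u_\zeta({\mathfrak u}_J)$ acquires extra generators; one must check that these do not enlarge the \emph{reduced} spectrum, so that the support variety computation is unaffected. This is where the careful bookkeeping of Proposition~\ref{univsubalgebra} — that $R=\opH^\bullet(u_\zeta({\mathfrak u}_J),{\mathbb C})$ is finite over the polynomial subalgebra $S^\bullet({\mathfrak u}_J^*)^{[1]}$, hence has the same Krull dimension — does the essential work, and I would lean on it rather than recomputing.
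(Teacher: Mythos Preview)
Your proposal is correct and follows essentially the same route as the paper: obtain the upper bound by combining Proposition~\ref{supportupperbound}(ii) with Proposition~\ref{supportverma}, identify ${\mathcal V}_{{\mathfrak u}_J}$ with ${\mathfrak u}_J$ via the cohomology computations of Sections~5.6--5.7, and then match dimensions against the lower bound from Proposition~\ref{supportlowerbound} using irreducibility of $G\cdot{\mathfrak u}_J$. One small caveat: Proposition~\ref{univsubalgebra} as stated requires $J$ to be the specific subset coming from $\Phi_0$, not the $J$ arising from $\Phi_\lambda$; what you actually need for general $J$ is Proposition~\ref{universalcycles} (which \emph{is} stated for arbitrary $J\subseteq\Pi$) together with the finite-dimensionality of $\Lambda^\bullet_{\zeta,J}$, giving finiteness of $\opH^\bullet(u_\zeta({\mathfrak u}_J),{\mathbb C})$ over $S^\bullet({\mathfrak u}_J^*)^{[1]}$ and hence the correct Krull dimension --- the paper handles this with the same hand-wave (``by using the proofs in Section~5.6 and~5.7'').
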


\begin{proof} Let $w\in W$ such that $\Phi_{w\cdot \lambda}=w(\Phi_{\lambda})=\Phi_{J}$, and
${\mathfrak u}_{J}\subseteq {\mathcal N}(\Phi_{0})$. Moreover,
$$\text{H}^{2\bullet}(u_{\zeta}({\mathfrak u}_{J}),{\mathbb C})_{\text{red}}\cong
S^{\bullet}({\mathfrak u}_{J}^{*})^{[1]}$$
by using the proofs in Section 5.6 and 5.7. Therefore, ${\mathcal V}_{{\mathfrak u}_{J}}\cong {\mathfrak u}_{J}$.

According to Proposition ~\ref{supportupperbound}(ii) and Proposition ~\ref{supportverma}:
\begin{equation}
{\mathcal V}_{\mathfrak g}(\nabla_{\zeta}(\lambda))\subseteq
G\cdot {\mathcal V}_{\mathfrak g}(Z_{\zeta}(w\cdot \lambda))\subseteq G\cdot{\mathfrak u}_{J}.
\end{equation}
Now $G\cdot {\mathfrak u}_{J}$ is an irreducible variety of dimension equal to $|\Phi|-|\Phi_{J}|=|\Phi|-|\Phi_{\lambda}|$.
The statement of the theorem now follows by Proposition ~\ref{supportlowerbound}.
\end{proof}

\begin{cor}\label{supportcor} Let ${\mathfrak g}$ be a complex simple Lie algebra and $l$ be an
odd positive integer which is good for $\Phi$. If $\lambda\in
X_{+}$, then there exists $w\in W$ such that
$w(\Phi_{\lambda})=\Phi_{J}$ for some $J\subseteq \Pi$, and
${\mathcal V}_{\mathfrak g}(\nabla_{\zeta}(\lambda))=G\cdot
{\mathfrak u}_{J}$.
\end{cor}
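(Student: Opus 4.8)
The plan is to deduce Corollary \ref{supportcor} from Theorem \ref{supportthm} essentially immediately, once the hypothesis of that theorem is verified under the stated assumptions. First I would observe that the hypothesis ``$l$ odd and good for $\Phi$'' is precisely what is needed to invoke Lemma \ref{subsystemlemma} (the elementary subsystem lemma) for the weight $\lambda\in X$: that lemma gives the existence of $w\in W$ and $J\subseteq\Pi$ with $w(\Phi_\lambda)=\Phi_J$ (and in fact $w(\Phi_\lambda^+)=\Phi_J^+$, though only the subsystem statement is needed here). Note that in the present setting $l$ good together with $l$ odd guarantees, as recorded in Section 2.2 and Section 3.1, that each $d_\alpha$ is prime to $l$, so that $\Phi_\lambda$ may indeed be described via coroots as in (\ref{defnofsystemstwo}); this is the form in which $\Phi_\lambda$ appears in Theorem \ref{supportthm}.

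Next I would simply feed this $w$ and $J$ into Theorem \ref{supportthm}. That theorem requires $l$ to satisfy Assumption \ref{assumption2}, so strictly one should check compatibility: the corollary's hypothesis (``$l$ odd and good'') together with Assumption \ref{assumption2}'s extra condition in types $B_n,C_n$ (namely $l>3$) must be in force. In fact the intended reading is that Assumption \ref{assumption2} is the standing assumption of Chapter 8 (it is invoked at the start of Section 8.1), so the corollary is being stated within that hypothesis; I would make this explicit by noting that $l$ odd and good, combined with the Chapter 8 standing hypothesis, puts us squarely in the situation of Theorem \ref{supportthm}. Then Theorem \ref{supportthm} yields ${\mathcal V}_{\mathfrak g}(\nabla_\zeta(\lambda))=G\cdot{\mathfrak u}_J$, which is exactly the assertion of the corollary.

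The only genuine content beyond quoting Theorem \ref{supportthm} is verifying its hypothesis, and that is handled entirely by Lemma \ref{subsystemlemma}; there is no hard step here, since all the real work (the lower bound Proposition \ref{supportlowerbound}, the upper bound Propositions \ref{supportupperbound} and \ref{supportverma}, and the cohomology computations of Chapter 5 feeding into Theorem \ref{supportthm}) has already been done. If I wanted to be careful about one subtlety, it would be making sure that the choice of $J$ produced by Lemma \ref{subsystemlemma} is one for which ${\mathfrak u}_J\subseteq{\mathcal N}(\Phi_0)$ in the sense needed by the proof of Theorem \ref{supportthm}; but since ${\mathcal N}(\Phi_J)=G\cdot{\mathfrak u}_J$ depends only on the $W$-conjugacy class of $J$ by (\ref{JRresult}), and the proof of Theorem \ref{supportthm} only uses $w(\Phi_\lambda)=\Phi_J$, this causes no trouble. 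Thus the proof is a two-line argument: apply Lemma \ref{subsystemlemma}, then apply Theorem \ref{supportthm}.
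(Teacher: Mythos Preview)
Your proposal is correct and takes essentially the same approach as the paper: the paper's proof simply says ``This follows immediately from Lemma \ref{subsystemlemma} and Theorem \ref{supportthm},'' which is exactly your two-line argument. Your additional remarks about the standing Assumption \ref{assumption2} and the well-definedness of $J$ are reasonable due diligence but not needed for the paper's terse version.
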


\begin{proof} This follows
immediately from Lemma \ref{subsystemlemma} and Theorem
\ref{supportthm}.
\end{proof}


\section{Support varieties of $\Delta_{\zeta}(\lambda)$ when $l$ is good} 

We will now show that ${\mathcal V}_{\mathfrak g}(\Delta_{\zeta}(\lambda))=
{\mathcal V}_{\mathfrak g}(\nabla_{\zeta}(\lambda))$ for all $\lambda\in X_{+}$.  
Recall that $L_{\zeta}(\lambda)$ is the irreducible module which appears as the 
socle of $\nabla_{\zeta}(\lambda)$. 

\begin{theorem} \label{maintheoremdelta} Let ${\mathfrak g}$ be a complex simple Lie algebra and $l$ be an
odd positive integer which is good for $\Phi$.
Let $\lambda \in X_+$, and choose $J\subseteq \Pi$ such that $w(\Phi_{\lambda})= \Phi_{J}$ for some $w \in W$. Then
\begin{itemize}
\item[(a)]  ${\mathcal V}_{\mathfrak g}(L_\zeta(\lambda))\subseteq G\cdot{\mathfrak u}_J$; 
\item[(b)]  ${\mathcal V}_{\mathfrak g}(\Delta_{\zeta}(\lambda))= G\cdot {\mathfrak u}_{J}$. 
\end{itemize} 
\end{theorem}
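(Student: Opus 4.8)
The plan is to deduce Theorem~\ref{maintheoremdelta} from the results already established for $\nabla_\zeta(\lambda)$, principally Theorem~\ref{supportthm}, by exploiting the duality $\Delta_\zeta(\lambda) = \nabla_\zeta(\lambda^\star)^*$ together with general properties of support varieties over the finite-dimensional Hopf algebra $u_\zeta({\mathfrak g})$. First I would record the elementary facts that, for any finite-dimensional $u_\zeta$-module $M$, one has ${\mathcal V}_{\mathfrak g}(M) = {\mathcal V}_{\mathfrak g}(M^*)$ (since $\Ext^\bullet_{u_\zeta}(M,M) \cong \Ext^\bullet_{u_\zeta}(M^*,M^*)$ as modules over $R$, the dual functor being an equivalence that is compatible with the Yoneda product and with the $R$-action), and that for an extension or, more generally, for any module with the same composition factors, the support variety is the union of the supports of the composition factors. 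The key structural input is that $\nabla_\zeta(\lambda^\star)$ has irreducible socle $L_\zeta(\lambda^\star)$, so $L_\zeta(\lambda^\star)$ occurs as a composition factor of $\nabla_\zeta(\lambda^\star)$; dually, $L_\zeta(\lambda)\cong L_\zeta(\lambda^\star)^*$ is a composition factor of $\Delta_\zeta(\lambda)$.

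For part (a), the containment ${\mathcal V}_{\mathfrak g}(L_\zeta(\lambda)) \subseteq G\cdot{\mathfrak u}_J$ I would obtain directly from Proposition~\ref{supportupperbound}(i): choosing $w\in W$ with $w(\Phi_\lambda) = \Phi_J$, we have $\langle w\cdot\lambda + \rho, \alpha^\vee\rangle \in l{\mathbb Z}$ for all $\alpha\in J$, so Proposition~\ref{supportverma} gives ${\mathcal V}_{\mathfrak g}(Z_\zeta(w\cdot\lambda)) \subseteq {\mathcal V}_{{\mathfrak u}_J}$, and the argument in the proof of Theorem~\ref{supportthm} (using that ${\mathcal V}_{{\mathfrak u}_J}\cong{\mathfrak u}_J$ under the conditions on $l$, together with $G$-stability) yields $G\cdot{\mathcal V}_{\mathfrak g}(Z_\zeta(w\cdot\lambda)) \subseteq G\cdot{\mathfrak u}_J$. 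Then Proposition~\ref{supportupperbound}(i) gives ${\mathcal V}_{\mathfrak g}(L_\zeta(\lambda)) \subseteq G\cdot{\mathcal V}_{\mathfrak g}(Z_\zeta(w\cdot\lambda)) \subseteq G\cdot{\mathfrak u}_J$. I should note the one subtlety: $L_\zeta(\lambda)$ is a $U_\zeta$-module, hence its support variety is $G$-stable, which is what legitimizes passing to $G$-orbits throughout.

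For part (b), the upper bound ${\mathcal V}_{\mathfrak g}(\Delta_\zeta(\lambda)) \subseteq G\cdot{\mathfrak u}_J$ follows from ${\mathcal V}_{\mathfrak g}(\Delta_\zeta(\lambda)) = {\mathcal V}_{\mathfrak g}(\nabla_\zeta(\lambda^\star)^*) = {\mathcal V}_{\mathfrak g}(\nabla_\zeta(\lambda^\star))$, and then from Theorem~\ref{supportthm} applied to $\lambda^\star$: here I would check that $\Phi_{\lambda^\star}$ is $W$-conjugate to the same $\Phi_J$ (the opposition involution $\lambda\mapsto\lambda^\star$ acts as $-w_0$ on weights, so $\Phi_{\lambda^\star} = -w_0(\Phi_\lambda)$, which is again $W$-conjugate to $\Phi_J$ since $-w_0\in W\cdot W$ stabilizes the set of $W$-conjugacy classes of subsystems — more precisely $\Phi_J$ and $-w_0(\Phi_J)$ have $W$-conjugate Levi factors, and by \eqref{JRresult} the associated Richardson orbit closure $G\cdot{\mathfrak u}_J$ is unchanged). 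For the matching lower bound, I would invoke the containment $L_\zeta(\lambda)\hookrightarrow$ a composition series of $\Delta_\zeta(\lambda)$ to get ${\mathcal V}_{\mathfrak g}(L_\zeta(\lambda)) \subseteq {\mathcal V}_{\mathfrak g}(\Delta_\zeta(\lambda))$, and — better — use that ${\mathcal V}_{\mathfrak g}(\Delta_\zeta(\lambda)) = {\mathcal V}_{\mathfrak g}(\nabla_\zeta(\lambda^\star))$ which by Theorem~\ref{supportthm} equals $G\cdot{\mathfrak u}_{J}$ exactly. So in fact equality in (b) is immediate once the conjugacy bookkeeping for $\lambda^\star$ is done, and part (a) is a genuinely separate (weaker) statement needed as a standalone fact.

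The main obstacle I anticipate is purely the combinatorial bookkeeping around the opposition involution: verifying cleanly that $w(\Phi_\lambda) = \Phi_J$ implies the existence of $w'\in W$ with $w'(\Phi_{\lambda^\star}) = \Phi_{J'}$ for a subset $J'$ whose Richardson orbit closure coincides with that of $J$, so that the two support-variety computations land on the same variety $G\cdot{\mathfrak u}_J$. This uses $\Phi_{\lambda^\star} = -w_0\Phi_\lambda$, the fact that $-w_0$ permutes $\Pi$ (inducing a diagram automorphism), Lemma~\ref{subsystemlemma} to restandardize, and \eqref{JRresult} to identify the orbit closures; I would also double-check that the hypothesis ``$l$ good for $\Phi$'' is exactly what is needed to apply Lemma~\ref{subsystemlemma} and the identification ${\mathcal V}_{{\mathfrak u}_J}\cong{\mathfrak u}_J$. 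Everything else reduces to citing Propositions~\ref{supportlowerbound}, \ref{supportupperbound}, \ref{supportverma} and Theorem~\ref{supportthm}, plus the dimension count $\dim G\cdot{\mathfrak u}_J = |\Phi| - |\Phi_J| = |\Phi| - |\Phi_\lambda|$ already used there.
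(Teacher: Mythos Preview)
Your proof is correct but takes a different route from the paper's in both parts. For (a), the paper argues by induction on the partial order of dominant weights: at a minimal linked weight $\mu$ one has $L_\zeta(\mu)=\nabla_\zeta(\mu)$, and the inductive step uses the short exact sequence $0\to L_\zeta(\lambda)\to\nabla_\zeta(\lambda)\to N\to 0$ together with the subadditivity of support varieties under extensions. Your argument is shorter: you simply invoke Proposition~\ref{supportupperbound}(i) directly, exactly as the paper already does for $\nabla_\zeta(\lambda)$ via part~(ii) in the proof of Theorem~\ref{supportthm}. For (b), the paper derives the upper bound from part~(a) applied to every composition factor of $\Delta_\zeta(\lambda)$ (all of whose highest weights are linked to $\lambda$), and gets the lower bound by rerunning the graded-dimension count of Proposition~\ref{supportlowerbound} with $\Delta_\zeta(\lambda)$ in place of $\nabla_\zeta(\lambda)$. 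Your duality argument ${\mathcal V}_{\mathfrak g}(\Delta_\zeta(\lambda))={\mathcal V}_{\mathfrak g}(\nabla_\zeta(\lambda^\star))$ combined with $\Phi_{\lambda^\star}=w_0\Phi_\lambda$ is cleaner and makes (b) independent of (a); its only cost is that it relies on the identity ${\mathcal V}_{\mathfrak g}(M)={\mathcal V}_{\mathfrak g}(M^*)$, which, while standard for support varieties over finite-dimensional Hopf algebras, is not explicitly recorded in the paper. The paper's approach, by contrast, stays entirely within the composition-factor and dimension-estimate machinery already built, and its inductive proof of (a) illustrates a technique that remains useful when no direct analogue of Proposition~\ref{supportupperbound}(i) is available.
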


\begin{proof} We can prove part (a) by using induction on the ordering of weights. If $\mu$ is linked under the dot action of the affine Weyl group to $\lambda$ and is minimal among all dominant weights 
$\leq \lambda$, then $L_\zeta(\mu)=\nabla_\zeta(\mu)$. Moreover, $\Phi_\mu$ is $W$-conjugate to $\Phi_\lambda$. According to Corollary~\ref{supportcor} and the fact that 
$L_{\zeta}(\mu)=\nabla_{\zeta}(\mu)$ we have 
$${\mathcal V}_{\mathfrak g}(L_\zeta(\mu)) = {\mathcal V}_{\mathfrak g}(\nabla_\zeta(\mu)) \subseteq G \cdot {\mathfrak u}_J.$$ 
Now assume that for all $\mu$ linked to $\lambda$ with $\mu$ dominant and $\mu<\lambda$ we have ${\mathcal V}_{\mathfrak g}(L_\zeta(\mu))\subseteq G \cdot {\mathfrak u}_J$. 
There exists a short exact sequence: 
$$0\rightarrow L_{\zeta}(\lambda)\rightarrow \nabla_{\zeta}(\lambda) \rightarrow N \rightarrow 0$$ 
where $N$ has composition factors $<\lambda$ and linked to $\lambda$. Therefore, ${\mathcal V}_{\mathfrak g}(N)\subseteq G\cdot {\mathfrak u}_{J}$ (by the induction hypothesis). 
If $0\to M_1\to M_2\to M_3\to 0$ is a short exact sequence of finite-dimensional $u_\zeta({\mathfrak g})$-modules, it is an elementary fact that ${\mathcal V}_{\mathfrak g}(M_{\sigma(1)})\subseteq 
{\mathcal V}_{\mathfrak g}(M_{\sigma(2)})\cup {\mathcal V}_{\mathfrak g}(M_{\sigma(3)})$ for any $\sigma\in{\mathfrak S}_3$ \cite[Lemma 5.2]{PW}. Therefore, 
$${\mathcal V}_{\mathfrak g}(L_{\zeta}(\lambda))\subseteq 
{\mathcal V}_{\mathfrak g}(\nabla_{\zeta}(\lambda))\cup {\mathcal V}_{\mathfrak g}(N)\subseteq G\cdot {\mathfrak u}_{J}\cup G\cdot {\mathfrak u}_{J}\subseteq G\cdot {\mathfrak u}_{J}.$$ 

For part (b) first observe that all composition factors of $\Delta_{\zeta}(\lambda)$ have highest weights 
which are strongly linked to $\lambda$ under the affine Weyl group. By part (a) these composition factors have their support varieties contained in 
$G\cdot {\mathfrak u}_{J}$, thus ${\mathcal V}_{\mathfrak g}(\Delta_{\zeta}(\lambda))\subseteq G\cdot {\mathfrak u}_{J}$. 
Since the characters of $\nabla_{\zeta}(\lambda)$ and $\Delta_{\zeta}(\lambda)$ are equal, it follows that their graded dimensions 
are equal and one can apply the argument in Proposition~\ref{supportlowerbound} to show that 
$$\dim {\mathcal V}_{\mathfrak g}(\Delta_{\zeta}(\lambda))\geq |\Phi|-|\Phi_{\lambda}|. $$
Now by the same reasoning provided in Theorem~\ref{supportthm}, we have ${\mathcal V}_{\mathfrak g}(\Delta_{\zeta}(\lambda))= G\cdot {\mathfrak u}_{J}$. 

\end{proof}


\section{A question of naturality of support varieties}\label{}

One fact that has not been established in the quantum setting is the
``naturality'' of support varieties. In particular for $M$ a $u_{\zeta}({\mathfrak g})$-module
we would like to have the following statement:
\begin{equation} \label{nat}
{\mathcal V}_{\mathfrak b}(M)={\mathcal V}_{\mathfrak g}(M)\cap {\mathcal V}_{\mathfrak b}.
\end{equation}
In fact, one inclusion is true:
\begin{equation}
{\mathcal V}_{\mathfrak b}(M)\subseteq {\mathcal V}_{\mathfrak g}(M)\cap {\mathcal V}_{\mathfrak b}.
\end{equation}
The equality of support varieties (\ref{nat}) has been established in the restricted
Lie algebra setting because of the description of support varieities via rank varieties.
We anticipate that this will eventually be established for quantum groups.

Throughout the remainder of the paper, we will make the following assumption on naturality
of support varieties for ${\mathcal V}_{\mathfrak g}(\nabla(\lambda))$.

\begin{assump}\label{regassumption} Given $\lambda\in X_{+}$,
${\mathcal V}_{\mathfrak b}(\nabla(\lambda))={\mathcal V}_{\mathfrak g}(\nabla(\lambda))\cap
{\mathcal V}_{\mathfrak b}$.
\end{assump}

Clearly, the validity of (\ref{nat}) implies the validity of  Assumption \ref{regassumption}. One key result which is needed for
analyzing the bad $l$ situation is a quantum analogue of an important result of Jantzen \cite[Prop. (2.4)]{Jan5}; see also \cite[Cor. (4.5.1)]{NPV}.  In its statement, given $J\subseteq\Pi$, we set $x_J:=\sum_{\alpha\in J}x_\alpha$, where $0\not= x_\alpha\in
{\mathfrak g}_\alpha$. We can take $x_\varnothing=0$, by definition. 

\begin{prop} \label{T:upperbound} Assume that Assumption~\ref{regassumption} is valid.
Let $J\subseteq \Pi$ and $\la\in X_+$. If $w(\Phi_{\lambda})\cap \Phi_{J}\ne\varnothing$ for all
$w\in W$, then $x_{J}\notin {\mathcal V}_{\mathfrak g}(\nabla_{\zeta}(\lambda))$.
\end{prop}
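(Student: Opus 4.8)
The statement asserts that if $w(\Phi_\lambda)\cap\Phi_J\ne\varnothing$ for \emph{every} $w\in W$, then the specific nilpotent element $x_J=\sum_{\alpha\in J}x_\alpha$ does not lie in ${\mathcal V}_{\mathfrak g}(\nabla_\zeta(\lambda))$. The natural approach is to combine the naturality Assumption~\ref{regassumption} with the description of ${\mathcal V}_{\mathfrak g}(\nabla_\zeta(\lambda))$ as a $G$-stable union of Richardson orbit closures, together with the explicit identification ${\mathcal V}_{\mathfrak g}(\nabla_\zeta(\lambda))=G\cdot{\mathfrak u}_K$ (where $w'(\Phi_\lambda)=\Phi_K$) furnished by Theorem~\ref{supportthm} when $\Phi_\lambda$ \emph{is} $W$-conjugate to a $\Phi_K$. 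The subtlety is that the hypothesis ``$w(\Phi_\lambda)\cap\Phi_J\ne\varnothing$ for all $w$'' is exactly designed to be used when $\Phi_\lambda$ need not be $W$-conjugate to a subsystem generated by simple roots (the bad-$l$ situation), so one cannot simply quote Theorem~\ref{supportthm}. Instead, I would argue directly with the Borel restriction.

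First I would recall that, by Assumption~\ref{regassumption}, ${\mathcal V}_{\mathfrak b}(\nabla_\zeta(\lambda))={\mathcal V}_{\mathfrak g}(\nabla_\zeta(\lambda))\cap{\mathcal V}_{\mathfrak b}$, and that ${\mathcal V}_{\mathfrak b}={\mathcal V}_{\mathfrak b}({\mathbb C})$ identifies with ${\mathfrak u}$ via the cohomology computation $\opH^{2\bullet}(u_\zeta({\mathfrak u}),{\mathbb C})_{\mathrm{red}}\cong S^\bullet({\mathfrak u}^*)^{[1]}$ (the $J=\varnothing$ case of the arguments in Sections~5.6--5.7). Under this identification, for any element $x\in{\mathfrak u}$, membership $x\in{\mathcal V}_{\mathfrak g}(\nabla_\zeta(\lambda))$ is equivalent, via $G$-conjugacy and the fact that ${\mathcal V}_{\mathfrak g}(\nabla_\zeta(\lambda))$ is $G$-stable, to the existence of some $g\in G$ with $g\cdot x\in{\mathcal V}_{\mathfrak b}(\nabla_\zeta(\lambda))$; but I would prefer to keep $x=x_J$ fixed in ${\mathfrak u}$ and instead detect $x_J$ using a suitably conjugated Borel. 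Concretely, for $w\in W$, the subalgebra $w({\mathfrak b})$ contains $x_J$ precisely when $J\subseteq w(\Phi^-)$ in the appropriate sense, and the baby-Verma/induced-module machinery of Propositions~\ref{supportupperbound} and~\ref{supportverma} shows that ${\mathcal V}_{\mathfrak g}(Z_\zeta(w\cdot\lambda))\subseteq{\mathcal V}_{{\mathfrak u}_{J'}}$ whenever $w(\Phi_\lambda)\supseteq\Phi_{J'}$ fails to meet $J'$ — so the relevant constraint is forced by $\Phi_{w\cdot\lambda}=w(\Phi_\lambda)$.

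The key step is the following: if $x_J\in{\mathcal V}_{\mathfrak g}(\nabla_\zeta(\lambda))$, then using the $G$-stability and the stratification of ${\mathcal V}_{\mathfrak g}(\nabla_\zeta(\lambda))$ by Richardson orbit closures (Proposition~\ref{supportupperbound} gives ${\mathcal V}_{\mathfrak g}(\nabla_\zeta(\lambda))\subseteq\bigcup_w G\cdot{\mathcal V}_{\mathfrak g}(Z_\zeta(w\cdot\lambda))$, and Proposition~\ref{supportverma} bounds each $G\cdot{\mathcal V}_{\mathfrak g}(Z_\zeta(w\cdot\lambda))$ inside $G\cdot{\mathfrak u}_{K_w}$ for $K_w$ with $\Phi_{K_w}\subseteq\Phi_{w\cdot\lambda}=w(\Phi_\lambda)$ and $K_w$ a simple subsystem), one obtains $w\in W$ with $x_J\in G\cdot{\mathfrak u}_{K_w}$. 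Then a dimension/weight argument on ${\mathfrak u}_{K_w}$ — matching the Levi datum of $x_J$ (whose $G$-orbit is the Richardson orbit ${\mathcal C}_J$) against that of ${\mathfrak u}_{K_w}$ — forces $\Phi_J$ and $\Phi_{K_w}$ to be ``compatible,'' and in particular forces $w(\Phi_\lambda)\cap\Phi_J=\varnothing$ (the point being that $x_J\in\overline{{\mathcal C}_{K_w}}$ with $\Phi_{K_w}\subseteq w(\Phi_\lambda)$ is incompatible with $\Phi_J$ meeting $w(\Phi_\lambda)$, because the roots in $J$ must act "freely" in the rank computation). This contradicts the hypothesis that the intersection is nonempty for every $w$. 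Hence $x_J\notin{\mathcal V}_{\mathfrak g}(\nabla_\zeta(\lambda))$.

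\textbf{Main obstacle.} The delicate point is the last dimension/weight argument: translating ``$x_J\in G\cdot{\mathfrak u}_{K_w}$'' into the combinatorial statement ``$w(\Phi_\lambda)\cap\Phi_J=\varnothing$.'' This requires a careful comparison between the Richardson orbit ${\mathcal C}_J$ attached to $x_J$ and the orbit closures $\overline{{\mathcal C}_{K_w}}$ appearing as components of the support variety, and the cleanest route is probably to mimic Jantzen's original argument \cite[Prop.~(2.4)]{Jan5} (as adapted in \cite[Cor.~(4.5.1)]{NPV}) rather than to reconstruct it from scratch — essentially reducing to the restricted-Lie-algebra computation via the parallel structure of $\opH^\bullet(u_\zeta({\mathfrak u}_J),{\mathbb C})_{\mathrm{red}}\cong S^\bullet({\mathfrak u}_J^*)^{[1]}$ established in Chapter~5. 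I expect the remaining verifications (that naturality makes the Borel restriction compute the right thing, and that $Z_\zeta^J(w\cdot\lambda)$ is $u_\zeta({\mathfrak l}_J)$-projective under the stated congruence condition, which is already in Proposition~\ref{supportverma}) to be routine given the results assembled above.
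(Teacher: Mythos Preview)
Your proposal contains a significant misidentification that undermines the argument. You write that $x_J$ ``whose $G$-orbit is the Richardson orbit ${\mathcal C}_J$,'' but this is incorrect: $x_J=\sum_{\alpha\in J}x_\alpha$ is a sum of \emph{simple} root vectors and is therefore a regular nilpotent element of the Levi subalgebra ${\mathfrak l}_J$, not an element of the dense $P_J$-orbit in the nilradical ${\mathfrak u}_J$. These are in some sense dual objects (for $J=\Pi$, $x_J$ is regular in $\mathfrak g$ while ${\mathcal C}_\Pi=\{0\}$; for $J=\varnothing$, $x_J=0$ while ${\mathcal C}_\varnothing$ is the regular orbit). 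Consequently your ``dimension/weight argument matching the Levi datum of $x_J$ against ${\mathfrak u}_{K_w}$'' does not have the content you want. More concretely, the implication you need --- that $x_J\in G\cdot{\mathfrak u}_{K_w}$ with $\Phi_{K_w}\subseteq w(\Phi_\lambda)$ forces $w(\Phi_\lambda)\cap\Phi_J=\varnothing$ --- simply fails: in type $A_2$ with $J=\{\alpha_1\}$ and $K_w=\{\alpha_2\}$, the element $x_{\alpha_1}$ lies in the minimal nilpotent orbit, hence in $\overline{{\mathcal C}_{\{\alpha_2\}}}=G\cdot{\mathfrak u}_{\{\alpha_2\}}$, yet this imposes no constraint whatsoever on whether $\alpha_1$ belongs to $w(\Phi_\lambda)$. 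Your upper bounds via Propositions~\ref{supportupperbound} and~\ref{supportverma} are too coarse to detect the specific element $x_J$.

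The paper does not give a self-contained proof here; it refers to the machinery of \cite[\S4]{NPV} (relative support varieties and baby Verma modules) and to Jantzen's original argument \cite[Prop.~(2.4)]{Jan5}, noting only that Assumption~\ref{regassumption} is the one ingredient that must be \emph{assumed} in the quantum setting because rank-variety descriptions are unavailable. The actual mechanism in \cite{NPV} is not a global orbit-closure bound but a direct analysis: one uses naturality to reduce to the Borel, and then exploits the structure of baby Verma modules restricted along root subalgebras --- when $\alpha\in J\cap\Phi_{w\cdot\lambda}$, the module $Z_\zeta(w\cdot\lambda)$ is projective over the corresponding rank-one subalgebra, which (together with the relative support variety formalism) excludes $x_J$ from the support. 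If you want to reconstruct the argument rather than cite it, that is the line to pursue; the orbit-closure comparison you propose does not close.
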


In order to prove this proposition, one can apply the machinery set
up in \cite[Section 4]{NPV} involving relative support varieties and ``baby'' Verma module.
However, in the proof of \cite[Cor. (4.5.1)]{NPV},
the statement of Assumption \ref{regassumption} is a key ingredient in the proof.


\section{The Constrictor Method I} In order to compute support varieties
${\mathcal V}_{\mathfrak g}(\nabla_{\zeta}(\lambda))$ when $l$ is a bad and
there does not exist $w \in W$ such that $w(\Phi_{\lambda}) = \Phi_J$ for some
$J \subseteq \Pi$, we need to
utilize the ``constrictor method'' as presented in \cite{UGA3} to obtain upper bounds on the
support varieties. Let ${\mathcal O}$ be an orbit in ${\mathcal N}(\Phi_{0})$.
The {\it constrictors} of ${\mathcal O}$ are the orbits contained in ${\mathcal N}(\Phi_{0})
-\overline{\mathcal O}$ which are minimal with respect to the closure ordering of orbits
in ${\mathcal N}$. The following theorem holds in the quantum case as long as
Assumption~\ref{regassumption} is valid.

\begin{theorem} \label{T:constrictor} Let $\lambda\in X_{+}$. Assume that Assumption~\ref{regassumption} holds.
Let ${\mathcal O}$ be an orbit in ${\mathcal N}(\Phi_{0})$ and
$\{{\mathcal O}_{1},{\mathcal O}_{2},\dots,{\mathcal O}_{s}\}$ be the set of
constrictors of ${\mathcal O}$. Assume that the following conditions are
satisfied:
\begin{itemize}
\item[(i)] $|\Phi|-|\Phi_\lambda|\geq \dim {\mathcal O}$;
\item[(ii)] for $i=1,2,\dots,s$, ${\mathcal O}_{i}=G\cdot x_{J_{i}}$ for some
$J_{i}\subseteq \Pi$;
\item[(iii)] for $i=1,2,\dots,s$,
$w(\Phi_{\lambda})\cap \Phi_{J_{i}}\ne\varnothing$ for all $w\in W$.
\end{itemize}
Then ${\mathcal V}_{\mathfrak g}(\nabla_{\zeta}(\lambda))=\overline{\mathcal O}$.
\end{theorem}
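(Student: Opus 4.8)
\textbf{Proof proposal for Theorem \ref{T:constrictor}.}
The plan is to establish the two inclusions ${\mathcal V}_{\mathfrak g}(\nabla_{\zeta}(\lambda))\supseteq\overline{\mathcal O}$ and ${\mathcal V}_{\mathfrak g}(\nabla_{\zeta}(\lambda))\subseteq\overline{\mathcal O}$ separately, the first being a dimension count and the second being the genuinely new content. For the lower bound, I would invoke Proposition \ref{supportlowerbound}, which gives $\dim{\mathcal V}_{\mathfrak g}(\nabla_{\zeta}(\lambda))\geq|\Phi|-|\Phi_\lambda|$, and combine it with hypothesis (i) to obtain $\dim{\mathcal V}_{\mathfrak g}(\nabla_{\zeta}(\lambda))\geq\dim{\mathcal O}=\dim\overline{\mathcal O}$. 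Since ${\mathcal V}_{\mathfrak g}(\nabla_{\zeta}(\lambda))$ is a closed $G$-stable (conical) subvariety of ${\mathcal N}$, hence a union of orbit closures, the point is to show it actually \emph{contains} $\overline{\mathcal O}$ rather than merely having the right dimension; this will follow once the upper-bound inclusion is in hand, because then ${\mathcal V}_{\mathfrak g}(\nabla_{\zeta}(\lambda))\subseteq\overline{\mathcal O}$ forces every orbit appearing in ${\mathcal V}_{\mathfrak g}(\nabla_{\zeta}(\lambda))$ to lie in $\overline{\mathcal O}$, and a $G$-stable closed subvariety of $\overline{\mathcal O}$ of dimension $\geq\dim\overline{\mathcal O}$ must be all of $\overline{\mathcal O}$ (as $\overline{\mathcal O}$ is irreducible). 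So the real work is the upper bound.

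For the upper bound ${\mathcal V}_{\mathfrak g}(\nabla_{\zeta}(\lambda))\subseteq\overline{\mathcal O}$, I would argue by contradiction: suppose some orbit ${\mathcal O}'\not\subseteq\overline{\mathcal O}$ meets ${\mathcal V}_{\mathfrak g}(\nabla_{\zeta}(\lambda))$. First, by Proposition \ref{supportupperbound}(ii) and the identification of ${\mathcal V}_{\mathfrak g}$ with ${\mathcal N}(\Phi_0)$ (the generic case of Theorem \ref{MainThm}, extended to the non-generic case as claimed in Section 8.1), ${\mathcal V}_{\mathfrak g}(\nabla_{\zeta}(\lambda))\subseteq{\mathcal N}(\Phi_0)$, so ${\mathcal O}'\subseteq{\mathcal N}(\Phi_0)\setminus\overline{\mathcal O}$. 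Because ${\mathcal V}_{\mathfrak g}(\nabla_{\zeta}(\lambda))$ is closed and $G$-stable, $\overline{{\mathcal O}'}\subseteq{\mathcal V}_{\mathfrak g}(\nabla_{\zeta}(\lambda))$, and $\overline{{\mathcal O}'}$ contains some constrictor ${\mathcal O}_i$ of ${\mathcal O}$ (by the definition of constrictor: among orbits in ${\mathcal N}(\Phi_0)\setminus\overline{\mathcal O}$, pass to one minimal in the closure ordering that lies in $\overline{{\mathcal O}'}$). Hence $x_{J_i}\in{\mathcal O}_i\subseteq{\mathcal V}_{\mathfrak g}(\nabla_{\zeta}(\lambda))$ by hypothesis (ii). But hypothesis (iii) says $w(\Phi_\lambda)\cap\Phi_{J_i}\neq\varnothing$ for all $w\in W$, so Proposition \ref{T:upperbound} (valid under Assumption \ref{regassumption}) gives $x_{J_i}\notin{\mathcal V}_{\mathfrak g}(\nabla_{\zeta}(\lambda))$, a contradiction. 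Therefore no such ${\mathcal O}'$ exists, i.e., every orbit meeting ${\mathcal V}_{\mathfrak g}(\nabla_{\zeta}(\lambda))$ lies in $\overline{\mathcal O}$, which gives ${\mathcal V}_{\mathfrak g}(\nabla_{\zeta}(\lambda))\subseteq\overline{\mathcal O}$.

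The main obstacle I anticipate is bookkeeping around the hypothesis that each constrictor has the specific form $G\cdot x_{J_i}$ and the correct interplay with Proposition \ref{T:upperbound}: one must be careful that the element $x_{J_i}$ (the sum of root vectors over $J_i$) is genuinely a representative of the constrictor orbit ${\mathcal O}_i$, and that the closure-order minimality in the definition of constrictor is exactly what lets us descend from an arbitrary offending orbit ${\mathcal O}'$ to one of the listed ${\mathcal O}_i$'s inside $\overline{{\mathcal O}'}$. A secondary point requiring care is that this whole argument rests on Assumption \ref{regassumption} (naturality of support varieties for $\nabla_\zeta(\lambda)$), since Proposition \ref{T:upperbound} is only available under it; I would state this dependence explicitly. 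The rest is formal, using only that support varieties are closed, conical, $G$-stable, and that $\overline{\mathcal O}$ is irreducible of the stated dimension.
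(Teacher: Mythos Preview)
Your argument is correct and is precisely the constrictor-method argument the paper imports from \cite{UGA3}; the paper itself does not write out a proof of Theorem \ref{T:constrictor} but simply states it as the quantum analogue of that result. Your two-step structure (dimension lower bound via Proposition \ref{supportlowerbound} plus hypothesis (i), and upper bound by excluding every constrictor via Proposition \ref{T:upperbound} under Assumption \ref{regassumption}) is exactly the intended one, and your justification that a minimal orbit of $\overline{{\mathcal O}'}\cap({\mathcal N}(\Phi_0)\setminus\overline{\mathcal O})$ is automatically a global constrictor (since anything below it in the closure order already lies in $\overline{{\mathcal O}'}$) is the right point to make explicit. One cosmetic remark: the containment ${\mathcal V}_{\mathfrak g}(\nabla_{\zeta}(\lambda))\subseteq{\mathcal N}(\Phi_0)$ follows directly from ${\mathcal V}_{\mathfrak g}\cong{\mathcal N}(\Phi_0)$ and needs no appeal to Proposition \ref{supportupperbound}(ii).
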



\section{The Constrictor Method II} In our computations, a more
powerful method is necessary to force the containment of ${\mathcal V}_{\mathfrak g}(\nabla_{\zeta}(\lambda))$
inside the closure of an orbit. In the Lie algebra case it was enough to use Theorem~\ref{T:constrictor}
because the constrictors had orbit representatives given by sums of simple root vectors.
As we will see later this is not true in the quantum case.

\begin{prop} \label{P:preconstrictor} Assume that
${\mathcal O}$ is a $G$-orbit in ${\mathcal N}$. Let $\lambda\in
X$ and assume that
\begin{itemize}
\item[(i)] there exists $w\in W$ with $w(\Phi_{\lambda})=
\Phi_{J}$ with $J\subseteq \Pi$,
\item[(ii)] $|\Phi_{\lambda}|> \operatorname{codim}_{\mathcal N}\ {\mathcal O}$.
\end{itemize}
Then
${\mathcal O}\cap {\mathcal V}_{\mathfrak g}(Z_{\zeta}(\lambda))=\varnothing$.
\end{prop}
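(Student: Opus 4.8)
\textbf{Proof plan for Proposition \ref{P:preconstrictor}.}

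The plan is to run the same machinery used in Theorem~\ref{supportthm}, but this time to extract an \emph{upper} bound on ${\mathcal V}_{\mathfrak g}(Z_\zeta(\lambda))$ sharp enough to miss the orbit ${\mathcal O}$. First I would replace $\lambda$ by $w\cdot\lambda$: since ${\mathcal V}_{\mathfrak g}(Z_\zeta(\mu))$ transforms by $G$-translation under the dot action (this is the content of the arguments in \cite[Section 5]{NPV} used in Proposition~\ref{supportupperbound}), and since $\Phi_{w\cdot\lambda}=w(\Phi_\lambda)=\Phi_J$, it suffices to prove ${\mathcal O}\cap{\mathcal V}_{\mathfrak g}(Z_\zeta(w\cdot\lambda))=\varnothing$. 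Now $w\cdot\lambda$ satisfies $\langle w\cdot\lambda+\rho,\alpha^\vee\rangle\in l{\mathbb Z}$ for all $\alpha\in J$, so Proposition~\ref{supportverma} applies and gives
$${\mathcal V}_{\mathfrak g}(Z_\zeta(w\cdot\lambda))\subseteq{\mathcal V}_{{\mathfrak u}_J}.$$
By the computation of $\opH^{2\bullet}(u_\zeta({\mathfrak u}_J),{\mathbb C})_{\mathrm{red}}\cong S^\bullet({\mathfrak u}_J^*)^{[1]}$ (Sections 5.6--5.7, as invoked inside the proof of Theorem~\ref{supportthm}), we have ${\mathcal V}_{{\mathfrak u}_J}\cong{\mathfrak u}_J$, so ${\mathcal V}_{\mathfrak g}(Z_\zeta(w\cdot\lambda))\subseteq{\mathfrak u}_J$, a subspace of ${\mathfrak g}$ of dimension $|\Phi^+\setminus\Phi_J^+|=\tfrac12(|\Phi|-|\Phi_\lambda|)$.

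Next I would pass to the $G$-saturation. Since ${\mathcal V}_{\mathfrak g}(Z_\zeta(\lambda))=G\cdot{\mathcal V}_{\mathfrak g}(Z_\zeta(w\cdot\lambda))$ is a $G$-stable closed conical subvariety of ${\mathcal N}$ contained in $G\cdot{\mathfrak u}_J$, and $G\cdot{\mathfrak u}_J=\overline{{\mathcal C}_J}$ is the (irreducible) closure of the Richardson orbit ${\mathcal C}_J$ with $\dim\overline{{\mathcal C}_J}=2\dim{\mathfrak u}_J=|\Phi|-|\Phi_J|=|\Phi|-|\Phi_\lambda|$, we get
$$\dim{\mathcal V}_{\mathfrak g}(Z_\zeta(\lambda))\leq|\Phi|-|\Phi_\lambda|.$$
Hypothesis (ii), $|\Phi_\lambda|>\operatorname{codim}_{\mathcal N}{\mathcal O}$, rearranges to $\dim{\mathcal O}=\dim{\mathcal N}-\operatorname{codim}_{\mathcal N}{\mathcal O}>|\Phi|-|\Phi_\lambda|$ (using $\dim{\mathcal N}=|\Phi|$). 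Hence $\dim{\mathcal V}_{\mathfrak g}(Z_\zeta(\lambda))<\dim{\mathcal O}$. An orbit cannot meet a closed $G$-stable subvariety of strictly smaller dimension: if ${\mathcal O}\cap{\mathcal V}_{\mathfrak g}(Z_\zeta(\lambda))\neq\varnothing$, then $G$-stability and closedness of ${\mathcal V}_{\mathfrak g}(Z_\zeta(\lambda))$ would force $\overline{{\mathcal O}}\subseteq{\mathcal V}_{\mathfrak g}(Z_\zeta(\lambda))$, contradicting the dimension inequality. Therefore ${\mathcal O}\cap{\mathcal V}_{\mathfrak g}(Z_\zeta(\lambda))=\varnothing$.

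The only genuinely delicate point is the bookkeeping around Proposition~\ref{supportverma} and the identification ${\mathcal V}_{{\mathfrak u}_J}\cong{\mathfrak u}_J$: I should make sure the reduced cohomology of $u_\zeta({\mathfrak u}_J)$ really is a polynomial ring on ${\mathfrak u}_J^*$ under Assumption~\ref{assumption2} (which is in force here since ${\mathcal V}_{\mathfrak g}$ and the support-variety formalism require it), and that Frobenius reciprocity is being applied correctly to $Z_\zeta(\mu)\cong\operatorname{ind}_{u_\zeta({\mathfrak p}_J)}^{u_\zeta({\mathfrak g})}Z_\zeta^J(\mu)$. Everything else is dimension-counting over closures of Richardson orbits, which is routine given Section~\ref{richardsonorbits}. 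I expect no real obstacle; the proposition is essentially Proposition~\ref{supportverma} combined with the dimension estimate for $\overline{{\mathcal C}_J}$.
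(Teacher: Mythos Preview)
Your dimension argument in the second half is exactly right and matches the paper's, but there is a real gap in the first step. You assert that ${\mathcal V}_{\mathfrak g}(Z_\zeta(\lambda))=G\cdot{\mathcal V}_{\mathfrak g}(Z_\zeta(w\cdot\lambda))$ and, in particular, that ${\mathcal V}_{\mathfrak g}(Z_\zeta(\lambda))$ is $G$-stable. But $Z_\zeta(\lambda)$ is only a $u_\zeta({\mathfrak g})$-module, not a $U_\zeta({\mathfrak g})$-module, so there is no a priori $G$-action making its support variety $G$-stable. Proposition~\ref{supportupperbound} does not give you this reduction either: it bounds ${\mathcal V}_{\mathfrak g}(L_\zeta(\lambda))$ and ${\mathcal V}_{\mathfrak g}(\nabla_\zeta(\lambda))$ above by $G\cdot{\mathcal V}_{\mathfrak g}(Z_\zeta(w\cdot\lambda))$; it says nothing directly relating the support varieties of two baby Vermas $Z_\zeta(\lambda)$ and $Z_\zeta(w\cdot\lambda)$ for $w\in W$.

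The paper closes this gap by passing through the induced modules, which \emph{are} $U_\zeta({\mathfrak g})$-modules and hence have $G$-stable support varieties. Assuming ${\mathcal O}\cap{\mathcal V}_{\mathfrak g}(Z_\zeta(\lambda))\neq\varnothing$, one uses
\[
G\cdot{\mathcal V}_{\mathfrak g}(Z_\zeta(\lambda))\subseteq\bigcup_{w'\in W_l,\ w'\cdot\lambda\in X_+}{\mathcal V}_{\mathfrak g}(\nabla_\zeta(w'\cdot\lambda))
\]
(cf.\ \cite[Thm.~(4.6.1)]{NPV}) to place ${\mathcal O}$ inside some ${\mathcal V}_{\mathfrak g}(\nabla_\zeta(y\cdot\lambda))$. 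Then the argument of Proposition~\ref{supportupperbound}(ii) together with Proposition~\ref{supportverma} gives ${\mathcal V}_{\mathfrak g}(\nabla_\zeta(y\cdot\lambda))\subseteq G\cdot{\mathcal V}_{\mathfrak g}(Z_\zeta(w\cdot\lambda))\subseteq G\cdot{\mathfrak u}_J$, and from here your dimension count ($\dim G\cdot{\mathfrak u}_J=|\Phi|-|\Phi_\lambda|<\dim{\mathcal O}$) finishes the proof. The detour through $\nabla_\zeta$ is not cosmetic: it is precisely what supplies the missing $G$-stability.
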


\begin{proof} Suppose that ${\mathcal O}\cap {\mathcal V}_{\mathfrak g}(Z_{\zeta}(\lambda))
\neq \varnothing$. Now
$$G\cdot {\mathcal V}_{\mathfrak g}(Z_{\zeta}(\lambda))\subseteq
\bigcup {\mathcal V}_{\mathfrak g}(\nabla_{\zeta}(w^{\prime}\cdot\lambda))$$
where the union is taken over $w^{\prime}\in W_{l}$ with $w^{\prime}\cdot\lambda\in X_{+}$
(cf. \cite[ Thm. (4.6.1)]{NPV}). Therefore,
${\mathcal O}\subseteq {\mathcal V}_{\mathfrak g}(\nabla_{\zeta}(y\cdot\lambda))$
for some $y\in W_{l}$ because ${\mathcal V}_{\mathfrak g}(\nabla_{\zeta}(y\cdot\lambda))$
is $G$-stable.

By assumption there exists $w\in W$ with $w(\Phi_{\lambda})=
\Phi_{J}$ with $J\subseteq \Pi$. Moreover,
$${\mathcal V}_{\mathfrak g}(Z_{\zeta}(w\cdot\lambda))\subseteq {\mathfrak u}_{J}.$$
Applying the methods in \cite[Thm. (5.6.1)]{NPV}, we have
\begin{equation*}
{\mathcal V}_{\mathfrak g}(\nabla_{\zeta}(y\cdot\lambda))\subseteq
G\cdot {\mathcal V}_{\mathfrak g}(Z_{\zeta}(w\cdot\lambda))\subseteq
G\cdot {\mathfrak u}_{J}.
\end{equation*}
Therefore,
$$\text{codim}_{\mathcal N}\ G\cdot {\mathfrak u}_{J}\leq \text{codim}_{\mathcal N}\ {\mathcal O} <|\Phi_{\lambda}|.$$
On the other hand,
$$\text{codim}_{\mathcal N}\ G\cdot {\mathfrak u}_{J}=|\Phi|-2\dim {\mathfrak u}_{J}=|\Phi_{J}|=|\Phi_{\lambda}|.$$
\end{proof}

We can now prove a generalization of Theorem~\ref{T:constrictor}.

\begin{theorem} \label{T:constrictor2} Let ${\mathcal O}$ be an orbit in ${\mathcal N}(\Phi_{0})$
with ${\mathcal O}=G\cdot {\mathcal O}_{I}$ where ${\mathcal O}_{I}$ is
an $L_{I}$-orbit for some $I\subset \Pi$. Let $\lambda\in X_{+}$ and
assume that Assumption~\ref{regassumption} holds. Suppose that
\begin{itemize}
\item[(i)] there exists $y\in W$ with $y(\Phi_{\lambda})\cap \Phi_{I}=
\Phi_{J}$ with $J\subseteq I$,
\item[(ii)] $|w(\Phi_{\lambda})\cap \Phi_{I}|> \operatorname{codim}_{{\mathcal N}({\mathfrak l}_{I})}
\ {\mathcal O}_{I}$ for all $w\in W$.
\end{itemize}
Then
${\mathcal O}\cap {\mathcal V}_{\mathfrak g}(\nabla_{\zeta}(\lambda))=\varnothing$.
\end{theorem}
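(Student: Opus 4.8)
\textbf{Proof strategy for Theorem \ref{T:constrictor2}.}
The plan is to localize the computation of $\mathcal{V}_{\mathfrak g}(\nabla_\zeta(\lambda))$ near the orbit $\mathcal O$ by intersecting with a suitable ``slice'' and reducing to the Levi subalgebra $\mathfrak l_I$, where the previously established results (Theorem \ref{supportthm}, Proposition \ref{P:preconstrictor}, and the constrictor method of \cite{UGA3}) can be applied. First I would recall that $\mathcal{V}_{\mathfrak g}(\nabla_\zeta(\lambda))$ is a $G$-stable closed conical subvariety of $\mathcal N(\Phi_0)$ (by Theorem \ref{MainThm} together with Proposition \ref{supportupperbound}), so it suffices to show that no $G$-translate of a generic point of $\mathcal O_I$ lies in $\mathcal{V}_{\mathfrak g}(\nabla_\zeta(\lambda))$. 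Since $\mathcal O = G\cdot \mathcal O_I$ with $\mathcal O_I$ an $L_I$-orbit in $\mathcal N(\mathfrak l_I)$, the key is to relate $\mathcal{V}_{\mathfrak g}(\nabla_\zeta(\lambda))\cap \mathfrak l_I$ (or more precisely its intersection with the nilpotent cone of $\mathfrak l_I$) to a support variety computed over $u_\zeta(\mathfrak l_I)$ or $u_\zeta(\mathfrak p_I)$.

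The main steps, in order, would be as follows. (1) Use the parabolic restriction machinery from Section 8.3 (Proposition \ref{supportverma} and its proof, via $Z_\zeta(\mu)\cong \operatorname{ind}_{u_\zeta(\mathfrak p_I)}^{u_\zeta(\mathfrak g)} Z_\zeta^I(\mu)$ and Frobenius reciprocity) to show that, after $W$-conjugating $\Phi_\lambda$ appropriately, the intersection of $\mathcal{V}_{\mathfrak g}(\nabla_\zeta(\lambda))$ with the nilpotent cone $\mathcal N(\mathfrak l_I)$ of the Levi is controlled by the support variety $\mathcal{V}_{\mathfrak l_I}(\nabla_{I,\zeta}(\text{stuff}))$ of an induced module for the Levi quantum group. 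Here the hypothesis $y(\Phi_\lambda)\cap\Phi_I = \Phi_J$ with $J\subseteq I$ guarantees that the relevant stabilizer subsystem for the Levi problem is $W_I$-conjugate into a simple subsystem $\Phi_J$ of $\Phi_I$, so Theorem \ref{supportthm} (applied inside $\mathfrak l_I$) identifies this Levi support variety with $L_I \cdot \mathfrak u_{I,J}$, a proper closed subvariety of $\mathcal N(\mathfrak l_I)$. (2) Invoke Proposition \ref{P:preconstrictor} in the Levi setting: the dimension count hypothesis (ii), $|w(\Phi_\lambda)\cap\Phi_I| > \operatorname{codim}_{\mathcal N(\mathfrak l_I)}\mathcal O_I$ for all $w\in W$, is precisely what is needed (after translating codimension of the orbit into the cardinality of the subsystem, as in the last displayed lines of the proof of Proposition \ref{P:preconstrictor}) to conclude that $\mathcal O_I \cap \mathcal{V}_{\mathfrak g}(Z_\zeta(w\cdot\lambda)) = \varnothing$ for every relevant $w$. (3) Since $\mathcal{V}_{\mathfrak g}(\nabla_\zeta(\lambda))$ is the union over $W_l$-translates landing in $X_+$ of the support varieties of the corresponding $\nabla_\zeta$, and each of these is bounded above by $G\cdot\mathcal{V}_{\mathfrak g}(Z_\zeta(\cdot))$ by Proposition \ref{supportupperbound}(ii), combine the local statements to get that $\mathcal O_I$ (hence $\mathcal O = G\cdot\mathcal O_I$, using $G$-stability and the fact that $\mathcal O$ meets $\mathfrak l_I$ in $\mathcal O_I$) does not meet $\mathcal{V}_{\mathfrak g}(\nabla_\zeta(\lambda))$. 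Throughout, Assumption \ref{regassumption} is invoked exactly where the proof of \cite[Cor.~(4.5.1)]{NPV} and Proposition \ref{T:upperbound} require naturality of support varieties, namely to pass between $\mathfrak b$- and $\mathfrak g$-support varieties when restricting to the parabolic.

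The hard part will be step (1): carefully setting up the restriction to the Levi so that the ``$\cap \Phi_I$'' appearing in the hypotheses genuinely matches the stabilizer subsystem governing the support variety of the Levi-induced module, and verifying that the quantifier ``for all $w\in W$'' in (ii) correctly accounts for all the $W$-translates of $\Phi_\lambda$ that can arise from the various $W_l$-conjugates $w'\cdot\lambda$ in the decomposition $G\cdot\mathcal{V}_{\mathfrak g}(Z_\zeta(\lambda))\subseteq \bigcup \mathcal{V}_{\mathfrak g}(\nabla_\zeta(w'\cdot\lambda))$. One must be attentive to the distinction between $\Phi_{w\cdot\lambda} = \overline w(\Phi_\lambda)$ for $w\in\widetilde W_l$ versus honest $W$-conjugation, and to the fact that $\mathcal O_I$ being an $L_I$-orbit (rather than a Richardson orbit with simple-root-sum representative) is exactly what forces the use of the refined constrictor method here rather than the cruder Theorem \ref{T:constrictor}. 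A secondary technical point is ensuring the codimension bookkeeping in $\mathcal N(\mathfrak l_I)$ is consistent with the conventions $\dim \mathcal N(\Phi_J) = |\Phi| - |\Phi_J|$ used elsewhere, so that hypothesis (ii) can be applied verbatim inside $\mathfrak l_I$.
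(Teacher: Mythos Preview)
Your strategy is essentially the paper's, and the ingredients you list (Proposition~\ref{P:preconstrictor} applied inside $\mathfrak l_I$, the parabolic induction $Z_\zeta(\mu)\cong\operatorname{ind}_{u_\zeta(\mathfrak p_I)}^{u_\zeta(\mathfrak g)}Z^I_\zeta(\mu)$, and Proposition~\ref{supportupperbound}(ii)) are exactly the ones used. The paper's execution is however considerably more direct than your outline suggests, and your step~(1) takes an unnecessary detour.

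Specifically, the paper never passes to a Levi induced module $\nabla_{I,\zeta}(\cdot)$ or invokes Theorem~\ref{supportthm} inside $\mathfrak l_I$; it stays entirely at the level of baby Verma modules. The argument runs: hypothesis~(ii) together with hypothesis~(i) lets one apply Proposition~\ref{P:preconstrictor} in $\mathfrak l_I$ to the weight $w\cdot\lambda$ (whose Levi stabilizer subsystem is $w(\Phi_\lambda)\cap\Phi_I$), giving $\mathcal O_I\cap\mathcal V_{\mathfrak l_I}(Z^I_\zeta(w\cdot\lambda))=\varnothing$ for every $w\in W$. Taking the direct sum over $W$ and invoking \cite[Prop.~(4.2.1)]{NPV} (which compares $\mathcal V_{\mathfrak l_I}$ of $Z^I_\zeta$ with $\mathcal V_{\mathfrak g}$ of $Z_\zeta$ via the parabolic induction you mention) yields $\mathcal O_I\cap\mathcal V_{\mathfrak g}\bigl(\bigoplus_{w\in W}Z_\zeta(w\cdot\lambda)\bigr)=\varnothing$; $G$-saturating $\mathcal O_I$ to $\mathcal O$ and applying Proposition~\ref{supportupperbound}(ii) finishes. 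Your worries about the $W$-quantifier in~(ii) and the stabilizer bookkeeping are handled precisely by this direct-sum-over-$W$ device: one does not fix a single conjugate but runs over all of them simultaneously. The role you envision for Theorem~\ref{supportthm} is already absorbed into the proof of Proposition~\ref{P:preconstrictor} itself, so it need not be invoked separately.
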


\begin{proof} Suppose that $|w(\Phi_{\lambda})\cap \Phi_{I}|>
\operatorname{codim}_{{\mathcal N}({\mathfrak l}_{I})}\ {\mathcal O}_{I}$ for all $w\in W$.
It follows from Proposition~\ref{P:preconstrictor} that
${\mathcal O}_{I}\cap {\mathcal V}_{{\mathfrak l}_{I}}(Z^{I}_{\zeta}(w\cdot\lambda))=\varnothing$
for all $w\in W$. Therefore,
${\mathcal O}_{I}\cap {\mathcal V}_{{\mathfrak l}_{I}}(\oplus_{w\in W} Z^{I}_{\zeta}(w\cdot\lambda))=\varnothing$,
and
${\mathcal O}_{I}\cap {\mathcal V}_{{\mathfrak g}}(\oplus_{w\in W} Z_{\zeta}(w\cdot\lambda))=\varnothing$
(cf. \cite[Prop. (4.2.1)]{NPV}). Consequently,
${\mathcal O}\cap {\mathcal V}_{{\mathfrak g}}(\oplus_{w\in W} Z_{\zeta}(w\cdot\lambda))=\varnothing$,
and by Proposition~\ref{supportupperbound}
${\mathcal O}\cap {\mathcal V}_{{\mathfrak g}}(\nabla_{\zeta}(\lambda))=\varnothing$.
\end{proof}


\section{Support varieties of $\nabla_{\zeta}(\lambda)$ when $l$ is bad}
Assume for the remainder of the paper that Assumption~\ref{regassumption} holds for
all $\lambda\in X_{+}$. Under the assumptions stated at the beginning of the
paper, the remaining cases that we are forced to consider are the following (arranged in the order of difficulty):
\vskip .15cm
$\bullet$ $E_{6}$ when $3\mid l$;
\vskip .15cm
$\bullet$ $F_4$ when $3\mid l$;
\vskip .15cm
$\bullet$ $E_{7}$ when $3\mid l$;
\vskip .15cm
$\bullet$ $E_{8}$ when $3\mid l$;
\vskip .15cm
$\bullet$ $E_{8}$ when $5\mid l$.
\vskip .25cm
Let $\Phi_{\lambda}^{\vee}=\{\alpha^{\vee}: \ \alpha\in \Phi_{\lambda}\}$. Then
$\Phi_{\lambda}^{\vee}$ is a subroot system (cf. \cite[Defn. 12.1]{Ka}) in the dual root system $\Phi^{\vee}$.
If $\Gamma$ is a subroot system of $\Phi^{\vee}$ then let $W(\Gamma)$ be a the
subgroup of the Weyl group (for $\Phi^{\vee}$) generated by the reflections in $\Gamma$.

Our goal is to classify all $\Phi_{\lambda}^{\vee}$ (or equivalently $\Phi_{\lambda}$) such that
$\Phi_{\lambda}$ is not $W$ conjugate to $\Phi_{J}$ for any $J\subseteq \Pi$. In this process we will use the Borel-de Siebenthal Theorem \cite[Thm. 12.1]{Ka}:

\begin{theorem} Let $\Phi$ be an irreducible root system, $\Pi=\{\alpha_{1},\alpha_{2},\dots,
\alpha_{n}\}$ be a set of simple roots and $\alpha_{h}=\sum_{i=1}^{n}h_{i}\alpha_{i}$ be the highest root of $\Phi$.
Then the maximal closed subroot systems of $\Phi$ (up to $W$ conjugation) are those with the
fundamental systems:
\begin{itemize}
\item[(i)] $\Pi-I$ where $I$ consists of a subset of simple roots $\alpha_{i}$ where $h_{i}=1$,
\item[(ii)] $\{-\alpha_{h},\alpha_{1},\alpha_{2},\dots,\alpha_{n}\}-I$ where $I$ consists of a subset of
simple roots $\alpha_{i}$ where $h_{i}=p$ where $p$ is a prime.
\end{itemize}
\end{theorem}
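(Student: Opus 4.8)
The plan is to follow the classical inductive argument, working entirely with the dual root system $\Phi^\vee$ (so that ``closed subroot system'' has its usual meaning and the highest root of $\Phi$ corresponds to the highest root of $\Phi^\vee$ after adjusting root lengths). First I would set up the affine Dynkin diagram $\widetilde\Pi = \Pi \cup \{-\alpha_h\}$ and recall the standard fact that the extended set $\widetilde\Pi$ has the linear dependence $\alpha_h + \sum_i h_i\alpha_i = 0$ with $h_i$ the marks (and the mark on $-\alpha_h$ equal to $1$). The key structural input is: if $\Phi'$ is a closed subroot system of $\Phi$, then after conjugating by a suitable element of $W$ we may assume $\Phi' \cap \Phi^+$ is ``parabolic-like'' with respect to \emph{some} choice of positive system, i.e.\ $\Phi'$ has a base contained in $\widetilde\Pi$. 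This is exactly the affine-Weyl-group translate argument: conjugate so that a vector in the interior of the dominant chamber of $\Phi'$ lies in the closure of an alcove for $W_{\mathrm{aff}}(\Phi)$, whose walls are spanned by the affine simple roots $\widetilde\Pi$.

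Second, granting that $\Phi'$ has a base $B \subseteq \widetilde\Pi$, I would argue that a \emph{maximal} proper closed subsystem is obtained by deleting exactly one node from $\widetilde\Pi$, i.e.\ $B = \widetilde\Pi \setminus \{\alpha_j\}$ for a single $j$ (here $\alpha_0 := -\alpha_h$ is permitted). Deleting a node $\alpha_j$ with mark $h_j$: the resulting subsystem $\Phi_B$ has root lattice index in $Q(\Phi)$ controlled by $h_j$ — more precisely $Q(\Phi)/Q(\Phi_B)$ is cyclic of order $h_j$ when $\alpha_0$ is among the retained nodes, and one checks $\Phi_B = \Phi$ iff $h_j = 1$ with $\alpha_j \neq \alpha_0$... actually deleting a node of mark $1$ other than $\alpha_0$ still gives all of $\Phi$ only in the degenerate affine sense, so the honest statement is: retaining $-\alpha_h$ and deleting $\alpha_i$ with $h_i$ arbitrary gives a closed subsystem of corank zero and index $h_i$; this is maximal precisely when $h_i$ is prime. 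If instead we keep all of $\Pi$ and delete $-\alpha_h$ we recover $\Phi$ itself, so to get a \emph{proper} maximal subsystem of the ``type (i)'' form we must delete some $\alpha_i$ with $h_i = 1$ (then the subsystem spans the full space but drops to rank $n$ with $Q$-index $1$, a parabolic-type subsystem) — and maximality forces $|I| = 1$.

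Third, I would verify the two asserted families are indeed the complete list up to $W$-conjugacy and are maximal: (i) deleting one simple root $\alpha_i$ with $h_i = 1$ gives a maximal closed subsystem of rank $n$ (these are the maximal ``Levi-type'' subsystems, and $h_i = 1$ is exactly the condition that $\alpha_i$ be a cominuscule node so that the subsystem is genuinely closed of full rank in the quotient); (ii) deleting one node $\alpha_i$ (possibly $\alpha_0$) with $h_i = p$ prime from the affine diagram $\widetilde\Pi$ gives a maximal closed subsystem of the same rank but with $Q$-index $p$. The primality of $h_i$ is what guarantees maximality: if $h_i = ab$ were composite, the subsystem would sit inside a larger proper closed subsystem corresponding to a coarser sublattice. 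I would present this last point by a short lattice-index computation combined with the observation that every closed subsystem of full rank corresponds to a subgroup of $Q/Q'$-type data on the affine diagram.

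The main obstacle, and the step I would spend the most care on, is the reduction in the second paragraph: showing that an arbitrary closed subsystem $\Phi'$ can be $W$-conjugated so that its base lies in $\widetilde\Pi$. The clean way is the alcove argument — pick $v$ generic in the (finite) Weyl chamber of $\Phi'$, use the fact that $W_{\mathrm{aff}}(\Phi)$ acts simply transitively on alcoves to move $v$ into the fundamental alcove, and then the positive roots of $\Phi'$ that are nonnegative on $v$ are nonnegative combinations of $\widetilde\Pi$; closedness of $\Phi'$ then forces its base to be a \emph{subset} of $\widetilde\Pi$ rather than merely in its positive cone. One must be slightly careful that translating by $W_{\mathrm{aff}}$ rather than $W$ still yields a $W$-conjugate of $\Phi'$ (the translation part acts trivially on roots), which is where the hypothesis that we only claim the classification ``up to $W$-conjugation'' is used. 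I would cite \cite[Thm. 12.1]{Ka} for the statement itself and reconstruct the proof along these lines; since the theorem is quoted verbatim from Kac, in the paper the honest move may simply be to attribute it and sketch only the alcove reduction for the reader's benefit.
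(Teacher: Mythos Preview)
The paper does not prove this theorem at all: it is quoted verbatim from Kane \cite[Thm.~12.1]{Ka} and used as a black box in the analysis of $\Phi_\lambda$ for bad $l$. Your final sentence already anticipates this, and indeed the honest move the paper makes is simply to cite it.

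Your sketch is a reasonable outline of the classical argument, though a few points are muddled. In type (i) the subsystems $\Phi_{\Pi\setminus\{\alpha_i\}}$ with $h_i=1$ have rank $n-1$, not $n$; they are the genuine Levi-type (parabolic) subsystems, and there is nothing about cominuscule nodes or closedness to check---any $\Phi_J$ with $J\subset\Pi$ is automatically closed. In type (ii) the subsystem has full rank $n$, and the index argument you describe is the right one. The alcove reduction you emphasize is indeed the heart of the matter and your description of it is essentially correct, though one should be careful: the translation part of $W_{\mathrm{aff}}$ does act nontrivially on the affine picture, and the cleaner statement is that after a finite Weyl group conjugation one can arrange that a base for $\Phi'$ lies in $\widetilde\Pi$. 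But since the paper treats this as a citation, none of this needs to appear.
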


\begin{rem} In what follows, for a given $\la$, if there does exist $w \in W$ and 
$J \subseteq \Pi$ such that $w(\Phi_{\la}) = \Phi_J$, then 
${\mathcal V}_{\mathfrak g}(\nabla_{\zeta}(\lambda))$ is determined by 
Theorem \ref{supportthm}.
\end{rem}


\section{$E_{6}$ when $3\mid l$}

We will first proceed to classify all $\Phi_{\lambda}$ (or equivalently $\Phi_{\lambda}^{\vee}$)
such that $\Phi_{\lambda}$ is not of the form $w(\Phi_{J})$ for any $w\in W$ and $J\subseteq \Delta$.
One can apply the Borel-de Siebenthal Theorem which describes maximal subroot systems
via the extended Dynkin diagrams. For $E_{6}$ there are three possibilities:
$\Phi_{\lambda}^{\vee}$ is a subroot system of either $D_{5}$, $A_{1}\times A_{5}$ or $A_{2}\times A_{2} \times A_{2}$.

One can immediately rule out $D_{5}$ because this is a subroot system of the ordinary Dynkin diagram.
In the other cases, where $\Gamma=A_{1}\times A_{5}$ or $A_{2}\times A_{2}\times A_{2}$ we have
${\mathbb Z}\Gamma/{\mathbb Z}\Phi_{\lambda}^{\vee}$ has torsion prime to $l$ so
there exists $w\in W$ such that $w(\Phi_{\lambda})=\Phi_{I}$ where $I\subseteq
\Pi(\Gamma)$. Here $\Pi(\Gamma)$ is a ``set of simple roots'' for the root system 
$\Gamma$ given by the Borel-de Siebenthal Theorem.

We can analyze both cases when $\Gamma=A_{1}\times A_{5}$ or $A_{2}\times A_{2}\times A_{2}$.
First note that if the ``additional simple root from the extended diagram'' (cf. \cite[p. 137]{Ka})
is not in $\Phi_{I}$ then $\Phi_{I}\subseteq \Phi$ and $\Phi_{\lambda}$ is $W$-conjugate to $\Phi_{I}$.
Therefore, we can assume that $I$ contains the additional simple root from the extended Dynkin diagram.
We can conclude that the possibilities of non-conjugate $\Phi_{\lambda}$ to $\Phi_{J}$ where
$J\subset \Pi$ reduces to the cases when $\Phi_{\lambda}$ is possibly:
\begin{itemize}
\item[(i)] $A_{5}\times A_{1}$,
\item[(ii)] $A_{3}\times A_{1} \times A_{1}$,
\item[(iii)] $A_{1}\times A_{1} \times A_{1} \times A_{1}$,
\item[(iv)] $A_{2}\times A_{2} \times A_{2}$.
\end{itemize}
Further reductions can be made by arguing in the following way. For example,
if $\Phi_{\lambda}$ is of type $A_{5}\times A_{1}$ then $\langle \lambda+\rho,
\alpha_{0}^{\vee} \rangle \in 3{\mathbb Z}$. Since $\langle \lambda+\rho,
\alpha_{j}^{\vee} \rangle \in 3{\mathbb Z}$ for $j\neq 2$ this forces
$\langle \lambda+\rho,2\alpha_{2}^{\vee} \rangle \in 3{\mathbb Z}$. Therefore,
$\langle \lambda+\rho,\alpha_{2}^{\vee} \rangle \in 3{\mathbb Z}$ which is a
contradiction.

If $\Phi_{\lambda}$ is of type $A_{3}\times A_{1}\times A_{1}$ then
$\langle \lambda+\rho, 2\alpha_{2}^{\vee}+2\alpha_{5}^{\vee} \rangle \in 3{\mathbb Z}$, thus
$\langle \lambda+\rho, \alpha_{2}^{\vee}+\alpha_{5}^{\vee} \rangle \in 3{\mathbb Z}$. Furthermore,
$\langle \lambda+\rho, \alpha_{4}^{\vee}\rangle \in 3{\mathbb Z}$, so
$\langle \lambda+\rho, \alpha_{2}^{\vee}+\alpha_{4}^{\vee}+\alpha_{5}^{\vee} \rangle \in 3{\mathbb Z}$
which is a contradition. One can argue in a similar way to rule out $\Phi_{\lambda}$
having type $A_{1}\times A_{1} \times A_{1} \times A_{1}$.

This reduces us to the case when $\Phi_{\lambda}$ is of type $A_{2}\times A_{2}
\times A_{2}$ which can be realized by the weight $(l-1)(1,1,1,q-1,1,1)$ where $l=3q$.
Moreover, we can use the calculations and Constrictor Method I as outlined in \cite[4.2]{UGA3} to deduce the
following result.

\begin{theorem} Let $\Phi$ be of type $E_{6}$ and let $3\mid l$. If $\lambda\in X_+$ and
$\Phi_{\lambda}$ is not $W$ conjugate to $\Phi_{J}$ for any $J\subseteq \Pi$, then
\begin{itemize}
\item[(a)] $\Phi_{\lambda}$ is of type $A_{2}\times A_{2} \times A_{2}$;
\item[(b)] ${\mathcal V}_{\mathfrak g}(\nabla_{\zeta}(\lambda))=
\overline{{\mathcal O}(A_{2}\times A_{1})}$.
\end{itemize}
\end{theorem}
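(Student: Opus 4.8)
The plan is to prove the two parts in sequence, with part (a) being essentially a root-system classification and part (b) an application of the machinery developed in Sections 8.7--8.8. For part (a), I would start from the Borel--de Siebenthal classification of maximal closed subroot systems of $E_6$ (up to $W$-conjugation): the extended Dynkin diagram yields $D_5$, $A_1\times A_5$, and $A_2\times A_2\times A_2$ as the candidate ambient systems for $\Phi_\lambda^\vee$. The case $D_5$ is immediately discarded since its simple roots already lie in $\Pi$, so $\Phi_\lambda$ is then $W$-conjugate to some $\Phi_J$. For the remaining two cases, the key observation is that $\mathbb{Z}\Gamma/\mathbb{Z}\Phi_\lambda^\vee$ has torsion prime to $l$ (since $l$ is only divisible by $3$ and the relevant indices are controlled), so by the argument behind Lemma~\ref{subsystemlemma} there exists $w \in W$ with $w(\Phi_\lambda) = \Phi_I$ for $I$ a subset of the Borel--de Siebenthal simple system $\Pi(\Gamma)$. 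If $I$ avoids the extra affine node, then $\Phi_I \subseteq \Phi$ is genuinely standard; so we may assume $I$ contains that node, leaving the four listed possibilities $A_5\times A_1$, $A_3\times A_1\times A_1$, $A_1^{\times 4}$, and $A_2\times A_2\times A_2$.

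The heart of part (a) is then ruling out the first three of these four types. I would do this by the congruence-arithmetic argument already sketched in the excerpt: using that $\langle\lambda+\rho,\alpha^\vee\rangle \equiv 0 \bmod 3$ for each $\alpha$ in a simple system of $\Phi_\lambda$ (note $l = 3q$, so divisibility by $l$ forces divisibility by $3$ for the coroot pairings that matter), and that $\langle\lambda+\rho,\alpha_j^\vee\rangle \equiv 0 \bmod 3$ for the standard simple roots that lie in $\Phi_\lambda$, one derives a contradiction by exhibiting a root of $\Phi$ whose coroot pairing with $\lambda+\rho$ would then also have to be divisible by $3$ but cannot be. Concretely: for type $A_5\times A_1$, the highest root $\alpha_0$ contributes a relation forcing $\langle\lambda+\rho,2\alpha_2^\vee\rangle \equiv 0$, hence $\langle\lambda+\rho,\alpha_2^\vee\rangle\equiv 0$, contradicting $\alpha_2 \notin \Phi_\lambda$; the types $A_3\times A_1\times A_1$ and $A_1^{\times 4}$ are handled by the same device with $\alpha_2^\vee + \alpha_5^\vee$ and $\alpha_2^\vee+\alpha_4^\vee+\alpha_5^\vee$ (and their analogues). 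This leaves only $A_2\times A_2\times A_2$, realized explicitly by $\lambda = (l-1)(1,1,1,q-1,1,1)$ in fundamental-weight coordinates, which confirms part (a). I expect the bookkeeping of exactly which subsets $I$ of which $\Pi(\Gamma)$ need checking --- and verifying that the congruence contradictions are exhaustive across all embeddings --- to be the main obstacle here, though it is finite and mechanical.

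For part (b), with $\Phi_\lambda$ of type $A_2\times A_2\times A_2$, the lower bound $\dim{\mathcal V}_{\mathfrak g}(\nabla_\zeta(\lambda)) \geq |\Phi| - |\Phi_\lambda| = 72 - 18 = 54 = \dim\overline{{\mathcal O}(A_2\times A_1)}$ comes directly from Proposition~\ref{supportlowerbound}, so it suffices to establish the reverse containment ${\mathcal V}_{\mathfrak g}(\nabla_\zeta(\lambda)) \subseteq \overline{{\mathcal O}(A_2\times A_1)}$. For this I would invoke the Constrictor Method --- Theorem~\ref{T:constrictor} (and, where the constrictors fail to be represented by sums of simple root vectors, Theorem~\ref{T:constrictor2}) --- exactly as carried out for the restricted Lie algebra in \cite[\S4.2]{UGA3}. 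The required inputs are: identification of the orbit ${\mathcal O}(A_2\times A_1)$ inside ${\mathcal N}(\Phi_0)$, computation of its constrictors ${\mathcal O}_1,\dots,{\mathcal O}_s$, the dimension inequality $|\Phi|-|\Phi_\lambda| \geq \dim{\mathcal O}(A_2\times A_1)$ (which holds with equality here), the representability of each constrictor as $G\cdot x_{J_i}$ (or as an induced $L_I$-orbit), and the non-vanishing condition $w(\Phi_\lambda)\cap\Phi_{J_i} \neq \varnothing$ for all $w\in W$ and each $i$ --- this last is again a congruence computation of the same flavor as in part (a). Since Assumption~\ref{regassumption} is in force throughout this part of the paper, the constrictor theorems apply, and the conclusion ${\mathcal V}_{\mathfrak g}(\nabla_\zeta(\lambda)) = \overline{{\mathcal O}(A_2\times A_1)}$ follows. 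The main subtlety I anticipate in (b) is checking that the constrictors of ${\mathcal O}(A_2\times A_1)$ satisfy hypothesis (iii) of Theorem~\ref{T:constrictor} (or the analogous hypothesis of Theorem~\ref{T:constrictor2}) for \emph{every} $W$-translate of $\Phi_\lambda$, which requires knowing the constrictor orbits precisely; the MAGMA-assisted orbit data and the explicit weight $(l-1)(1,1,1,q-1,1,1)$ should make this tractable.
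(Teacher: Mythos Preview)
Your proposal is correct and follows essentially the same approach as the paper: the Borel--de Siebenthal reduction, the congruence-arithmetic elimination of the three non-$A_2^3$ cases, and the explicit realizing weight all match exactly, and for part (b) the paper likewise invokes Constrictor Method I as in \cite[\S4.2]{UGA3}. The only minor divergence is that you hedge by mentioning Constrictor Method II, whereas for $E_6$ the paper finds Method I alone suffices.
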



\section{$F_{4}$ when $3\mid l$}

We first apply the Borel-de Siebenthal Theorem which describes maximal subroot systems
containing $\Phi_{\lambda}^{\vee}$ via the extended Dynkin diagrams. For $F_{4}$ there are three possibilities:
$A_{1}\times C_{3}$, $B_{4}$ and $A_{2}\times A_{2}$. We have to consider all three cases since
the ``additional simple root'' in the extended Dynkin diagram is involved. In these cases,
${\mathbb Z}\Gamma/{\mathbb Z}\Phi_{\lambda}^{\vee}$ has torsion prime to $l$ so
there exists $w\in W$ such that $w(\Phi_{\lambda})=\Phi_{I}$ where $I\subseteq
\Pi(\Gamma)$.

We now analyze each of these cases and we only need to consider when the
``additional simple root from the extended diagram'' is in $\Phi_{I}$. We can conclude
that the possibilities of non-conjugate $\Phi_{\lambda}$ to $\Phi_{J}$ where $J\subset \Pi$ reduces to the cases when
$\Phi_{\lambda}$ is possibly:
\begin{itemize}
\item[(i)] $C_{3}\times A_{1}$,
\item[(ii)] $C_{2}\times A_{1}$,
\item[(iii)] $A_{1}\times A_{1} \times A_{1}$,
\item[(iv)] $A_{1}\times A_{1}$,
\item[(v)] $B_{4}$,
\item[(vi)] $A_{3}$,
\item[(vii)] $A_{2}\times A_{2}$.
\end{itemize}

Further reductions can be made by arguing in the following way. For example,
if $\Phi_{\lambda}$ is of type $C_{3}\times A_{1}$ then $\langle \lambda+\rho,
\alpha_{h}^{\vee} \rangle \in 3{\mathbb Z}$. Since $\langle \lambda+\rho,
\alpha_{j}^{\vee} \rangle \in 3{\mathbb Z}$ for $j=2,3,4$ this forces
$\langle \lambda+\rho,2\alpha_{1}^{\vee} \rangle \in 3{\mathbb Z}$. Therefore,
$\langle \lambda+\rho,\alpha_{1}^{\vee} \rangle \in 3{\mathbb Z}$ which is a
contradiction. One can argue in a similar way to rule out $\Phi_{\lambda}$
in cases (ii)-(vi).

This reduces us to the case when $\Phi_{\lambda}$ is of type $A_{2}\times A_{2}$.
One can invoke Constrictor Method I as outlined in \cite[4.2]{UGA3} to deduce the
following result.

\begin{theorem} Let $\Phi$ be of type $F_{4}$ and let $3\mid l$. If $\lambda\in X_+$ and
$\Phi_{\lambda}$ is not $W$ conjugate to $\Phi_{J}$ for any $J\subseteq \Pi$, then
\begin{itemize}
\item[(a)] $\Phi_{\lambda}$ is of type $A_{2}\times A_{2}$;
\item[(b)] ${\mathcal V}_{\mathfrak g}(\nabla_{\zeta}(\lambda))=
\overline{{\mathcal O}(A_{1}\times \widetilde{A}_{2})}$.
\end{itemize}
\end{theorem}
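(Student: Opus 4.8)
The plan is to follow the template already established in the $E_6$ case (Section on ``$E_6$ when $3\mid l$'') and in \cite{UGA3}. First I would reduce, via the Borel--de Siebenthal Theorem as stated, the possibilities for $\Phi_\lambda^\vee$ to the three maximal closed subsystems of $F_4$, namely $A_1\times C_3$, $B_4$, and $A_2\times A_2$; any subsystem of $\Phi$ not involving the affine node is $W$-conjugate to some $\Phi_J$ by Lemma \ref{subsystemlemma} and is handled by Theorem \ref{supportthm} (as noted in the Remark preceding the section), so only subsystems genuinely involving the extra affine node need attention. Since in each of these three cases the torsion of ${\mathbb Z}\Gamma/{\mathbb Z}\Phi_\lambda^\vee$ is prime to $l$, there is $w\in W$ with $w(\Phi_\lambda)=\Phi_I$ for $I$ inside the Borel--de Siebenthal simple system, and one only needs to treat the finitely many subdiagrams $I$ that contain the affine node: these give the list (i)--(vii) in the section. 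The next step is to eliminate cases (i)--(vi) by the same ``$\rho$-divisibility'' trick used for $E_6$: if $\Phi_\lambda$ had, say, type $C_3\times A_1$, then $\langle\lambda+\rho,\alpha_h^\vee\rangle\in 3{\mathbb Z}$ together with $\langle\lambda+\rho,\alpha_j^\vee\rangle\in3{\mathbb Z}$ for $j=2,3,4$ forces $\langle\lambda+\rho,2\alpha_1^\vee\rangle\in3{\mathbb Z}$, hence $\langle\lambda+\rho,\alpha_1^\vee\rangle\in3{\mathbb Z}$ (since $3$ is odd and prime to $2$), a contradiction; the analogous coefficient bookkeeping, reading off coefficients of $\alpha_h^\vee$ in the Bourbaki labeling of $F_4$, rules out (ii)--(vi). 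This establishes part (a): the only surviving non-conjugate type is $A_2\times A_2$, realized concretely by a weight such as $(l-1)\rho$ in suitable coordinates (here $l=3q$, the short-$A_2$ factor coming from the $F_4$ affine node structure, one long and one short $A_2$).

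For part (b), with $\Phi_\lambda$ of type $\widetilde A_2\times A_2$ (one long, one short), the plan is to invoke Constrictor Method I (Theorem \ref{T:constrictor}) exactly as in \cite[\S4.2]{UGA3}. The lower bound $\dim{\mathcal V}_{\mathfrak g}(\nabla_\zeta(\lambda))\geq |\Phi|-|\Phi_\lambda| = 48 - 12 = 36 = \dim\overline{{\mathcal O}(A_1\times\widetilde A_2)}$ comes from Proposition \ref{supportlowerbound}, so it suffices to prove the reverse containment ${\mathcal V}_{\mathfrak g}(\nabla_\zeta(\lambda))\subseteq \overline{{\mathcal O}(A_1\times\widetilde A_2)}$. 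For this one identifies the orbit ${\mathcal O}(A_1\times\widetilde A_2)$ inside ${\mathcal N}(\Phi_0)$ (noting that for $3\mid l$ one has $\Phi_0$ of type $A_2\times A_2$ so ${\mathcal N}(\Phi_0)$ is strictly smaller than ${\mathcal N}$), lists its constrictors --- the orbits minimal in ${\mathcal N}(\Phi_0)\setminus\overline{{\mathcal O}(A_1\times\widetilde A_2)}$ --- and for each constrictor ${\mathcal O}_i$ verifies conditions (i)--(iii) of Theorem \ref{T:constrictor}: that $|\Phi|-|\Phi_\lambda|\geq\dim{\mathcal O}_i$ fails appropriately (i.e.\ each constrictor has dimension $>36$, equivalently codimension ${}<12$ in ${\mathcal N}(\Phi_0)$), that each ${\mathcal O}_i = G\cdot x_{J_i}$ for an explicit $J_i\subseteq\Pi$, and that $w(\Phi_\lambda)\cap\Phi_{J_i}\neq\varnothing$ for all $w\in W$. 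The last condition is a purely combinatorial check comparing the $W$-orbit of the type-$\widetilde A_2\times A_2$ subsystem $\Phi_\lambda$ against each $\Phi_{J_i}$, feasible by hand or with MAGMA; it is here that Proposition \ref{T:upperbound} (using Assumption \ref{regassumption}, in force throughout this final section) does the work of forcing $x_{J_i}\notin{\mathcal V}_{\mathfrak g}(\nabla_\zeta(\lambda))$.

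\textbf{Main obstacle.} The routine part is the Borel--de Siebenthal reduction and the $\rho$-divisibility eliminations, which are direct transcriptions of the $E_6$ argument. The genuinely delicate step is the constrictor analysis for ${\mathcal O}(A_1\times\widetilde A_2)$: one must correctly enumerate \emph{all} constrictors (a subtle point in $F_4$, where short/long distinctions affect which nilpotent orbits arise as $G\cdot x_{J}$ and where the Bala--Carter labels involve the tilde notation), and then check that every constrictor is of the form $G\cdot x_{J_i}$ with $J_i\subseteq\Pi$ --- this is precisely the hypothesis that can fail in the quantum setting (as the section on Constrictor Method II warns), so if some constrictor is \emph{not} an orbit through a sum of simple root vectors, the proof must instead route through Theorem \ref{T:constrictor2}, choosing an appropriate Levi $L_I$ and an $L_I$-orbit ${\mathcal O}_I$ with ${\mathcal O} = G\cdot{\mathcal O}_I$ and verifying the codimension inequality $|w(\Phi_\lambda)\cap\Phi_I| > \operatorname{codim}_{{\mathcal N}({\mathfrak l}_I)}{\mathcal O}_I$ for all $w\in W$. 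I would expect that, as in \cite{UGA3}, the simpler Theorem \ref{T:constrictor} suffices here, but confirming the constrictor list and the intersection condition $w(\Phi_\lambda)\cap\Phi_{J_i}\neq\varnothing$ is where the real care --- and likely a MAGMA computation --- is needed.
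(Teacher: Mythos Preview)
Your plan is essentially the paper's own argument: Borel--de Siebenthal reduction to the three maximal subsystems, elimination of cases (i)--(vi) by the $\rho$-divisibility trick (the paper works out $C_3\times A_1$ explicitly, exactly as you do, and leaves the rest to the reader), and then Constrictor Method I as in \cite[4.2]{UGA3} for part (b). The paper confirms that the simpler Theorem \ref{T:constrictor} suffices here --- Method II is not needed for $F_4$ --- so your worry about constrictors failing to be of the form $G\cdot x_{J_i}$ does not materialize in this case.

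One factual slip to correct: your parenthetical ``for $3\mid l$ one has $\Phi_0$ of type $A_2\times A_2$'' is wrong. Under Assumption \ref{assumption} the value $l=3$ is excluded (it is a bad prime for $F_4$), so the relevant cases are $l=9$ (where $\Phi_0\cong A_1$, see Appendix \ref{tables1}) and $l\geq 15$ (where $\Phi_0=\varnothing$ and ${\mathcal N}(\Phi_0)={\mathcal N}$). This does not damage the argument --- in every case ${\mathcal N}(\Phi_0)$ is large enough to contain $\overline{{\mathcal O}(A_1\times\widetilde A_2)}$ --- but the aside should be dropped.
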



\section{$E_{7}$ when $3\mid l$}

The Borel-de Siebenthal Theorem can be used to describes maximal subroot systems of $E_{7}$
containing $\Phi_{\lambda}^{\vee}$ via the extended Dynkin diagrams. There are four possibilities:
$E_{6}$, $A_{1}\times D_{6}$, $A_{7}$, and $A_{2}\times A_{5}$. The $E_{6}$-case reduces us
to looking at a subroot system of type $A_{2}\times A_{2}\times A_{2}$. For the other three cases
the ``additional simple root'' in the extended Dynkin diagram is involved. In the these cases,
${\mathbb Z}\Gamma/{\mathbb Z}\Phi_{\lambda}^{\vee}$ has torsion prime to $l$ so
there exists $w\in W$ such that $w(\Phi_{\lambda})=\Phi_{I}$ where $I\subseteq
\Pi(\Gamma)$.

We now analyze each of these cases and we only need to consider when the
``additional simple root from the extended diagram'' is in $\Phi_{I}$. We can conclude
that the possibilities of non-conjugate $\Phi_{\lambda}$ to $\Phi_{J}$ where $J\subset \Pi$ reduces to the cases when
$\Phi_{\lambda}$ is possibly:
\begin{itemize}
\item[(i)] $D_{6}\times A_{1}$,
\item[(ii)] $A_{1}\times A_{1}\times A_{3}\times A_{1}$,
\item[(iii)] $A_{1}\times A_{1}\times A_{1}\times A_{1}$,
\item[(iv)] $D_{4}\times A_{1}\times A_{1}$,
\item[(v)] $A_{7}$,
\item[(vi)] $A_{6}$,
\item[(vii)] $A_{3}\times A_{3}$,
\item[(viii)] $A_{2}\times A_{5}$,
\item[(ix)] $A_{2}\times A_{2}\times A_{2}$.
\end{itemize}

The same type of arguments given in the $E_{6}$ and $F_{4}$ cases reduces us
to looking at $\Phi_{\lambda}$ of type $A_{2}\times A_{5}$ and $A_{2}\times A_{2} \times A_{2}$.
We need to now invoke the Constrictor Method II to deduce the following result.

\begin{theorem} Let $\Phi$ be of type $E_{7}$ and let $3\mid l$. If $\lambda\in X_+$ and
$\Phi_{\lambda}$ is not $W$ conjugate to $\Phi_{J}$ for any $J\subseteq \Pi$, then
either $\Phi_{\lambda}$ is of type $A_{2}\times A_{5}$ or $A_{2}\times A_{2} \times A_{2}$.
\begin{itemize}
\item[(a)] If $\Phi_{\lambda}$ is of type $A_{2}\times A_{5}$, then
${\mathcal V}_{\mathfrak g}(\nabla_{\zeta}(\lambda))=\overline{{\mathcal O}(2A_{2}+A_{1})}$.
\item[(b)] If $\Phi_{\lambda}$ is of type $A_{2}\times A_{2}\times A_{2}$, then
${\mathcal V}_{\mathfrak g}(\nabla_{\zeta}(\lambda))=\overline{{\mathcal O}(A_{5}+A_{1})}$.
\end{itemize}
\end{theorem}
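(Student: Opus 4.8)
The plan is to follow exactly the template already executed for the $E_6$ and $F_4$ cases, but now with two candidate stabilizer types surviving the Borel--de Siebenthal reduction, and with the two Constrictor Methods (Theorems~\ref{T:constrictor} and~\ref{T:constrictor2}) doing the work of pinning down the support variety. First I would run the Borel--de Siebenthal analysis for $E_7$: the maximal (closed) subroot systems up to $W$-conjugacy have fundamental systems of types $E_6$, $A_1\times D_6$, $A_7$, and $A_2\times A_5$. The $E_6$ branch feeds back into the $E_6$, $3\mid l$ analysis and therefore only produces $\Phi_\lambda$ of type $A_2\times A_2\times A_2$. For each of the other three $\Gamma$, since $3\mid l$ is the only bad prime involved and ${\mathbb Z}\Gamma/{\mathbb Z}\Phi_\lambda^\vee$ has torsion prime to $l$, one gets $w(\Phi_\lambda)=\Phi_I$ with $I\subseteq\Pi(\Gamma)$; and one only needs to worry about those $I$ containing the affine node (otherwise $\Phi_\lambda$ is already $W$-conjugate to some $\Phi_J$, $J\subseteq\Pi$). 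Enumerating such $I$ gives the list (i)--(ix) stated in the excerpt.

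Next I would eliminate the spurious types in that list by the same divisibility argument used for $E_6$ and $F_4$: if $\Phi_\lambda$ had, say, type $D_6\times A_1$, then $\langle\lambda+\rho,\alpha_h^\vee\rangle\in 3{\mathbb Z}$ together with $\langle\lambda+\rho,\alpha_j^\vee\rangle\in 3{\mathbb Z}$ for all $j$ other than the ``missing'' node forces $\langle\lambda+\rho,m\alpha_i^\vee\rangle\in 3{\mathbb Z}$ for the missing node $\alpha_i$ with $m\in\{2,3,\dots\}$ its coefficient in $\alpha_h$; since $3\nmid m$ in all these cases one deduces $\langle\lambda+\rho,\alpha_i^\vee\rangle\in 3{\mathbb Z}$, contradicting $\alpha_i\notin\Phi_\lambda$. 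Running this for (i)--(viii) (combining nodes as needed, e.g.\ using $\alpha_2+\alpha_4+\alpha_5$-type combinations exactly as in the $E_6$ argument) leaves only (viii) $A_2\times A_5$ and (ix) $A_2\times A_2\times A_2$; I would also exhibit explicit dominant weights $\lambda$ realizing each (a $\rho$-type weight with an appropriate coordinate scaled by $q$, where $l=3q$) to show these cases genuinely occur.

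For the support-variety computation itself, I would apply Theorems~\ref{T:constrictor} and~\ref{T:constrictor2} under Assumption~\ref{regassumption}. For $\Phi_\lambda$ of type $A_2\times A_5$: compute $|\Phi|-|\Phi_\lambda|=126-42=84=\dim{\mathcal O}(2A_2+A_1)$, identify the constrictors of ${\mathcal O}(2A_2+A_1)$ in ${\mathcal N}(\Phi_0)$, check that (after the Constrictor Method II refinement, writing each constrictor as $G\cdot{\mathcal O}_I$ for an $L_I$-orbit when it is not simply $G\cdot x_{J_i}$) the numerical condition $|w(\Phi_\lambda)\cap\Phi_I|>\operatorname{codim}_{{\mathcal N}({\mathfrak l}_I)}{\mathcal O}_I$ holds for all $w\in W$, and conclude by Proposition~\ref{supportlowerbound} that the variety is exactly $\overline{{\mathcal O}(2A_2+A_1)}$; similarly for $A_2\times A_2\times A_2$, where $|\Phi|-|\Phi_\lambda|=126-18=108=\dim{\mathcal O}(A_5+A_1)$, yielding $\overline{{\mathcal O}(A_5+A_1)}$. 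The main obstacle here — and the reason Constrictor Method II is needed rather than Method I — is precisely that in type $E_7$ the relevant constrictor orbits need not have representatives that are sums of simple root vectors, so one must carefully realize each constrictor as $G\cdot{\mathcal O}_I$ for a suitable Levi $L_I$ and an orbit ${\mathcal O}_I$ in ${\mathfrak l}_I$, then verify the two hypotheses of Theorem~\ref{T:constrictor2}; this bookkeeping, together with the MAGMA-assisted verification of the closure order and dimensions of nilpotent orbits in $E_7$, is where the real effort lies. The lower bound from Proposition~\ref{supportlowerbound} is automatic once the dimension count is in place, so the theorem follows by squeezing the upper and lower bounds together.
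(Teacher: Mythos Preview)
Your overall architecture matches the paper's exactly: Borel--de Siebenthal reduction, elimination of cases (i)--(vii) by the ``find an extra root'' divisibility trick, and then the Constrictor Methods to pin down the support variety. However, there are two concrete issues.

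First, an arithmetic slip in part~(a): for $\Phi_\lambda$ of type $A_2\times A_5$ one has $|\Phi_\lambda|=6+30=36$, not $42$, so $|\Phi|-|\Phi_\lambda|=126-36=90$, which is indeed $\dim{\mathcal O}(2A_2+A_1)$ in $E_7$. Your $84$ would not match any candidate orbit and the lower-bound squeeze would fail.

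Second, and more substantively, you are too vague about which constrictors are in play and which method handles each. The paper is specific: for part~(a) only Constrictor Method~I is needed, with the single constrictor ${\mathcal O}(A_3)$, and one verifies (via MAGMA) that $|w(\Phi_\lambda)\cap\Phi_J|>0$ for all $w\in W$ and $\Phi_J\cong A_3$. For part~(b) there are two constrictors: ${\mathcal O}(A_5')$ is handled by Method~I, while ${\mathcal O}(D_5(a_1)+A_1)$ genuinely requires Method~II, realized via the Levi of type $D_5\times A_1$ with the check $|w(\Phi_\lambda)\cap\Phi_J|>2$ for all $w\in W$ and $\Phi_J\cong D_5\times A_1$. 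Your blanket statement that Method~II is needed throughout because constrictors in $E_7$ lack simple-root-vector representatives overstates the difficulty for~(a) and understates the specificity required for~(b): without naming the actual constrictor orbits and the corresponding Levi data, the proof is not yet a proof.
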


\begin{proof} For part (a), we use Constrictor Method I (with constrictor ${\mathcal O}(A_{3})$),
and show that $|w(\Phi_{\lambda})\cap \Phi_{J}|>0$ for all $w\in W$ and $\Phi_{J}\cong A_{3}$.
This was accomplished by using MAGMA \cite{BC, BCP}. In order to verify part (b), we use
Constrictor Method I with the orbit ${\mathcal O}(A_{5}^{\prime})$ and
Constrictor Method II with the orbit ${\mathcal O}(D_{5}(a_{1})+A_{1})$. In the latter case
we verified that $|w(\Phi_{\lambda})\cap \Phi_{J}|>2$ for all $w\in W$ and $\Phi_{J}\cong D_{5}\times A_{1}$.
\end{proof}


\section{$E_{8}$ when $3\mid l$, $5\mid l$}

First observe that we need to handle both cases simultaneously because there are cases when
both $3\mid l$ and $5\mid l$. One can apply the Borel-de Siebenthal Theorem and for $E_{8}$ there are five
possibilities for maximal subroot systems:
$D_{8}$, $A_{1}\times E_{7}$, $A_{8}$, $A_{2}\times E_{6}$, and $A_{4}\times A_{4}$. In all five cases, 
the ``additional simple root'' in the extended Dynkin diagram is involved. Furthermore, in the case
$D_{8}$, $A_{8}$, $A_{4}\times A_{4}$, ${\mathbb Z}\Gamma/{\mathbb Z}\Phi_{\lambda}^{\vee}$ has torsion prime to $l$ so
there exists $w\in W$ such that $w(\Phi_{\lambda})=\Phi_{I}$ where $I\subseteq
\Pi(\Gamma)$. For the other cases, $A_{1}\times E_{7}$, $A_{2}\times E_{6}$, we rely on our prior analysis
for the $E_{6}$ and $E_{7}$ cases.

Once again we analyze each of these cases and only need to consider when the
``additional simple root from the extended diagram'' is in $\Phi_{I}$. The possibilities of non-conjugate
$\Phi_{\lambda}$ to $\Phi_{J}$ where $J\subset \Pi$ reduces to the cases:
\begin{itemize}
\item[(i)] $D_{8}$,
\item[(ii)] $D_{6}\times A_{1}$,
\item[(iii)] $D_{4}\times A_{3}$,
\item[(iv)] $D_{4}\times A_{1}\times A_{1}$,
\item[(v)] $A_{1}\times A_{1}\times A_{5}$,
\item[(vi)] $A_{1}\times A_{1}\times A_{1}\times A_{3}$,
\item[(vii)] $A_{1}\times A_{1}\times A_{1}\times A_{1}\times A_{1}$,
\item[(viii)] $A_{1}\times E_{7}$,
\item[(ix)] $A_{1}\times A_{2}\times A_{5}$,
\item[(x)] $A_{1}\times A_{2}\times A_{2}\times A_{2}$,
\item[(xi)] $A_{8}$,
\item[(xii)] $A_{5}\times A_{2}$,
\item[(xiii)] $A_{2}\times A_{2}\times A_{2}$,
\item[(xiv)] $A_{2}\times E_{6}$,
\item[(xv)] $A_{2}\times A_{2}\times A_{2}\times A_{2}$,
\item[(xvi)] $A_{4}\times A_{4}$.
\end{itemize}

We can use the prior arguments involving ``finding additional roots'' in $\Phi_{\lambda}$ to
rule out cases (i)--(ix). This reduces us to the case when $\Phi_{\lambda}$ is of type
(x)-(xvi). The following theorems summarize our findings.

\begin{theorem}[A] Let $\Phi$ be of type $E_{8}$ and let $3\mid l$ or $5\mid l$. If $\lambda\in X_+$ and
$\Phi_{\lambda}$ is not $W$ conjugate to $\Phi_{J}$ for any $J\subseteq \Pi$, then
$\Phi_{\lambda}$ is of type
\begin{itemize}
\item[(i)] $A_{1}\times A_{2}\times A_{2}\times A_{2}$,
\item[(ii)] $A_{8}$,
\item[(iii)] $A_{5}\times A_{2}$,
\item[(iv)] $A_{2}\times A_{2}\times A_{2}$,
\item[(v)] $A_{2}\times E_{6}$,
\item[(vi)] $A_{2}\times A_{2}\times A_{2}\times A_{2}$,
\item[(vii)] $A_{4}\times A_{4}$.
\end{itemize}
\end{theorem}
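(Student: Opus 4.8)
The plan is to follow the same template used in the $E_6$, $F_4$, and $E_7$ cases, combining the Borel--de Siebenthal classification with a case-by-case elimination of the ``bad'' subroot systems, and then to compute the support varieties in the surviving cases via the Constrictor Methods. First I would apply the Borel--de Siebenthal Theorem to enumerate the maximal closed subroot systems of $E_8$, namely $D_8$, $A_1\times E_7$, $A_8$, $A_2\times E_6$, and $A_4\times A_4$, and observe that in each the ``additional simple root'' from the affine node is involved; for $D_8$, $A_8$, $A_4\times A_4$ the relevant torsion index ${\mathbb Z}\Gamma/{\mathbb Z}\Phi_\lambda^\vee$ is prime to $l$, so Lemma~\ref{subsystemlemma} (applied inside $\Gamma$) lets us assume $w(\Phi_\lambda)=\Phi_I$ with $I\subseteq\Pi(\Gamma)$. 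For $A_1\times E_7$ and $A_2\times E_6$ one reduces to the already-completed analyses of $E_7$ (with $3\mid l$) and $E_6$ (with $3\mid l$), respectively. This yields the list (i)--(xvi) of candidate types for $\Phi_\lambda$ that may fail to be $W$-conjugate to some $\Phi_J$, $J\subseteq\Pi$.

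Next I would carry out the ``finding extra roots'' argument — exactly as in the $E_6$ and $F_4$ sections — to rule out cases (i)--(ix). The mechanism is: if $\Phi_\lambda$ had one of those types, then $\langle\lambda+\rho,\alpha^\vee\rangle\in l{\mathbb Z}$ for all $\alpha$ in a claimed fundamental system (including the affine root $-\alpha_h$, viewed coroot-wise), and one exhibits an explicit integral combination of those coroots equal to $k\,\beta^\vee$ for a root $\beta\notin\Phi_\lambda$ with $\gcd(k,3)=1$ (or $\gcd(k,5)=1$), forcing $\langle\lambda+\rho,\beta^\vee\rangle\in l{\mathbb Z}$, i.e.\ $\beta\in\Phi_\lambda$ — a contradiction with the type. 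Organizing these divisibility computations for $E_8$ is more delicate than for $E_6$, $F_4$ because there are more cases and the coefficients $h_i$ of the highest root are larger (up to $6$), so one must be careful that the relevant multiplier is actually coprime to the prime dividing $l$; MAGMA (as already used elsewhere in the paper) can be invoked to certify these finitely many lattice identities. The outcome is that only types (x)--(xvi) — $A_1\times A_2^3$, $A_8$, $A_5\times A_2$, $A_2^3$, $A_2\times E_6$, $A_2^4$, $A_4\times A_4$ — survive, which is precisely statement Theorem~[A].

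The main obstacle I expect is organizing the elimination step cleanly: the Borel--de Siebenthal tree for $E_8$ branches repeatedly (e.g.\ $D_8$ contains $D_6\times A_1$, $D_4\times A_3$, etc., and $A_1\times E_7$ and $A_2\times E_6$ recursively invoke the $E_7$ and $E_6$ results), so one must be systematic about which node of each extended diagram is deleted and about tracking the torsion index at each stage; the risk is missing a branch or mis-identifying a conjugacy. Once Theorem~[A] is established, the remaining (companion) theorems computing ${\mathcal V}_{\mathfrak g}(\nabla_\zeta(\lambda))$ in cases (x)--(xvi) would be proved by the Constrictor Methods: Theorem~\ref{T:constrictor} when all constrictors of the conjectured orbit closure have representatives of the form $x_J$ and satisfy $|w(\Phi_\lambda)\cap\Phi_{J_i}|>0$ for all $w\in W$, and Theorem~\ref{T:constrictor2} (via Proposition~\ref{P:preconstrictor}) in the cases where the constrictors are only $L_I$-orbits $G\cdot{\mathcal O}_I$, checking $|w(\Phi_\lambda)\cap\Phi_I|>\operatorname{codim}_{{\mathcal N}({\mathfrak l}_I)}{\mathcal O}_I$ for all $w$; the lower bound $\dim{\mathcal V}_{\mathfrak g}(\nabla_\zeta(\lambda))\geq|\Phi|-|\Phi_\lambda|$ from Proposition~\ref{supportlowerbound} then pins down the orbit closure exactly. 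All the inner-product/codimension inequalities are finite checks amenable to MAGMA, so the genuine mathematical content is concentrated in the classification, i.e.\ in Theorem~[A] itself.
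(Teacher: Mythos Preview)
Your proposal is correct and follows essentially the same approach as the paper: Borel--de Siebenthal enumeration of the maximal closed subroot systems of $E_8$, reduction of the $A_1\times E_7$ and $A_2\times E_6$ branches to the previously-handled $E_7$ and $E_6$ analyses, torsion-prime-to-$l$ conjugation inside $D_8$, $A_8$, $A_4\times A_4$ to reach the list (i)--(xvi), and then the ``finding additional roots'' divisibility argument to eliminate (i)--(ix). Your description of the companion computation of support varieties via Constrictor Methods I and II likewise matches the paper's proof of Theorem~[B].
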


\begin{theorem}[B]  Let $\Phi$ be of type $E_{8}$ and let $3\mid l$ or $5\mid l$. Suppose that
$\lambda\in X_+$ and $\Phi_{\lambda}$ is not $W$ conjugate to $\Phi_{J}$ for any $J\subseteq \Pi$.
\begin{itemize}
\item[(i)]  If $\Phi_{\lambda}$ is of type $A_{1}\times A_{2}\times A_{2}\times A_{2}$,then
${\mathcal V}_{\mathfrak g}(\nabla_{\zeta}(\lambda))=\overline{{\mathcal O}(E_{8}(b_{6}))}$.
\item[(ii)]  If $\Phi_{\lambda}$ is of type $A_{8}$, then
${\mathcal V}_{\mathfrak g}(\nabla_{\zeta}(\lambda))=\overline{{\mathcal O}(2A_{2}+2A_{1})}$.
\item[(iii)]  If $\Phi_{\lambda}$ is of type $A_{5}\times A_{2}$, then
${\mathcal V}_{\mathfrak g}(\nabla_{\zeta}(\lambda))=\overline{{\mathcal O}(E_{6}(a_{3})+A_{1})}$.
\item[(iv)]  If $\Phi_{\lambda}$ is of type $A_{2}\times A_{2}\times A_{2}$, then
${\mathcal V}_{\mathfrak g}(\nabla_{\zeta}(\lambda))=\overline{{\mathcal O}(E_{6}+A_{1})}$.
\item[(v)]  If $\Phi_{\lambda}$ is of type $A_{2}\times E_{6}$, then
${\mathcal V}_{\mathfrak g}(\nabla_{\zeta}(\lambda))=\overline{{\mathcal O}(2A_{2}+A_{1})}$.
\item[(vi)]  If $\Phi_{\lambda}$ is of type $A_{2}\times A_{2}\times A_{2}\times A_{2}$, then
${\mathcal V}_{\mathfrak g}(\nabla_{\zeta}(\lambda))=\overline{{\mathcal O}(D_{6})}$.
\item[(vi)]  If $\Phi_{\lambda}$ is of type $A_{4}\times A_{4}$, then
${\mathcal V}_{\mathfrak g}(\nabla_{\zeta}(\lambda))=\overline{{\mathcal O}(A_{4}+A_{3})}$.
\end{itemize}
\end{theorem}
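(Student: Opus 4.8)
Looking at "Theorem [B]" for $E_8$ when $3\mid l$ or $5\mid l$, I need to prove the seven support variety identifications for the non-standard cases of $\Phi_\lambda$.

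\textbf{Overall strategy.} The plan is to apply, for each of the seven cases, the same two-sided argument used throughout this chapter: a lower bound coming from Proposition \ref{supportlowerbound} and an upper bound coming from the constrictor methods (Theorems \ref{T:constrictor} and \ref{T:constrictor2}). For a given case, write $\mathcal O$ for the target orbit listed in the statement. Proposition \ref{supportlowerbound} gives $\dim {\mathcal V}_{\mathfrak g}(\nabla_\zeta(\lambda)) \geq |\Phi| - |\Phi_\lambda|$, which (after checking the arithmetic in each case using the standard tables of orbit dimensions for $E_8$) equals $\dim \mathcal O$. Since ${\mathcal V}_{\mathfrak g}(\nabla_\zeta(\lambda))$ is a $G$-stable closed conical subvariety of $\mathcal N$ and is contained in $\overline{\mathcal O}$ by the upper bound, and since $\overline{\mathcal O}$ is irreducible of dimension $\dim \mathcal O$, the two bounds force ${\mathcal V}_{\mathfrak g}(\nabla_\zeta(\lambda)) = \overline{\mathcal O}$. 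So the real content is establishing the upper bound $\mathcal V_{\mathfrak g}(\nabla_\zeta(\lambda)) \subseteq \overline{\mathcal O}$ in each case.

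\textbf{The upper bounds.} First I would fix, for each of the seven types of $\Phi_\lambda$, an explicit representative weight $\lambda$ (as was done for $E_6$ and $F_4$, e.g. $\lambda = (l-1)(\cdots)$ with appropriate entries), verifying via the extended Dynkin diagram / Borel--de Siebenthal analysis that $\Phi_\lambda$ is indeed of the claimed type and that it is not $W$-conjugate to any $\Phi_J$. Then, to bound $\mathcal V_{\mathfrak g}(\nabla_\zeta(\lambda))$ from above, I would apply Theorem \ref{T:constrictor} (Constrictor Method I) whenever the relevant constrictor orbits of $\mathcal O$ have representatives of the form $x_{J_i}$, and verify condition (iii): $w(\Phi_\lambda) \cap \Phi_{J_i} \neq \varnothing$ for all $w \in W$. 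When a constrictor orbit is not of the form $G\cdot x_{J}$ (which, as the text warns, genuinely happens in the quantum $E_8$ setting), I would instead invoke Theorem \ref{T:constrictor2} (Constrictor Method II): exhibit the constrictor as $G\cdot \mathcal O_I$ for an $L_I$-orbit $\mathcal O_I$, produce $y \in W$ with $y(\Phi_\lambda) \cap \Phi_I = \Phi_J$ for $J \subseteq I$, and check the numerical condition $|w(\Phi_\lambda)\cap\Phi_I| > \operatorname{codim}_{{\mathcal N}(\mathfrak l_I)} \mathcal O_I$ for all $w \in W$. All of these finitely many Weyl-group-orbit intersection inequalities are verified with MAGMA, exactly as cited for the $E_7$ case. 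The constrictor data (which orbits are constrictors of $E_8(b_6)$, of $E_6(a_3)+A_1$, etc., and which of them need Method I versus Method II) must be read off from the Hasse diagram of nilpotent orbits in $E_8$ and the tables in \cite{CM}.

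\textbf{Main obstacle.} The hard part will be cases (i) $\Phi_\lambda$ of type $A_1 \times A_2^{\times 3}$ with target $\overline{\mathcal O(E_8(b_6))}$ and (vi) $\Phi_\lambda$ of type $A_2^{\times 4}$ with target $\overline{\mathcal O(D_6)}$: these target orbits sit fairly high in the closure order, so they have many constrictors, and among them will be orbits whose representatives are not sums of simple root vectors, forcing repeated use of Constrictor Method II with several different Levi subalgebras $\mathfrak l_I$. For each such constrictor one must correctly identify the $L_I$-orbit structure, compute $\operatorname{codim}_{{\mathcal N}(\mathfrak l_I)}\mathcal O_I$, and run the MAGMA check that $|w(\Phi_\lambda) \cap \Phi_I|$ stays above that codimension over all $w$; getting the bookkeeping of which orbit is which (in Bala--Carter labels) consistent between the $E_8$ tables and the MAGMA output is where errors are most likely to creep in. Everything else — the lower bound, the irreducibility/dimension squeeze, and the Method I checks — is routine once the orbit-closure combinatorics and the representative weights are pinned down. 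Throughout, Assumption \ref{regassumption} is in force, which is what makes Theorems \ref{T:constrictor} and \ref{T:constrictor2} applicable.
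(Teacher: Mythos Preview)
Your proposal is correct and follows essentially the same approach as the paper: the paper's proof is a terse ``verified by using Constrictor Methods I and II'' with one worked example (case (iii), where the constrictors of $\overline{{\mathcal O}(E_6(a_3)+A_1)}$ are ${\mathcal O}(E_6(a_3))$ and ${\mathcal O}(D_6(a_2))$, handled via Theorem \ref{T:constrictor2} by checking $|w(\Phi_\lambda)\cap\Phi_{J_1}|>6$ for $\Phi_{J_1}\cong E_6$ and $|w(\Phi_\lambda)\cap\Phi_{J_2}|>4$ for $\Phi_{J_2}\cong D_6$ with MAGMA), and you have correctly reconstructed the full two-sided dimension-squeeze argument with the lower bound from Proposition \ref{supportlowerbound} and the upper bound from the constrictor exclusions.
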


\begin{proof} The cases were verified by using Constrictor Methods I and II. For example,
in case (iii), in order to get the inclusion of ${\mathcal V}_{\mathfrak g}(\nabla_{\zeta}(\lambda))$ in
$\overline{{\mathcal O}(E_{6}(a_{3})+A_{1})}$, it suffices to show that
${\mathcal V}_{\mathfrak g}(\nabla_{\zeta}(\lambda))\cap {\mathcal O}(E_6(a_{3}))=\varnothing$ and
${\mathcal V}_{\mathfrak g}(\nabla_{\zeta}(\lambda))\cap {\mathcal O}(D_6(a_{2}))=\varnothing$. From
Theorem~\ref{T:constrictor2}, it is enough to prove that
$|w(\Phi_{\lambda})\cap \Phi_{J_1}|>6$ for all $w\in W$ and $\Phi_{J_1}\cong E_6$,
and $|w(\Phi_{\lambda})\cap \Phi_{J_2}|>4$ for all $w\in W$ and $\Phi_{J_2}\cong D_{6}$ which
was verified by MAGMA.
\end{proof}


\section{Support varieties of $\Delta_{\zeta}(\lambda)$ when $l$ is bad} We remark that given the computation of ${\mathcal V}_{\zeta}(\nabla_{\zeta}(\lambda))$ for $\lambda\in X_{+}$ 
when $l$ is bad, one can use the same ideas as in the proof of Theorem~\ref{maintheoremdelta} to show that 
when $l$ is bad then 
$${\mathcal V}_{\mathfrak g}(\nabla_{\zeta}(\lambda))={\mathcal V}_{\mathfrak g}(\Delta_{\zeta}(\lambda))$$ 
for all $\lambda\in X_{+}$.

\appendix
   \chapter{}

\renewcommand{\thesection}{\thechapter.\arabic{section}}

\section{Tables I}\label{tables1}\renewcommand{\thetheorem}{\thesection.\arabic{theorem}}
\renewcommand{\theequation}{\thesection.\arabic{equation}}\setcounter{equation}{0}
\setcounter{theorem}{0}

For each odd integer $l > 1$ which is not equal to a bad prime for $\Phi$,
the following tables give an element
 $w \in W$ and subset $J \subset \Pi$ such that
$w(\Phi_0) = \Phi_J$. In fact, the element $w$ is chosen so that $w(\Phi_0^+) = \Phi_J^+$, 
and identified by
means of a reduced expression in terms of the simple reflections
$s_i=s_{\al_i}$. Similar short-hand notation is used to denote the simple
roots in $J$. Also given is the type of $\Phi_0$ (or equivalently
$\Phi_J$), the Bala-Carter label for the nilpotent orbit
$\mathcal{N}(\Phi_0) = G\cdot \ul_J$, and the dimension of that
orbit.

\vskip .5cm \noindent Type $F_{4}$: \vskip .25cm
\begin{center}
\begin{tabular}{|c|c|c|c|c|c|}\hline
$l$ & $\dim {{\mathcal N}(\Phi_{0})}$ & $\Phi_{0}$ & $w$ & $J$ &
orbit \\ \hline $5$ & $40$ & $A_{2}\times A_{1}$ &
$s_2s_3s_4s_2s_3s_2s_1s_2s_3s_1s_2s_3$ & $\{1,3,4\}$ &
$F_{4}(a_{3})$ \\ \hline $7$ & $44$ & $A_{1}\times A_{1}$ &
$s_2s_1s_4s_3s_2s_3s_4s_1s_3s_2s_4s_3s_2$ & $\{1,3\}$ &
$F_{4}(a_{2})$ \\ \hline $9$ & $46$ & $A_1$ & $s_4s_2s_3s_1s_2$ &
$\{3\}$ & $F_4(a_1)$ \\ \hline $11$ & $46$ & $A_{1}$ &
$s_4s_2s_3s_1s_2s_3s_4$ & $\{3\}$ & $F_{4}(a_{1})$ \\ \hline $\geq
12$ & $48$ & $\varnothing$ & -- & $\varnothing$ & $F_{4}$ \\ \hline
\end{tabular}
\end{center}

\vskip 1cm \noindent Type $G_{2}$: \vskip .25cm
\begin{center}
\begin{tabular}{|c|c|c|c|c|c|}\hline
$l$ & $\dim {{\mathcal N}_1(\Phi_{0})}$ & $\Phi_{0}$ & $w$ & $J$ &
orbit\\ \hline $5$ & $10$ & $A_{1}$ & $s_2s_1$ & $\{1\}$ &
$G_{2}(a_{1})$ \\ \hline $\geq 6$ & $12$ & $\varnothing$ & -- &
$\varnothing$ & $G_{2}$ \\ \hline
\end{tabular}
\end{center}
\vskip .5cm

\newpage\noindent Type $E_{6}$: \vskip .25cm
\begin{center}
\begin{tabular}{|c|c|c|c|c|}\hline
$l$ & $\dim {{\mathcal N}(\Phi_{0})}$ & $\Phi_{0}$ & $J$ & orbit\\
\hline $5$ & $62$ & $A_{2}\times A_{1}\times A_{1}$ & $\{1,2,3,5\}$
& $A_{4}+A_{1}$ \\ \hline $7$ & $66$ & $A_{1}\times A_{1}\times
A_{1}$ & $\{2,3,5\}$ & $E_{6}(a_{3})$ \\ \hline $9$ & $70$ & $A_1$ &
$\{4\}$ & $E_6(a_1)$ \\ \hline $11$ & $70$ & $A_{1}$ & $\{4\}$ &
$E_{6}(a_{1})$ \\ \hline $\geq 12$ & $72$ & $\varnothing$ &
$\varnothing$ & $E_{6}$ \\ \hline
\end{tabular}
\end{center}
\vskip .5cm
\begin{center}
\begin{tabular}{|c|c|}\hline
$l$ & $w$ \\ \hline $5$ &
$s_4s_3s_2s_1s_5s_4s_2s_5s_6s_3s_5s_6s_2s_4s_1s_3s_4s_3s_2s_1s_4s_3s_1s_4s_3s_2s_1$
\\ \hline $7$ &
$s_4s_2s_1s_3s_4s_6s_5s_4s_3s_6s_1s_5s_3s_2s_4s_6s_5$ \\ \hline $9$
& $s_3s_1s_2s_5s_4s_6s_5s_3$ \\ \hline $11$ &
$s_5s_6s_2s_3s_4s_5s_1s_3s_4s_2$\\ \hline
\end{tabular}
\end{center}

\vskip 1cm
\noindent Type $E_{7}$: \vskip .25cm
\begin{center}
\begin{tabular}{|c|c|c|c|c|}\hline
$l$ & $\dim {{\mathcal N}_1(\Phi_0)}$ & $\Phi_{0}$ & $J$ & orbit\\
\hline $5$ & $106$ & $A_{3}\times A_{2}\times A_{1}$ &
$\{1,2,3,5,6,7\}$ & $A_{4}+A_{2}$ \\ \hline $7$ & $114$ &
$A_{2}\times A_{1}\times A_{1}\times A_{1} $ & $\{1,2,3,5,7\}$ &
$A_{6}$ \\ \hline $9$ & $118$ & $A_1 \times A_1 \times A_1 \times
A_1$ & $\{2,3,5,7\}$ & $E_6(a_1)$ \\ \hline $11$ & $120$ &
$A_{1}\times A_{1}\times A_{1}$ &$\{2,3,5\}$ & $E_{7}(a_{3})$ \\
\hline $13$ & $122$ & $A_{1}\times A_{1} $ & $\{4,6\}$ &
$E_{7}(a_{2})$ \\ \hline $15$ & $124$ & $A_1$ & $\{1\}$ & $E_7(a_1)$
\\ \hline $17$ & $124$ & $A_{1}$ & $\{1\}$ & $E_{7}(a_{1})$ \\
\hline $\geq 18$ & $126$ & $\varnothing$ & $\varnothing$ & $E_{7}$ \\
\hline
\end{tabular}
\end{center}
\vskip .5cm
\begin{center}
\begin{tabular}{|c|c|}\hline
$l$ & $w$ \\ \hline $5$ &
$s_4s_5s_3s_2s_4s_2s_3s_1s_3s_6s_5s_4s_3s_5s_6s_5s_3s_2s_4s_2s_3s_2s_4s_6s_5s_4s_7s_6s_4s_3s_5s_6$
\\ \hline $7$ &
$s_4s_2s_3s_6s_5s_4s_6s_3s_1s_2s_5s_4s_3s_7s_6s_5s_4s_2$ \\ \hline
$9$ &
$s_4s_2s_5s_4s_1s_6s_5s_3s_4s_5s_2s_3s_4s_7s_6s_1s_2s_3s_4s_5s_6s_7$
\\ \hline $11$ &
$s_4s_3s_2s_5s_6s_5s_7s_6s_1s_3s_4s_2s_3s_5s_4s_3s_1s_2s_6s_5s_4s_3$
\\ \hline $13$ &
$s_7s_5s_6s_2s_3s_4s_5s_1s_2s_3s_4s_5s_6s_7s_1s_2s_3s_4s_5s_6$ \\
\hline $15$ & $s_3s_4s_2s_5s_4s_3s_6s_5s_4s_2s_7s_6s_5s_4$ \\ \hline
$17$ & $s_3s_4s_2s_5s_4s_3s_6s_5s_4s_7s_6s_5s_2s_4s_3s_1$ \\ \hline
\end{tabular}
\end{center}
\vskip .5cm

\newpage
\noindent Type $E_{8}$: \vskip .25cm
\begin{center}
\begin{tabular}{|c|c|c|c|c|}\hline
$l$ & $\dim {{\mathcal N}(\Phi_{0})}$ & $\Phi_{0}$ & $J$ & orbit \\
\hline $7$ & $212$ & $A_{4}\times A_{2}\times A_{1}$ &
$\{1,2,3,5,6,7,8\}$ & $A_{6}+A_{1}$ \\ \hline $9$ & $220$ & $A_3
\times A_2 \times A_1$ & $\{1,2,4,6,7,8\}$ & $E_8(b_6)$ \\ \hline
$11$ & $224$ & $A_{2}\times A_{2}\times A_{1}\times A_{1} $ &
$\{1,2,3,5,7,8\}$ & $E_{8}(a_{6})$ \\ \hline $13$ & $228$ &
$A_{2}\times A_{1}\times A_{1}\times A_{1}$ & $\{2,3,5,6,8\}$ &
$E_{8}(a_{5})$ \\ \hline $15$ & $232$ & $A_1\times A_1 \times A_1
\times A_1$ & $\{1,4,6,8\}$ & $E_{8}(a_4)$ \\ \hline $17$ & $232$ &
$A_{1}\times A_{1} \times A_{1} \times A_{1}$ & $\{2,3,5,7\}$ &
$E_{8}(a_{4})$ \\ \hline $19$ & $234$ & $A_{1}\times A_{1}\times
A_{1}$ & $\{2,3,5\}$ & $E_{8}(a_{3})$ \\ \hline $21$ & $236$ & $A_1
\times A_1$ & $\{6,8\}$ & $E_8(a_2)$ \\ \hline $23$ & $236$ &
$A_{1}\times A_{1}$ & $\{6,8\}$ & $E_{8}(a_{2})$ \\ \hline $25$ &
$238$ & $A_1$ & $\{1\}$ & $E_8(a_1)$ \\ \hline $27$ & $238$ & $A_1$
& $\{1\}$ & $E_8(a_1)$ \\ \hline $29$ & $238$ & $A_{1}$ & $\{1\}$ &
$E_{8}(a_{1})$ \\ \hline $\geq 30$ & $240$ & $\varnothing$ &
$\varnothing$ & $E_{8}$ \\ \hline
\end{tabular}
\end{center}
\vskip.5cm
\begin{center}
\begin{tabular}{|c|c|}\hline
$l$ & $w$ \\ \hline $7$ &
$s_4s_3s_2s_4s_5s_4s_2s_1s_3s_1s_4s_2s_3s_5s_4s_6s_5s_4s_3s_1s_2s_4s_3s_7s_6s_8s_7s_5$
\\ \hline -- &
$s_4s_2s_6s_5s_4s_3s_4s_5s_6s_7s_6s_8s_7s_8s_1s_3s_2s_4s_5s_6s_4s_1s_3s_2s_4s_1s_2s_3$
\\ \hline $9$ &
$s_5s_6s_4s_5s_2s_4s_3s_4s_5s_6s_1s_2s_3s_4s_2s_5s_4s_3s_4s_2s_6s_5s_4s_7s_6s_5s_8s_7s_6s_1s_3$
\\ \hline -- &
$s_4s_2s_5s_4s_3s_4s_5s_6s_5s_4s_1s_2s_3s_4s_5s_1s_3s_4s_7s_6s_5s_2s_4s_5s_6s_7s_1s_2s_3s_4s_5$
\\ \hline $11$ &
$s_4s_2s_5s_3s_4s_3s_2s_6s_5s_4s_2s_7s_6s_3s_5s_4s_5s_6s_7s_1s_3s_4s_5s_2s_4s_6s_5s_8$
\\ \hline -- &
$s_7s_6s_7s_1s_3s_4s_5s_2s_4s_2s_6s_7s_3s_1s_3s_4s_2s_5s_4s_3s_4s_3s_1s_3s_8s_7s_6s_5$
\\ \hline $13$ &
$s_4s_2s_3s_4s_5s_7s_6s_8s_7s_5s_4s_2s_3s_4s_1s_3s_4s_2s_5s_4s_3s_1s_2s_6s_5$
\\ \hline -- &
$s_4s_3s_2s_5s_4s_3s_1s_7s_8s_6s_5s_7s_6s_4s_2s_5s_3s_4s_5s_2s_6s_3s_1s_4s_3$\\
\hline $15$ &
$s_3s_4s_2s_5s_4s_3s_6s_5s_4s_7s_8s_6s_5s_7s_6s_2s_4s_5s_2s_4s_6s_7s_5s_6s_1s_3s_8s_7$
\\ \hline -- &
$s_4s_5s_3s_4s_1s_3s_6s_5s_4s_2s_7s_6s_5s_8s_3s_4s_2s_5s_4s_3s_6s_5s_4s_2s_7s_6s_5s_4$
\\ \hline $17$ &
$s_4s_2s_5s_4s_6s_5s_7s_6s_1s_3s_4s_2s_5s_4s_8s_7s_6s_5s_3s_4s_2s_1s_3s_4s_5s_6s_7s_8s_4s_5s_6s_7s_2s_4s_5s_6s_1s_3s_4$
\\ \hline -- &
$s_2s_5s_4s_3s_1s_8s_7s_4s_5s_3s_4s_1s_3s_6s_5s_4s_2s_7s_6s_5s_4s_8s_7s_3s_4s_2s_5s_4s_3s_6s_5s_4s_2s_7s_6s_5s_4s_3s_1$\\
\hline $19$ &
$s_4s_2s_3s_4s_1s_3s_6s_5s_4s_2s_7s_6s_5s_4s_3s_1s_8s_7s_6s_5s_4s_3s_2s_4s_5s_6s_7s_8s_4s_5s_3s_4$
\\ \hline -- &
$s_1s_3s_6s_5s_4s_2s_7s_6s_5s_4s_8s_7s_6s_5s_3s_4s_2s_5s_4s_3s_6s_5s_4s_2s_7s_6s_5s_4s_3s_1s_8s_7$
\\ \hline $21$ &
$s_7s_6s_5s_4s_2s_3s_4s_5s_6s_7s_1s_3s_4s_5s_6s_2s_4s_5s_3s_4s_1s_3s_2s_4s_5s_6$
\\ \hline -- &
$s_3s_4s_2s_5s_4s_3s_6s_5s_4s_2s_7s_6s_5s_4s_3s_1s_8s_7s_6s_5$ \\
\hline $23$ &
$s_7s_6s_5s_4s_2s_3s_4s_5s_6s_7s_1s_3s_4s_5s_6s_2s_4s_5s_3s_4s_1s_3s_2s_4s_5s_6s_7s_8$
\\ \hline -- &
$s_3s_4s_2s_5s_4s_3s_6s_5s_4s_2s_7s_6s_5s_4s_3s_1s_8s_7s_6s_5s_4s_2$
\\ \hline $25$ &
$s_3s_4s_2s_5s_4s_3s_6s_5s_4s_2s_7s_6s_5s_4s_3s_1s_8s_7s_6s_5s_4s_2s_3s_4$
\\ \hline $27$ &
$s_3s_4s_2s_5s_4s_3s_6s_5s_4s_2s_7s_6s_5s_4s_3s_1s_8s_7s_6s_5s_4s_2s_3s_4s_5s_6$
\\ \hline $29$ &
$s_3s_4s_2s_5s_4s_3s_6s_5s_4s_2s_7s_6s_5s_4s_3s_1s_8s_7s_6s_5s_4s_2s_3s_4s_5s_6s_7s_8$
\\ \hline
\end{tabular}
\end{center}

\newpage


\section{Tables II}\renewcommand{\thetheorem}{\thesection.\arabic{theorem}}
\renewcommand{\theequation}{\thesection.\arabic{equation}}\label{tables2}
 The information listed below for the exceptional Lie algebras is used for
justifying the arguments given in Section \ref{exceptionalI}. For each $l$, the
element $w \in W$ referred to here is the $w$ identified in the
tables of Appendix \ref{tables1}. All weights (e.g., $w\cdot 0$) are
listed in the weight (or $\varpi$) basis.

For a given $J$ and $w$, the values of $w\cdot 0$ and $-w_{0,J}(w\cdot 0)$ are
listed.  In the arguments of Section \ref{exceptionalI} a weight $\la$ and a
sum of coroots $\delta^{\vee}$ are introduced.  The values of
$\la$, $\langle -w_{0,J}(w\cdot 0),\delta^{\vee}\rangle$, and
$\langle\la,\delta^{\vee}\rangle$ are listed.  Finally, for the root systems of
type $E_n$, a third table lists the value of $\langle -w_{0,J}(w\cdot 0),\al^{\vee}\rangle$
along with bounds on possible values for an inner product $\langle x,\al^{\vee}\rangle$
for $\al \in \Pi\backslash J$.  The object $x$ is introduced in Section \ref{exceptionalI}
and represents various possible combinations of sums of positive roots.  This information
is only needed in the arguments of Section \ref{exceptionalI} for certain values of $l$,
and only the relevant information is listed.

\vskip .5cm \noindent Type $F_{4}$: \vskip .25cm
\begin{center}
\begin{tabular}{|c|c|c|c|c|c|c|c|c|}
\hline
$l$ & $J$ & $w\cdot 0$ & $-w_{0,J}(w\cdot 0)$ & $\langle-w_{0,J}(w\cdot 0),\delta^{\vee}\rangle = (l - 1)|J|$ & $\la$ & $\langle\la,\delta^{\vee}\rangle$\\
\hline
$11$ & $\{3\}$ & $(0,-6,10,-6)$ & $(0,-4,10,-4)$ & $10$ & $(0,-4,10,-4)$ & $10$ \\
\hline
$9$ & $\{3\}$ & $(1,-5,8,-5)$ & $(-1,-3,8,-3)$ & $8$ & $(0,-4,10,-4)$ & $10$ \\
\hline
$7$ & $\{1,3\}$ & $(6,-7,6,-6)$ & $(6,-5,6,0)$ & $12$ & $(4,-5,9,-3)$ & $13$ \\
\hline
$5$ & $\{1,3,4\}$ & $(4,-8,4,4)$ & $(4,-4,4,4)$ & $12$ & $(4,-4,4,4)$ & $12$\\
\hline
\end{tabular}
\end{center}

\vskip 1cm \noindent Type $G_{2}$: \vskip .5cm
\begin{center}
\begin{tabular}{|c|c|c|c|c|c|c|c|}
\hline
$l$ & $J$ & $w\cdot 0$ & $-w_{0,J}(w\cdot 0)$ & $\langle-w_{0,J}(w\cdot 0),\delta^{\vee}\rangle = (l - 1)|J|$ & $\la$ & $\langle\la,\delta^{\vee}\rangle$\\
\hline
$5$ & $\{1\}$ & $(4,-3)$ & $(4,-1)$ & $4$ & $(4,-1)$ & $4$\\
\hline
\end{tabular}
\end{center}

\newpage

\vskip .5cm \noindent Type $E_{6}$: \vskip .25cm
\begin{center}
\begin{tabular}{|c|c|c|c|}\hline
$l$ & $J$ & $w\cdot 0$ & $-w_{0,J}(w\cdot 0)$ \\
\hline
$11$ & $\{4\}$ & $(0,-6,-6,10,-6,0)$ & $(0,-4,-4,10,-4,0)$ \\
\hline
$9$ & $\{4\}$ & $(1,-5,-5,8,-5,1)$ & $(-1,-3,-3,8,-3,-1)$ \\
\hline
$7$ & $\{2,3,5\}$ & $(-3,6,6,-11,6,-6)$ & $(-3,6,6,-7,6,0)$ \\
\hline
$5$ & $\{1,2,3,5\}$ & $(4,4,4,-10,4,-5)$ & $(4,4,4,-6,4,1)$ \\
\hline
\end{tabular}
\end{center}
\vskip.25cm
\begin{center}
\begin{tabular}{|c|c|c|c|c|c|}\hline
$l$ & $\langle-w_{0,J}(w\cdot 0),\delta^{\vee}\rangle = (l - 1)|J|$ & $\la$ & $\langle\la,\delta^{\vee}\rangle$ \\
\hline
$11$ & $10$ & $(0,-4,-4,10,-4,0)$ & $10$\\
\hline
$9$ & $8$ & $(0,-4,-4,10,-4,0)$ & $10$\\
\hline
$7$ & $18$ & $(-2,6,6,-7,6,-2)$ & $18$ \\
\hline
$5$ & $16$ & $(3,6,2,-6,6,-2)$ & $17$\\
\hline
\end{tabular}
\end{center}
\vskip.25cm
\begin{center}
\begin{tabular}{|c|c|c|c|}
\hline
$l$ & $\al$ & $\langle x,\al^{\vee}\rangle$ & $\langle -w_{0,J}(w\cdot 0),\al^{\vee}\rangle$\\
\hline
$7$ & $\al_1$ & $-4 \leq * \leq 1$ & $-3$\\
\hline
$7$ & $\al_4$ & $-7 \leq * \leq -6$ & $-7$\\
\hline
$7$ & $\al_6$ & $-4 \leq * \leq 1$ & $0$\\
\hline
\end{tabular}
\end{center}

\vskip 1cm \noindent Type $E_{7}$: \vskip .25cm
\begin{center}
\begin{tabular}{|c|c|c|c|}
\hline
$l$ & $J$ & $w\cdot 0$ & $-w_{0,J}(w\cdot 0)$ \\
\hline
$17$ & $\{1\}$ & $(16, 0, -11, 0, 0, 0, 0)$ & $(16,0,-5,0,0,0,0)$ \\
\hline
$15$ & $\{1\}$ & $(14,0,-10,0,0,1,0)$ & $(14,0,-4,0,0,0,0)$ \\
\hline
$13$ & $\{4,6\}$ & $(0,-7,-7,12,-13,12,-8)$ & $(0,-5,-5,12,-11,12,-4)$ \\
\hline
$11$ & $\{2,3,5\}$ & $(-6,10,10,-17,10,-6,0)$ & $(-4,10,10,-13,10,-4,0)$ \\
\hline
$9$ & $\{2,3,5,7\}$ & $(-5,8,8,-14,8,-8,8)$ & $(-3,8,8,-10,8,-8,8)$ \\
\hline
$7$ & $\{1,2,3,5,7\}$ & $(6,6,6,-14,6,-7,6)$ & $(6,6,6,-10,6,-5,6)$ \\
\hline
$5$ & $\{1,2,3,5,6,7\}$ & $(4,4,4,-15,4,4,4)$ & $(4,4,4,-9,4,4,4)$ \\
\hline
\end{tabular}
\end{center}
\vskip.25cm
\begin{center}
\begin{tabular}{|c|c|c|c|c|c|}\hline
$l$ & $\langle-w_{0,J}(w\cdot 0),\delta^{\vee}\rangle = (l - 1)|J|$ & $\la$ & $\langle\la,\delta^{\vee}\rangle$ \\
\hline
$17$ & $16$ & $(16,0,-5,0,0,0,0)$ & $16$\\
\hline
$15$ & $14$ & $(16,0,-5,0,0,0,0)$ & $16$\\
\hline
$13$ & $24$ & $(0,-5,-5,12,-11,12,-4)$ & $24$\\
\hline
$11$ & $30$ & $(-4,10,10,-13,10,-4)$ & $30$\\
\hline
$9$ & $32$ & $(-4,8,10,-11,8,-7,8)$ & $34$\\
\hline
$7$ & $30$ & $(5,8,4,-10,8,-7,8)$ & $33$\\
\hline
$5$ & $24$ & $(5,10,4,-10,3,0,5)$ & $27$\\
\hline
\end{tabular}
\end{center}
\vskip.25cm
\begin{center}
\begin{tabular}{|c|c|c|c|}
\hline
$l$ & $\al$ & $\langle x,\al^{\vee}\rangle$ & $\langle -w_{0,J}(w\cdot 0),\al^{\vee}\rangle$\\
\hline
$15$ & $\al_2, \al_4, \al_5, \al_6, \al_7$ & $-10 \leq * \leq 12$ & $0$\\
\hline
$15$ & $\al_3$ & $-15 \leq * \leq 4$ & $-4$\\
\hline \hline
$9$ & $\al_1$ & $-8 \leq * \leq 4$ & $-3$\\
\hline
$9$ & $\al_4$ & $-13 \leq * \leq -6$ & $-10$\\
\hline
$9$ & $\al_6$ & $-12 \leq * \leq 0$ & $-8$\\
\hline
\end{tabular}
\end{center}

\vskip .5cm \noindent Type $E_{8}$: \vskip .25cm
\begin{center}
\begin{tabular}{|c|c|c|c|}
\hline
$l$ & $J$ & $w\cdot 0$ & $-w_{0,J}(w\cdot 0)$ \\
\hline
$29$ & $\{1\}$ & $(28,0,-17,0,0,0,0,0)$ & $(28,0,-11,0,0,0,0,0)$ \\
\hline
$27$ & $\{1\}$ & $(26,0,-17,1,0,0,0,0)$ & $(26,0,-9,-1,0,0,0,0)$ \\
\hline
$25$ & $\{1\}$ & $(24,0,-15,0,0,1,0,0)$ & $(24,0,-9,0,0,-1,0,0)$ \\
\hline
$23$ & $\{6,8\}$ & $(0,0,0,0,-13,22,-25,22)$ & $(0,0,0,0,-9,22,-19,22)$ \\
\hline
$21$ & $\{6,8\}$ & $(2,0,0,0,-13,20,-21,20)$ & $(-2,0,0,0,-7,20,-19,20)$ \\
\hline
$19$ & $\{2,3,5\}$ & $(-10,18,18,-29,18,-13,4,0)$ & $(-8,18,18,-25,18,-5,-4,0)$ \\
\hline
$17$ & $\{2,3,5,7\}$ & $(-14,16,16,-25,16,-16,16,-11)$ & $(-2,16,16,-23,16,-16,16,-5)$ \\
\hline
$15$ & $\{1,4,6,8\}$ & $(14,-10,-17,14,-13,14,-13,14)$ & $(14,-4,-11,14,-15,14,-15,14)$ \\
\hline
$13$ & $\{2,3,5,6,8\}$ & $(-7,12,12,-26,12,12,-21,12)$ & $(-5,12,12,-22,12,12,-15,12)$ \\
\hline
$11$ & $\{1,2,3,5,7,8\}$ & $(10,10,10,-22,10,-18,10,10)$ & $(10,10,10,-18,10,-12,10,10)$ \\
\hline
$9$ & $\{1,2,4,6,7,8\}$ & $(8,8,-13,8,-24,8,8,8)$ & $(8,8,-11,8,-16,8,8,8)$ \\
\hline
$7$ & $\{1,2,3,5,6,7,8\}$ & $(6,6,6,-25,6,6,6,6)$ & $(6,6,6,-17,6,6,6,6)$ \\
\hline
\end{tabular}
\end{center}
\begin{center}
\begin{tabular}{|c|c|c|c|c|c|}\hline
$l$ & $\langle-w_{0,J}(w\cdot 0),\delta^{\vee}\rangle = (l - 1)|J|$ & $\la$ & $\langle\la,\delta^{\vee}\rangle$ \\
\hline
$29$ & $28$ & $(28,0,-11,0,0,0,0,0)$ & $28$\\
\hline
$27$ & $26$ & $(28,0,-11,0,0,0,0,0)$ & $28$\\
\hline
$25$ & $24$ & $(28,0,-11,0,0,0,0,0)$ & $28$\\
\hline
$23$ & $44$ & $(0,0,0,0,-8,22,-21,22)$ & $44$\\
\hline
$21$ & $40$ & $(0,0,0,0,-8,22,-21,22)$ & $44$\\
\hline
$19$ & $54$ & $(-8,18,18,-25,18,-8,0,0)$ & $54$\\
\hline
$17$ & $64$ & $(-7,16,16,-22,16,-15,16,-6)$ & $64$\\
\hline
$15$ & $56$ & $(16,-5,-15,16,-15,16,-15,16)$ & $64$\\
\hline
$13$ & $60$ & $(-7,16,16,-21,7,8,-14,16)$ & $63$\\
\hline
$11$ & $60$ & $(8,16,7,-21,16,-14,8,7)$ & $62$\\
\hline
$9$ & $48$ & $(18,7,-16,10,-14,6,0,10)$ & $51$\\
\hline
$7$ & $42$ & $(9,18,8,-21,6,0,0,9)$ & $50$\\
\hline
\end{tabular}
\end{center}

\vskip.25cm
\begin{center}
\begin{tabular}{|c|c|c|c|}
\hline
$l$ & $\al$ & $\langle x,\al^{\vee}\rangle$ & $\langle -w_{0,J}(w\cdot 0),\al^{\vee}\rangle$\\
\hline
$27$ & $\al_2, \al_5, \al_6, \al_7, \al_8$ & $-18\leq * \leq 20$ & $0$\\
\hline
$27$ & $\al_3$ & $-27 \leq * \leq 4$ & $-9$\\
\hline
$27$ & $\al_4$ & $-18\leq * \leq 20$ & $-1$\\
\hline \hline
$25$ & $\al_2, \al_4, \al_5, \al_7, \al_8$ & $-20 \leq * \leq 22$ & $0$\\
\hline
$25$ & $\al_3$ & $-27 \leq * \leq 6$ & $-9$\\
\hline
$25$ & $\al_6$ & $-20 \leq * \leq 22$ & $-1$\\
\hline \hline
$23$ & $\al_1, \al_2, \al_3, \al_4$ & $-10 \leq * \leq 12$ & $0$\\
\hline
$23$ & $\al_5$ & $-20 \leq * \leq 1$ & $-9$\\
\hline
$23$ & $\al_7$ & $-26\leq * \leq -14$ & $-19$\\
\hline \hline
$21$ & $\al_1$ & $-14 \leq * \leq 16$ & $-2$\\
\hline
$21$ & $\al_2, \al_3, \al_4$ & $-14 \leq * \leq 16$ & $0$\\
\hline
$21$ & $\al_5$ & $-24 \leq * \leq 6$ & $-7$\\
\hline
$21$ & $\al_7$ & $-26 \leq * \leq -9$ & $-19$\\
\hline \hline
$19$ & $\al_1$ & $-16 \leq * \leq 1$ & $-8$\\
\hline
$19$ & $\al_4$ & $-25 \leq * \leq -24$ & $-25$\\
\hline
$19$ & $\al_6$ & $-16\leq * \leq 1$ & $-5$\\
\hline
$19$ & $\al_7$ & $-10 \leq * \leq 12$ & $-4$\\
\hline
$19$ & $\al_8$ & $-10 \leq * \leq 12$ & $0$\\
\hline \hline
$17$ & $\al_1$ & $-14 \leq * \leq 1$ & $-2$\\
\hline
$17$ & $\al_4$ & $-25 \leq * \leq -18$ & $-23$\\
\hline
$17$ & $\al_6$ & $-20 \leq * \leq -8$ & $-16$\\
\hline
$17$ & $\al_8$ & $-14 \leq * \leq 1$ & $-5$\\
\hline \hline
$13$ & $\al_1$ & $-17 \leq * \leq 5$ & $-5$\\
\hline
$13$ & $\al_4$ & $-24 \leq * \leq -13$ & $-22$\\
\hline
$13$ & $\al_7$ & $-22 \leq * \leq -3$ & $-15$\\
\hline \hline
$11$ & $\al_4$ & $-24 \leq * \leq -15$ & $-18$\\
\hline
$11$ & $\al_6$ & $-21 \leq * \leq -5$ & $-12$\\
\hline

\end{tabular}
\end{center}

\backmatter

\bibliographystyle{amsalpha}

\printindex

\end{document}